\numberwithin{equation}{section}
\newtheorem{definition}{Definition}[section]
\newtheorem*{definition*}{Definition}
\newtheorem{proposition}[definition]{Proposition}
\newtheorem{example}[definition]{Example}
\newtheorem{theorem}[definition]{Theorem}
\newtheorem*{theorem*}{Theorem}
\newtheorem{remark}[definition]{Remark}
\newtheorem{corollary}[definition]{Corollary}
\newtheorem{lemma}[definition]{Lemma}
\newtheorem{conjecture}[definition]{Conjecture}
\newtheorem{fact}[definition]{Fact}
\newtheorem{notation}[definition]{Notation}
\tikzset{
        MRarrow/.style={
        decoration={             
            markings, 
            mark=at position 0.75 with {%
  \makebox[2pt][l]{\arrow{triangle 45}};}
        },
        postaction={decorate}
    }
}
\newcommand{\vl}{\boldsymbol{\lambda}}
\newcommand{\vm}{\boldsymbol{\mu}}
\newcommand{\vn}{\boldsymbol{\nu}}
\newcommand{\vx}{\boldsymbol{x}}
\newcommand{\vy}{\boldsymbol{y}}
\newcommand{\vs}{\boldsymbol{s}}
\newcommand{\lo}{\lambda^{(1)}}
\newcommand{\lt}{\lambda^{(2)}}
\newcommand{\lN}{\lambda^{(N)}}
\newcommand{\mo}{\mu^{(1)}}
\newcommand{\mt}{\mu^{(2)}}
\newcommand{\mN}{\mu^{(N)}}
\newcommand{\vu}{\boldsymbol{u}}
\newcommand{\vv}{\boldsymbol{v}}
\newcommand{\vw}{\boldsymbol{w}}
\newcommand{\Xo}{X^{(1)}}
\newcommand{\vnn}{\boldsymbol{n}}
\newcommand{\vmm}{\boldsymbol{m}}
\newcommand{\ketzero}{\ket{\boldsymbol{0}}}
\newcommand{\brazero}{\bra{\boldsymbol{0}}}
\newcommand{\cF}{\mathcal{F}}
\newcommand{\cU}{\mathcal{U}}
\newcommand{\cR}{\mathcal{R}}
\newcommand{\cV}{\mathcal{V}}
\newcommand{\nub}{\bar{\nu}}
\newcommand{\sfa}{\mathsf{a}}
\newcommand{\wPhi}{\widehat{\Phi}}
\newcommand{\cFhol}{\mathcal{F}^{(1,0)}}
\newcommand{\hg}{c}
\newcommand{\cTH}{\mathcal{T}^H}
\newcommand{\cTV}{\mathcal{T}^V}
\newcommand{\lbl}[3]{[#1, #2]_{\boldsymbol{#3}}}
\newcommand{\overstar}[1]{\mathop{\overset{*}{ #1 }}}
\newcommand{\nik}{[i,k]_{\vnn}}
\newcommand{\ik}{[i,k]}
\newcommand{\ts}[1]{q^{-\theta_{#1}}s_{#1}}
\newcommand{\pp}{\mathsf{p}} 
\newcommand{\WA}{\mathsf{U}(N)} 
\newcommand{\aotimes}[2]{\ooalign{$\textstyle \bigotimes$\crcr \hss \raisebox{1.85ex}{\scriptsize$\curvearrowright$}\hss}\rule[-0.6ex]{0ex}{2.4ex}_{#1}^{#2}}
\newcommand{\Mphi}[4]{
	\phi^{#1}_{#2}\Big(
	\arraycolsep=1pt
	\renewcommand{\arraystretch}{0.8}
	\begin{array}{cccccccc}#3\end{array}\Big|
	\begin{array}{cccccccc}#4\end{array}
	\renewcommand{\arraystretch}{1.0}
	\Big)
}
\newcommand{\Mphiu}[4]{
\phi^{#1}\Big(
\arraycolsep=1pt
\renewcommand{\arraystretch}{0.8}
\begin{array}{cccccccc}#2\end{array}\Big|
\begin{array}{cccccccc}#3\end{array}; {#4}
\renewcommand{\arraystretch}{1.0}
\Big)
}
\newcommand{\Vweight}[3]{
	V^{#1}\Big(
	\arraycolsep=1pt
	\renewcommand{\arraystretch}{0.8}
	\begin{array}{c}#2\end{array}\, ; #3
	\renewcommand{\arraystretch}{1.0}
	\Big)
}
\newcommand{\tVweight}[3]{
	\widetilde{V}^{#1}\Big(
	\arraycolsep=1pt
	\renewcommand{\arraystretch}{0.8}
	\begin{array}{c}#2\end{array}\, ; #3
	\renewcommand{\arraystretch}{1.0}
	\Big)
}
\newcommand{\cVweight}[2]{
	\cV\Big(
	\arraycolsep=1pt
	\renewcommand{\arraystretch}{0.8}
	\begin{array}{c}#1\end{array}\, ; #2
	\renewcommand{\arraystretch}{1.0}
	\Big)
}
\title[Generalized Macdonald functions and Changing Preferred Direction]{Generalized Macdonald Functions on Fock Tensor Spaces and Duality Formula for Changing Preferred Direction}
\author{Masayuki~Fukuda, Yusuke~Ohkubo and  Jun'ichi~Shiraishi}
\address{MF: Department of Physics, Faculty of Science, The University of Tokyo, Bunkyo-ku, Tokyo 113-0033, Japan}
\email{fukuda@hep-th.phys.s.u-tokyo.ac.jp}
\address{YO, JS: Graduate School of Mathematical Sciences, University of Tokyo, Komaba, Tokyo 153-8914, Japan}
\email{yusuke.ohkubo.math@gmail.com}
\email{shiraish@ms.u-tokyo.ac.jp}
\begin{document}
\maketitle

\begin{abstract}
An explicit formula is obtained for the generalized Macdonald functions on the 
$N$-fold Fock tensor spaces,
calculating a certain matrix element of a composition of several 
screened vertex operators. 
As an application, 
we prove the factorization property of 
the arbitrary matrix elements of the multi-valent intertwining operator
(or refined topological vertex operator)
associated with the Ding--Iohara--Miki algebra (DIM algebra) with respect to 
the generalized Macdonald functions, 
which was conjectured by Awata, Feigin, Hoshino, Kanai, Yanagida and one of the authors. 
Our proof is based on the combinatorial and analytic properties of the 
asymptotic eigenfunctions of the ordinary Macdonald operator of $A$-type, and the 
Euler transformation formula for Kajihara and Noumi's multiple basic hypergeometric series. 
That factorization formula provides us with a reasonable algebraic 
description of the 5D (K-theoretic) Alday-Gaiotto-Tachikawa (AGT) correspondence, and
the interpretation of the invariance under the preferred direction   
from the point of view of the $SL(2,\mathbb{Z})$ duality of the DIM algebra. 
\end{abstract}

\section{Introduction}\label{sec: Intro}
Let $\cU$ be the DIM algebra \cite{DI, Miki}. 
As for the definition of the DIM algebra, 
see Definition \ref{def: DIM}. 
The central object in the present paper is 
the intertwining operator $\cV (x)$ associated with some structure of the $\cU$ modules. 
From the point of view of the geometric engineering, or 
topological vertex construction for the partition functions for the quantum supersymmetric
gauge theories, we regard the $\cV(x)$ as the {\it (multi-valent) topological vertex operator}.
The intertwining operator $\cV(z)$ is defined 
through a certain set of commutation relations with the $\cU$-generators. Let
$X^{(i)}(z)$'s ($i=1,\ldots,N$) be the generating currents constructed from the 
standard Drinfeld current of $\cU$ 
(Definition \ref{def: X}), acting on the $N$-fold tensor space 
$\cF_{\boldsymbol{u}}= \aotimes{j=1}{N} \cF_{u_j}$  of the Fock spaces 
(For the definition of $\aotimes{}{}$, see Notation \ref{not: aotimes}).

\begin{definition}[topological vertex]
Let  $\cV(x) : \cF_{\boldsymbol{u}} \to \cF_{\boldsymbol{v}}$ be a linear map 
satisfying the commutation relations
\begin{equation}
\left(1-\frac{x}{z}\right)X^{(i)}(z) \cV(x) 
=\left(1- (t/q)^i\frac{x}{z}\right)\cV(x) X^{(i)}(z) 
\qquad (i=1,\ldots,N),
\end{equation}
and the normalization condition $\bra{\boldsymbol{0}}\cV(x) \ket{\boldsymbol{0}}=1$. 
Here $\ket{\boldsymbol{0}}$ (resp. $\bra{\boldsymbol{0}}$)  is the vacuum (resp. dual vacuum) state.
\end{definition}

We refer to this operator as the Mukad\'{e} operator. 
Mukad\'{e} is a Japanese word which means a centipede. 
The operator $\cV(x)$ can be realized by connecting the trivalent intertwining operators 
as Figure \ref{fig_TH} in Section \ref{Sec_Sdual}, 
and that figure is the reason why we call $\cV(x)$ the Mukad\'{e} operator. 
Let $\boldsymbol{\lambda}=(\lambda^{(1)},\ldots,\lambda^{(N)})$ be 
an $N$-tuple of partitions. 
Denote by $\ket{K_{\boldsymbol{\lambda}}}=\ket{K_{\boldsymbol{\lambda}}(\boldsymbol{u})}$ 
the canonical form of the generalized 
Macdonald function on $\cF_{\boldsymbol{u}}$ (Definition \ref{def: K}).  
The main result of this paper is to prove the following 
factorization formula conjectured in \cite{AFHKSY1}. 

\begin{theorem}\label{main-theorem}
We have 
\begin{align}\label{eq: main thm}
\bra{K_{\boldsymbol{\lambda}}(\vv)}\cV (x)\ket{K_{\boldsymbol{\mu}}(\vu)}
=&
\frac{\left((-\gamma^2)^N e_{N}(\vu)x \right)^{|\vl|}}
{\left( \gamma^2 x \right)^{|\vm|}}
\prod_{i=1}^N
\frac{u_i^{|\mu^{(i)}|}g_{\mu^{(i)}}} 
{\left(v_i^{|\lambda^{(i)}|}g_{\lambda^{(i)}}\right)^{N-1}} \cdot
\prod_{i,j=1}^N N_{\lambda^{(i)},\mu^{(j)}}(qv_i/tu_j). 
\end{align}
Here, $\gamma=(t/q)^{1/2}$, $e_{N}(\vu)=u_1\cdots u_N$ and $N_{\lambda,\mu}$ is the Nekrasov factor. 
For the definition of $g_{\lambda}$, see Definition \ref{def: flaming factor}. 
\end{theorem}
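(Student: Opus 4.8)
The plan is to first reduce the abstract matrix element in \eqref{eq: main thm} to an explicit computation with free bosons. Using the free field (Fock) realization of $\cU$, the currents $X^{(i)}(z)$ become normal-ordered exponentials of Heisenberg generators, and the defining commutation relations for the Mukad\'e operator $\cV(x)$ can be solved explicitly: $\cV(x)$ is realized as a composition of $N$ trivalent intertwining operators glued along the horizontal Fock spaces, i.e.\ as a single \emph{screened vertex operator} obtained by dressing a product of vertex operators with the appropriate screening currents (this is the centipede of Figure \ref{fig_TH}). Sandwiching this explicit operator between the vacua and inserting the generalized Macdonald states reduces the left-hand side to a concrete, albeit involved, bosonic correlation function.

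Second, I would feed in the explicit formula for the canonical generalized Macdonald function $\ket{K_{\vl}(\vu)}$ on $\cF_{\vu}$. The key structural input is that $\ket{K_{\vl}}$ is itself computed as a matrix element of a composition of screened vertex operators, and that in the appropriate asymptotic regime (separating the Coulomb parameters $u_1,\dots,u_N$) it is governed by the \emph{asymptotic eigenfunctions} of the ordinary Macdonald operator of $A$-type. Inserting this structure into the correlation function from the first step, the $N$-fold computation unwinds into a product of single-Fock-space pieces together with cross terms between distinct factors $\cF_{u_i}$ and $\cF_{u_j}$.

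Third, I would evaluate the pieces combinatorially. Each diagonal piece, being a principal-specialization/Cauchy-type pairing of Macdonald functions, contributes a Nekrasov factor $N_{\lambda^{(i)},\mu^{(i)}}$, while the off-diagonal cross terms assemble into Kajihara and Noumi's multiple basic hypergeometric series in the variables $qv_i/tu_j$. Applying the \emph{Euler transformation formula} for these multiple series collapses each cross term into the remaining Nekrasov factors $N_{\lambda^{(i)},\mu^{(j)}}$ with $i\neq j$, producing the full product $\prod_{i,j=1}^N N_{\lambda^{(i)},\mu^{(j)}}(qv_i/tu_j)$. What remains is careful bookkeeping of the scalar prefactors: the powers of $x$, $\gamma=(t/q)^{1/2}$, and $e_N(\vu)$ come from the overall charge and the $x$-dependence forced by the commutation relations, while the framing factors $g_{\lambda^{(i)}}$, $g_{\mu^{(i)}}$ (Definition \ref{def: flaming factor}) and the powers $u_i^{|\mu^{(i)}|}$, $v_i^{|\lambda^{(i)}|}$ arise from normalizing the generalized Macdonald states and from the self-contractions of the vertex operators.

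The main obstacle I expect is the third step: showing that the cross terms are \emph{exactly} Kajihara--Noumi series and that the Euler transformation factorizes them into Nekrasov factors with the precise arguments $qv_i/tu_j$. This demands controlling the asymptotic factorization of $\ket{K_{\vl}}$ with enough precision that the combinatorial coefficients match the Nekrasov factors term by term, with no residual error after the hypergeometric summation. A secondary difficulty is tracking the normalization consistently across all $N$ factors so that the $(N-1)$-th powers $\left(v_i^{|\lambda^{(i)}|}g_{\lambda^{(i)}}\right)^{N-1}$ in the denominator emerge correctly; this asymmetry between the $\vl$-dependence and the $\vm$-dependence is a delicate consequence of the chosen canonical normalization and of the direction in which the centipede is read.
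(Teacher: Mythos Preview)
Your outline captures the high-level ingredients (screened vertex operators, asymptotic Macdonald eigenfunctions, Kajihara--Noumi Euler transformation) but misidentifies how they fit together, and the ``diagonal/off-diagonal'' picture in your third step is not how the factorization actually arises.

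First, on the realization of $\cV(x)$: the centipede of trivalent intertwiners (Figure~\ref{fig_TH}) is used only to prove \emph{existence} of $\cV(x)$. For the computation of matrix elements the paper uses a different realization, valid only at the special spectral parameters $v_i=t^{n_i}u_i$: one takes the \emph{principal specialization} $x_i\to t^{|\vnn|-i}x$ of the composition $V^{(\vnn)}(x_1,\dots,x_{|\vnn|})$ of screened vertex operators (times a prefactor and a Cartan tail $A^{-1}_{(|\vnn|)}$), and then argues by analytic continuation in the spectral parameters. This specialization step is essential and is what makes the transformation formula applicable; your description conflates the two realizations.

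Second, and more seriously, the matrix element does \emph{not} unwind into single-Fock-space diagonal pieces plus off-diagonal cross terms. The generalized Macdonald functions $\ket{K_{\vl}}$ are not tensor products of ordinary Macdonald functions, and the correlation function has no such factorization. What actually happens is that, via the explicit formula for $\ket{Q_{\vm}}$ (Theorem~\ref{thm: GM}) and Lemma~\ref{lem: <P|VV|0>}, the matrix element $\bra{P_{\vl}}\widetilde V^{(\vnn)}(x)\ket{Q_{\vm}}$ is written as a constant-term extraction against \emph{one single} asymptotic Macdonald function $p_{|\vnn|+|\vmm|}$ in $|\vnn|+|\vmm|$ variables. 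The Euler transformation is then applied, not to off-diagonal pieces, but to this one function after principal specialization of the first $|\vnn|$ variables: Proposition~\ref{prop: trf formula} (proved by induction on $m$, invoking Kajihara--Noumi at each step) transforms $p_{|\vnn|+|\vmm|}$ into a series $\sum_{\nu}\mathsf N^{|\vnn|,|\vmm|}_{\nu}\,p_{|\vmm|}$. The duality Lemma~\ref{Lem_delta1} then kills all but the term $\nu=[\vm]^{\vmm}$, and \emph{all} of the Nekrasov factors $N_{\lambda^{(i)},\mu^{(j)}}$, diagonal and off-diagonal alike, are extracted together from the surviving coefficient $\mathsf N^{|\vnn|,|\vmm|}_{[\vm]}$ by a separate inductive combinatorial identity. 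So there is no Cauchy-type pairing producing the diagonal $N_{\lambda^{(i)},\mu^{(i)}}$ and no separate hypergeometric summation for the off-diagonal ones; the mechanism you describe would not go through.
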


In our proof, 
the first important thing is 
to construct the explicit formula for the generalized Macdonald functions 
$\ket{K_{\vl}}$. 
We construct this explicit formula by using 
some screened vertex operators (Theorem \ref{thm: GM}). 
Next, the operator $\cV(x)$ can also be realized by 
a certain fusion of the screened vertex operators. 
Matrix elements of the composition of the screened vertex operators satisfy 
a difference equation and 
correspond to a multiple series $p_n$ 
(Macdonald functions of A-type, Definition \ref{def: p_n f_n}). 
The analyticity of matrix elements of screened vertex operators 
can be clarified by the analyticity of the Macdonald functions $p_n$. 
The final step of our proof is attributed to a transformation problem 
with respect to this multiple series $p_n$, 
and this is proved by the Euler transformation formula for 
Kajihara and Noumi's multiple basic hypergeometric series. 

We remark that the case $N=1$ of 
Theorem \ref{main-theorem} was proved by using Kajihara-Noumi's
Euler transformation formula and the Pieri rules for the Macdonald 
polynomials \cite{AFHKSY1}. As for the detail of the proof, 
unfortunately, 
it remains unpublished yet \cite{AFHKSY2}.
Since the Pieri rules are known only for the standard Macdonald case ($N=1$), 
it is totally unclear how we  extend that method 
to the case $N>1$, which drove the present authors to have the 
formulation presented in this paper.

This paper is organized as follows.
In Section \ref{sec_Prelim}, we briefly remind the readers of the basic facts about the DIM algebra $\cU$
and its module structure on the Fock space $\cF$. 
In Section \ref{Sec_GenMac}, 
we introduce the generalized Macdonald functions, 
and they are explicitly constructed by the screened vertex operators (Therorem \ref{thm: GM}). 
We also discuss the analyticity of the matrix elements of the composition of 
screened vertex operators (Theorem \ref{thm_Analytic V}). 
In Section \ref{Sec_matrix element}, 
we give a proof of the main theorem (\ref{eq: main thm}). 
In Section \ref{Sec_Sdual}, the existence of $\cV(x)$ 
is proved through the explicit construction in terms of the topological vertex (Proposition \ref{prop: existance_V}). 
As its application, we can show the invariance under changing the preferred direction 
of toric diagrams, which is the natural consequence of the dualities in the string theory (Theorem \ref{prop: changing PD}).

In Appendix \ref{App: Mac from top vertex}, 
the Macdonald functions $p_n$ are constructed in terms of intertwining operators. 
In Appendix \ref{App: ord. Macdonald}, we give a brief review of the definition of 
ordinary Macdonald functions and their basic facts. 
In Appendix \ref{App: Useful Formula}, some useful formulas used in paper are explained. 
Some proofs in the main text are given in Appendix \ref{App: proofs}. 
In Appendix \ref{App: reargof SM}, 
we revisit the proof of a certain formula for the Kac determinant 
with respect to $\ket{X_{\vl}}$ (Definition \ref{def: PBW-type basis}) given in \cite{O}. 
In particular, we clarify the choice of integral cycles of screening operators. 
In Appendix \ref{App: example}, we give some examples. 
At last, we present list of notations in Appendix \ref{App: list of notation}.  

\subsection*{Notations}

A partition $\lambda=(\lambda_1,\lambda_2,\ldots)$ is a sequence of non-negative integers 
satisfying $\lambda_1 \geq \lambda_2\geq \cdots$ and containing finitely many nonzero elements.  
They are identified if their elements without $0$ are the same. 
$\ell(\lambda)$ denotes the length of $\lambda$, i.e., 
the number of elements in $\lambda$ without $0$. 
For an integer $i>\ell(\lambda)$, 
we occasionally write $\lambda_i$. 
Note that it is just $0$. 
The partition in which all elements are $0$ is denoted by $\emptyset$. 
Write $|\lambda|=\sum_{i\geq 1} \lambda_i$ and $n(\lambda)=\sum_{i\geq 1}(i-1)\lambda_i$. 
Partitions are identified with the Young diagrams. 
For example, if $\lambda=(4,4,2,1)$, 
the Young diagram of $\lambda$ is 
\setlength{\unitlength}{1mm}
\newsavebox{\young}
\savebox{\young}{
\begin{picture}(20,20)(0,0)
\put(0,20){\line(1,0){20}}
\put(0,15){\line(1,0){20}}
\put(0,10){\line(1,0){20}}
\put(0,5){\line(1,0){10}}
\put(0,0){\line(1,0){5}}
\put(20,20){\line(0,-1){10}}
\put(15,20){\line(0,-1){10}}
\put(10,20){\line(0,-1){15}}
\put(5,20){\line(0,-1){20}}
\put(0,20){\line(0,-1){20}} 
\put(11.5,11.5){$s$}
\end{picture}}
\begin{center}
\usebox{\young}.
\end{center}
The transpose of $\lambda$ is denoted by $\lambda'$. 
The coordinate of the box in the $i$-th row and the $j$-th column is denoted by $(i,j)$, 
say the coordinate of the box $s$ in the above Young diagram is $(2,3)$. 
For a coordinate $(i,j)\in \mathbb{Z}^2$, 
the arm length and the leg length are defined 
by $a_{\lambda}(i,j)= \lambda_i-j$ and $\ell_{\lambda}(i,j)=\lambda'_j-i$, 
respectively.  
$A(\lambda)$ (resp. $R(\lambda)$) is the set of coordinates 
where we can add (resp. remove) a box. 
For example, if $\lambda=(3,3,1)$, 
$A(\lambda)=\{(1,4),(3,2),(4,1) \}$ and $R(\lambda)=\{ (2,3),(3,1) \}$. 
In this paper, we often use $N$-tuples of partitions $\vl=(\lo,\ldots , \lN)$. 
For an $N$-tuple of partitions $\vl$, 
we put $|\vl|=\sum_{k=1}^N |\lambda^{(k)}|$. 

Further, we use the following factorials ($q$-Pochhammer symbol) and the theta function: \begin{align}
&(a;q)_{\infty}:=\prod_{n=1}^{\infty}(1-q^{n-1}a),\quad 
(a;q)_{m}:=
\left\{ 
\begin{array}{c l}
\displaystyle \prod_{n=1}^{m}(1-q^{n-1}a) & (m\geq 1);\\
1 & (m=0);\\
\displaystyle \prod_{n=1}^{-m}(1-q^{-n}a)^{-1} &  (m \leq -1), 
\end{array} 
\right. \\
&\theta_q(a):=(a;q)_{\infty}(qa^{-1};q)_{\infty}.
\end{align}

\section{Preliminaries}\label{sec_Prelim}
\subsection{Ding--Iohara--Miki algebra $\mathcal{U}$}
Recall briefly some basic facts about the DIM algebra $\mathcal{U}$ \cite{DI, Miki}. 
Let $q$ and $t$ be nonzero complex parameters 
satisfying $|q|<1$ and 
$q^{\frac{n}{2}}t^{\frac{m}{2}}\neq 1$ for all integers $n, m$.

\begin{definition}\label{def: DIM}
Let $\mathcal{U}=\mathcal{U}_{q,t}$ be the unital associative algebra over $\mathbb{C}$  generated by the Drinfeld currents 
\begin{equation*}
    x^\pm(z) = \sum_{n\in \mathbb{Z}} x_n^\pm z^{-n}\,,\quad \psi^\pm(z) = \sum_{\pm n\in \mathbb{Z}_{\geq 0}} \psi_n^\pm z^{-n}\,,
\end{equation*}
and the invertible central element $\hg^{1/2}$, satisfying the following defining relations: 
\begin{equation}\label{DIMdef}
 \begin{split}
&\psi^+(z)x^\pm(w)=g(c^{\mp 1/2}w/z)^{\mp1} x^\pm(w)\psi^+(z),\qquad \psi^-(z)x^\pm(w)=g(c^{\mp 1/2}z/w)^{\pm1} x^\pm(w)\psi^-(z),\\
&\psi^\pm(z) \psi^\pm(w)= \psi^\pm(w) \psi^\pm(z),\qquad 
\psi^+(z)\psi^-(w)=
\dfrac{g(c^{+1} w/z)}{g(c^{-1}w/z)}\psi^-(w)\psi^+(z),\\
&[x^+(z),x^-(w)]=\dfrac{(1-q)(1-1/t)}{1-q/t}
\bigg( \delta(c^{-1}z/w)\psi^+(c^{1/2}w)-
\delta(c z/w)\psi^-(c^{-1/2}w) \bigg),\\
&G^{\mp}(z/w)x^\pm(z)x^\pm(w)=G^{\pm}(z/w)x^\pm(w)x^\pm(z)\,,
\end{split}
\end{equation}
where
\begin{equation}
g(z)=\dfrac{G^+(z)}{G^-(z)}\,,\qquad
G^\pm(z)=(1-q^{\pm1}z)(1-t^{\mp 1}z)(1-q^{\mp1}t^{\pm 1}z)\,,\qquad\delta(z) = \sum_{n\in\mathbb{Z}}z^n\,.
\end{equation}
\end{definition}

\begin{fact}
    The $\mathcal{U}$ admits the (topological) Hopf algebra structure
    with the Drindeld coprocuct $\Delta$:
    \begin{equation}\label{copro}
        \begin{split}
&\Delta(\hg^{\pm 1/2})=\hg^{\pm 1/2} \otimes \hg^{\pm 1/2}\,,\\
&\Delta (x^+(z))=
x^+(z)\otimes 1+
\psi^-(c_{(1)}^{1/2}z)\otimes x^+(c_{(1)}z)\,,\\
&\Delta (x^-(z))=
x^-(c_{(2)}z)\otimes \psi^+(c_{(2)}^{1/2}z)+1 \otimes x^-(z)\,,\\
&\Delta (\psi^\pm(z))=
\psi^\pm (c_{(2)}^{\pm 1/2}z)\otimes \psi^\pm (c_{(1)}^{\mp 1/2}z)\,,
        \end{split}
    \end{equation}
    where $\hg_{(1)}^{\pm 1/2}=\hg^{\pm 1/2}\otimes 1$
and $\hg_{(2)}^{\pm 1/2}=1\otimes \hg^{\pm 1/2}$.
We omit the antipode and the counit.
\end{fact}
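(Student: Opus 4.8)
The plan is to prove the claim by verifying the two properties that are actually asserted, setting the antipode and counit aside exactly as the statement permits. Concretely, one must show (i) that the assignment $\Delta$ on the generating currents in \eqref{copro} extends to an algebra homomorphism $\cU \to \cU\,\widehat{\otimes}\,\cU$, where $\widehat{\otimes}$ denotes a suitable topological completion of $\cU\otimes\cU$, and (ii) that $\Delta$ is coassociative. Since $\cU$ is presented by the generators and the relations \eqref{DIMdef}, the homomorphism property reduces to checking that the images $\Delta(x^\pm(z))$, $\Delta(\psi^\pm(z))$ and $\Delta(\hg^{\pm1/2})$ again satisfy \eqref{DIMdef}. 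I would carry this out one relation at a time, relying throughout on three facts: that the currents in each tensor leg already obey \eqref{DIMdef}; that the structure functions factor multiplicatively, i.e. $g$ and $G^\pm$ are products of elementary factors of the form $(1-aw/z)$; and the substitution property of the formal delta function, $\delta(az/w)\,h(z)=\delta(az/w)\,h(w/a)$.

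With this setup the relations split by difficulty. The $\psi$--$\psi$ and $\psi$--$x$ relations are the mildest: because $\Delta(\psi^\pm(z))$ is a product of $\psi^\pm$ in the two legs with the spectral parameter rescaled by the central elements $\hg_{(1)}^{\mp1/2}$ and $\hg_{(2)}^{\pm1/2}$, commuting it past $\Delta(x^\pm(w))$ reproduces the scalar factor $g(\cdots)^{\mp1}$ once the shifts of $z$ by powers of $\hg$ are tracked correctly. The quartic relation $G^{\mp}(z/w)x^\pm(z)x^\pm(w)=G^{\pm}(z/w)x^\pm(w)x^\pm(z)$ follows by expanding $\Delta(x^\pm(z))\Delta(x^\pm(w))$, collecting the diagonal and cross terms, and matching them against the functional identities the factors $G^\pm$ satisfy. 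The genuinely delicate case is the $[x^+,x^-]$ relation: applying $\Delta$ to both currents yields four families of terms, and I expect the bulk of the work to lie in showing that the two cross terms reorganize, with the help of the coproduct of $\psi^\pm$ and the delta supports $z=\hg^{\pm1}w$, into exactly the right-hand side of the commutator while carrying the correct powers of the central element. Keeping straight which leg contributes $\hg_{(1)}$ versus $\hg_{(2)}$, and where the spectral parameter is rescaled, is the crux.

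Finally, coassociativity $(\Delta\otimes\mathrm{id})\circ\Delta=(\mathrm{id}\otimes\Delta)\circ\Delta$ would be checked on each generating current. For the group-like central element it is immediate, and for $x^\pm$ and $\psi^\pm$ it reduces to verifying that the threefold tensors and, crucially, the accompanying rescalings of the parameter by $\hg_{(1)},\hg_{(2)},\hg_{(3)}$ agree on both sides; this is pure bookkeeping once $\Delta(\hg^{\pm1/2})$ is recorded. Throughout one must work in the topological completion so that the infinite sums in $\Delta(x^\pm(z))$ and their iterates are well-defined, which is precisely why the statement speaks of a \emph{topological} Hopf algebra. I anticipate that the verification of the $[x^+,x^-]$ relation—the cancellation of the cross terms and the matching of the delta-function supports with the right powers of $\hg$—will be the main obstacle, the remaining relations and coassociativity being routine if careful computations.
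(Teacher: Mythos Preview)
Your proposal outlines the standard verification that a Drinfeld-type coproduct respects the defining relations and is coassociative, and the outline is correct in spirit: checking the $\psi$--$\psi$ and $\psi$--$x$ relations first, then the quadratic $x^\pm x^\pm$ relations, and saving the $[x^+,x^-]$ commutator for last is exactly how such computations are organized, and your identification of the cross terms in the commutator as the delicate point is accurate.

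However, there is nothing to compare against: the paper does not prove this statement. It is recorded as a \emph{Fact}, i.e.\ a result quoted from the literature (the references [DI] and [Miki] in the paper's bibliography), and no argument is supplied. So your proposal is not an alternative to the paper's proof but rather a sketch of what the original references establish. If you intend to actually carry it out, be aware that what you call ``pure bookkeeping'' in the $[x^+,x^-]$ verification hides a genuine computation: after expanding $[\Delta(x^+(z)),\Delta(x^-(w))]$ into four commutators, the two diagonal terms produce delta functions at $z=c_{(1)}^{\pm1}c_{(2)}^{\pm1}w$ times products of $\psi^\pm$ in each leg, which must be reassembled into $\Delta(\psi^\pm)$ evaluated at the correct shifted argument, while the two cross terms must cancel against each other. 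That cancellation relies on the specific form of $g(z)$ and on the identity $\psi^-(c_{(1)}^{1/2}z)\otimes 1$ commuting past $1\otimes x^-(w)$ and its companion, together with the $\psi$--$x$ relations in each leg; it is not automatic and is where most attempted verifications go wrong. Your plan is sound, but the paper simply takes this on faith from the founding references.
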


\subsection{Fock and Fock tensor modules}\label{subsec_F1M}

A $\cU$-module is called of level-$(n,m)$, if the two central elements act as
$c=(t/q)^{n/2}$ and $(\psi^+_0/\psi^-_0)^{1/2}=(q/t)^{m/2}$.
Let $M\in \mathbb{Z}$. 
The Fock module $\mathcal F$ of level-$(1,M)$ is constructed as follows.
Let $\{a_n | n\in\mathbb{Z}\}$ be the Heisenberg algebra with the relation 
\begin{equation}\label{def_boson}
     [a_m,a_n]=m\dfrac{1-q^{|m|}}{1-t^{|m|}}\delta_{m+n,0} \, a_0\,,
\end{equation}
and act on the Fock space $\cF$ with the vacuum $\ket{0}$ in the usual way (that is, $a_n \ket{0} = 0$ for $n\in\mathbb{Z}_{>0}$).
$\mathcal{F}^*$ is the dual space generated by $\bra{0}$ satisfying 
$\bra{0}a_n=0$ ($n<0$). 
The basis of $\cF$ (resp. $\cF^*$) is given by $\ket{a_\lambda} = a_{-\lambda_1}a_{-\lambda_2}\cdots \ket{0}$ 
(resp. $\bra{a_\lambda} = \bra{0}\cdots a_{\lambda_2}a_{\lambda_1}$) 
with a partition $\lambda = (\lambda_1, \lambda_2, \dots)$.
The bilinear form $\cF^* \otimes \cF \rightarrow \mathbb{C}$ 
is given by $\braket{0|0}=1$.

\begin{fact}[\cite{FHHSY}]\label{Fact_Hor_rep}
Let $u$ be a nonzero complex parameter. 
The following algebra homomorphism $\rho_u: \cU \to \mathrm{End}(\cF) $ endows the $\cU$-module structure on $\cF$: 
\begin{equation}
\begin{split}
&\hg^{1/2}\mapsto (t/q)^{1/4},\quad
x^+(z)\mapsto
u z^{-M} q^{-M/2}t^{M/2}\eta(z),\quad x^-(z)\mapsto
u^{-1} z^{M}q^{M/2}t^{-M/2} \xi(z) ,\\
&\psi^+(z)\mapsto
q^{M/2}t^{-M/2}
\varphi^+(z),\quad
\psi^-(z)\mapsto
q^{-M/2}t^{M/2}\varphi^-(z)\,,
\end{split}
\end{equation}
where 
\begin{equation}
    \begin{split}
&\eta(z)=
\exp\Big( \sum_{n=1}^{\infty} \dfrac{1-t^{-n}}{n}a_{-n} z^{n} \Big)
\exp\Big(-\sum_{n=1}^{\infty} \dfrac{1-t^{n} }{n}a_n    z^{-n}\Big),\\
&\xi(z)=
\exp\Big(-\sum_{n=1}^{\infty} \dfrac{1-t^{-n}}{n}
q^{-n/2}t^{n/2}
a_{-n} z^{n}\Big)
\exp\Big( \sum_{n=1}^{\infty} \dfrac{1-t^{n}}{n} q^{-n/2}t^{n/2} a_n z^{-n}\Big),\\
&\varphi^{+}(z)=
\exp\Big(
 -\sum_{n=1}^{\infty} \dfrac{1-t^{n}}{n} (1-t^n q^{-n})
 q^{n/4}t^{-n/4} a_n z^{-n}
    \Big),\\
&\varphi^{-}(z)=
\exp\Big(
 \sum_{n=1}^{\infty} \dfrac{1-t^{-n}}{n} (1-t^n q^{-n}) q^{n/4}t^{-n/4} a_{-n}z^{n}
    \Big).
    \end{split}
\end{equation}
We call $u$ the spectral parameter, and denote this $\cU$-module by $\cF^{(1,M)}_u$.

We can also define the dual $\cU$-module structure $\cF^{(1,M)*}_u$ on $\cF^*$ through the same $\rho_u$ by regarding its image as in $\mathrm{End}(\cF^*)$.
\end{fact}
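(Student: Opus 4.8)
The statement asserts that $\rho_u$ is an algebra homomorphism, so the plan is to verify that the proposed images of all Drinfeld currents satisfy every defining relation in \eqref{DIMdef}, and that the two central elements act with the level-$(1,M)$ eigenvalues. The level check is immediate: since $\hg^{1/2}\mapsto(t/q)^{1/4}$ is a scalar, we read off $c=(t/q)^{1/2}$, i.e. $n=1$; and because the zero mode of each $\varphi^{\pm}$ is $1$, we get $\psi^+_0=q^{M/2}t^{-M/2}$, $\psi^-_0=q^{-M/2}t^{M/2}$, so $(\psi^+_0/\psi^-_0)^{1/2}=(q/t)^{M/2}$, i.e. $m=M$. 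Everything else reduces to computing operator products of the vertex operators $\eta,\xi,\varphi^{\pm}$, which is the bulk of the work.

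The single tool is normal ordering. For exponentials $e^{A},e^{B}$ of linear combinations of the $a_n$, the identity $e^{A}e^{B}=e^{[A,B]}\,{:}e^{A}e^{B}{:}$ holds whenever $[A,B]$ is central, and by \eqref{def_boson} each such commutator is a scalar series $\sum_{n\geq1}\tfrac1n c_n (w/z)^n$. Resumming via $-\log(1-x)=\sum_{n\geq1}x^n/n$, every pairwise contraction exponentiates to a finite product of factors $(1-\alpha\,w/z)^{\pm1}$. A representative computation, which I would carry out first, is
\begin{equation*}
\eta(z)\eta(w)=\frac{(1-w/z)(1-qt^{-1}w/z)}{(1-qw/z)(1-t^{-1}w/z)}\,{:}\eta(z)\eta(w){:}\,,
\end{equation*}
and the contractions $\eta\xi$, $\varphi^{\pm}\eta$, $\varphi^{\pm}\xi$, $\varphi^{\pm}\varphi^{\pm}$ follow by the identical mechanism.

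Most relations then fall out routinely. For the $\psi\psi$ and $\psi x$ relations, the point is that $\varphi^+$ (resp. $\varphi^-$) involves only annihilation (resp. creation) modes, so the normal-ordered parts agree on both sides and the relation reduces to an identity between scalar contraction factors; one checks directly that these equal $g(c^{\mp1/2}w/z)^{\mp1}$ and the ratio $g(c w/z)/g(c^{-1}w/z)$ with $c=(t/q)^{1/2}$, the explicit scalars $q^{\pm M/2}t^{\mp M/2}$ and the spectral/monomial prefactors $u^{\pm1}z^{\mp M}$ being carried through unchanged. For the quadratic relations $G^{\mp}(z/w)x^{\pm}(z)x^{\pm}(w)=G^{\pm}(z/w)x^{\pm}(w)x^{\pm}(z)$, the normal-ordered product ${:}\eta(z)\eta(w){:}$ is symmetric, so it suffices to observe that the ratio of the two orderings' contraction factors equals $g(z/w)=G^{+}(z/w)/G^{-}(z/w)$; this is precisely the identity produced by the displayed $\eta\eta$ contraction, and the symmetric prefactor $(zw)^{-M}$ drops out.

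The crux is the $[x^+(z),x^-(w)]$ relation. Under $\rho_u$ one has $x^+(z)x^-(w)\propto\eta(z)\xi(w)$ and $x^-(w)x^+(z)\propto\xi(w)\eta(z)$, with ${:}\eta(z)\xi(w){:}={:}\xi(w)\eta(z){:}$, while the two scalar contraction factors are the boundary values of a single rational function $R(w/z)$ expanded in the regions $|z|>|w|$ and $|z|<|w|$ respectively. The commutator therefore collapses to $(R_+-R_-)\,{:}\eta(z)\xi(w){:}$, and the difference of the two expansions of $R$ is a sum of $\delta$-functions supported at its poles $z/w=c^{\pm1}$, yielding exactly $\delta(c^{-1}z/w)$ and $\delta(cz/w)$. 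Restricting ${:}\eta(z)\xi(w){:}$ to each support returns $\varphi^+(c^{1/2}w)$ and $\varphi^-(c^{-1/2}w)$ up to scalars, and on the support $z=w$ the prefactors $uz^{-M}$ and $u^{-1}w^{M}$ cancel. The delicate and genuinely quantitative point—the one place where the precise structure constants are tested—is to confirm that the residues assemble into the exact coefficient $\tfrac{(1-q)(1-1/t)}{1-q/t}$ and that the $c^{\pm1/2}$ argument shifts land as in \eqref{DIMdef}; this I expect to be the main obstacle, though it is a matter of careful residue extraction once the contractions are in hand. The dual module structure on $\cF^{*}$ then follows by transposing all operators.
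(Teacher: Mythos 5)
The paper itself offers no proof of this statement: Fact \ref{Fact_Hor_rep} is imported verbatim from \cite{FHHSY}, so your attempt can only be measured against the standard free-field verification, which is exactly the route you take. Most of your outline checks out: the level computation is right; the $\eta\eta$ contraction you display is correct; the quadratic relations do reduce to the rational identity $f(1/x)/f(x)=g(x)$ for the contraction factor $f(x)=\frac{(1-x)(1-qx/t)}{(1-qx)(1-x/t)}$; the $\psi\psi$ and $\psi x^{\pm}$ relations do collapse to scalar identities because $\varphi^{+}$ (resp.\ $\varphi^{-}$) contains only annihilation (resp.\ creation) modes; and your mechanism for $[x^{+}(z),x^{-}(w)]$ — one rational function, two expansion regions, delta functions at the poles $z/w=c^{\pm1}$ — is the right one. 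Indeed the residues come out to exactly $\pm\frac{(1-q)(1-1/t)}{1-q/t}$, and the normal-ordered products restrict on the two supports, with coefficient exactly $1$, to $\varphi^{+}(c^{1/2}w)$ and $\varphi^{-}(c^{-1/2}w)$, so the point you flag as ``the main obstacle'' does work out.

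The one step that is wrong as written is the claim that ``on the support $z=w$ the prefactors $uz^{-M}$ and $u^{-1}w^{M}$ cancel.'' First, the supports are $z=c^{\pm1}w$, not $z=w$ (you state this correctly two sentences earlier, so the sentence is internally inconsistent). Second, and more importantly, what cancels between $\rho_u(x^{+}(z))$ and $\rho_u(x^{-}(w))$ are only the constants $u^{\pm1}$ and $q^{\mp M/2}t^{\pm M/2}$; the monomial factor $(w/z)^{M}$ survives, and on the two supports it evaluates to $c^{\mp M}=q^{\pm M/2}t^{\mp M/2}$, which is not $1$ when $M\neq0$. This factor is not a nuisance to be discarded — it is precisely what reproduces the prefactors $q^{\pm M/2}t^{\mp M/2}$ attached to $\varphi^{\pm}$ in the definition of $\rho_u(\psi^{\pm})$. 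If the prefactors genuinely cancelled, your computed commutator would equal $\frac{(1-q)(1-1/t)}{1-q/t}\bigl(\delta(c^{-1}z/w)\varphi^{+}(c^{1/2}w)-\delta(cz/w)\varphi^{-}(c^{-1/2}w)\bigr)$, which for $M\neq 0$ differs from the image of the right-hand side of \eqref{DIMdef} by exactly those factors, and the verification would fail for every member of the level-$(1,M)$ family except $M=0$ — defeating the purpose of the $z^{\mp M}$ twists in the statement. The fix is one line: evaluate $(w/z)^{M}$ on each delta support and observe that it supplies exactly the required $q^{\pm M/2}t^{\mp M/2}$. With that correction, your outline is a complete and correct proof.
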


Let $N\in \mathbb{Z}_{\geq 1}$, and let $\vu=(u_1,\ldots, u_N)$
be an $N$-tuple of nonzero complex parameters. 
Set 
\begin{align} 
&\rho_{\vu}^{(N,0)}:=
(\rho_{u_1}\otimes\rho_{u_2}\otimes\cdots \otimes\rho_{u_N}) 
\circ\Delta^{(N)}, \\
&\mbox{where $\Delta^{(1)}:=\mathrm{id}$, 
$\Delta^{(N)}:=
(\underbrace{\mathrm{id} \otimes \cdots \otimes{\rm id}}_{N-2} \otimes \Delta) 
\circ \cdots \circ 
(\mathrm{id} \otimes \Delta) \circ \Delta$ ($N\geq 2$)}. \nonumber
\end{align} 
The $\rho_{\vu}^{(N,0)}$ gives the level-$(N,0)$ module structure  
on the $N$-fold tensor space of $\mathcal F$. 
We denote it by $\cFhol_{u_1}\otimes \cdots \otimes \cFhol_{u_N}$.
In what follows, 
we will use the shorthand notation 
$\cF_{\boldsymbol{u}}=\cFhol_{u_1}\otimes \cdots \otimes \cFhol_{u_N}$ for simplicity.
We also denote $\cF^*_{\boldsymbol{u}}=\cF^{(1,0)*}_{u_1}\otimes \cdots \otimes \cF^{(1,0)*}_{u_N}$.
Let $\ketzero$ (resp. $\brazero$) be the tensor product of the vacuum 
(resp. dual vacuum) states 
$\ket{0}^{\otimes N}$ (resp. $\bra{0}^{\otimes N}$) in $\cF_{\vu}$ (resp. $\cF^*_{\vu}$). 
Set 
\begin{align}
a^{(i)}_n=\overbrace{1\otimes \cdots \otimes 1}^{i-1}\otimes
a_n\otimes\overbrace{ 1 \otimes \dots \otimes 1}^{N-i}, 
\end{align}
for simplicity.

\begin{definition}\label{Def_X^k}
For $k = 1,2,\dots,N$, 
set
\begin{align}
&\Lambda^{(i)}(z) := \varphi^-(\gamma^{1/2}z)\otimes \cdots \otimes\varphi^-(\gamma^{i-3/2}z)\otimes\overbrace{\eta(\gamma^{i-1} z)}^{i\text{-th Fock space}}\otimes 1\otimes\cdots\otimes1 . 
\end{align}
\end{definition}

\begin{fact}[\cite{FHSSY}]
On $\cF_{\vu}$, we have
\begin{align}
\rho_{\vu}^{(N,0)}( x^+(z))=\sum_{i=1}^N u_i \Lambda^{(i)}(z).
\end{align}
\end{fact}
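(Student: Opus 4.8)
The plan is to establish the identity by induction on $N$, computing $\rho_{\vu}^{(N,0)}(x^+(z))$ directly from the Drinfeld coproduct \eqref{copro} and the single-Fock representation of Fact \ref{Fact_Hor_rep}. Since $\cU$ is a Hopf algebra, $\Delta$ is coassociative, so I may split off the first tensor factor and write $\Delta^{(N)}=(\mathrm{id}\otimes\Delta^{(N-1)})\circ\Delta$. The base case $N=1$ is immediate: the module $\cFhol_{u_1}$ has $M=0$, so $\rho_{u_1}(x^+(z))=u_1\eta(z)$, which is precisely $u_1\Lambda^{(1)}(z)$ since the string of $\varphi^-$ factors in $\Lambda^{(1)}$ is empty.

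For the inductive step I would apply $\Delta$ to $x^+(z)$,
\[ \Delta(x^+(z))=x^+(z)\otimes 1+\psi^-(c_{(1)}^{1/2}z)\otimes x^+(c_{(1)}z), \]
then apply $\mathrm{id}\otimes\Delta^{(N-1)}$ and finally the representation $\rho_{u_1}\otimes(\rho_{u_2}\otimes\cdots\otimes\rho_{u_N})$. The key bookkeeping is that on the first factor $\cFhol_{u_1}$ the central element acts as $c=(t/q)^{1/2}=\gamma$, so $c_{(1)}\mapsto\gamma$, while $M=0$ trivializes all scalar prefactors; hence $\rho_{u_1}(\psi^-(\gamma^{1/2}z))=\varphi^-(\gamma^{1/2}z)$ and $\rho_{u_1}(x^+(z))=u_1\eta(z)$. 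The first summand yields $u_1\eta(z)\otimes 1^{\otimes(N-1)}=u_1\Lambda^{(1)}(z)$, the $i=1$ term, while the second becomes
\[ \varphi^-(\gamma^{1/2}z)\otimes\Big[\rho_{(u_2,\ldots,u_N)}^{(N-1,0)}(x^+(\gamma z))\Big], \]
the spectral shift $z\mapsto\gamma z$ arising from $c_{(1)}z\mapsto\gamma z$.

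Applying the induction hypothesis to the bracket and writing $\Lambda^{(i)}_{[N-1]}$ for the corresponding operator on the trailing $N-1$ factors, I would then verify the recursion
\[ \varphi^-(\gamma^{1/2}z)\otimes\Lambda^{(i)}_{[N-1]}(\gamma z)=\Lambda^{(i+1)}(z). \]
Indeed, prepending $\varphi^-(\gamma^{1/2}z)$ and replacing the argument by $\gamma z$ turns the chain $\varphi^-(\gamma^{1/2}\gamma z),\ldots,\varphi^-(\gamma^{i-3/2}\gamma z)$ into $\varphi^-(\gamma^{1/2}z),\varphi^-(\gamma^{3/2}z),\ldots,\varphi^-(\gamma^{i-1/2}z)$ and sends $\eta(\gamma^{i-1}\gamma z)$ into the $(i+1)$-st slot as $\eta(\gamma^{i}z)$, which is exactly $\Lambda^{(i+1)}(z)$ on the full $N$-fold space. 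Summing over $i=1,\ldots,N-1$ produces $\sum_{j=2}^{N}u_j\Lambda^{(j)}(z)$, and adding the $i=1$ term completes the induction.

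The main obstacle is the careful tracking of the spectral shifts induced by the central charge: each application of the coproduct replaces $z$ by $c_{(1)}z=\gamma z$ on the trailing factors, and one must check that the accumulated powers of $\gamma$ match exactly the exponents $\gamma^{1/2},\gamma^{3/2},\ldots,\gamma^{i-3/2}$ on the $\varphi^-$ factors and $\gamma^{i-1}$ on the $\eta$ factor in Definition \ref{Def_X^k}. The vanishing of $M$ is what makes all the $z^{-M}$ and $q,t$ prefactors collapse, leaving the clean coefficients $u_i$; this should be recorded explicitly to rule out stray normalization factors.
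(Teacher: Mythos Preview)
Your inductive argument is correct: the base case is immediate, and the recursion $\varphi^-(\gamma^{1/2}z)\otimes\Lambda^{(i)}_{[N-1]}(\gamma z)=\Lambda^{(i+1)}(z)$ checks out exactly as you describe, with the $M=0$ condition killing all stray scalar factors. Note, however, that the paper does not supply its own proof of this statement; it is recorded as a Fact with a citation to \cite{FHSSY}, so there is no in-paper argument to compare against. Your direct verification from the coproduct and the level-$(1,0)$ representation is the standard route and would serve perfectly well as a self-contained proof.
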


\begin{definition}\label{def: X}
Set $X^{(1)}(z)=\rho_{\vu}^{(N,0)}( x^+(z))$. Introduce
the set of generators $X^{(k)}_n$ ($k=1,\ldots,N,n\in \mathbb{Z}$) by performing the 
fusion of several $X^{(1)}$'s as
\begin{equation}
X^{(k)}(z) =
\sum_{n\in\mathbb{Z}} X^{(k)}_n z^{-n} = \Xo(\gamma^{2(1-k)} z)\Xo(\gamma^{2(2-k)} z)\cdots\Xo(z)
\in \mathrm{End}(\mathcal{F}_{\vu})[[z^{\pm 1}]]. 
\end{equation}
\end{definition}

\begin{fact}[\cite{AFHKSY1}]
We have
\begin{align}
&X^{(k)}(z) 
= \sum_{1\leq j_1 <\cdots <j_k \leq N} : \Lambda^{(j_1)}(z) \cdots \Lambda^{(j_k)}((q/t)^{k-1}z): u_{j_1} \cdots u_{j_k}.
\end{align}
Here, $:\ast:$ denotes the usual normal ordering in the Heisenberg algebra. 
\end{fact}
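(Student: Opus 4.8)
The plan is to argue by induction on $k$. First I would record the telescoping recursion
\begin{equation}
X^{(k)}(z)=X^{(1)}\big((q/t)^{k-1}z\big)\,X^{(k-1)}(z),
\end{equation}
which is immediate from Definition \ref{def: X}: since $\gamma^{2(1-k)}=(q/t)^{k-1}$, the last $k-1$ factors of $X^{(k)}(z)$ are precisely $X^{(k-1)}(z)$. The base case $k=1$ reads $X^{(1)}(z)=\sum_{i=1}^N u_i\Lambda^{(i)}(z)$, which is the content of Definition \ref{def: X} together with the preceding Fact, and coincides with the asserted formula.

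The analytic core is a short list of normal-ordering coefficients. Writing $\Lambda^{(i)}(z)\Lambda^{(j)}(w)=C_{ij}\,:\Lambda^{(i)}(z)\Lambda^{(j)}(w):$ and reading off the tensor slots from Definition \ref{Def_X^k}, one sees that at most one slot can contribute a contraction, so $C_{ij}$ is a single elementary OPE factor. Using the boson relation \eqref{def_boson} (with $a_0$ acting as the identity) I would compute
\begin{align}
\eta(z_1)\eta(z_2)&=\frac{(1-z_2/z_1)(1-qt^{-1}z_2/z_1)}{(1-qz_2/z_1)(1-t^{-1}z_2/z_1)}\,:\eta(z_1)\eta(z_2):,\\
\eta(z_1)\varphi^-(z_2)&=\frac{(1-tr)(1-q^{-1}r)(1-t^{-1}qr)}{(1-qr)(1-t^{-1}r)(1-tq^{-1}r)}\,:\eta(z_1)\varphi^-(z_2):,\qquad r=q^{1/4}t^{-1/4}z_2/z_1,
\end{align}
while two $\varphi^-$'s contract trivially. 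Specializing to the fusion configuration $z=(q/t)w$ these yield exactly the three statements the induction needs: (a) if $i>j$ then $C_{ij}=1$ identically, the only nontrivial slot being a $\varphi^-$--$\eta$ product that is already normal ordered; (b) if $i<j$ then $C_{ij}=0$, the zero coming from the slot-$i$ factor $(1-t^{-1}qr)$, which hits $r=t/q$ at the fusion point; (c) if $i=j$ then $C_{ij}=0$, the zero coming from $(1-qt^{-1}z_2/z_1)$ at $z_2/z_1=t/q$. In (b) and (c) the vanishing is a genuine simple zero of a rational function, with all remaining slot-factors finite and nonzero for generic $q,t$, so no indeterminate $0\cdot\infty$ occurs.

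For the induction step I would insert the hypothesis for $X^{(k-1)}(z)$ into the recursion and bring the new factor $\Lambda^{(i)}((q/t)^{k-1}z)$ into each normal-ordered product by multiplicativity of the contractions. The decisive point is that the new factor sits at argument $(q/t)^{k-1}z=(q/t)\cdot(q/t)^{k-2}z$, i.e.\ exactly $(q/t)$ times the argument of the current largest factor $\Lambda^{(j_{k-1})}$, which is the fusion configuration of (a)--(c). Hence if $i\le j_{k-1}$ this one pair contributes a zero and the whole term drops out; if $i>j_{k-1}$ every pair is a descent, all contractions equal $1$, and the new factor merges cleanly,
\begin{equation}
\Lambda^{(i)}\big((q/t)^{k-1}z\big)\,:\Lambda^{(j_1)}(z)\cdots\Lambda^{(j_{k-1})}\big((q/t)^{k-2}z\big):\;=\;:\Lambda^{(j_1)}(z)\cdots\Lambda^{(j_{k-1})}\big((q/t)^{k-2}z\big)\Lambda^{(i)}\big((q/t)^{k-1}z\big):.
\end{equation}
Under the bijection $\big(\{j_1<\cdots<j_{k-1}\},\,i>j_{k-1}\big)\leftrightarrow\{j_1<\cdots<j_{k-1}<j_k:=i\}$ the surviving terms --- with their $u$-weights and with the argument $(q/t)^{m-1}z$ pinned to the rank $m$ of each index --- reproduce precisely the claimed sum, closing the induction. (Equivalently, in the fully expanded $k$-fold product only strictly decreasing index sequences survive, since every adjacent pair is forced into the fusion configuration.)

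I expect the main obstacle to be bookkeeping rather than conceptual: carefully tracking the $\gamma$-shifted spectral arguments through the tensor slots of each $\Lambda^{(i)}$, and verifying that at every fusion specialization the single relevant slot carries the vanishing factor while all other slot-factors remain regular, so that the zero is never cancelled by a spurious pole.
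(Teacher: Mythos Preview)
The paper does not give its own proof of this Fact; it is quoted from \cite{AFHKSY1}. Your inductive argument is correct and is the natural one. The OPE computations (a)--(c) check out, and the induction closes as you describe: for $i>j_{k-1}$ every contraction $C_{i,j_m}$ equals $1$ identically (case (a)), while for $i\le j_{k-1}$ the adjacent pair with $j_{k-1}$ at the fusion ratio $t/q$ contributes a genuine simple zero (cases (b),(c)).

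The one point worth making fully explicit is that the relevant quantity is the \emph{product} $\prod_{m=1}^{k-1}C_{i,j_m}$, so when $i\le j_{k-1}$ you must also know that the non-adjacent contractions $C_{i,j_m}$ with $m<k-1$ are finite at their specializations. These sit at ratios $(t/q)^{k-m}$ with $k-m\ge 2$; each such $C_{i,j_m}$ is either identically $1$ (when $i>j_m$) or a ratio of factors of the form $1-q^a t^b$ with $(a,b)\ne(0,0)$, hence finite and nonzero under the paper's standing genericity assumption $q^{n/2}t^{m/2}\ne 1$. Your closing paragraph essentially anticipates this regularity check, so nothing substantive is missing.
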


\begin{definition}\label{Def_WA}
We denote by $\WA$ (the completion in the sence of the adic topology, of)
the algebra 
$\langle X^{(i)}_n|n\in \mathbb{Z}, i=1,\ldots N  \rangle$ in $\mathrm{End}(\mathcal{F}_{\vu})$.
Namely, $\WA$ is the completion of the algebra generated by the 
set of operators 
$\left\{ X^{(i)}_n\right\}$.
\end{definition}

This algebra $\WA$ can be regarded as the tensor product of 
the deformed $W_{N}$ algebra and some Heisenberg algebra \cite{FHSSY}. 

\begin{definition}\label{def: PBW-type basis}
For an $N$-tuple of partitions 
$\vl=(\lo, \lt, \ldots, \lambda^{(N)})$, 
we define 
the vectors $\Ket{X_{\vl}}=\Ket{X_{\vl}(\vu)} \in \mathcal{F}_{\vu}$ and  
$\Bra{X_{\vl}}=\Bra{X_{\vl}(\vu)}\in \mathcal{F}_{\vu}^*$ by 
\begin{align}
&\Ket{X_{\vl}(\vu)} := 
  X^{(1)}_{-\lambda^{(1)}_1} X^{(1)}_{-\lambda^{(1)}_2} \cdots  X^{(2)}_{-\lambda^{(2)}_1} X^{(2)}_{-\lambda^{(2)}_2}\cdots X^{(N)}_{-\lambda^{(N)}_1} X^{(N)}_{-\lambda^{(N)}_2} \cdots \ketzero, \\
& \Bra{X_{\vl}(\vu)} := 
\brazero \cdots X^{(N)}_{\lambda^{(N)}_2} X^{(N)}_{\lambda^{(N)}_1} \cdots X^{(2)}_{\lambda^{(2)}_2} X^{(2)}_{\lambda^{(2)}_1} \cdots 
X^{(1)}_{\lambda^{(1)}_2} X^{(1)}_{\lambda^{(1)}_1} . 
\end{align}
\end{definition}
In what follows, we omit the spectral parameter $\vu$, as long as there is no confusion. 

\begin{fact}[\cite{O}]\label{fact: PBW basis}
The set $(\Ket{X_{\vl}})$ (resp. $(\Bra{X_{\vl}})$) forms a PBW-type basis 
of $\mathcal{F}_{\vu}$ (resp. $\mathcal{F}^*_{\vu}$), if 
$u_i\neq q^st^{-r}u_j$ and $u_i\neq 0$ 
for all $i, j$ and $r, s \in \mathbb{Z}$. 
\end{fact}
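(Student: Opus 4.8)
The plan is to establish that the vectors $\Ket{X_{\vl}}$ span $\mathcal{F}_{\vu}$ and are linearly independent, the two halves of the PBW-basis claim. The main strategy I would adopt is a \emph{triangularity argument with respect to the power-sum (Heisenberg) basis} $\Ket{a_{\vl}}$, combined with a generic-parameter nonvanishing check. First I would expand each current $X^{(k)}(z)$ in the Heisenberg generators $a^{(i)}_n$ using the explicit formula from the preceding Fact, namely $X^{(k)}(z)=\sum_{1\leq j_1<\cdots<j_k\leq N}:\Lambda^{(j_1)}(z)\cdots\Lambda^{(j_k)}((q/t)^{k-1}z):\,u_{j_1}\cdots u_{j_k}$. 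The point is that each $\Lambda^{(i)}(z)$ is a normal-ordered vertex operator built from $a^{(j)}_n$, so applying the product $X^{(1)}_{-\lo_1}\cdots X^{(N)}_{-\lN_2}\cdots$ to $\ketzero$ produces $\Ket{a_{\vl}}$ (with a nonzero scalar coefficient) as its \emph{leading term}, plus lower terms in a suitable degree/dominance ordering.

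The key steps, in order, would be: (1) fix a grading on $\mathcal{F}_{\vu}$ by the total $a_0$-eigenvalues (or by the $N$-tuple of levels on each tensor factor), under which each $X^{(k)}_{-n}$ raises the grade in a controlled way; (2) introduce a partial order on $N$-tuples of partitions $\vl$ refining this grading (for instance a dominance-type order on the tuples $(\lo,\ldots,\lN)$ read off tensor-factor by tensor-factor); (3) show that in the expansion of $\Ket{X_{\vl}}$ in the monomial Heisenberg basis $\{\Ket{a_{\vm}}\}$, the ``diagonal'' coefficient of $\Ket{a_{\vl}}$ is a nonzero function of $q,t,\vu$, while all other contributions are strictly lower in the order; (4) conclude that the transition matrix from $(\Ket{X_{\vl}})$ to $(\Ket{a_{\vl}})$ is triangular with nonzero diagonal, hence invertible, under the genericity hypothesis $u_i\neq q^s t^{-r}u_j$ and $u_i\neq 0$. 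The dual statement for $\Bra{X_{\vl}}$ follows by the same computation applied to the dual Fock module, or by transposing.

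The heart of the matter is identifying \emph{why} the diagonal coefficient is nonzero precisely under the stated genericity condition, and this is where the condition $u_i\neq q^s t^{-r}u_j$ must enter. I would track carefully the scalar factors generated by the normal-ordering contractions and the zero modes: when two currents $X^{(k)}$ of the same upper index act, the contraction factors produce denominators and structure functions $g(\cdot)$ whose vanishing is governed exactly by resonances of the form $u_i = q^s t^{-r} u_j$. Showing that the leading coefficient is a product of such factors none of which vanishes under the hypothesis is the technical crux.

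The hard part, I expect, will be controlling the off-diagonal (lower-order) terms and verifying that the chosen partial order is genuinely compatible with the action of all the $X^{(k)}$ simultaneously — a naive degree count is not enough because the fusion defining $X^{(k)}$ mixes several tensor factors and shifts spectral parameters by powers of $\gamma^2=t/q$. An alternative and arguably cleaner route, which I would keep in reserve, is to invoke the Kac-determinant computation for $\WA$: since Fact~\ref{fact: PBW basis} is attributed to \cite{O} and the Kac determinant with respect to $\Ket{X_{\vl}}$ is revisited in Appendix~\ref{App: reargof SM}, one can deduce linear independence directly from the nonvanishing of that determinant (which is a product of factors of the form $1-q^s t^{-r} u_j/u_i$), and then pair it with a dimension count in each graded piece to get spanning. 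This reduces the whole claim to the determinant formula, at the cost of importing the screening-operator machinery of the appendix.
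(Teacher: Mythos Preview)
Your reserve approach---deducing the basis property from nonvanishing of the Kac determinant---is exactly what the paper does, and is the only argument it gives. Proposition~\ref{prop: KacDet} in Appendix~\ref{App: reargof SM} computes $\det_n=\det(\braket{X_{\vl}|X_{\vm}})$ by first constructing explicit nonzero singular vectors $\ket{\chi^{(k)}_{r,s}}$ from screening currents at each resonance $u_k=q^s t^{-r}u_{k+1}$ (Propositions~\ref{prop: nonzero} and~\ref{prop: commutativity}, Corollary~\ref{cor: sing vct}), which forces $(u_i-q^{\pm s}t^{\mp r}u_j)^{P^{(N)}(n-rs)}$ to divide $\det_n$; a degree count in the variables $u_i$ then shows these factors exhaust the $u$-dependence. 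Nonvanishing of $\det_n$ under the hypothesis gives linear independence of the $\ket{X_{\vl}}$, and the dimension count $\#\{\vl:|\vl|=n\}=\dim(\mathcal{F}_{\vu})_n$ finishes the basis claim.

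Your primary approach, by contrast, has a genuine gap at step~(3). The implicit identification ``$X^{(k)}_{-n}$ has leading term $a^{(k)}_{-n}$'' is not correct: $X^{(k)}$ is the \emph{fusion} of $k$ copies of $X^{(1)}=\sum_{i=1}^N u_i\Lambda^{(i)}$, not an operator localized on the $k$-th tensor factor, so already $X^{(1)}_{-n}\ketzero$ spreads over all $N$ factors and $X^{(k)}_{-n}\ketzero$ involves all $\binom{N}{k}$ choices of factors. You do not exhibit a partial order making the transition matrix to the Heisenberg basis triangular, and you correctly flag that this is ``the hard part''; but without it the argument does not run. There is a further inconsistency: you expect the diagonal entries themselves to carry the resonance factors $(u_i-q^s t^{-r}u_j)$, yet if the matrix were genuinely triangular those entries would govern invertibility directly, and vanishing of the Shapovalov determinant at a resonance does \emph{not} imply linear dependence of the $\ket{X_{\vl}}$ there---only degeneracy of the pairing. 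The paper avoids all of this by working with the Gram matrix rather than any transition matrix.
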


In Appendix \ref{App: reargof SM}, 
we reproduce the proof (presented in \cite{O}) of Fact \ref{fact: PBW basis}, 
with detailed treatments concerning the integration cycles.

\section{Generalized Macdonald Functions}\label{Sec_GenMac}

In  \cite{AFHKSY1}, 
a sort of generalization of Macdonald functions are introduced 
on the level $(N,0)$ module $\mathcal{F}_{\vu}$. 
In this section, 
we give a certain explicit formula for the generalized Macdonald functions.

\subsection{Definition of generalized Macdonald functions}\label{SSec_GMDef}

The generalized Macdonald functions are defined as eigenfunctions of $\Xo_{0}$. 
In the theory of ordinary Macdonald functions, 
the fundamental existence theorem follows from the triangulation of Macdonald's difference operator. 
In that triangulation, 
the order of a basis is given by the dominance partial ordering. 
In the $N=1$ case, $\Xo_{0}$ corresponds to Macdonald's difference operator 
\cite{AMOS}. 
In the general $N$ case, 
the operator $\Xo_{0}$ can be triangulated by the following partial ordering.

\begin{definition}\label{def:ordering1}
The partial ordering $\overstar{>}$ on the set of $N$-tuples of partitions 
is defined by 
\begin{align}
\vl \overstar{>} \vm \quad \overset{\mathrm{def}}{\Longleftrightarrow} \quad  
& |\vl| = |\vm|, \quad 
\sum_{i=k}^N |\lambda^{(i)}| \geq \sum_{i=k}^N |\mu^{(i)}| \quad (\forall k ) \quad \mathrm{and}   \\
&  (|\lo|,|\lt|,\ldots ,|\lN|) \neq (|\mo|,|\mt|,\ldots ,|\mN|).  \nonumber
\end{align}
\end{definition}

\begin{remark}
This partial ordering is defined so that 
as we move boxes in a right Young diagram to a left one, 
it gets smaller. 
On the other hand, 
as boxes are moved from left to right, 
it gets larger. 
Note that if $N$-tuples of partitions $\vl$ and $\vm$ 
have the same number of boxes in each Young diagram, 
then neither $\vl\overstar{>} \vm$ nor $\vl\overstar{<} \vm$. 
\end{remark}

\begin{example}
If $N=3$ and the number of boxes is $2$, 
then 
\[\xymatrix@!=0.5em{
  & (\emptyset, \emptyset, (2))  \ar[rd] & & (\emptyset, \emptyset, (1,1)) \ar[ld]&  \\
  & & (\emptyset, (1), (1))\ar[lld]\ar[d]\ar[rrd] & &  \\
((1), \emptyset, (1))\ar[rrd] & & (\emptyset, (2),\emptyset)\ar[d] & & ( \emptyset, (1,1),\emptyset )\ar[lld] \\
  & & ((1), (1), \emptyset)\ar[ld]\ar[rd] & & \\
  & ((2), \emptyset, \emptyset) & & ((1,1), \emptyset, \emptyset). &
}\]
Here $\vl \rightarrow \vm$ stands for $\vl \overstar{>} \vm$. 
\end{example}

Regarding the products of ordinary Macdonald functions $\prod_{i=1}^N P_{\lambda^{(i)}}(a^{(i)}_{-n})\ketzero$ 
as a basis of $\mathcal{F}_{\vu}$, 
we can triangulate the operator $\Xo_0$. 
Here, $P_{\lambda}(a_{-n})$ is the abbreviation for $P_{\lambda}(a_{-1},a_{-2}\ldots)$, 
which is the Macdonald function obtained by replacing the power sum symmetric functions 
with generators of the Heisenberg algebra. 
For the definition and notations of the ordinary Macdonald functions 
see Appendix \ref{App: ord. Macdonald}. 
In Appendix \ref{App: ord. Macdonald}, we also explain some well-known facts for Macdonald functions. 

\begin{fact}[\cite{AFO}]
We have
\begin{equation}
X_{0}^{(1)} \prod_{j=1}^{N} P_{\lambda^{(j)}}(a_{-n}^{(j)}) \ketzero
= \epsilon_{\vl} \prod_{j=1}^{N} P_{\lambda^{(j)}}(a_{-n}^{(j)}) \ketzero
 + \sum_{\vm \overstar{<} \vl} v_{\vl, \vm} \prod_{j=1}^{N} P_{\mu^{(j)}}(a_{-n}^{(j)}) \ketzero, 
\end{equation}
\begin{equation}
\brazero \prod_{j=1}^{N} P_{\lambda^{(j)}}(a_{n}^{(j)}) X_{0}^{(1)} 
= \epsilon^*_{\vl}\brazero \prod_{j=1}^{N}  P_{\lambda^{(j)}}(a_{n}^{(j)}) 
 + \sum_{\vm \overstar{>} \vl} v^*_{\vl, \vm} \brazero \prod_{j=1}^{N} P_{\mu^{(j)}}(a_{n}^{(j)}). 
\end{equation}
Here, $v_{\vl, \vm}, v^*_{\vl, \vm} \in \mathbb{C}$. 
The eigenvalues $\epsilon_{\vl}$ and $\epsilon^*_{\vl}$ are in the following form: 
\begin{equation}
\epsilon_{\vl}(\vu)= 
\epsilon_{\vl}^*(\vu)= 
\sum_{k=1}^N u_k e_{\lambda^{(k)}}, \quad 
e_{\lambda}:= 1+(t-1) \sum_{i \geq 1} (q^{\lambda_i}-1)t^{-i}. 
\end{equation}
\end{fact}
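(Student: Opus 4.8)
The plan is to establish the triangularity and compute the diagonal eigenvalue by reducing everything to the $N=1$ situation, where $X^{(1)}_0$ is known to act as the Macdonald difference operator with eigenvalue $e_{\lambda}$ on $P_\lambda(a_{-n})$. First I would unpack the coproduct: since $X^{(1)}(z) = \rho^{(N,0)}_{\vu}(x^+(z)) = \sum_{i=1}^N u_i \Lambda^{(i)}(z)$, the zero mode $X^{(1)}_0$ is a sum over $i$ of contributions, each of which acts nontrivially (as an $\eta$-type current) only on the $i$-th tensor factor while acting by the $\varphi^-$ currents on the factors $1,\dots,i-1$. The key structural observation is that the $\varphi^-(\gamma^{\bullet}z)$ factors, being exponentials of creation operators $a_{-n}$, can only \emph{add} boxes to the Young diagrams sitting to the left of the $i$-th factor, whereas the $\eta(\gamma^{i-1}z)$ current acting on the $i$-th factor preserves the total box count in that factor at zeroth order and moves boxes around below it.

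Second I would extract the constant-in-$z$ (zero-mode) piece and analyze how it changes the multidegree $(|\mu^{(1)}|,\dots,|\mu^{(N)}|)$ of a basis vector $\prod_j P_{\mu^{(j)}}(a^{(j)}_{-n})\ketzero$. The term $u_i\Lambda^{(i)}$ either leaves all box counts unchanged — this is the diagonal part — or transfers boxes from the $i$-th factor to factors with smaller index, i.e. strictly to the \emph{left}. By Definition \ref{def:ordering1}, moving boxes leftward produces a partial of the same total weight $|\vl|$ that is strictly smaller in the $\overstar{>}$ ordering; hence $X^{(1)}_0$ is lower-triangular in the $P$-basis with respect to $\overstar{>}$, which is exactly the first claim. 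The diagonal coefficient is obtained by keeping only the box-count-preserving part of each $u_i\Lambda^{(i)}$: the $\varphi^-$ factors contribute their constant term $1$, and $\eta(\gamma^{i-1}z)$ contributes its zero mode acting on the single factor $P_{\lambda^{(i)}}$. Because on a single Fock space the zero mode of $u_i\eta(z)$ is precisely the Macdonald operator with eigenvalue $e_{\lambda^{(i)}}$ (the $N=1$ case, via \cite{AMOS}), summing over $i$ gives $\epsilon_{\vl}=\sum_{k=1}^N u_k e_{\lambda^{(k)}}$. The dual statement follows by the transpose/adjoint computation, with $\varphi^-$ replaced by $\varphi^+$ acting to the right, so that boxes are moved \emph{rightward} and the triangularity flips to $\overstar{>}$; the diagonal eigenvalue is unchanged, giving $\epsilon^*_{\vl}=\epsilon_{\vl}$.

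The main obstacle I anticipate is making the phrase ``box-count-preserving zero mode'' precise and verifying that the off-diagonal terms genuinely respect the ordering rather than merely the total weight. Concretely, one must check that the contribution of $u_i\Lambda^{(i)}(z)$ to $X^{(1)}_0$, after normal-ordering the $\varphi^-$ creation operators against the $P_{\mu^{(j)}}$ for $j<i$, expands into products of Macdonald functions whose multidegree differs from the original only by transporting a nonnegative number of boxes out of factor $i$ into factors $1,\dots,i-1$. Establishing that the diagonal part reproduces exactly the one-variable Macdonald eigenvalue requires controlling the interplay between the $\gamma$-shifts in the arguments $\gamma^{i-1}z$ and the zero-mode extraction; these shifts affect only the overall monomial in $z$ and drop out at $z^0$, but this needs to be argued carefully. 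I would reduce this last point to the known $N=1$ eigenvalue equation and the explicit form of $\eta(z)$ in Fact \ref{Fact_Hor_rep}, treating the remaining factors as spectators at the diagonal order.
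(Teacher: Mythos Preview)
The paper does not prove this statement: it is recorded as a \emph{Fact} with a citation to \cite{AFO}, so there is no ``paper's own proof'' to compare against. That said, your outline is the standard argument and is essentially what one finds in \cite{AFO}. The decomposition $X^{(1)}_0=\sum_i u_i(\Lambda^{(i)})_0$ together with the observation that $\varphi^-$ contains only creation modes (hence can only raise degree in factors $1,\dots,i-1$) while factors $i+1,\dots,N$ are untouched is exactly the mechanism that forces boxes to migrate leftward at the zero mode, giving lower-triangularity for $\overstar{>}$. Your identification of the diagonal with the $N=1$ Macdonald eigenvalue via $\eta_0 P_\lambda = e_\lambda P_\lambda$ is also correct, and you are right that the $\gamma^{i-1}$ shift in the argument of $\eta$ is invisible at the zero mode.

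One small correction on the dual side: you write that ``$\varphi^-$ is replaced by $\varphi^+$ acting to the right,'' but no such replacement occurs. The operator is still $\Lambda^{(i)}(z)=\varphi^-\otimes\cdots\otimes\varphi^-\otimes\eta\otimes 1\otimes\cdots$; what changes is that $\varphi^-$, being built from $a_{-n}$, now \emph{lowers} degree when it acts from the right on $\brazero P_{\lambda^{(j)}}(a_n^{(j)})$ (since $\bra{0}a_{-n}=0$ forces commutators that differentiate in the $a_n$'s). Correspondingly the annihilation tail of $\eta$ raises degree on the right. The net effect is that boxes move from factors $j<i$ into factor $i$, i.e.\ rightward, and the triangularity flips to $\overstar{>}$ as you claim. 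So your conclusion is right; only the stated mechanism needs adjusting.
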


\begin{remark}
In this paper, 
we assume that the complex parameters $q$, $t$ and 
the spectra parameters $\vu$ are generic 
in the following sense: 
\begin{align}
&\epsilon_{\vl}(\vu)\neq 0 \quad (\forall \vl);\\
&\epsilon_{\vl}(\vu) \neq \epsilon_{\vm}(\vu) \quad (\vl \neq \vm);\\
&u_i \neq q^{n}t^{m} u_j \quad ( n,m \in \mathbb{Z},\, i,j=1,\ldots, N).
\end{align}
\end{remark}

Under this assumption, 
the eigenfunctions of $\Xo_0$ can be characterized as follows.

\begin{fact}[Existence and Uniqueness \cite{AFO}]
\label{fact:existence thm of Gn Mac}
For an $N$-tuple of partitions $\vl$, 
there exists a unique vector $\Ket{P_{\vl}} =\Ket{P_{\vl}(\vu)} \in \mathcal{F}_{\vu}$ 
such that
\begin{align}
 &\bullet \quad \Ket{P_{\boldsymbol{\lambda}}(\vu)} 
  = \prod_{i=1}^N P_{\lambda^{(i)}}(a^{(i)}_{-n}) \ketzero 
  + \sum_{\vm \mathop{\overset{*}{<}} \vl} u_{\vl, \vm} \prod_{i=1}^N P_{\mu^{(i)}}(a^{(i)}_{-n}) \ketzero,
\quad u_{\vl, \vm}\in \mathbb{C}; 
\\
 &\bullet \quad X^{(1)}_0 \Ket{P_{\vl}(\vu)} = \epsilon_{\vl}(\vu) \Ket{P_{\vl}(\vu)}, \quad 
\epsilon_{\vl}(\vu) \in \mathbb{C}.
\end{align}
Similarly, there exists a unique vector 
$\Bra{P_{\vl}}=\Bra{P_{\vl}(\vu) }\in \mathcal{F}_{\vu}^*$ such that 
\begin{align}
 &\bullet \quad\Bra{P_{\boldsymbol{\lambda}}(\vu)} 
  = \brazero \prod_{i=1}^N P_{\lambda^{(i)}}(a^{(i)}_{n}) 
  + \sum_{\boldsymbol{\mu} \overstar{>} \boldsymbol{\lambda}} u_{\vl, \vm}^* \brazero \prod_{i=1}^N P_{\mu^{(i)}}(a^{(i)}_{n}), 
\quad u^*_{\vl, \vm}\in \mathbb{C};
\\
 &\bullet \quad \Bra{P_{\boldsymbol{\lambda}}(\vu)} \Xo_0 = \epsilon_{\boldsymbol{\lambda}}^*(\vu) \Bra{P_{\vl}(\vu)},\quad 
\epsilon_{\vl}^*(\vu) \in \mathbb{C}. 
\end{align}
Again, we omit $\vu$ unless mentioned otherwise.
\end{fact}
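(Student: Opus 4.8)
The plan is to prove this by the standard triangularization argument, treating $\Xo_0$ as a triangular operator with distinct diagonal entries. First I would reduce to the homogeneous pieces. Write $m_{\vl}:=\prod_{i=1}^N P_{\lambda^{(i)}}(a^{(i)}_{-n})\ketzero$, and let $V_n$ be the span of all $m_{\vl}$ with $|\vl|=n$. Since there are only finitely many $N$-tuples of partitions of a fixed total weight $n$, each $V_n$ is finite-dimensional and the $m_{\vl}$ with $|\vl|=n$ form a basis of it. Because $\vm\overstar{<}\vl$ forces $|\vm|=|\vl|$, the preceding Fact of \cite{AFO} shows that $\Xo_0$ preserves $V_n$ and acts there as $\Xo_0 m_{\vl}=\epsilon_{\vl}m_{\vl}+\sum_{\vm\overstar{<}\vl}v_{\vl,\vm}m_{\vm}$; that is, $\Xo_0|_{V_n}$ is triangular with respect to the partial order $\overstar{>}$, with diagonal entries $\epsilon_{\vl}$.

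Next I would construct $\Ket{P_{\vl}}$ directly inside $V_n$. Make the ansatz $\Ket{P_{\vl}}=m_{\vl}+\sum_{\vm\overstar{<}\vl}u_{\vl,\vm}m_{\vm}$ and impose $\Xo_0\Ket{P_{\vl}}=\epsilon_{\vl}\Ket{P_{\vl}}$. The coefficient of $m_{\vl}$ matches automatically, because triangularity prevents the lower basis vectors $m_{\vm}$ ($\vm\overstar{<}\vl$) from feeding back into $m_{\vl}$; comparing the coefficient of $m_{\vn}$ for each $\vn\overstar{<}\vl$ then yields the relation
\[(\epsilon_{\vl}-\epsilon_{\vn})\,u_{\vl,\vn}=v_{\vl,\vn}+\sum_{\vn\overstar{<}\vm\overstar{<}\vl}u_{\vl,\vm}\,v_{\vm,\vn}.\]
Under the genericity assumption $\epsilon_{\vl}\neq\epsilon_{\vn}$ for $\vl\neq\vn$, the factor $\epsilon_{\vl}-\epsilon_{\vn}$ is invertible, so this determines $u_{\vl,\vn}$ in terms of the coefficients $u_{\vl,\vm}$ indexed by $\vm$ strictly between $\vn$ and $\vl$. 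Since the interval $\{\vm:\vn\overstar{<}\vm\overstar{<}\vl\}$ is finite, I would solve this recursion by downward induction along $\overstar{>}$, starting from the elements $\vn$ covered by $\vl$ (for which the sum is empty) and descending. This produces the $u_{\vl,\vn}$ uniquely, giving both existence and uniqueness of $\Ket{P_{\vl}}$ with the prescribed leading term and eigenvalue.

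Finally, the dual statement for $\Bra{P_{\vl}}$ follows by the same argument applied to the transpose: the second formula in the cited Fact shows $\brazero\prod_{i=1}^N P_{\lambda^{(i)}}(a^{(i)}_n)$ is triangular with the correction terms now satisfying $\vm\overstar{>}\vl$, and with the same diagonal entries $\epsilon^*_{\vl}=\epsilon_{\vl}$; running the analogous recursion, this time as an induction ascending in $\overstar{>}$, yields $\Bra{P_{\vl}}$. I do not anticipate a deep obstacle here, since triangularity is supplied by the cited Fact and the distinctness of the diagonal entries by the genericity hypothesis. The one point requiring genuine care is that $\overstar{>}$ is only a \emph{partial} order, so the recursion must be organized as a downward (resp.\ upward) induction over the finite poset interval below (resp.\ above) $\vl$, rather than as a single linear sweep; finiteness of $V_n$ is exactly what guarantees that this induction terminates.
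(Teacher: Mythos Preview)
Your argument is correct and is precisely the standard triangularization argument one expects here. Note, however, that the paper does not supply its own proof of this statement: it is recorded as a \emph{Fact} with a citation to \cite{AFO}, so there is no in-paper proof to compare against. Your write-up is self-contained and relies only on the preceding triangularity Fact and the stated genericity hypothesis $\epsilon_{\vl}\neq\epsilon_{\vm}$ for $\vl\neq\vm$; the one subtlety you flag about $\overstar{>}$ being merely a partial order is handled by transitivity of $\overstar{>}$ (which follows directly from its definition), ensuring that $\Xo_0$ applied to your ansatz produces only terms indexed by $\vl$ or by $\vn\overstar{<}\vl$.
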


Note that for integers $n_i$ ($i=1,\ldots N$) with $n_i \geq \ell(\lambda^{(i)})$, 
the eigenvalues can be rewritten as 
\begin{equation}
    \epsilon_{\boldsymbol{\lambda}}(\vu) = (1-t^{-1})\sum_{i=1}^N u_i\sum_{k=1}^{n_i} q^{\lambda_k^{(i)}}t^{1-k} + \sum_{i=1}^N u_i t^{-n_i}. 
\end{equation}

By this Fact, 
it is easily seen that the generalized Macdonald functions $\ket{P_{\vl}}$ form a basis 
of $\mathcal{F}_{\vu}$. 
It is also known that the operator $\Xo_0$ is a member of a certain family 
of infinitely many commutative operators, 
and  $\ket{P_{\vl}}$ can be regarded as simultaneous eigenfunctions of them 
\cite{FHHSY}. 
Moreover,  $\ket{P_{\vl}}$ correspond to the torus fixed points in the instanton moduli space, 
which form a significant basis in the AGT correspondence. 
Note that this is a $q$-analogue of the AFLT basis \cite{AFLT}.

\begin{fact}[\cite{AFHKSY1}]
$\ket{P_{\vl}}$ and $\bra{P_{\vl}}$ are orthogonal, and 
their inner products take the form 
\begin{align}
\braket{P_{\vl}|P_{\vm}}=\prod_{i=1}^N\frac{c'_{\lambda^{(i)}}}{c_{\lambda^{(i)}}}\delta_{\vl,\vm}.
\end{align}
Here, we put
\begin{align}\label{eq: c and c'}
c_{\lambda}:= \prod_{(i,j)\in \lambda}(1-q^{a_{\lambda}(i,j)}t^{\ell_{\lambda}(i,j)+1}), \quad 
c'_{\lambda}:= \prod_{(i,j)\in \lambda}(1-q^{a_{\lambda}(i,j)+1}t^{\ell_{\lambda}(i,j)}). 
\end{align}
\end{fact}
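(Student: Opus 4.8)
The plan is to split the statement into the orthogonality $\braket{P_{\vm}|P_{\vl}}=0$ for $\vm\neq\vl$ and the evaluation of the diagonal norm $\braket{P_{\vl}|P_{\vl}}$, and to treat them by two independent mechanisms. For orthogonality I would exploit that $\Xo_0$ is compatible with the bilinear pairing $\cF^*_{\vu}\otimes\cF_{\vu}\to\mathbb{C}$, the left action on $\cF^*_{\vu}$ being the transpose of the right action on $\cF_{\vu}$. By the existence-and-uniqueness Fact we have simultaneously $\Xo_0\ket{P_{\vl}}=\epsilon_{\vl}(\vu)\ket{P_{\vl}}$ and $\bra{P_{\vm}}\Xo_0=\epsilon^*_{\vm}(\vu)\bra{P_{\vm}}$, so evaluating $\bra{P_{\vm}}\Xo_0\ket{P_{\vl}}$ in the two admissible orders yields
\[
\bigl(\epsilon_{\vl}(\vu)-\epsilon^*_{\vm}(\vu)\bigr)\braket{P_{\vm}|P_{\vl}}=0.
\]
Since $\epsilon^*_{\vm}(\vu)=\epsilon_{\vm}(\vu)$ and the genericity assumption guarantees $\epsilon_{\vl}(\vu)\neq\epsilon_{\vm}(\vu)$ whenever $\vl\neq\vm$ with $|\vl|=|\vm|$, this forces $\braket{P_{\vm}|P_{\vl}}=0$; the complementary case $|\vl|\neq|\vm|$ is immediate from the grading of $\cF_{\vu}$. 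Note that because $\overstar{>}$ is only a partial order, this eigenvalue argument is essential: it supplies orthogonality even for tuples that are incomparable under $\overstar{>}$, where the triangular structure alone would say nothing.

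For the norm I would use the triangular expansions together with the fact that the pairing on the tensor space factorizes as the product of the single-Fock pairings (the generators $a^{(i)}_n$ act on distinct tensor factors and the vacuum is a pure tensor). Writing $\ket{P_{\vl}}$ as its leading term $\prod_i P_{\lambda^{(i)}}(a^{(i)}_{-n})\ketzero$ plus a combination of $\prod_i P_{\mu^{(i)}}(a^{(i)}_{-n})\ketzero$ with $\vm\overstar{<}\vl$, and dually $\bra{P_{\vl}}$ with corrections $\vn\overstar{>}\vl$, the inner product becomes a sum of products $\prod_i\braket{P_{\nu^{(i)}}|P_{\mu^{(i)}}}$ of single-variable Macdonald pairings. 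By the classical orthogonality of the ordinary Macdonald functions (see Appendix \ref{App: ord. Macdonald}) each such product is nonzero only when $\nu^{(i)}=\mu^{(i)}$ for every $i$, i.e. only when the two $N$-tuples coincide. This kills all cross terms: the leading-times-correction and correction-times-leading terms vanish because the correction tuples differ from $\vl$, while the correction-times-correction terms vanish because $\vn\overstar{>}\vl\overstar{>}\vm$ gives $\vn\overstar{>}\vm$ by transitivity, hence $\vn\neq\vm$. Only the leading-times-leading term survives, contributing $\prod_i\braket{P_{\lambda^{(i)}}|P_{\lambda^{(i)}}}=\prod_i c'_{\lambda^{(i)}}/c_{\lambda^{(i)}}$ via the single-Fock norm formula, which is exactly the claimed right-hand side.

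The proof is thus a standard self-adjointness-plus-triangularity argument, and the only genuine external input is the single-Fock Macdonald norm $\braket{P_\lambda|P_\lambda}=c'_\lambda/c_\lambda$, which can simply be quoted. Accordingly I do not expect a serious obstacle; the point requiring the most care is the bookkeeping of the partial order $\overstar{>}$ to verify that every off-diagonal contribution in the norm computation indeed cancels, and the recognition that the eigenvalue step (resting on $\epsilon_{\vl}\neq\epsilon_{\vm}$) is what cleanly separates the off-diagonal from the diagonal sector before the triangular expansion is even invoked.
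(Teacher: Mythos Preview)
The paper does not give its own proof of this statement; it is quoted as a Fact from \cite{AFHKSY1} without argument. Your proposal is correct and is precisely the standard proof one would expect: orthogonality follows from the eigenvalue argument for $X^{(1)}_0$ (using that the dual module structure on $\cF^*_{\vu}$ is defined through the same $\rho_{\vu}$, so $\bra{P_{\vm}}X^{(1)}_0\ket{P_{\vl}}$ can be evaluated either way), together with the genericity hypothesis $\epsilon_{\vl}\neq\epsilon_{\vm}$ for $\vl\neq\vm$; the norm then follows from the triangular expansions in the product-Macdonald basis, the factorization of the tensor pairing over the Fock factors, and the classical single-variable norm $\langle P_\lambda,P_\lambda\rangle_{q,t}=c'_\lambda/c_\lambda$.

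The only point worth a remark is your handling of the correction--correction terms. Your appeal to transitivity of $\overstar{>}$ is legitimate: from $\vn\overstar{>}\vl$ one gets a strict inequality $\sum_{i\geq k_0}|\nu^{(i)}|>\sum_{i\geq k_0}|\lambda^{(i)}|$ at some $k_0$, and combining with $\sum_{i\geq k_0}|\lambda^{(i)}|\geq\sum_{i\geq k_0}|\mu^{(i)}|$ forces $(|\nu^{(i)}|)\neq(|\mu^{(i)}|)$, hence $\vn\neq\vm$. In fact even less is needed here, since the product of single-Fock Macdonald pairings already vanishes unless $\nu^{(i)}=\mu^{(i)}$ for every $i$, and that equality is ruled out once the size vectors differ.
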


\begin{definition}
We introduce $\ket{Q_{\vl}}$ such that $\braket{P_{\vl}|Q_{\vm}}=\delta_{\vl,\vm}$, 
i.e., $\ket{Q_{\vl}}:=\prod_{i=1}^N\frac{c_{\lambda^{(i)}}}{c'_{\lambda^{(i)}}} \ket{P_{\vl}}$. 
\end{definition}

\subsection{Screening currents and vertex operators}\label{SSec_VO}

This subsection and the next are devoted to the construction of 
an explicit formula for the generalized Macdonald functions $\ket{Q_{\vl}}$. 
Our explicit formula is made up of two ingredients. 
The one is a certain screened vertex operators on $\mathcal{F}_{\vu}$, 
and the other is 
an explicit solution to the eigenfunction problem  
associated with the Macdonald $q$-difference operator of A-type. 
In this subsection, 
we introduce the first ingredient, screening currents and some vertex operators. 
They play an important role also in a realization of our main vertex operator $\cV(x)$.

\begin{notation}
For an $N$-tuple of parameters $\vu=(u_1,\ldots, u_N)$, 
we write 
\begin{align}
t^{\pm \delta_i}\cdot \vu&:=(u_1,\ldots,u_{i-1}, t^{\pm 1} u_i, u_{i+1}, \ldots, u_N),\\ 
t^{\alpha_i}\cdot \vu&:=(u_1,\ldots, u_{i-1}, t u_i, t^{-1} u_{i+1}, u_{i+2}\ldots, u_N).
\end{align}
\end{notation}

\begin{definition}\label{def: screening}
Define the screening currents 
$S^{(i)}(y):\cF_{t^{\alpha_i} \cdot \vu} \to \cF_{\vu}$ 
by the following, 
\begin{equation}
\begin{split}
& S^{(i)}(z) := \overbrace{1\otimes\cdots\otimes 1}^{i-1}\otimes \phi^{\mathrm{sc}}(\gamma^{i-1} z)\otimes \overbrace{1\otimes \cdots \otimes 1}^{n-i-1}\,,\qquad i=1,\dots,N-1\,,\\
\end{split}
\end{equation}
with 
\begin{equation}
    \begin{split}
    \phi^{\mathrm{sc}}(z):= &\exp\left(-\sum_{n>0} \frac{1}{n}\frac{1-t^{n}}{1-q^n}\gamma^{2n} a_{-n} z^n\right)\exp\left(\sum_{n>0} \frac{1}{n}\frac{1-t^{-n}}{1-q^{-n}}a_{n} z^{-n}\right) \\
    & \otimes\exp\left(\sum_{n>0} \frac{1}{n}\frac{1-t^{n}}{1-q^n}\gamma^n a_{-n} z^n\right)\exp\left(-\sum_{n>0} \frac{1}{n}\frac{1-t^{-n}}{1-q^{-n}}\gamma^{-n} a_{n} z^{-n}\right)\,.
    \end{split}
\end{equation}
\end{definition}

\begin{remark}
In this paper, 
we do not attach zero modes to the screening currents. 
When discussing commutativity with the algebra $\WA$, 
we use a function satisfying some difference equation 
as in the following proposition. 
\end{remark}

\begin{proposition}\label{prop: commutativity of S}
Let $g_i(w)$ ($i=1,\ldots, N-1$) be a function which satisfies the difference equation $g_i(qw)=\frac{u_{i+1}}{tu_i}g_i(w)$. 
Then, 
the commutation relation between $S^{(i)}(w) g_i(w)$ and
the algebra $\WA$ is a total $q$-difference: 
\begin{align}
\left[S^{(i)}(w) g_i(w),X^{(k)}_n\right]=(1-T_{q,w})\left( \text{some operators} \right) \quad (\forall i,k,n). 
\end{align}
Here, $T_{q,w}$ is the difference operator such that $T_{q,w}: w \mapsto q w$. 
\end{proposition}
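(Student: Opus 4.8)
The plan is to establish the total-$q$-difference property for the single generating current and then propagate it through the fusion structure. First I would reduce to the case $k=1$: granting that $[X^{(1)}(z),S^{(i)}(w)g_i(w)]=(1-T_{q,w})\mathcal{O}^{(i)}(z,w)$ for some current-valued $\mathcal{O}^{(i)}$, the fused current $X^{(k)}(z)=\Xo(\gamma^{2(1-k)}z)\cdots\Xo(z)$ of Definition~\ref{def: X} is a product of $w$-independent factors, so the Leibniz rule gives
\[
[X^{(k)}(z),S^{(i)}(w)g_i(w)]=\sum_m \Xo(\cdots)\,[\Xo(\gamma^{\bullet}z),S^{(i)}(w)g_i(w)]\,\Xo(\cdots).
\]
Since $T_{q,w}$ acts only on $w$ while the surrounding $\Xo$-factors are $w$-independent, left and right multiplication by them commutes with $(1-T_{q,w})$; hence each summand — and therefore the whole commutator, understood in the completion $\WA$ — is again a total $q$-difference. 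Extracting the coefficient of $z^{-n}$ yields the assertion for every $X^{(k)}_n$.

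For $k=1$ I would expand $X^{(1)}(z)=\sum_{j=1}^N u_j\Lambda^{(j)}(z)$ and analyse $[\Lambda^{(j)}(z),S^{(i)}(w)]$ slot by slot, using that $S^{(i)}(w)$ acts nontrivially only on the $i$-th and $(i+1)$-th Fock factors through $\phi^{\mathrm{sc}}(\gamma^{i-1}w)=\phi^{\mathrm{sc}}_1\otimes\phi^{\mathrm{sc}}_2$, whereas $\Lambda^{(j)}(z)$ carries $\varphi^-$ on slots $1,\dots,j-1$ and $\eta$ on slot $j$. For $j<i$ the supports are disjoint and the commutator vanishes. For $j>i+1$ the screening meets $\varphi^-$ on both slots $i$ and $i+1$, and a direct contraction shows that the two scalar factors produced on these slots have product $1$ and thus cancel — this is exactly what the $\gamma$-powers $\gamma^{i-1}$, $\gamma^{2n}$ and $\gamma^{n}$ of Definition~\ref{def: screening} are tuned to achieve — so $\Lambda^{(j)}$ commutes with $S^{(i)}$. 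Only $j=i$ and $j=i+1$ survive.

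The heart of the proof is the treatment of these two terms. For $j=i$ the nontrivial contraction is $\eta$ against $\phi^{\mathrm{sc}}_1$ on slot $i$ (with $\phi^{\mathrm{sc}}_2$ merely sitting on slot $i+1$); for $j=i+1$ it is the combined contraction of $\varphi^-\otimes\eta$ against $\phi^{\mathrm{sc}}_1\otimes\phi^{\mathrm{sc}}_2$ on slots $i,i+1$. In each case the reordering produces a scalar factor with a simple pole, so that the commutator localizes to delta-function terms $\delta(\gamma^{\bullet}w/z)$ times a normal-ordered current; the coefficients $\frac{1-t^{n}}{1-q^n}$ and $\frac{1-t^{-n}}{1-q^{-n}}$ in $\phi^{\mathrm{sc}}$ are precisely what make the evaluation occurring in the $j=i+1$ term the $q$-shift $w\mapsto qw$ of the one in the $j=i$ term. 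I would then multiply by $g_i(w)$ and use $g_i(qw)=\frac{u_{i+1}}{tu_i}g_i(w)$: the constant $u_{i+1}/(tu_i)$ absorbs exactly the scalar picked up under this $q$-shift (the $u_i,u_{i+1}$ coming from $\Lambda^{(i)},\Lambda^{(i+1)}$ and the $t$ from the $\gamma$-dependent contraction), so that the $u_{i+1}$-weighted $j=i+1$ contribution becomes $T_{q,w}$ of the $u_i$-weighted $j=i$ contribution. Hence $[X^{(1)}(z),S^{(i)}(w)g_i(w)]=(1-T_{q,w})\mathcal{O}^{(i)}(z,w)$, and the coefficient of $z^{-n}$ identifies the ``some operators'' in the statement.

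I expect the main obstacle to be the operator-product bookkeeping of this last step: verifying that the $\gamma$-powers and the $\frac{1-t^n}{1-q^n}$ factors are calibrated so that both the $j>i+1$ contractions cancel identically and the $j=i+1$ delta-term is the exact $T_{q,w}$-image of the $j=i$ delta-term, up to the single constant $u_{i+1}/(tu_i)$. Matching every pole, $\gamma$-shift and sign — rather than merely up to an unaccounted scalar — is the delicate part, and the difference equation imposed on $g_i$ is engineered precisely to close this gap.
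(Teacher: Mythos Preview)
Your proposal is correct. The heart of the argument — localizing the commutators with $\Lambda^{(i)}$ and $\Lambda^{(i+1)}$ to delta functions and recognizing the $j=i+1$ evaluation as the $q$-shift of the $j=i$ one via the bosonic identity $:\Lambda^{(i+1)}(tw)S^{(i)}(w):\,=\,:\Lambda^{(i)}(tw)S^{(i)}(qw):$ — is exactly what the paper does, and the difference equation for $g_i$ enters in the same way to close the two delta terms into $(1-T_{q,w})(\cdots)$.

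The one genuine difference is in the reduction from general $X^{(k)}$ to this core. You apply the Leibniz rule to the product form $X^{(k)}(z)=\Xo(\gamma^{2(1-k)}z)\cdots\Xo(z)$ of Definition~\ref{def: X} and pull $(1-T_{q,w})$ past the $w$-independent flanking factors. The paper instead works with the normal-ordered expansion $X^{(k)}=\sum_{j_1<\cdots<j_k}:\Lambda^{(j_1)}\cdots\Lambda^{(j_k)}:\,u_{j_1}\cdots u_{j_k}$ and makes the extra observation that $:\Lambda^{(i)}(z)\Lambda^{(i+1)}(\gamma^{-2}z):$ itself \emph{commutes} with $S^{(i)}(w)$; thus in each summand either none or exactly one of $\Lambda^{(i)},\Lambda^{(i+1)}$ contributes nontrivially, and the surviving terms pair up (index set with $i$ versus the same index set with $i+1$) into copies of the $k=1$ computation. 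Your route is a bit more economical since it bypasses this additional commutativity check; the paper's route isolates a structural fact — the commutativity of the consecutive normal-ordered pair with the screening current — that makes the cancellation mechanism inside $X^{(k)}$ more explicit.
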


The proof is given in Appendix \ref{sec: pf of screening}. 
\begin{remark}
The function $g_{i}(w)$ can be realized as 
$g_{i}(w)=\frac{\theta_q(t^2u_{i}w/u_{i+1})}{\theta_q(tw)}$, 
and integrals of the screening currents commute with $\WA$ 
if the spectral parameters $\vu$ are degenerated 
(See Proposition \ref{prop: commutativity}).
\end{remark}

\begin{definition}\label{Def_screendV}
Define the screened vertex operators 
$\Phi^{(k)}(x):\cF_{t^{-\delta_{k+1}}\cdot \vu} \to \cF_{\vu} $ 
($k=0,1,\dots ,N-1$) by
\begin{align}
\Phi^{(0)}(x)=&:\exp\left( \sum_{n>0} \frac{1}{n}\frac{1-t^{n}}{1-q^n}a^{(1)}_{-n} x^n\right)\exp\left(\sum_{n>0} \frac{1}{n}\frac{1-\gamma^{2n}t^{n}}{1-q^{-n}}t^{-n}a^{(1)}_{n} x^{-n}\right)\\
&\times \exp\left(\sum_{n>0} \frac{1}{n}\frac{1-\gamma^{2n}}{1-q^{-n}}
\sum_{j=2}^N\gamma^{(j-1)n} a^{(j)}_{n} x^{-n}\right):, 
\end{align}
and  as the composition of operators, 
\begin{equation}
\Phi^{(k)}(x):=
\prod_{i=1}^k \frac{(q;q)_{\infty} (q/t;q)_{\infty}}{(\frac{q u_i}{u_{k+1}};q)_{\infty}(\frac{q u_{k+1}}{tu_{i}};q)_{\infty}}\cdot 
\oint_C \prod_{i=1}^k \frac{dy_i}{2 \pi \sqrt{-1}y_i}\Phi^{(0)}(x) S^{(1)}(y_1)\cdots S^{(k)}(y_k) g(x,y_1,\ldots,y_k)
\end{equation}
with an integral kernel 
\begin{equation}
g(x,y_1,\ldots,y_k)=\frac{\theta_q(tu_1 y_1/u_{k+1}x)}{\theta_q(t y_1/x)}
\prod_{i=1}^{k-1}\frac{\theta_q(tu_{i+1} y_{i+1}/u_{k+1}y_i)}{\theta_q(t y_{i+1}/y_i)}\,.
\end{equation}
Here, the contour of the integration $C$ is chosen so that $|t^{-1}|<|y_j/y_i|<|q|$ for $1\leq i<j \leq k$, 
and $|q/t|<|y_i/x|<1$ for $i\geq 1$. 
\end{definition}

\begin{remark}
The integration contour is well-defined if $|t^{-1}|<|q^{N-2}|$. 
Hence, in what follows, we assume this condition in this paper 
without Appendix \ref{App: reargof SM}. 
\end{remark}

These screened vertex operators can be obtained by 
combining a certain specialized intertwiner of the DIM algebra 
and replacing some Jackson integrals with contour ones. 
In Appendix \ref{App: Mac from top vertex}, 
we explain details of this construction and correspondence between 
Jackson integrals and contour ones.

\begin{remark}\label{rem: expand Phik}
The screened vertex operator $\Phi^{(k)}(x)$ 
is normalized such that 
$\oint \frac{dx}{2\pi \sqrt{-1}x}\Phi^{(k)}(x)\ketzero=\ketzero$. 
Indeed, $\Phi^{(k)}(x)$ can be expanded as 
\begin{align}
\Phi^{(k)}(y_0) =\oint_C \prod_{i=1}^k \frac{dy_i}{2 \pi \sqrt{-1}y_i} 
\sum_{r_1,\ldots , r_k \in \mathbb{Z}} \prod_{i=1}^k 
\frac{(tu_i/u_{k+1};q)_{r_i} }{(qu_i/u_{k+1};q)_{r_i}} (y_i/y_{i-1})^{r_i}
:\Phi^{(0)}(y_0) S^{(1)}(y_1)\cdots S^{(k)}(y_k):. \label{eq: expand scr. vetex}
\end{align}
This expansion follows from the operator products 
(\ref{eq: Phi0 S OPE})-(\ref{eq: S S OPE1}) and 
Ramanujan's ${}_1\psi_1$ summation formula (Fact \ref{fact: ramanujan}). 
By (\ref{eq: expand scr. vetex}), 
it can be shown that the coefficient of $y_0^{r_1}$ in the expansion of $\Phi^{(0)}(y_0)\ketzero$ 
is given by a finite sum and the constant term with respect to $y_0$ is $1$. 
\end{remark}

\begin{fact}[(5.2.1) in \cite{GR}]\label{fact: ramanujan}
\begin{align}
{}_1\psi_1(a;b;q;z):=
\sum_{n=-\infty}^{\infty} \frac{(a;q)_{n}}{(b;q)_{n}}z^n=
\frac{(q;q)_{\infty}(b/a;q)_{\infty}(az;q)_{\infty}(q/az;q)_{\infty}}
{(b;q)_{\infty}(q/a;q)_{\infty}(z;q)_{\infty}(b/az;q)_{\infty}} \quad 
(|b/a|<z<1). 
\end{align}
\end{fact}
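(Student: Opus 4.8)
The plan is to derive the bilateral sum from the ordinary $q$-binomial theorem by analytic continuation in the lower parameter $b$, in the spirit of Ismail's argument. The single nontrivial input is the $q$-binomial theorem
\begin{equation*}
\sum_{n=0}^{\infty}\frac{(a;q)_n}{(q;q)_n}\,z^n=\frac{(az;q)_{\infty}}{(z;q)_{\infty}}\qquad(|z|<1),
\end{equation*}
which I would first dispatch by the standard functional-equation trick: the left-hand side $F(z)$ satisfies $(1-z)F(z)=(1-az)F(qz)$, so iterating and using $|q|<1$ together with $F(0)=1$ forces $F(z)=(az;q)_{\infty}/(z;q)_{\infty}$.

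Next I would set up the analytic framework. For fixed $a\neq0$ and $z$ with $|z|<1$, the positive-index tail of ${}_1\psi_1(a;b;q;z)$ behaves like $\sum z^n$, since $(a;q)_n/(b;q)_n\to(a;q)_{\infty}/(b;q)_{\infty}$, while the negative-index tail, after writing $n=-m$ and using $(a;q)_{-m}/(b;q)_{-m}\sim(b/a)^m$, behaves like $\sum(b/az)^m$; hence the series converges on $|b/a|<|z|<1$ and, restricted to a small disk $|b|<\min(|a||z|,1)$ around the origin, is a holomorphic function of $b$ (the right-hand side of the Fact is likewise holomorphic there, its only $b$-poles sitting at $|b|\ge1$ and $|b|\ge|az|$). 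The crux is then the specialization $b=q^{1+k}$, $k\in\mathbb{Z}_{\ge0}$: by the definition of the extended $q$-Pochhammer symbol the factor $(q^{1+k};q)_n$ becomes infinite for every $n\le-(k+1)$, so those reciprocals vanish and the bilateral series collapses to a one-sided series starting at $n=-k$. Shifting the index and applying the $q$-binomial theorem evaluates this finite-from-below series in closed form, and a short bookkeeping check shows the answer equals the right-hand side of the Fact at $b=q^{1+k}$.

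Finally, the two holomorphic functions of $b$ agree on the sequence $q^{1+k}\to0$, which accumulates at the interior point $b=0$; by the identity theorem they coincide on the disk, and then by analytic continuation wherever the series converges, which is the assertion. The main obstacle I anticipate is entirely in the specialization step: correctly tracking the extended Pochhammer symbols for negative and shifted indices, confirming that the collapsed series is genuinely the $q$-binomial series after re-indexing, and matching its closed form to the claimed right-hand side including the overall normalization. The convergence estimates, the choice of a pole-free neighborhood of $b=0$, and the identity-theorem step are routine once that cancellation pattern is pinned down. An alternative route, proving that both sides solve the same first-order $q$-difference equation in $z$ and fixing the elliptic ambiguity, is available but is less clean for a bilateral series, so I would prefer the continuation argument above.
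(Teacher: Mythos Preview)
Your argument is correct: this is precisely Ismail's analytic-continuation proof of Ramanujan's ${}_1\psi_1$ summation, and the steps you outline (the $q$-binomial theorem via the functional equation, the collapse of the bilateral series at $b=q^{1+k}$, and the identity theorem in $b$) go through as stated.

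However, the paper does not prove this statement at all. It is recorded as a \emph{Fact} with a direct citation to Gasper--Rahman (formula (5.2.1) there), and is used purely as a tool in Remark~\ref{rem: expand Phik} to rewrite the integral kernel of the screened vertex operator $\Phi^{(k)}(x)$ as a bilateral series. So there is no ``paper's proof'' to compare against: you have supplied a complete argument where the authors simply quoted the literature. If you want a reference for the exact route you chose, Ismail's original proof (and its presentation in Gasper--Rahman, \S5.2) matches your outline.
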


\begin{remark}
Note that $g(x,y_1,\ldots,y_k)$ satisfies 
\begin{align}
T_{q,y_i}g(x,y_1,\ldots,y_k)=\frac{u_{i+1}}{u_i}g(x,y_1,\ldots,y_k). 
\end{align}
This corresponds to the difference equation which the function $g_i(z)$ satisfies
in Proposition \ref{prop: commutativity of S}. 
Moreover, 
the screening currents in $\Phi^{(k)}(x)$ commute with the algebra $\WA$ 
by taking the integrals. 
For more details, see the proof of the following lemma. 
\end{remark}

The screened vertex operators $\Phi^{(k)}(x)$ satisfy the following relation with 
the algebra $\WA$.

\begin{lemma}\label{lem: rel of X and Phi}
For $k=0,1,\ldots,N-1$ and $r=1,\ldots, N$, 
\begin{align}
X^{(r)}(z) \Phi^{(k)}(x)- \frac{1-(q/t)^rz/tx}{1-z/tx}\Phi^{(k)}(x) X^{(r)}(z) 
=u_{k+1} (1-t^{-1})\delta(tx/z)Y^{(r)}(x) \Phi^{(k)}(qx) \Psi^{+}(x), 
\end{align}
where $\delta(z)=\sum_{n\in \mathbb{Z}} z^n$ is the formal delta function, 
and we defined
\begin{align}
&Y^{(r)}(x):=\sum_{2\leq i_2<\cdots < i_r\leq N}
:\Lambda^{(i_2)}((q/t)tx) \cdots \Lambda^{(i_r)}((q/t)^{r-1}tx): 
u_{i_2}\cdots u_{i_r}, \\
&\Psi^+(z):= \exp\left(\sum \frac{1}{n}(1-\gamma^{2n}) \sum_{j=1}^N\gamma^{(j-1)n} a^{(j)}_n z^{-n}\right)
=\prod_{k=0}^\infty \frac{1}{\rho^{(N)} (\psi^+(\gamma^{-1}t^{-k}z))}\,.
\end{align}
Note that in particular $Y^{(1)}(z)=1$. 
\end{lemma}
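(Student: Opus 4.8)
The plan is to reduce the statement to a free-field computation for the bare vertex operator $\Phi^{(0)}(x)$ and then to propagate that relation through the screening integral defining $\Phi^{(k)}(x)$. \textbf{First, the case $k=0$.} I would insert the explicit normal-ordered form $X^{(r)}(z)=\sum_{1\le j_1<\cdots<j_r\le N}u_{j_1}\cdots u_{j_r}:\Lambda^{(j_1)}(z)\cdots\Lambda^{(j_r)}((q/t)^{r-1}z):$ and compute both operator orderings $X^{(r)}(z)\Phi^{(0)}(x)$ and $\Phi^{(0)}(x)X^{(r)}(z)$ by Wick's theorem. The crucial observation is that $\Phi^{(0)}(x)$ carries creation modes only in the first Fock space, so I would split the sum according to whether $j_1=1$ or $j_1\ge 2$. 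When $j_1\ge 2$, every factor $\Lambda^{(j_s)}$ meets the first tensor slot of $\Phi^{(0)}(x)$ only through the creation current $\varphi^-$, and the resulting scalar contractions are pole-free at $z=tx$; forming the combination $X^{(r)}(z)\Phi^{(0)}(x)-\frac{1-(q/t)^rz/tx}{1-z/tx}\Phi^{(0)}(x)X^{(r)}(z)$ (with the rational factor exactly as in the lemma) makes all these regular contributions cancel.

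\textbf{Next, the residue.} The only surviving piece comes from the $j_1=1$ terms, whose first slot contains $\eta(z)$; its contraction with $\Phi^{(0)}(x)$ produces a simple pole at $z=tx$. The difference of the two expansion regions then collapses to $(1-t^{-1})\delta(tx/z)$ times the residue, which I would evaluate by freezing $z=tx$. At that point the remaining factors $\Lambda^{(i_2)}((q/t)tx)\cdots\Lambda^{(i_r)}((q/t)^{r-1}tx)$ with $i_2,\dots,i_r\ge 2$ reassemble precisely into $Y^{(r)}(x)$, while the leftover first-slot bosonic data reorganizes into $\Phi^{(0)}(qx)\Psi^{+}(x)$, with overall coefficient $u_1$. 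This establishes the lemma for $k=0$.

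\textbf{Then, propagation through the screenings.} For general $k$ I would substitute the contour-integral representation of $\Phi^{(k)}(x)$ from Definition~\ref{Def_screendV} and commute $X^{(r)}(z)$ first past $\Phi^{(0)}(x)$ by the $k=0$ result, then past $S^{(1)}(y_1)\cdots S^{(k)}(y_k)$. Since the kernel $g(x,y_1,\dots,y_k)$ satisfies $T_{q,y_i}g=(u_{i+1}/u_i)g$, Proposition~\ref{prop: commutativity of S} guarantees that each commutator of $X^{(r)}(z)$ with $S^{(i)}(y_i)$ dressed by $g$ is a total $q$-difference in $y_i$; these contributions integrate to zero over the closed contour $C$, so the regular part reassembles into $\frac{1-(q/t)^rz/tx}{1-z/tx}\Phi^{(k)}(x)X^{(r)}(z)$.

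\textbf{The main obstacle} is to show that the delta term produced in the second step, now living inside the integral against $S^{(1)}(y_1)\cdots S^{(k)}(y_k)\,g(x,\vec y)$, reorganizes into $u_{k+1}(1-t^{-1})\delta(tx/z)\,Y^{(r)}(x)\,\Phi^{(k)}(qx)\,\Psi^{+}(x)$. Here the delta freezes $z=tx$, and I would use the operator products of $\Psi^{+}(x)$ and $\Phi^{(0)}(qx)$ with the screening currents, together with the expansion (\ref{eq: expand scr. vetex}), to convert $g(x,\vec y)$ into $g(qx,\vec y)$ and so recognize the resulting integral as $\Phi^{(k)}(qx)$. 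The delicate part is the bookkeeping that promotes the coefficient $u_1$ (coming from the first Fock space) to $u_{k+1}$: this change is forced by the factors $u_1/u_{k+1}$ in $g$ and by the spectral-parameter shifts $\vu\mapsto t^{-\delta_{k+1}}\cdot\vu$ carried by $\Phi^{(0)}$ and the screenings, and making all these normalizations conspire correctly is where the real work lies.
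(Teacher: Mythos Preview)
Your outline follows the paper's proof closely: establish $k=0$ by splitting the sum over $(j_1,\ldots,j_r)$ according to whether $j_1=1$ (using the identity $:\Phi^{(0)}(x)\Lambda^{(1)}(tx):=\Phi^{(0)}(qx)\Psi^+(x)$ at the residue), then attach the screening integral. You have, however, inverted where the difficulty lies. The promotion $u_1\to u_{k+1}$ is a one-line observation: since $\Psi^+$ commutes with every $S^{(i)}$ and the kernel satisfies $T_{q,x}g(x,y_1,\ldots,y_k)=(u_1/u_{k+1})\,g(x,y_1,\ldots,y_k)$, the delta term $u_1\cdots\Phi^{(0)}(qx)\cdots g(x,\vec y)$ immediately becomes $u_{k+1}\cdots\Phi^{(0)}(qx)\cdots g(qx,\vec y)$, which is $\Phi^{(k)}(qx)$ after integration. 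No spectral-parameter bookkeeping or appeal to the expansion~(\ref{eq: expand scr. vetex}) is needed here.

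The step you pass over too quickly is the claim that the total $q$-differences in $y_i$ ``integrate to zero over the closed contour $C$''. Proposition~\ref{prop: commutativity of S} gives the commutator of $X^{(r)}$ with a \emph{bare} $S^{(i)}(w)g_i(w)$ as $(1-T_{q,w})(\cdots)$, but inside $\Phi^{(k)}$ that $S^{(i)}(y_i)$ is sandwiched between $\Phi^{(0)}(x)S^{(1)}(y_1)\cdots S^{(i-1)}(y_{i-1})$ on the left and $S^{(i+1)}(y_{i+1})\cdots S^{(k)}(y_k)$ on the right; their OPEs with the residue operator $:\Lambda^{(\ldots)}S^{(i)}(y_i):$ generate additional poles in $y_i$, and for $r\geq 2$ there are yet more from the extra $\Lambda^{(j_m)}$ factors. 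The paper's proof spends most of its length listing every such pole and verifying---under the contour specification of Definition~\ref{Def_screendV} and the standing assumption $|t^{-1}|<|q|^{N-2}$---that none of them is crossed by the shift $y_i\mapsto qy_i$. Without that verification the integration-by-parts step is only formal; this pole analysis, not the $u_1\to u_{k+1}$ conversion, is the genuine work.
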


For the proof, see Appendix \ref{sec: pf rel of X and Phi}

\subsection{Explicit formula for generalized Macdonald functions}

In this subsection, 
we explain an explicit solution to the eigenfunction equation of the Macdonald 
$q$-difference operator of A-type. 
By using its solution, 
we give an explicit formula for generalized Macdonald functions. 

First, we introduce a modified Macdonald 
$q$-difference operator. 
\begin{definition}
Define the operators on $\mathbb{C}[[x_2/x_1,x_3/x_2\ldots,x_n/x_{n-1}]]$ by 
\begin{align}
D^1_n(\boldsymbol{s};q,t)&:= \sum_{k=1}^n
s_k
\prod_{1\leq \ell < k} \frac{1-tx_k/x_{\ell}}{1-x_k/x_{\ell}} 
\prod_{k < \ell \leq n} \frac{1-x_{\ell}/tx_k}{1-x_{\ell}/x_{k}}
T_{q,x_k}, 
\end{align}
where $\boldsymbol{s} = (s_1,\dots,s_n)$ are generic parameters, 
and $T_{q,x_k}$ is the difference operator 
defined by  
\begin{align}
T_{q,x_k}F(x_1,\ldots,x_n)=F(x_1,\ldots,qx_k,\ldots,x_n). 
\end{align}
Though it is essentially equivalent, we also introduce the operator 
\begin{align}
\widetilde{D^1_n}(\boldsymbol{s};q, t) &:= \sum_{k=1}^n s_k
\prod_{1\leq \ell < k} \frac{1-tx_k/qx_{\ell}}{1-x_k/q x_{\ell}} 
\prod_{ k < \ell \leq n} \frac{1-qx_{\ell}/tx_k}{1-q x_{\ell}/x_{k}} T_{q^{-1},x_k}\,.
\end{align}
\end{definition}

An explicit formula for the eigenfunction of $\widetilde{D^1_n}(\boldsymbol{s};q, t)$ was conjectured in \cite{S}. 
Afterward, 
it was proved, and a solution to the bispectral problem was also given in \cite{NS}. 

\begin{definition}\label{def: p_n f_n}
Let $\boldsymbol{x}=(x_i)_{1\leq i \leq n}$ be inderminates and $\boldsymbol{s}=(s_i)_{1\leq i \leq n}$ be generic parameters.  
Define $p_n(\boldsymbol{x};\boldsymbol{s}|q,t)$ and $f_n(\boldsymbol{x};\boldsymbol{s}|q,t) \in \mathbb{C}[[x_2/x_1, x_3/x_2,\ldots , x_n/x_{n-1}]]$ 
by 
\begin{align}
p_n(\boldsymbol{x};\boldsymbol{s}|q,t) &=
\sum_{\theta \in M_n} c_n(\theta;\boldsymbol{s}|q,t) \prod_{1\leq i<j\leq n}(x_j/x_i)^{\theta_{i,j}}, \\
f_n(\boldsymbol{x};\boldsymbol{s}|q,t) &=\prod_{1\leq k<\ell\leq n}(1-x_\ell/x_k)\,\cdot\,p_n(\boldsymbol{x};\boldsymbol{s}|q^{-1},t^{-1}). 
\end{align}
For convenience, we put $f_0=p_0=1$. 
Here $M_n$ is the set of all $n \times n$ upper triangular matrices with nonnegative integers, 
in which diagonal elements are $0$. 
$c_n(\theta;\boldsymbol{s}|q,t)$ are coefficients defined by the following recurrence relations
\begin{equation}
    \begin{split}
&c_1(-;s_1,q,t)=1\,,\\
&c_n((\theta_{i,j})_{1\leq i<j\leq n};(s_i)_{1\leq i \leq n}|q,t)\\
&\quad = c_{n-1}((\theta_{i,j})_{1\leq i<j\leq n-1}; (q^{-\theta_{i,n}}s_i)_{1\leq i \leq n}|q,t) \times d_n((\theta_{i,n})_{1\leq i \leq n-1};(s_i)_{1\leq i \leq n}|q,t),\\
&(n = 2, 3, \dots),
    \end{split}
\end{equation}
with
\begin{align}
&d_n((\theta_i)_{1\leq i \leq n-1};(s_i)_{1\leq i \leq n}|q,t)	\nonumber  \\
& \qquad =\prod_{i=1}^{n-1}
(q/t)^{\theta_{i}} { (t;q)_{\theta_{i}} \over (q;q)_{ \theta_{i}} }
{(t s_n/s_i;q)_{\theta_{i}} \over (q s_n/s_i;q)_{\theta_{i}} }\prod_{1\leq i<j\leq n-1}
{(t s_j/s_i ;q)_{\theta_{i}} \over (q s_j/s_i;q)_{\theta_{i}} }
{(q^{-\theta_{j}}q s_j/t s_i;q)_{\theta_{i}} \over 
	(q^{-\theta_{j}}s_j/s_i;q)_{\theta_{i}} }. \label{eq: form of d}
\end{align}
\end{definition}

From the recurrence relations, we can compute the explicit form of $c_n$ as
\begin{equation}
\begin{split}
&c_n((\theta_{i,j})_{1\leq i<j\leq n};(s_i)_{1\leq i \leq n}|q,t) =\prod_{1\leq i<j\leq n} (q/t)^{\theta_{i,j}}{ (t;q)_{\theta_{i,j}} \over (q;q)_{ \theta_{i,j}} }
{(q^{\sum_{a=j+1}^n (\theta_{i,a}-\theta_{j,a})} t s_j/s_i;q)_{\theta_{i,j}} \over (q^{\sum_{a=j+1}^n (\theta_{i,a}-\theta_{j,a})} q s_j/s_i;q)_{\theta_{i,j}} }\\
&\quad\times\prod_{k=3}^n \prod_{1\leq l<m\leq k-1} { (q^{\sum_{b=k+1}^n (\theta_{l,b}-\theta_{m,b})}t s_m/s_l;q)_{\theta_{l,k}} \over (q^{\sum_{b=k+1}^n (\theta_{l,b}-\theta_{m,b})}q s_m/s_l;q)_{\theta_{l,k}} } 
{ (q^{-\theta_{m,k}} q^{\sum_{b=k+1}^n (\theta_{l,b}-\theta_{m,b})}q s_m/t s_l;q)_{\theta_{l,k}} \over (q^{-\theta_{m,k}}q^{\sum_{b=k+1}^n (\theta_{l,b}-\theta_{m,b})} s_m/s_l;q)_{\theta_{l,k}} }. 
\end{split}
\end{equation}

\begin{fact}[\cite{NS,S,BFS}]\label{fact: eigen fn of D}
The function $p_n(\boldsymbol{x};\boldsymbol{s}|q,t)$ is an unique formal solution to the eigenfunction equation 
\begin{align} 
D^1_n(\boldsymbol{s};q,t)p_n(\boldsymbol{x};\boldsymbol{s}|q,t) =(s_1+\cdots +s_n)p_n(\boldsymbol{x};\boldsymbol{s}|q,t),
\end{align} 
up to scalar multiples. 
Equivalently, $f_n(\boldsymbol{x};\boldsymbol{s}|q,t)$ is an unique function (up to scalar multiples) such that 
\begin{align}
&\widetilde{D^1_n}(\boldsymbol{s};q,t)f_n(\boldsymbol{x};\boldsymbol{s}|q,t) =(s_1+\cdots +s_n)f_n(\boldsymbol{x};\boldsymbol{s}|q,t).
\end{align}
\end{fact}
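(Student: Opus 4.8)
The plan is to establish the statement for $p_n$ directly and then transfer it to $f_n$ by a gauge transformation; both parts rest on the triangularity of $D^1_n(\boldsymbol{s};q,t)$ on the formal power series ring $\mathbb{C}[[x_2/x_1,\ldots,x_n/x_{n-1}]]$. First I would fix the genuine monomial basis $y^{\boldsymbol{m}}=\prod_{a=1}^{n-1}(x_{a+1}/x_a)^{m_a}$ ($m_a\geq 0$) and record that the net power of $x_k$ in $y^{\boldsymbol{m}}$ is $d_k=m_{k-1}-m_k$ (with $m_0=m_n=0$), so that $T_{q,x_k}\,y^{\boldsymbol{m}}=q^{d_k}y^{\boldsymbol{m}}$. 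Expanding each rational coefficient $\frac{1-tx_k/x_\ell}{1-x_k/x_\ell}$ and $\frac{1-x_\ell/tx_k}{1-x_\ell/x_k}$ as a geometric series in the small variables, the leading term is $1$ and every further term multiplies $y^{\boldsymbol{m}}$ by a strictly positive power of the generators $x_{a+1}/x_a$. Hence
\[
D^1_n(\boldsymbol{s};q,t)\,y^{\boldsymbol{m}} = \Big(\sum_{k=1}^n s_k q^{d_k}\Big)\,y^{\boldsymbol{m}} + (\text{terms of strictly larger degree}),
\]
so $D^1_n$ is diagonal plus strictly degree-raising with respect to $|\boldsymbol{m}|=\sum_a m_a$.

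Uniqueness up to scalar then follows by induction on $|\boldsymbol{m}|$. Comparing the coefficient of $y^{\boldsymbol{m}}$ in $D^1_n p_n=(s_1+\cdots+s_n)p_n$ gives
\[
\Big(\sum_{k=1}^n s_k(q^{d_k}-1)\Big)\,[y^{\boldsymbol{m}}]\,p_n = -\sum_{|\boldsymbol{m}'|<|\boldsymbol{m}|}(\ast)\,[y^{\boldsymbol{m}'}]\,p_n,
\]
and for $\boldsymbol{m}\neq 0$ the vector $(d_k)_k$ is nonzero, so under the genericity assumptions on $\boldsymbol{s}$ and $q$ the prefactor $\sum_k s_k(q^{d_k}-1)$ does not vanish. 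Thus every coefficient is forced by those of lower degree, the constant term being free, and the eigenspace for the eigenvalue $s_1+\cdots+s_n$ is one-dimensional.

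For existence I would verify that the explicitly defined series, with coefficients $c_n(\theta)$, genuinely solves the eigenequation, using the inductive structure already encoded in the recurrence $c_n=c_{n-1}\cdot d_n$: peeling off the last variable $x_n$ should factorize the eigenfunction of $D^1_n(\boldsymbol{s})$ into the eigenfunction of $D^1_{n-1}$ with shifted spectral parameters $(q^{-\theta_{i,n}}s_i)$ times the factor $d_n$ carrying the $x_n$-dependence. Substituting this factorized ansatz into $D^1_n$ and invoking the explicit form \eqref{eq: form of d} of $d_n$, the eigenequation reduces to a $q$-Pochhammer (telescoping / $q$-binomial) identity in the single column of exponents $(\theta_{i,n})$. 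I expect this matching of the closed form of $c_n$ against the recursion forced by the eigenequation to be the main obstacle; it is precisely the substantive computation of \cite{NS,S,BFS}, whereas the triangularity argument above is routine.

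Finally, for the equivalence with $f_n$ I would use that $f_n=\Phi\cdot p_n(\boldsymbol{x};\boldsymbol{s}|q^{-1},t^{-1})$ with the gauge factor $\Phi:=\prod_{k<\ell}(1-x_\ell/x_k)$, which is a unit of the power series ring (constant term $1$). Computing the shift
\[
\frac{T_{q^{-1},x_k}\Phi}{\Phi}
= \prod_{a<k}\frac{1-q^{-1}x_k/x_a}{1-x_k/x_a}\;\prod_{b>k}\frac{1-q\,x_b/x_k}{1-x_b/x_k},
\]
and feeding it into $\Phi\,D^1_n(\boldsymbol{s};q^{-1},t^{-1})\,\Phi^{-1}$ should reproduce $\widetilde{D^1_n}(\boldsymbol{s};q,t)$ term by term, i.e. $\widetilde{D^1_n}\,\Phi=\Phi\,D^1_n(\boldsymbol{s};q^{-1},t^{-1})$. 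Granting this intertwining, $f_n$ satisfies $\widetilde{D^1_n}f_n=(s_1+\cdots+s_n)f_n$, and since $\Phi$ is invertible the one-dimensionality of the eigenspace transfers verbatim, completing the proof.
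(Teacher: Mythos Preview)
The paper does not prove this statement: it is recorded as a Fact with external citations, so there is no in-paper argument to compare against. Your triangularity argument for uniqueness is correct and standard; the operator $D^1_n(\boldsymbol{s};q,t)$ is indeed diagonal-plus-degree-raising on the monomial basis $y^{\boldsymbol{m}}$, and the same reasoning applies verbatim to $\widetilde{D^1_n}$, so uniqueness on both sides is unproblematic. Your sketch for existence (reducing to the recursion satisfied by $c_n$) is also a fair account of what the cited references do.

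The gap is in your final paragraph. The intertwining you assert, $\widetilde{D^1_n}(\boldsymbol{s};q,t)\,\Phi=\Phi\,D^1_n(\boldsymbol{s};q^{-1},t^{-1})$, is false. Feeding your (correctly computed) shift ratio into the conjugation gives, for the $k$-th summand,
\[
\Phi^{-1}\,\widetilde{D^1_n}(\boldsymbol{s};q,t)\,\Phi
=\sum_{k=1}^n s_k\,
\prod_{\ell<k}\frac{1-(t/q)\,x_k/x_\ell}{1-x_k/x_\ell}\,
\prod_{\ell>k}\frac{1-(q/t)\,x_\ell/x_k}{1-x_\ell/x_k}\,T_{q^{-1},x_k}
= D^1_n(\boldsymbol{s};q^{-1},t/q),
\]
with second parameter $t/q$, not $t^{-1}$. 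Already for $n=2$ the two operators differ unless $q=t^2$, and one checks directly that the linear coefficient of $(1-x_2/x_1)\,p_2(\boldsymbol{x};\boldsymbol{s}|q^{-1},t^{-1})$ does not match that of the eigenfunction of $\widetilde{D^1_2}(\boldsymbol{s};q,t)$ for generic $q,t$. So the Vandermonde gauge $\Phi$ alone does not transport the $p_n$-equation with parameters $(q^{-1},t^{-1})$ to the $f_n$-equation with parameters $(q,t)$. The relationship the paper actually exploits is different: $\widetilde{D^1_n}(\boldsymbol{s};q,t)=\sum_k s_k\,T_{q^{-1},x_k}\circ A_k$ is the \emph{formal adjoint} of $D^1_n(\boldsymbol{s};q,t)$ (same parameters) under the constant-term pairing, which is the ``integration by parts'' used in the proof of Theorem~\ref{thm: GM} and Lemma~\ref{Lem_delta1}. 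Bridging this to the definition $f_n=\Phi\cdot p_n(\,\cdot\,|q^{-1},t^{-1})$ requires further input beyond your gauge step (essentially the $t\leftrightarrow q/t$ symmetry of $p_n$, cf.\ Fact~\ref{fact_t_q/t}); your argument as written does not supply it.
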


\begin{remark}
The function $p_n(\boldsymbol{x};\boldsymbol{s}|q,t)$ and $f_n(\boldsymbol{x};\boldsymbol{s}|q,t)$ are dual to each other 
in the sense of Lemma \ref{Lem_delta1}. 
Furthermore, it is known that the function $p_n(\boldsymbol{x};\boldsymbol{s}|q,t)$ can also be regarded 
as a holomorphic function, and its analyticity is clarified in \cite{NS}. 
We explain its analyticity in Section \ref{sec: analyticity} (Fact \ref{fact_analyticity of p}).  
\end{remark}

In the beginning of this subsection, 
we stated to give an explicit formula for generalized Macdonald functions. 
For that purpose, 
let us introduce the following notations with respect to the screened vertex operators.

\begin{notation}
For  an $N$-tuple of nonnegative integers $\vnn=(n_1,\ldots ,n_N)$, 
we write 
\begin{align}
&|\vnn|:=\sum_{s=1}^N n_s, \qquad 
\ik=\lbl{i}{k}{n}:=\sum_{s=1}^{i-1}n_s+k 
\quad (1\leq i \leq N, \, \leq k \leq n_i), \\
& t^{\pm \vnn} \cdot \vu:=(t^{\pm n_1}u_1, \ldots, t^{\pm n_{N}} u_N). 
\end{align}
\end{notation}

\begin{definition}\label{def_Vn}
Let $\vnn=(n_1,\ldots ,n_N)$ be an $N$-tuple of nonnegative integers. 
Define the operator 
$V^{(\vnn)}(x_1,\ldots,x_{|\vnn|}): \cF_{t^{-\vnn}\cdot\vu} \to \cF_{\vu}$  by 
\begin{align}
\begin{split}
    &V^{(\vnn)}(x_1,\ldots,x_{|\vnn|})=\\
    &\quad\Phi^{(0)}(x_1)\cdots \Phi^{(0)}(x_{n_{1}})
\Phi^{(1)}(x_{n_1+1})\cdots \Phi^{(1)}(x_{n_{1}+n_{2}})\cdots 
\Phi^{(N-1)}(x_{\lbl{N}{1}{n}})\cdots \Phi^{(N-1)}(x_{|\vnn|}).
\end{split}
\end{align}
We occasionally write 
$V^{(\vnn)}(x_1,\ldots,x_{|\vnn|})=\Vweight{(\vnn)}{\vu\\ t^{-\vnn}\cdot\vu}{x_1,\ldots,x_{|\vnn|}}$
in order that the spectral parameters of representations are clear. 
\end{definition}

Now, we obtain an explicit formula for the generalized Macdonald functions $\ket{Q_{\vl}}$ 
in terms of the screened vertex operators. 
The action of the screened vertex operators is controlled 
by the function $f_{n}(\boldsymbol{x};\boldsymbol{s}|q,t)$. 
Further, we can also obtain an expression of $p_n$ by using $\bra{P_{\vl}}$ and 
the operator $V^{(\vnn)}$.  
These expressions of $\ket{Q_{\vl}}$ and $p_n$ are symmetric to each other in some sense. 

\begin{theorem}\label{thm: GM}
Let  $\vnn=(n_1, \ldots, n_N)$ be an $N$-tuple of integers 
satisfying that $n_i \geq \ell(\lambda^{(i)})$ for all $i$. 
Then under the identification $s_{\nik}=q^{\lambda^{(i)}_k} t^{1-k}u_i$ ($1 \leq k < n_i, \, i=1,\ldots, N$), 
the followings hold: 
\begin{align}
\left[ \boldsymbol{x}^{-\lambda} f_{|\vnn|}(\boldsymbol{x};\boldsymbol{s}|q,q/t) \Vweight{(\vnn)}{\vu\\  t^{-\vnn}\cdot\vu }{x_1,\ldots,x_{|\vnn|}}
\ketzero\right]_{\boldsymbol{x},1}
=\cR^{\vnn}_{\boldsymbol{\lambda}}(\vu)\ket{Q_{\boldsymbol{\lambda}}}\,,\label{GM_formula}\\
 \boldsymbol{x}^{-\vl}\bra{P_{\vl}} \Vweight{(\vnn)}{\vu\\ t^{-\vnn}\cdot\vu}{x_1,\ldots,x_{|\vnn|}}\ketzero
=
\mathcal{R}^{\vnn}_{\vl} (\vu)\,p_{|\vnn|}(\boldsymbol{x};\boldsymbol{s}|q,q/t). 
\label{GM_formula 2}
\end{align}
Here, we set 
\begin{align}
\boldsymbol{x}^{-\vl}:=\prod_{i=1}^N\prod_{k=1}^{n_i}x_{\nik}^{-\lambda^{(i)}_k}. 
\end{align}
$[\cdots]_{\boldsymbol{x},1}$ means to take the constant term in $\cdots$ with respect to $\vx$, and 
$\cR^{\vnn}_{\boldsymbol{\lambda}}(\vu) \in \mathbb{C}(u_1,\ldots , u_N)$ is some coefficient.
\begin{proof}

By Lemma \ref{lem: rel of X and Phi} and the operator product (\ref{eq: Psi+ Phi0 OP}) 
we can get the relation 
\begin{align}
&\Xo(z)\Vweight{(\vnn)}{\vu\\ t^{-\vnn}\cdot\vu}{x_1,\ldots,x_{|\vnn|}}
= \prod_{k=1}^{|\vnn|} \frac{1-qz/t^2x_k}{1-z/tx_k} \cdot
V^{(\vnn)}(x_1,\ldots,x_{|\vnn|})\Xo(z) \nonumber \\
&+(1-t^{-1}) \sum_{i=1}^N\sum_{k=1}^{n_i} 
u_{i}t^{1-k}\delta(tx_{\ik}/z )
\prod_{1\leq \ell < \ik} \frac{1-qx_{\ik}/tx_{\ell}}{1-x_{\ik}/x_{\ell}} \cdot
\prod_{\ik < \ell \leq |\vnn|} \frac{1-tx_{\ell}/qx_{\ik}}{1-x_{\ell}/x_{\ik}}\nonumber \\
& \qquad \qquad \qquad \qquad \times T_{q,x_{\ik}}\left( V^{(\vnn)}(x_1,\ldots,x_{|\vnn|})\right) \Psi^+(x_{\ik}). 
\end{align}
By taking the constant term of $z$, 
we have 
\begin{equation}\label{eq: X_0 V |0>}
\begin{split}
&\Xo_0 V^{(\vnn)}(x_1,\ldots,x_{|\vnn|}) \ketzero= 
V^{(\vnn)}(x_1,\ldots,x_{|\vnn|})\ketzero \left( \sum_{i=1}^N t^{-n_{i}}u_i \right) \\
&\qquad +(1-t^{-1}) D^1_{|\vnn|}(\boldsymbol{s}\big|_{\lambda_k^{(i)} = 0};q, q/t)  V^{(\vnn)}(x_1,\ldots,x_{|\vnn|})\ketzero\,.
\end{split}
\end{equation}
Noting the integrals of the total difference vanish, integrating by parts gives the following equality.
\begin{align}
&\left[ \boldsymbol{x}^{-\boldsymbol\lambda} f(\boldsymbol{x};\boldsymbol{s}|q,t) D^1_n(\boldsymbol{s}|_{\lambda_k^{(i)} = 0};q, q/t) V^{(\vnn)}(x_1,\ldots,x_{|\vnn|})\ketzero\right]_{\boldsymbol{x},1}\nonumber \\
&\qquad =\left[\left(\widetilde{D^1_{|\vnn|}}(\boldsymbol{s}\big|_{\lambda_k^{(i)} = 0};q, q/t) \boldsymbol{x}^{-\boldsymbol\lambda} f(\boldsymbol{x};\boldsymbol{s}|q, q/t)\right) V^{(\vnn)}(x_1,\ldots,x_{|\vnn|})\ketzero\right]_{\boldsymbol{x},1} \nonumber\\
&\qquad = \left[\left(\boldsymbol{x}^{-\boldsymbol\lambda}\widetilde{D^1_{|\vnn|}}(\boldsymbol{s};q, q/t) f(\boldsymbol{x};\boldsymbol{s}|q, q/t)\right) V^{(\vnn)}(x_1,\ldots,x_{|\vnn|})\ketzero\right]_{\boldsymbol{x},1}\nonumber \\
&\qquad = \left(\sum_{i = 1}^N \sum_{k=1}^{n_i} s_{\nik} \right)\left[\boldsymbol{x}^{-\vl} f(\boldsymbol{x};\boldsymbol{s}|q, q/t) V^{(\vnn)}(x_1,\ldots,x_{|\vnn|})\ketzero\right]_{\boldsymbol{x},1}\,. 
\end{align}
Here, in the third equality, we made use of Fact \ref{fact: eigen fn of D}. 
This equality identify the LHS of (\ref{GM_formula}) with the generalized Macdonald functions up to the normalization. 

From (\ref{eq: X_0 V |0>}), we can also have 
\begin{align}
D^1_{|\vnn|} (\vs;q,q/t)\vx^{-\vl}\bra{P_{\vl}} V^{(\vnn)}(x_1,\ldots,x_{|\vnn|})\ketzero 
= (s_1+\cdots +s_{|\vnn|}) \vx^{-\vl}\bra{P_{\vl}} V^{(\vnn)}(x_1,\ldots,x_{|\vnn|})\ketzero.
\end{align}
Therefore, $\vx^{-\vl}\bra{P_{\vl}} V^{(\vnn)}\ketzero$ is an eigenfunction 
of $D^1_{|\vnn|}$. This implies (\ref{GM_formula 2}). 
Since the constant terms in $p_n$ and $f_{n}$ are $c_1(-;s_1|q,t)=1$, 
it is clear that (\ref{GM_formula}) and (\ref{GM_formula 2}) have 
the same proportionality constant $\mathcal{R}^{\vnn}_{\vl}(\vu)$. 
\end{proof}
\end{theorem}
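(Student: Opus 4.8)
The plan is to characterise both sides of \eqref{GM_formula} and \eqref{GM_formula 2} by the same eigenvalue problems and then to invoke the uniqueness statements already at hand. For \eqref{GM_formula 2} I expect $\vx^{-\vl}\bra{P_{\vl}}V^{(\vnn)}(x_1,\ldots,x_{|\vnn|})\ketzero$ to be a formal eigenfunction of the Macdonald $q$-difference operator $D^1_{|\vnn|}(\vs;q,q/t)$ with eigenvalue $s_1+\cdots+s_{|\vnn|}$, so that Fact~\ref{fact: eigen fn of D} forces it to be proportional to $p_{|\vnn|}(\vx;\vs|q,q/t)$. Dually, for \eqref{GM_formula} I expect the constant-term vector on its left-hand side to be an eigenvector of $\Xo_0$ with eigenvalue $\epsilon_{\vl}(\vu)$, so that the existence-uniqueness Fact~\ref{fact:existence thm of Gn Mac} pins it down as a scalar multiple of $\ket{Q_{\vl}}$. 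That the two scalars coincide will follow at the end from the fact that $p_n$ and $f_n$ share the constant term $c_1(-;s_1|q,t)=1$.

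The technical heart, and the step I expect to be the main obstacle, is to compute $\Xo(z)V^{(\vnn)}(x_1,\ldots,x_{|\vnn|})$ by iterating the commutation relation of Lemma~\ref{lem: rel of X and Phi} through each factor $\Phi^{(k)}$ of $V^{(\vnn)}$. For $r=1$ one has $Y^{(1)}(x)=1$, so each pass contributes the rational prefactor $\frac{1-(q/t)z/tx}{1-z/tx}$ together with a single $\delta$-function correction carrying $\Phi^{(0)}(qx)$ and $\Psi^+(x)$. I would collect these into the relation displayed just before \eqref{eq: X_0 V |0>}, extract the coefficient of $z^0$, and apply both sides to $\ketzero$. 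The delicate point is to verify that the accumulated $\delta$-function terms reassemble, with exactly the rational coefficients $\prod_{\ell<\ik}\frac{1-qx_{\ik}/tx_\ell}{1-x_{\ik}/x_\ell}\prod_{\ik<\ell}\frac{1-tx_\ell/qx_{\ik}}{1-x_\ell/x_{\ik}}$, into the operator $D^1_{|\vnn|}(\vs|_{\lambda=0};q,q/t)$ acting on the $x$-variables; the outcome is exactly \eqref{eq: X_0 V |0>}, in which $D^1$ emerges from the algebra.

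For \eqref{GM_formula 2} I would sandwich \eqref{eq: X_0 V |0>} between $\bra{P_{\vl}}$ and $\ketzero$, using $\bra{P_{\vl}}\Xo_0=\epsilon_{\vl}(\vu)\bra{P_{\vl}}$. Since $D^1_{|\vnn|}$ acts only on the $x$-variables it passes freely through the matrix element, and cancelling the diagonal piece $\sum_i t^{-n_i}u_i$ against the explicit form $\epsilon_{\vl}(\vu)=(1-t^{-1})\sum_i u_i\sum_k q^{\lambda^{(i)}_k}t^{1-k}+\sum_i u_i t^{-n_i}$ leaves $D^1_{|\vnn|}(\vs|_{\lambda=0};q,q/t)\,G=(s_1+\cdots+s_{|\vnn|})G$ with $G=\bra{P_{\vl}}V^{(\vnn)}\ketzero$. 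Multiplying by $\vx^{-\vl}$ and using $\vx^{-\vl}T_{q,x_{\nik}}\vx^{\vl}=q^{\lambda^{(i)}_k}T_{q,x_{\nik}}$ conjugates the operator so that its parameters shift from $t^{1-k}u_i$ to $s_{\nik}=q^{\lambda^{(i)}_k}t^{1-k}u_i$; this is precisely the eigenfunction equation for $p_{|\vnn|}(\vx;\vs|q,q/t)$, and the uniqueness in Fact~\ref{fact: eigen fn of D} yields \eqref{GM_formula 2}.

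For \eqref{GM_formula} I would instead pair \eqref{eq: X_0 V |0>} against $\vx^{-\vl}f_{|\vnn|}$ under the constant-term functional $[\,\cdot\,]_{\vx,1}$. Because integrals of total $q$-differences vanish, the adjoint of $T_{q,x_k}$ under $[\,\cdot\,]_{\vx,1}$ is $T_{q^{-1},x_k}$, so integration by parts turns $D^1_{|\vnn|}(\vs|_{\lambda=0};q,q/t)$ acting on $V^{(\vnn)}\ketzero$ into $\widetilde{D^1_{|\vnn|}}(\vs|_{\lambda=0};q,q/t)$ acting on $\vx^{-\vl}f_{|\vnn|}$; the conjugation by $\vx^{\vl}$ shifts the parameters to $\vs$, after which Fact~\ref{fact: eigen fn of D} gives $\widetilde{D^1_{|\vnn|}}(\vs;q,q/t)f_{|\vnn|}=(s_1+\cdots+s_{|\vnn|})f_{|\vnn|}$. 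Combined with the diagonal piece this shows the left-hand vector of \eqref{GM_formula} is an $\Xo_0$-eigenvector of eigenvalue $\epsilon_{\vl}(\vu)$; by genericity of the eigenvalues together with the triangular leading behaviour of $V^{(\vnn)}\ketzero$ it must be a multiple of $\ket{Q_{\vl}}$. A secondary care here, beyond the assembly of $D^1$ flagged above, is to confirm that the boundary terms in this integration by parts genuinely vanish for the contours fixed in Definition~\ref{Def_screendV}.
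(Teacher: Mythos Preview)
Your proposal is correct and follows essentially the same approach as the paper: both iterate Lemma~\ref{lem: rel of X and Phi} (with $r=1$, $Y^{(1)}=1$) and the OPE \eqref{eq: Psi+ Phi0 OP} through the factors of $V^{(\vnn)}$ to derive the key identity \eqref{eq: X_0 V |0>}, then use it in two ways --- sandwiching with $\bra{P_{\vl}}$ and conjugating by $\vx^{-\vl}$ to get the $D^1_{|\vnn|}(\vs;q,q/t)$-eigenfunction equation for \eqref{GM_formula 2}, and integrating by parts against $\vx^{-\vl}f_{|\vnn|}$ (passing to $\widetilde{D^1_{|\vnn|}}$) to get the $\Xo_0$-eigenvector statement for \eqref{GM_formula} --- before matching the two proportionality constants via the shared leading coefficient $c_1=1$. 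The two checkpoints you single out (reassembling the accumulated $\delta$-terms into precisely $D^1_{|\vnn|}$, and verifying that the total-$q$-difference boundary terms vanish on the chosen contours) are exactly the places where the paper is terse, so your caution there is well placed.
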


The following proposition gives the explicit form of $\cR^{\vnn}_{\boldsymbol{\lambda}}(\vu)$.
\begin{proposition}\label{Prop_R}
The coefficient $\cR^{\vnn}_{\boldsymbol{\lambda}}(\vu)$ is of the form,
\begin{equation}
\mathcal{R}^{\vnn}_{\vl}(\vu) =
\gamma^{\sum_{i=1}^N (i-1)|\lambda^{(i)}|}
\prod_{k=2}^N\prod_{i=1}^{n_k}\prod_{l=1}^{k-1} 
\frac{(t^{-n_l+i}u_l/u_k;q )_{-\lambda^{(k)}_i}}{(qt^{-n_l+i-1}u_l/u_k;q )_{-\lambda^{(k)}_i}}\,.
\end{equation}
\end{proposition}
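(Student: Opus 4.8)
The plan is to pin down the scalar $\cR^{\vnn}_{\vl}(\vu)$ by extracting a single coefficient from the identity (\ref{GM_formula 2}). Since $p_{|\vnn|}(\boldsymbol{x};\boldsymbol{s}|q,q/t)$ lies in $\mathbb{C}[[x_2/x_1,\ldots,x_{|\vnn|}/x_{|\vnn|-1}]]$ with constant term $c_1(-;s_1|q,t)=1$, taking the constant term $[\,\cdot\,]_{\boldsymbol{x},1}$ of both sides of (\ref{GM_formula 2}) gives
$$\cR^{\vnn}_{\vl}(\vu)=\left[\boldsymbol{x}^{-\vl}\bra{P_{\vl}}V^{(\vnn)}(x_1,\ldots,x_{|\vnn|})\ketzero\right]_{\boldsymbol{x},1},$$
so that $\cR^{\vnn}_{\vl}(\vu)$ is exactly the coefficient of the monomial $\boldsymbol{x}^{\vl}=\prod_{i,k}x_{\nik}^{\lambda^{(i)}_k}$ in the matrix element $\bra{P_{\vl}}V^{(\vnn)}\ketzero$. (Equivalently one may match the leading term of (\ref{GM_formula}) against $\cR^{\vnn}_{\vl}\ket{Q_{\vl}}$ via $\ket{Q_{\vl}}=\prod_i\frac{c_{\lambda^{(i)}}}{c'_{\lambda^{(i)}}}\ket{P_{\vl}}$; since the constant terms of $f_{|\vnn|}$ and $p_{|\vnn|}$ both equal $1$, the two routes yield the same constant, as already noted in the proof of Theorem \ref{thm: GM}.) In particular this makes the normalization $\cR^{\vnn}_{\emptyset}=1$ manifest, consistent with Remark \ref{rem: expand Phik}.

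Next I would compute this coefficient directly. Expanding each screened vertex operator by (\ref{eq: expand scr. vetex}), I would normal-order the product $V^{(\vnn)}\ketzero$, collecting the scalar operator-product factors from the contractions among the $\Phiz$'s and the screening currents $S^{(j)}$, and then pair the normal-ordered creation part against $\bra{P_{\vl}}$. Because the mode $a^{(i)}_{-n}$ enters each vertex operator weighted by $x^{n}$, pairing with $\bra{P_{\vl}}$ forces the $i$-th Fock space to carry Heisenberg degree $|\lambda^{(i)}|$, and extracting the coefficient of $\boldsymbol{x}^{\vl}$ then localizes the contour integrals over the screening variables $y_1,\ldots$ to definite residues. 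The sums over the integers $r_i$ in (\ref{eq: expand scr. vetex}), with weights $\frac{(tu_i/u_{k+1};q)_{r_i}}{(qu_i/u_{k+1};q)_{r_i}}$, are evaluated in closed form by Ramanujan's ${}_1\psi_1$ summation (Fact \ref{fact: ramanujan}); this is what converts the bare spectral ratios $u_l/u_k$ into the $q$-Pochhammer symbols of the answer, while the factors $\gamma^{(j-1)n}$ inside $\Phiz$ account for the prefactor $\gamma^{\sum_i(i-1)|\lambda^{(i)}|}$.

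To organize the bookkeeping I would induct on $N$, exploiting the manifest factorization of the target formula: the $k=N$ block $\prod_{i=1}^{n_N}\prod_{l=1}^{N-1}\frac{(t^{-n_l+i}u_l/u_N;q)_{-\lambda^{(N)}_i}}{(qt^{-n_l+i-1}u_l/u_N;q)_{-\lambda^{(N)}_i}}$, together with $\gamma^{(N-1)|\lambda^{(N)}|}$, should be precisely the new contribution of the last bundle of operators $\Phi^{(N-1)}$, which carry the spectral parameter $u_N$ and the $N-1$ screening currents coupling to the first $N-1$ Fock spaces, while the remaining blocks reproduce $\cR$ for the $(N-1)$-fold problem. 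The base case $N=1$ has no screening currents, so $V^{(\vnn)}=\Phiz(x_1)\cdots\Phiz(x_{n_1})$ builds the ordinary Macdonald function with $\cR=1$, matching the empty product.

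The step I expect to be the main obstacle is exactly this residue/summation bookkeeping in the inductive step: one must track how the substitution $s_{\nik}=q^{\lambda^{(i)}_k}t^{1-k}u_i$ and the accumulated $t$-powers $t^{-n_l+i}$ thread through the cascade of screening integrals, and verify that after the ${}_1\psi_1$ evaluations and the cancellation of the normalization constants $\prod_i\frac{(q;q)_{\infty}(q/t;q)_{\infty}}{(qu_i/u_{k+1};q)_{\infty}(qu_{k+1}/tu_i;q)_{\infty}}$ in $\Phi^{(k)}$, everything collapses to the stated ratio of $q$-Pochhammer symbols with no spurious factors. Producing the arguments $t^{-n_l+i}u_l/u_k$ and the negative subscripts $-\lambda^{(k)}_i$ correctly, rather than shifted by one in $t$ or $q$, is the delicate point where most of the care is required.
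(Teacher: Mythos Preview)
Your overall shape --- expand $V^{(\vnn)}\ketzero$ via (\ref{eq: expand scr. vetex}), isolate a single coefficient, and peel off one Fock component at a time --- is the paper's strategy, but two of the mechanisms you rely on are wrong and would stall the computation.

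First, the sentence ``pairing with $\bra{P_{\vl}}$ forces the $i$-th Fock space to carry Heisenberg degree $|\lambda^{(i)}|$'' is false for $N\geq 2$. By Fact~\ref{fact:existence thm of Gn Mac},
\[
\bra{P_{\vl}}=\brazero\prod_{i}P_{\lambda^{(i)}}(a^{(i)}_n)\;+\;\sum_{\vm\,\overstar{>}\,\vl}u^*_{\vl,\vm}\,\brazero\prod_{i}P_{\mu^{(i)}}(a^{(i)}_n),
\]
and the subleading terms genuinely have $|\mu^{(i)}|\neq|\lambda^{(i)}|$ in individual Fock factors, so you cannot localize the Fock degrees by pairing with $\bra{P_{\vl}}$. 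The paper never pairs with $\bra{P_{\vl}}$; it instead expands $\big[\vx^{-\vl}V^{(\vnn)}\ketzero\big]_{\vx,1}$ in the tensor-product basis $\prod_i Q_{\mu^{(i)}}(a^{(i)}_{-n})\ketzero$, reads off the coefficient of $\prod_i Q_{\lambda^{(i)}}$, and only then invokes the triangularity of Fact~\ref{fact:existence thm of Gn Mac} to identify that number with the coefficient of $\ket{Q_{\vl}}$. (The remaining terms in the expansion either have the same $(|\mu^{(1)}|,\dots,|\mu^{(N)}|)$ as $\vl$ but a different shape, or satisfy $\vm\overstar{<}\vl$; neither class contributes after the base change.)

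Second, no ${}_1\psi_1$ summation is performed in this proof; Ramanujan was already consumed in deriving (\ref{eq: expand scr. vetex}). The sums over $r_{i,m}$ are not evaluated --- they are \emph{pinned} to single values $r_{i,m}=-\lambda^{(k)}_i$. The pinning tool, which you do not mention, is the Pieri formula (Fact~\ref{fact: Pieri}). Only $S^{(N-1)}$ carries creation operators on the $N$-th Fock space, and after normal ordering the $N$-th Fock part of $:\prod_i S^{(N-1)}_{-\sfa_i}:\ketzero$ is $\gamma^{(N-1)\sum_i\sfa_i}\prod_i g^{(N)}_{\sfa_i}\ketzero$; Pieri then shows that $Q_{\lambda^{(N)}}$ appears in this product only when the $\sfa_i$ are (a rearrangement of) the parts of $\lambda^{(N)}$. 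A separate degree count forces all the OPE expansion indices to vanish, which in turn fixes $r_{i,m}=-\lambda^{(N)}_i$. The $q$-Pochhammer ratios in the answer are exactly the single surviving weights $\frac{(tu_l/u_{k};q)_{r}}{(qu_l/u_{k};q)_{r}}$ from (\ref{eq: expand scr. vetex}) evaluated at these pinned $r$'s, and the prefactor $\gamma^{(N-1)|\lambda^{(N)}|}$ comes from the argument shift $\gamma^{i-1}z$ in $S^{(i)}$, not from the annihilation part of $\Phi^{(0)}$. Iterating the same degree-plus-Pieri argument through the remaining Fock components yields the full product.
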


The proof of Proposition \ref{Prop_R} is in subsection \ref{Proof_R}.

\subsection{Analyticity of matrix elements of $V^{(\boldsymbol{n})}$}\label{sec: analyticity}
In this subsection, we describe the fact that the Macdonald functions $p_n$ can be treated as 
holomorphic functions. 
Moreover, we show the matrix elements of $V^{(\boldsymbol{n})}$ can meromorphically extend to the whole $\mathbb{C}^{|\boldsymbol{n}|-1}$, 
and thus we can deal with the variables $\boldsymbol{x}$ as those in that space, not as formal variables.

\begin{definition}
Define the projection for the canonical coordinates $\pi_n : (\mathbb{C}^*)^n \to \mathbb{C}^{n-1}$ such that for each $a = (a_1,\dots,a_n)\in (\mathbb{C}^*)^n$, 
\begin{equation}
    \pi_n(a) =  (a_2/a_1, \dots, a_n/a_{n-1}) \in \mathbb{C}^{n-1}\,.
\end{equation}
\end{definition}

\begin{notation}
In what follows in this subsection, we use the following notations.
\begin{align*}
    &z := (z_1,\dots,z_{|\vnn|-1}) = \pi_{|\vnn|}((x_1,\dots,x_{|\vnn|}))\,, \\
    &\tilde{z} := (\tilde z_1,\dots,\tilde z_{|\vnn|+|\vmm|-1}) = \pi_{|\vnn|+|\vmm|}((x_1,\dots,y_1,\dots))\,,\\
    & w := (w_1,\dots,w_{|\vnn|-1}) = \pi_{|\vnn|}((s_1,\dots,s_{|\vnn|}))\,, \\
    &\tilde{w} := (\tilde w_1,\dots,\tilde w_{|\vnn|+|\vmm|-1}) = \pi_{|\vnn|+|\vmm|}((s_1,\dots,s_{|\vnn|+|\vmm|}))\,.
\end{align*}
\end{notation}

\begin{definition}
Define the open subset $D_w \subset \mathbb{C}^{n-1}$ by
\begin{equation}
    D^n_w = \{w = (w_1,\dots,w_{n-1})\in\mathbb{C}^{n-1}\big|\,\, w_i\cdots w_{j-1} \notin q^{-\mathbb{Z}}\cup \{0\},\,\, 1\leq i< j\leq n  \},
\end{equation}
so that
\begin{equation}
       \pi^{-1}(D^n_w(r)) = \{s = (s_1,\dots,s_{n})\in(\mathbb{C}^*)^n\big|\,\, s_j/s_i \notin q^{-\mathbb{Z}},\,\, 1\leq i< j\leq n \}\,.
\end{equation}
\end{definition}
\begin{definition}
Define the subsets $U^n_z(r), B^n_z(r) \subset \mathbb{C}^{n-1}$ by
\begin{align}
    & U^n_z(r) = \{z = (z_1,\dots,z_{n-1})\in\mathbb{C}^{n-1}\big|\,\, |z_i|< r,\,\, i = 1,\dots , n-1 \}\,,\\
    &B^n_z(r) = \{z = (z_1,\dots,z_{n-1})\in\mathbb{C}^{n-1}\big|\,\, |z_i|\leq r,\,\, i = 1,\dots , n-1\}\,,
\end{align}
so that
\begin{equation}
    \pi^{-1}(B^n_z(r)) = \{x = (x_1,\dots,x_{n})\in(\mathbb{C}^*)^n\big|\,\, |x_j/x_i|\leq r^{j-i},\,\, 1\leq i< j\leq n \}\,.
\end{equation}
\end{definition}

In order to see the matrix element above can be analytically continued 
except for its singularities, we make use of the following fact, proved in \cite{NS}.
\begin{fact}[\cite{NS}]\label{fact_analyticity of p}
Let $\tau$ be a generic complex parameter. 
For $n=2,3,\dots$, we regard $p_n(\vx;\vs|q,\tau)$ as a formal power series in $z = (z_1,\dots,z_{n-1})$ with coefficients in $\mathcal{O}(D^n_w)$, the ring of holomorphic functions on $D^n_w$:
\begin{equation}\label{temp}
    p_n(\vx;\vs|q,\tau) =\sum_{\theta \in M_n} c_n(\theta;\vs|q,\tau) \prod_{1\leq i<j\leq n}(x_j/x_i)^{\theta_{i,j}}\,\, \in \mathcal{O}(D^n_w)[[z]].
\end{equation}
Set $r_0 = |q/\tau|^{\frac{n-2}{n-1}}$ if $|q/\tau|\leq 1 $, and $r_0 = |\tau/q|$ if $|q/\tau|\geq 1$. Then for any compact subset $K\subset D_w$ and for any $r< r_0$, this series \ref{temp} is absolutely convergent,
uniformly on $B^n_z(r) \times K$.
Hence $p_n(\vx;\vs|q,\tau)$ defines a holomorphic function on $U^n_z(r_0) \times D^n_w$.
\end{fact}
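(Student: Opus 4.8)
The plan is to prove the stated absolute and uniform convergence by a termwise majorization of the series \eqref{temp}, after which holomorphy on $U^n_z(r_0)\times D^n_w$ follows from the Weierstrass $M$-test. Since the coefficients $c_n(\theta;\vs|q,\tau)$ are given in closed form (the display following Definition \ref{def: p_n f_n}) as a product of $q$-Pochhammer ratios indexed by the entries $\theta_{i,j}$, I would estimate $|c_n(\theta)|$ directly; a cleaner but equivalent route to the sharp constants is to read the coefficient recurrence off the eigenvalue equation of Fact \ref{fact: eigen fn of D} and build a geometric majorant from it. In either case the aim is a bound of the shape $|c_n(\theta)|\le C(K)\prod_{i<j}\rho_{i,j}(\theta)^{\theta_{i,j}}$ on a compact $K\subset D^n_w$, which, together with $\prod_{i<j}|x_j/x_i|^{\theta_{i,j}}\le\prod_{i<j}r^{(j-i)\theta_{i,j}}$ on $B^n_z(r)$, turns the series into a multiple geometric series that converges as soon as every effective ratio is $<1$.

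I would split the factors of $c_n(\theta)$ into two kinds. The harmless ones are the ratios $(t;q)_{\theta_{i,j}}/(q;q)_{\theta_{i,j}}$ and $(ts_j/s_i;q)_{\theta}/(qs_j/s_i;q)_{\theta}$ with \emph{unshifted} arguments: since $|q|<1$ each converges to a finite limit of the form $(a;q)_\infty/(b;q)_\infty$ as the index grows, and --- crucially --- on $D^n_w$ one has $q^{-\mathbb{Z}}=q^{\mathbb{Z}}$, so the condition $s_j/s_i\notin q^{-\mathbb{Z}}$ keeps \emph{all} the Pochhammer denominators uniformly bounded away from $0$; hence these factors are uniformly bounded on $K$. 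This is exactly where the domain $D^n_w$ enters. The delicate factors are the shift-dependent ratios $(q^{P_{i,j}}ts_j/s_i;q)_{\theta_{i,j}}/(q^{P_{i,j}}qs_j/s_i;q)_{\theta_{i,j}}$ with $P_{i,j}=\sum_{a>j}(\theta_{i,a}-\theta_{j,a})$, whose shift exponent couples different entries of $\theta$. Using the asymptotics $(q^{P}c;q)_{L}/(q^{P}d;q)_{L}\sim (c/d)^{\min(L,-P)}$ when $P<0$ (obtained by splitting the product at the index where $q^{P+k}$ crosses modulus one, the remaining factors contributing only a bounded correction), each such ratio is controlled by a power of $|t/q|=|\tau/q|$, capped by the $\theta$'s invested elsewhere.

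Assembling these estimates, the explicit prefactor $(q/\tau)^{\theta_{i,j}}$ governs the growth: in the regime $|q/\tau|\ge1$ it forces decay strong enough that the binding constraint is the consecutive one ($j-i=1$), namely $|q/\tau|\,r<1$, giving $r_0=|\tau/q|$; in the regime $|q/\tau|\le1$ the prefactor decays while the negative-$P$ shift factors can push growth back up toward $(\tau/q)^{\theta}$, and balancing this growth against the degree spread over the $n-1$ consecutive ratios is what produces the $n$-dependent value $r_0=|q/\tau|^{(n-2)/(n-1)}$. The main obstacle is precisely this balancing: because the shift exponents $P_{i,j}$ are themselves linear combinations of the summation variables, the effective ratios $\rho_{i,j}$ cannot be read off entry by entry, and one must control the joint growth of the whole product uniformly in $\theta$ and in $w\in K$. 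Carrying out this uniform estimate --- and checking that the crossover between the two expressions for $r_0$ occurs exactly at $|q/\tau|=1$ --- is the technical heart of the argument, and is the content established in \cite{NS}; once the termwise bound $|c_n(\theta)\prod_{i<j}(x_j/x_i)^{\theta_{i,j}}|\le C(K)\prod_{i<j}(\rho_{i,j}r^{j-i})^{\theta_{i,j}}$ with all $\rho_{i,j}r^{j-i}<1$ is in hand on $B^n_z(r)\times K$, the Weierstrass $M$-test finishes the proof.
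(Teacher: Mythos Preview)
The paper does not prove this statement: it is recorded as a \emph{Fact} with citation \cite{NS} and no argument is given in the text. There is therefore no proof in the paper to compare your proposal against.

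Your sketch is a reasonable outline of the kind of argument one expects --- termwise majorization of the Pochhammer products, uniform control of the shift-dependent factors on compacta of $D^n_w$, and a Weierstrass $M$-test --- and you correctly identify that the delicate step is the joint estimate on the coupled factors with exponents $P_{i,j}$. But as written it is a plan rather than a proof: the key inequality producing the precise value $r_0=|q/\tau|^{(n-2)/(n-1)}$ in the regime $|q/\tau|\le 1$ is asserted (``balancing this growth against the degree spread'') without being carried out, and you yourself defer that to \cite{NS}. If your goal is to supply an independent proof, that balancing is exactly what you must make explicit; if your goal is only to match the paper, note that the paper simply quotes the result.
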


Then, we have the following theorem.
\begin{theorem}\label{thm_Analytic V}
The operator $V^{(\vnn)}(x_1,\ldots,x_{|\vnn|})$ is well-defined on $\pi^{-1}_{|\vnn|}(U^{|\vnn|}_z(r_0))$ with $r_0 = |t^{-1}|$,
in the sense that its matrix elements are the holomorphic functions there.
\end{theorem}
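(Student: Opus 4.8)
The plan is to reduce the holomorphy of the matrix elements of $V^{(\vnn)}$ to the holomorphy of the function $p_{|\vnn|}$ established in Fact \ref{fact_analyticity of p}, together with the explicit normal-ordered expansion of the screened vertex operators from Remark \ref{rem: expand Phik}. First I would fix an arbitrary matrix element; by the PBW-type basis (Fact \ref{fact: PBW basis}) and linearity it suffices to treat matrix elements of the form $\Bra{X_{\vm}} V^{(\vnn)}(x_1,\ldots,x_{|\vnn|}) \Ket{X_{\vl}}$, or equivalently, since the generalized Macdonald functions $\bra{P_{\vm}}$ span the dual space, matrix elements $\bra{P_{\vm}} V^{(\vnn)} \ketzero$ and their bra/ket analogues. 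The key observation is (\ref{GM_formula 2}) of Theorem \ref{thm: GM}: the vacuum-to-$\bra{P_{\vl}}$ matrix element equals $\boldsymbol{x}^{\vl}\,\mathcal{R}^{\vnn}_{\vl}(\vu)\,p_{|\vnn|}(\boldsymbol{x};\boldsymbol{s}|q,q/t)$ up to the monomial prefactor, so for these the analyticity is immediate from Fact \ref{fact_analyticity of p} with $\tau = q/t$, which yields exactly the domain $U^{|\vnn|}_z(r_0)$ with $r_0 = |t^{-1}| = |q/\tau|^{0}$— wait, one must check the radius: here $|q/\tau| = |q/(q/t)| = |t|$, and the stated radius $r_0 = |t^{-1}|$ must be read off correctly from the two cases in Fact \ref{fact_analyticity of p}.

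Next I would handle a general matrix element, not just the vacuum one. The strategy is to expand $V^{(\vnn)}$ using the expansion (\ref{eq: expand scr. vetex}) of each $\Phi^{(k)}$ in normal-ordered form, so that a general matrix element becomes a (contour-)integral over the screening variables $y_i$ of a product of $q$-Pochhammer coefficients times a matrix element of a single normal-ordered vertex operator $:\Phi^{(0)}\cdots S^{(k)}\cdots:$ acting between Fock states. Such a normal-ordered matrix element is manifestly a convergent power series in the ratios $x_{j}/x_{i}$ (and $y/x$) whose coefficients are rational in the $\vu$, with the poles coming only from the normal-ordering contractions of the currents $\Lambda^{(j)}$, $\eta$, $\xi$ and the screenings. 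The contours $C$ in Definition \ref{Def_screendV} are chosen precisely so that these $y$-integrals converge and produce holomorphic functions of $z$ on the prescribed polydisc, with the Ramanujan ${}_1\psi_1$ summation (Fact \ref{fact: ramanujan}) controlling the convergence region. I would argue that multiplying $\ketzero$ by a PBW-basis vector $\Ket{X_{\vl}}$ only inserts finitely many additional current modes $X^{(k)}_{-\lambda}$, whose contractions with $V^{(\vnn)}$ multiply the vacuum matrix element by rational functions of $x$ and $\vu$ that are holomorphic and nonvanishing on $U^{|\vnn|}_z(r_0)$ after removing the apparent singularities; hence the domain of holomorphy is not shrunk.

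The cleanest route, which I would adopt, is therefore the following two-step argument. Step one: establish the claim for the distinguished matrix elements appearing in Theorem \ref{thm: GM}, where holomorphy on $U^{|\vnn|}_z(r_0)$ is literally the content of Fact \ref{fact_analyticity of p} transported through the identity (\ref{GM_formula 2}), after absorbing the monomial $\boldsymbol{x}^{-\vl}$ and the scalar $\mathcal{R}^{\vnn}_{\vl}(\vu)$. Step two: propagate holomorphy to all matrix elements by using that the $\bra{P_{\vm}}$ (equivalently the $\Bra{X_{\vm}}$) together with the $\ket{Q_{\vl}}$ form dual bases, and that the generating currents $X^{(k)}(z)$ act on $V^{(\vnn)}$ through the intertwining relation of Lemma \ref{lem: rel of X and Phi}, whose rational structure function $\frac{1-(q/t)^r z/tx}{1-z/tx}$ is holomorphic in the region $|z/x|$ small; commuting the finitely many modes of $\Ket{X_{\vl}}$ past $V^{(\vnn)}$ using this relation expresses a general matrix element as a finite $\mathbb{C}(\vu)$-linear combination of derivatives/shifts of the distinguished one, all holomorphic on the same domain. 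The main obstacle, and the step I expect to require the most care, is verifying that this commutation process introduces no new singularities inside $U^{|\vnn|}_z(r_0)$: the delta-function term $\delta(tx/z)$ in Lemma \ref{lem: rel of X and Phi} and the poles of $\frac{1}{1-z/tx}$ sit on $|z| = |tx|$, i.e. precisely at $|x_j/x_i|$ of order $|t^{-1}| = r_0$, so one must check that the relevant contractions keep the matrix element holomorphic strictly inside the polydisc of radius $r_0$ and that the boundary value of the radius is dictated exactly by these contractions, matching the $r_0 = |t^{-1}|$ claimed in the statement.
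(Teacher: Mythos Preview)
Your Step 1 is correct and is essentially the starting point of the paper's proof as well: the identity (\ref{GM_formula 2}) together with Fact \ref{fact_analyticity of p} (with $\tau=q/t$, hence $|q/\tau|=|t|>1$ under the standing hypothesis $|t^{-1}|<|q^{N-2}|$, so $r_0=|\tau/q|=|t^{-1}|$) gives holomorphy of $\bra{P_{\vl}}V^{(\vnn)}\ketzero$ on the claimed domain.

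The gap is in Step 2. Commuting modes of $X^{(r)}$ through $V^{(\vnn)}$ via Lemma \ref{lem: rel of X and Phi} (or its iterated form, equation (\ref{eq: for pf XV=VX 1})) does not close on matrix elements of $V^{(\vnn)}$ itself: the delta-function contributions produce the operators $\widetilde U^{(\vnn)}_{[i,k]}$ in which one argument is shifted to $qx_{[i,k]}$. For such a shift the ratio $x_{[i,k]+1}/(qx_{[i,k]})$ exceeds the original ratio by the factor $|q|^{-1}>1$, so the shifted point need not lie in $U^{|\vnn|}_z(r_0)$, and the recursion you envision does not stay inside the class of matrix elements whose holomorphy you have already established. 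You correctly identify this as the delicate point, but no mechanism in your outline controls it; the rational prefactors being holomorphic on the open polydisc is not enough, because the operator part itself has left the domain.

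The paper sidesteps this entirely by a different device. Instead of commuting modes, it concatenates a second block $V^{(\vmm)}(y_1,\ldots,y_{|\vmm|})$ and invokes Lemma \ref{lem: <P|VV|0>}, which identifies
\[
\vx^{-\vl}\bra{P_{\vl}}\,V^{(\vnn)}(\vx)\,V^{(\vmm)}(\vy)\,\ketzero
=\mathcal R_{\vl}(\vv)\,p_{|\vnn|+|\vmm|}((\vx,\vy);\vs\mid q,q/t),
\]
so that Fact \ref{fact_analyticity of p} gives joint holomorphy in $(\vx,\vy)$. One then multiplies by $\vy^{-\vm}f_{|\vmm|}(\vy;\vs'\mid q,q/t)$ and takes the constant term in $\vy$; by (\ref{GM_formula}) this produces exactly $\bra{P_{\vl}}V^{(\vnn)}(\vx)\ket{Q_{\vm}}$. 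The constant-term extraction is a genuine contour integral over $\vy$ with the contour chosen to separate $\vx$ and $\vy$, so it preserves holomorphy in $\vx$, and since $y_1$ can be taken arbitrarily small the restriction $|y_1/x_{|\vnn|}|<r_0$ disappears in the limit, yielding holomorphy on all of $\pi_{|\vnn|}^{-1}(U^{|\vnn|}_z(r_0))$. This trick of manufacturing general ket-states by integrating out an auxiliary $V^{(\vmm)}$ is the idea your proposal is missing.
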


We prepare the following lemma. 

\begin{lemma}\label{lem: <P|VV|0>}
Let $\vu=t^{-\vnn}\cdot\vv$, $\vw=t^{-\vmm}\cdot\vu$ 
and $n_i\geq \ell(\lambda^{(i)})$ ($\forall i$). Then 
\begin{equation}\label{pr_3.19}
\vx^{-\vl}\bra{P_{\vl}} 
\Vweight{(\vnn)}{\vv \\ \vu}{x_1,\ldots,x_{|\vnn|}}
\Vweight{(\vmm)}{\vu \\ \vw}{y_1,\ldots,y_{|\vmm|}}\ketzero  
= \mathcal{R}_{\vl}(\vv) \,p_{|\vnn|+|\vmm|}((\vx,\boldsymbol y);\vs|q,q/t) 
\end{equation}
under the identification 
\begin{align}
&s_{\lbl{i}{k}{n}}=q^{\lambda^{(i)}_k} t^{1-k}v_i 
\quad (1 \leq k \leq n_i, \, i=1,\ldots, N), \label{eq: form of s 1}\\
&s_{|\vnn|+[i,k]_{\vmm}}= t^{1- n_{i} -k}v_i 
\quad (1 \leq k \leq m_i, \, i=1,\ldots, N)\label{eq: form of s 2}. 
\end{align}
\end{lemma}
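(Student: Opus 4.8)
\emph{Proof strategy.} The plan is to view the composite operator
$\Vweight{(\vnn)}{\vv\\\vu}{x_1,\ldots,x_{|\vnn|}}\Vweight{(\vmm)}{\vu\\\vw}{y_1,\ldots,y_{|\vmm|}}$
as a single ordered product of $M:=|\vnn|+|\vmm|$ screened vertex operators $\Phi^{(k)}$ and to re-run the argument that establishes \eqref{GM_formula 2} in Theorem \ref{thm: GM}. The point is that Lemma \ref{lem: rel of X and Phi} controls the commutation of $\Xo(z)=X^{(1)}(z)$ (the case $r=1$, for which $Y^{(1)}=1$) past each \emph{individual} factor $\Phi^{(k)}$, regardless of its position and regardless of the fact that the level index drops back from $N-1$ to $0$ at the junction between the two blocks. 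First I would commute $\Xo(z)$ through the entire product using Lemma \ref{lem: rel of X and Phi} together with the operator product \eqref{eq: Psi+ Phi0 OP} for $\Psi^+$, exactly as in Theorem \ref{thm: GM}, producing a multiplicative prefactor $\prod_{a}\frac{1-qz/t^2 z_a}{1-z/t z_a}$ (the product running over all $M$ arguments $(z_a)=(x_1,\ldots,x_{|\vnn|},y_1,\ldots,y_{|\vmm|})$) plus one delta-function term per factor.

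Taking the constant term in $z$ then yields the analogue of \eqref{eq: X_0 V |0>},
\begin{equation}
\begin{split}
\Xo_0\, V^{(\vnn)}(\vx)V^{(\vmm)}(\vy)\ketzero
={}&\Big(\textstyle\sum_i t^{-n_i-m_i}v_i\Big)V^{(\vnn)}(\vx)V^{(\vmm)}(\vy)\ketzero\\
&+(1-t^{-1})\,D^1_{M}\big(\vs\big|_{\lambda^{(i)}_k=0};q,q/t\big)\,V^{(\vnn)}(\vx)V^{(\vmm)}(\vy)\ketzero,
\end{split}
\end{equation}
the scalar being the $\Xo_0$-eigenvalue $\sum_i w_i=\sum_i t^{-n_i-m_i}v_i$ of the source vacuum $\ketzero\in\cF_{\vw}$. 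The crucial step is to read off the coefficients $\vs|_{\lambda=0}$ attached to each $T_{q,z_a}$: for each factor this coefficient is precisely the relevant component $u_{k+1}$ of the \emph{target} spectral parameter of that $\Phi^{(k)}$. Since successive $\Phi^{(i-1)}$'s inside a block reduce that component by a factor $t^{-1}$, the first block (whose representations have target $\vv$) contributes $t^{1-k}v_i$ at position $\nik$, while the second block, whose representations start from $\vu=t^{-\vnn}\cdot\vv$, contributes $t^{-n_i}v_i\cdot t^{1-k}=t^{1-n_i-k}v_i$ at position $|\vnn|+[i,k]_{\vmm}$. This is exactly $\vs|_{\lambda=0}$ under \eqref{eq: form of s 1}--\eqref{eq: form of s 2} with $q^{\lambda}\to1$.

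To finish I would pair with $\bra{P_{\vl}}$, use $\bra{P_{\vl}}\Xo_0=\epsilon_{\vl}(\vv)\bra{P_{\vl}}$ from Fact \ref{fact:existence thm of Gn Mac}, and dress by $\vx^{-\vl}=\prod_{i,k}x_{\nik}^{-\lambda^{(i)}_k}$. As $\vx^{-\vl}$ involves only the first-block variables, conjugating $T_{q,x_{\nik}}$ by it multiplies the coefficient at $\nik$ by $q^{\lambda^{(i)}_k}$ and leaves the second-block coefficients untouched, upgrading $\vs|_{\lambda=0}$ to the full $\vs$ of \eqref{eq: form of s 1}--\eqref{eq: form of s 2}. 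Hence $\vx^{-\vl}\bra{P_{\vl}}V^{(\vnn)}(\vx)V^{(\vmm)}(\vy)\ketzero$ is an eigenfunction of $D^1_{M}(\vs;q,q/t)$ with eigenvalue $s_1+\cdots+s_{M}$, so by the uniqueness in Fact \ref{fact: eigen fn of D} it is proportional to $p_{M}((\vx,\vy);\vs|q,q/t)$. The proportionality constant is fixed by comparing constant terms: both $p_M$ and the normalized constituents have constant term $1$, and extracting the $\theta=0$ coefficient collapses $V^{(\vmm)}(\vy)\ketzero$ to $\ketzero$ by the normalization of Remark \ref{rem: expand Phik}, so the constant reduces to the $V^{(\vnn)}$ computation and coincides with $\mathcal{R}^{\vnn}_{\vl}(\vv)$ of Theorem \ref{thm: GM}, i.e. $\mathcal{R}_{\vl}(\vv)$.

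The main obstacle I anticipate is the bookkeeping in the commutation step. One must verify that the reset of the level index at the junction produces no spurious cross-terms and that the delta-function coefficients assemble into a \emph{single} Macdonald operator $D^1_M$ with the $s$-values in the stated order (first all $V^{(\vnn)}$-parameters, then all $V^{(\vmm)}$-parameters), rather than those of a genuine $V^{(\vnn+\vmm)}$, whose operator ordering differs. Confirming that the $\Psi^+$ arising from a second-block delta-term commutes correctly back through the intervening factors, contributing exactly the $\prod_{\ell<\ik}$ and $\prod_{\ik<\ell}$ factors of $D^1_M$, is the one genuinely computational point; it is a direct extension of the corresponding step in Theorem \ref{thm: GM}.
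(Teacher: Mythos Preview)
Your approach is correct and is exactly what the paper has in mind: the paper states only that the lemma ``can be derived as a corollary of Theorem \ref{thm: GM} by noting the normalization of the screened vertex operators (Remark \ref{rem: expand Phik}),'' and you have spelled out precisely this---re-running the commutation argument of \eqref{eq: X_0 V |0>} on the composite product of $\Phi^{(k)}$'s, identifying the resulting $s$-parameters block by block, and fixing the constant $\mathcal{R}^{\vnn}_{\vl}(\vv)$ via the normalization that collapses the constant term of $V^{(\vmm)}(\vy)\ketzero$ to $\ketzero$. Your caution about the junction is well placed but not an actual obstruction: since Lemma \ref{lem: rel of X and Phi} is stated for each individual $\Phi^{(k)}$ and $\Psi^+$ commutes uniformly via \eqref{eq: Psi+ Phi0 OP}, the delta-term bookkeeping assembles into a single $D^1_M$ with the $s$-values in the composite order $(\vx,\vy)$, exactly as you claim.
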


This lemma can be derived as a corollary of Theorem \ref{thm: GM} 
by noting the normalization of the screened vertex operators (Remark \ref{rem: expand Phik}). 

\vspace{0.5eM}

\noindent 
\textit{Proof of Theorem \ref{thm_Analytic V}. }
First, by Fact \ref{fact_analyticity of p}, the left hand side of (\ref{pr_3.19}) is a holomorphic function on $U^{|\vnn|+|\vmm|}_{\tilde z}(r_0)$.
Then, through the pull-back $\pi^*_{|\vnn|+|\vmm|}$, we regard it as the holomorphic function in $\pi^{-1}_{|\vnn|+|\vmm|}(U^{|\vnn|+|\vmm|}_{\tilde z}(r_0))$.
Thus, 
once we fix $y \in (\mathbb{C}^*)^{|\vmm|} $ such that $|y_j/y_i| < r^{j-i}_0 \,\, (1\leq i< j\leq |\vmm|)$, 
it becomes a holomorphic function of $x \in \widetilde{\pi}^{-1}_{|\vnn|,y_1}(U^{|\vnn|}_{z}(r_0))$, where 
\begin{equation}
    \widetilde{\pi}^{-1}_{|\vnn|,y_1}(U^{|\vnn|}_{ z}(r_0)) := \{ (x_1,\dots,x_{|\vnn|})\in(\mathbb{C}^*)^{|\vnn|}\big|\,\, |x_j/x_i|< r_0^{j-i}\, (1\leq i< j\leq |\vnn|), \,\, |y_1/x_{|\vnn|}| < r_0 \}\,.
\end{equation}

Second, by multiplying both sides of (\ref{pr_3.19}) by $\boldsymbol y^{-\mu} f_{|\vmm|}(\boldsymbol y;\vs|q,q/t) $ and taking the constant term in $\boldsymbol y$,
the left hand side of (\ref{pr_3.19}) becomes $  \vx^{-\vl}\bra{P_{\vl}} 
\Vweight{(\vnn)}{\vv \\ t^{-\vnn}\cdot \vv}{x_1,\ldots,x_{|\vnn|}}\ket{Q_{\vm}}$ with the help of Theorem \ref{thm: GM}.
Note that, in this case, taking the constant term means the usual contour integral in $y$, and the contour is chosen to separate $x$ and $y$.
This procedure does not affect the holomorphicity in $x$.
At this point, $x^{-\vl}\bra{P_{\vl}} 
\Vweight{(\vnn)}{\vv \\ t^{-\vnn}\cdot \vv}{x_1,\ldots,x_{|\vnn|}}\ket{Q_{\vm}}$ can holomorphically extend to $\pi^{-1}_{|\vnn|}(U^{|\vnn|}_z(r_0))$, because we can take arbitrary small $y_1$. 
As a result, Theorem \ref{thm_Analytic V} is proved.
\qed

\subsection{Integral forms}
In this subsection, we define the renormalization of generalized Macdonald functions, 
which correspond to the integral form of ordinary Macdonald functions up to monomials.
In this basis, 
the main theorem for the matrix elements of a main vertex operator can be written in smarter way than 
in the basis formed by $\ket{P_{\boldsymbol{\lambda}}}$, defined above.  

\begin{definition}\label{def_NekFac}
Define
\begin{align}
N_{\lambda \mu}(u) := \prod_{(i,j)\in \lambda} \left( 1- u q^{a_{\lambda}(i,j)}t^{\ell_{\mu}(i,j)+1} \right)  \prod_{(i,j)\in \mu} \left( 1- u q^{-a_{\mu}(i,j)-1} t^{-\ell_{\lambda}(i,j)} \right).
\end{align}
$N_{\lambda \mu}(u)$ is called the Nekrasov factor. 
\end{definition}
This factor appears in the instanton partition functions in five dimensional $\mathcal{N}=1$ supersymmetric gauge theories.

\begin{definition}
Set
\begin{align}
&\mathcal{C}_{\vl}^{(+)}(\vu) := \xi_{\vl}^{(+)}(\vu) \times 
\prod_{1\leq i<j \leq N} N_{\lambda^{(i)}, \lambda^{(j)}}(qu_i/tu_j) \cdot
\prod_{k=1}^N c_{\lambda^{(k)}}, \label{eq: ren. const. of K} \\
&\mathcal{C}_{\vl} ^{(-)}(\vu) :=  \xi_{\vl}^{(-)}(\vu) \times
\prod_{1\leq i<j \leq N} N_{\lambda^{(j)}, \lambda^{(i)}}(qu_j/tu_i) \cdot
\prod_{k=1}^N c_{\lambda^{(k)}},\\
& \xi_{\vl}^{(+)}(\vu):=
\prod_{i=1}^N(-1)^{(N-i+1)|\lambda^{(i)}|}
u_{i}^{(-N+i)|\lambda^{(i)}|+\sum_{k=1}^{i}|\lambda^{(k)}|}\nonumber \\
&\qquad \quad \times \prod_{i=1}^N
(q/t)^{\left(\frac{1-i}{2}\right)|\lambda^{(i)}|} 
q^{\left(i-N\right)\left(n(\lambda^{(i)'})+|\lambda^{(i)}|\right)}
t^{\left(N-i-1\right) \left( n(\lambda^{(i)})+|\lambda^{(i)}| \right)},\\
& \xi_{\vl}^{(-)}(\vu):=\prod_{i=1}^N(-1)^{i|\lambda^{(i)}|}
u_{i}^{(-i+1)|\lambda^{(i)}|+\sum_{k=i}^{N}|\lambda^{(k)}|}\nonumber \\
&\qquad \quad \times \prod_{i=1}^N
(q/t)^{\left(\frac{i-1}{2}\right)|\lambda^{(i)}|} t^{|\lambda^{(i)}|}
q^{\left(1-i\right)\left(n(\lambda^{(i)'})+|\lambda^{(i)}|\right)}
t^{\left(i-2\right) \left( n(\lambda^{(i)})+|\lambda^{(i)}| \right)},
\end{align}
where $c_{\lambda}$ is defined in (\ref{eq: c and c'}) and 
$n(\lambda)=\sum_{j\geq1} (j-1)\lambda_j$. 

\end{definition}

\begin{definition}\label{def: K}
Define $\ket{K_{\vl}(\vu)} \in \mathcal{F}_{\vu}$ and 
$\bra{K_{\vl}(\vu)} \in \mathcal{F}^*_{\vu}$ by
\begin{align}
\ket{K_{\vl}}=\ket{K_{\vl}(\vu)}:=\mathcal{C}_{\vl}^{(+)}(\vu) \ket{P_{\vl}(\vu)}, \quad 
\bra{K_{\vl}}=\bra{K_{\vl}(\vu)}:=\mathcal{C}_{\vl} ^{(-)}(\vu)\bra{P_{\vl}(\vu)}.
\end{align}
\end{definition}

This normalization arises from the following conjecture with 
respect to the expansion coefficient in the PBW-type basis $\ket{X_{\vl}}$ and $\bra{X_{\vl}}$. 
\begin{conjecture}
\begin{align}
&\ket{K_{\boldsymbol{\lambda}}}= \sum_{\vm} \alpha^{(+)}_{\vl \vm} \ket{X_{\vm}}, \quad 
\alpha^{(+)}_{\vl, ((1^{|\vl|}), \emptyset,\ldots,\emptyset)} =1, \\
&\bra{K_{\boldsymbol{\lambda}}}= \sum_{\vm} \alpha^{(-)}_{\vl \vm} \bra{X_{\vm}}, \quad 
\alpha^{(-)}_{\vl, ((1^{|\vl|}), \emptyset,\ldots, \emptyset)} =1.
\end{align}
\end{conjecture}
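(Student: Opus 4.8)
The plan is to treat the statement as a normalization fact: the constants $\mathcal{C}^{(\pm)}_{\vl}(\vu)$ have been engineered precisely so that one distinguished expansion coefficient equals $1$, so the whole content is the evaluation of a single ``corner'' coefficient together with a vanishing statement for everything not below it. First I would record that the index $((1^{|\vl|}),\emptyset,\ldots,\emptyset)$ is an extremal (indeed minimal) element for $\overstar{>}$ among $N$-tuples of fixed total size $|\vl|$: all $|\vl|$ boxes sit in the leftmost Fock factor, as the bottom row of the example diagram illustrates. Hence, provided the transition between the PBW basis $\ket{X_{\vm}}$ and the Macdonald-product basis $\prod_i P_{\lambda^{(i)}}(a^{(i)}_{-n})\ketzero$ is triangular with respect to $\overstar{>}$ (refined by an order within each tensor factor), the coefficient $\alpha^{(+)}_{\vl,((1^{|\vl|}),\emptyset,\ldots,\emptyset)}$ is a genuine matrix-corner entry rather than the solution of a large linear system. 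Establishing this triangularity, parallel to the triangulation of $\Xo_0$ recorded above, would be the first step.

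Next I would isolate the corner coefficient. Since $\ket{K_{\vl}}=\mathcal{C}^{(+)}_{\vl}(\vu)\ket{P_{\vl}}$ and $\ket{P_{\vl}}=\prod_i(c'_{\lambda^{(i)}}/c_{\lambda^{(i)}})\ket{Q_{\vl}}$, the screened-vertex-operator formula for $\ket{Q_{\vl}}$ in Theorem~\ref{thm: GM} expresses $\ket{K_{\vl}}$ as a constant term of $\vx^{-\lambda}f_{|\vnn|}(\vx;\vs|q,q/t)\,V^{(\vnn)}(x_1,\ldots,x_{|\vnn|})\ketzero$ up to the factor $\cR^{\vnn}_{\vl}(\vu)$. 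Because $\ket{X_{((1^{|\vl|}),\emptyset,\ldots,\emptyset)}}=(X^{(1)}_{-1})^{|\vl|}\ketzero$ involves only the single current $X^{(1)}$, I would pair this expression against the appropriate dual PBW vector (equivalently, extract the coefficient of $(X^{(1)}_{-1})^{|\vl|}\ketzero$) and reduce it, using the operator products among $\Phi^{(0)}$, the screening currents, and $X^{(1)}$, to a product over the boxes of $\vl$. The base case is $N=1$: there $V^{(\vnn)}$ contains only $\Phi^{(0)}$, the statement collapses to the classical fact that the integral form $c_{\lambda}P_{\lambda}$ carries unit coefficient on the relevant extremal term, and the factor $c_{\lambda}$ in $\mathcal{C}^{(+)}$ is exactly Macdonald's integral-form normalization.

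For $N>1$ I would bootstrap from $N=1$ through the tensor/coproduct structure $\rho^{(N,0)}_{\vu}=(\rho_{u_1}\otimes\cdots\otimes\rho_{u_N})\circ\Delta^{(N)}$. The key observation is that the cross-terms $\prod_{i<j}N_{\lambda^{(i)},\lambda^{(j)}}(qu_i/tu_j)$ appearing in $\mathcal{C}^{(+)}_{\vl}(\vu)$ are precisely the contributions produced by the Drinfeld coproduct when boxes are distributed among the Fock factors; matching them against the off-diagonal pieces of $\Delta^{(N)}$ acting on $X^{(1)}_{-1}$ should reduce the $N$-fold corner coefficient to a product of single-factor ones times these Nekrasov factors. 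The dual statement for $\bra{K_{\vl}}$ then follows by the conjugate construction, using $\mathcal{C}^{(-)}_{\vl}$, the transposed Nekrasov factors in $\xi^{(-)}_{\vl}$, and the reversed role of $\overstar{>}$.

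The main obstacle I expect lies in the explicit bookkeeping of the last two steps: verifying that the product over boxes produced by the screened-vertex-operator and coproduct computation cancels exactly against the intricate prefactor $\xi^{(+)}_{\vl}(\vu)\prod_{i<j}N_{\lambda^{(i)},\lambda^{(j)}}(qu_i/tu_j)\prod_k c_{\lambda^{(k)}}$, including all powers of $u_i$, $q$, $t$ and the signs. A secondary difficulty is that the boson-to-PBW change of basis is triangular but not diagonal, so $(X^{(1)}_{-1})^{|\vl|}\ketzero$ is a nontrivial mixture of $a^{(j)}_{-n}$-monomials; controlling these contractions (through the $\eta$--$\eta$ and $\Phi^{(0)}$--$\Phi^{(0)}$ operator products) uniformly in $\vl$ is where the real work resides.
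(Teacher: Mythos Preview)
The statement you are attempting to prove is labeled a \emph{Conjecture} in the paper, and the paper does \emph{not} supply a proof. The only supporting material given is the remark that ``this normalization arises from the following conjecture'' together with explicit tables of the transition matrices $\alpha^{(\pm)}_{\vl\vm}$ for small $|\vl|$ and $N\leq 3$ in Appendix~\ref{sec: ex of K}. There is therefore no argument in the paper against which to compare your proposal.

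As a strategy toward an actual proof, your outline is reasonable in spirit but leaves the essential difficulty unresolved. The triangularity you invoke---between the PBW basis $\ket{X_{\vm}}$ and the Macdonald-product basis $\prod_i P_{\mu^{(i)}}(a^{(i)}_{-n})\ketzero$ with respect to a refinement of $\overstar{>}$---is not established anywhere in the paper; the triangulation recorded there concerns only the action of $X^{(1)}_0$ on the Macdonald-product basis, not the change of basis between the two families. Without that, the index $((1^{|\vl|}),\emptyset,\ldots,\emptyset)$ being $\overstar{>}$-minimal does not by itself reduce $\alpha^{(+)}_{\vl,((1^{|\vl|}),\emptyset,\ldots,\emptyset)}$ to a single product, and your ``extract the coefficient of $(X^{(1)}_{-1})^{|\vl|}\ketzero$'' step is exactly where the nontrivial combinatorics of the PBW-to-boson transition would enter. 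The bootstrapping via the coproduct that you sketch for $N>1$ is plausible heuristics, but the claim that the cross Nekrasov factors in $\mathcal{C}^{(+)}_{\vl}$ match the off-diagonal pieces of $\Delta^{(N)}$ acting on $X^{(1)}_{-1}$ is itself the content to be proved, not an available tool. In short, your plan correctly identifies the shape of what needs to happen but does not supply the mechanism; the paper's authors evidently did not find one either, which is why the statement remains a conjecture.
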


In Appendix \ref{sec: ex of K}, we give examples of the transition matrices $\alpha^{(\pm)}_{\vl \vm}$. 

\begin{definition}[(Taki's) flaming factors]\label{def: flaming factor}
Define 
\begin{align}
&f_{\lambda}:=(-1)^{|\lambda|}q^{n(\lambda')+|\lambda|/2}t^{-n(\lambda)-|\lambda|/2},\\
&g_{\lambda}:=q^{n(\lambda')}t^{-n(\lambda)}.
\end{align}
\end{definition}

\begin{proposition}
\begin{align}
\braket{K_{\vl}|K_{\vl}}=
&\left((-1)^{N}\gamma^2 e_N(\vu)\right)^{|\vl|}
\prod_{i=1}^N 
\left( u_i^{|\lambda^{(i)}|}
\gamma^{-2|\lambda^{(i)}|}
g_{\lambda^{(i)}}
\right)^{\left(2-N\right)}
\cdot 
\prod_{i,j=1}^N N_{\lambda^{(i)}, \lambda^{(j)}}(qu_i/tu_j), 
\end{align}
where we put $e_{N}(\vu):=\prod_{i=1}^Nu_i$.
\end{proposition}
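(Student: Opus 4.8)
The plan is to reduce the computation to a scalar multiple of the already-known inner product $\braket{P_{\vl}|P_{\vm}}$ and then to match monomials. By Definition \ref{def: K} we have $\ket{K_{\vl}}=\mathcal{C}^{(+)}_{\vl}(\vu)\ket{P_{\vl}}$ and $\bra{K_{\vl}}=\mathcal{C}^{(-)}_{\vl}(\vu)\bra{P_{\vl}}$, so using the orthogonality relation $\braket{P_{\vl}|P_{\vm}}=\prod_{i=1}^N\frac{c'_{\lambda^{(i)}}}{c_{\lambda^{(i)}}}\delta_{\vl,\vm}$ I would first write
\begin{equation}
\braket{K_{\vl}|K_{\vl}}=\mathcal{C}^{(+)}_{\vl}(\vu)\,\mathcal{C}^{(-)}_{\vl}(\vu)\prod_{i=1}^N\frac{c'_{\lambda^{(i)}}}{c_{\lambda^{(i)}}}.
\end{equation}
Since each of $\mathcal{C}^{(\pm)}_{\vl}$ carries a factor $\prod_k c_{\lambda^{(k)}}$, the product of the three $c$-type factors collapses to $\prod_k c_{\lambda^{(k)}}c'_{\lambda^{(k)}}$, and the two off-diagonal Nekrasov products combine as $\prod_{1\le i<j\le N}N_{\lambda^{(i)},\lambda^{(j)}}(qu_i/tu_j)\,N_{\lambda^{(j)},\lambda^{(i)}}(qu_j/tu_i)=\prod_{i\ne j}N_{\lambda^{(i)},\lambda^{(j)}}(qu_i/tu_j)$.

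The key structural input is the self-duality of the Nekrasov factor at $u=q/t$. Setting $\mu=\lambda$ and $u=q/t$ in Definition \ref{def_NekFac}, the first product becomes $\prod_{(i,j)\in\lambda}(1-q^{a_{\lambda}(i,j)+1}t^{\ell_{\lambda}(i,j)})=c'_{\lambda}$, while each factor in the second product satisfies $1-q^{-a_{\lambda}(i,j)}t^{-\ell_{\lambda}(i,j)-1}=-q^{-a_{\lambda}(i,j)}t^{-\ell_{\lambda}(i,j)-1}\bigl(1-q^{a_{\lambda}(i,j)}t^{\ell_{\lambda}(i,j)+1}\bigr)$. Using $\sum_{(i,j)\in\lambda}a_{\lambda}(i,j)=n(\lambda')$ and $\sum_{(i,j)\in\lambda}\ell_{\lambda}(i,j)=n(\lambda)$, this yields
\begin{equation}
N_{\lambda,\lambda}(q/t)=(-1)^{|\lambda|}q^{-n(\lambda')}t^{-n(\lambda)-|\lambda|}\,c_{\lambda}c'_{\lambda},
\end{equation}
equivalently $c_{\lambda}c'_{\lambda}=(-1)^{|\lambda|}q^{n(\lambda')}t^{n(\lambda)+|\lambda|}N_{\lambda,\lambda}(q/t)$. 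Substituting this for each $k$ promotes $\prod_k c_{\lambda^{(k)}}c'_{\lambda^{(k)}}$ into the diagonal Nekrasov factors $\prod_k N_{\lambda^{(k)},\lambda^{(k)}}(q/t)$ times a monomial in $q,t$ and $(-1)$. Together with the off-diagonal product above, the diagonal factors complete the full double product $\prod_{i,j=1}^N N_{\lambda^{(i)},\lambda^{(j)}}(qu_i/tu_j)$ that appears in the claim.

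At this stage the statement reduces to a single monomial identity: that $\xi^{(+)}_{\vl}(\vu)\,\xi^{(-)}_{\vl}(\vu)$ times the $(-1),q,t$-monomial produced by the self-duality identity equals $\bigl((-1)^N\gamma^2 e_N(\vu)\bigr)^{|\vl|}\prod_{i=1}^N\bigl(u_i^{|\lambda^{(i)}|}\gamma^{-2|\lambda^{(i)}|}g_{\lambda^{(i)}}\bigr)^{2-N}$, where $\gamma^2=t/q$ and $g_{\lambda}=q^{n(\lambda')}t^{-n(\lambda)}$. I would verify this by matching the exponents of $(-1)$, of each $u_i$, of $q$, and of $t$ separately. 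The sign and the $u_i$-exponents match immediately: the $u_i$-exponent of $\xi^{(+)}\xi^{(-)}$ works out to $(2-N)|\lambda^{(i)}|+|\vl|$ after noting $\sum_{k=1}^{i}|\lambda^{(k)}|+\sum_{k=i}^{N}|\lambda^{(k)}|=|\vl|+|\lambda^{(i)}|$, which is exactly the target exponent coming from $e_N(\vu)^{|\vl|}$ and $\bigl(u_i^{|\lambda^{(i)}|}\bigr)^{2-N}$. The $q$- and $t$-exponents of $\xi^{(+)}\xi^{(-)}$ together with the self-duality monomial reduce, after the $(q/t)^{\pm(i-1)/2}$ pieces of $\xi^{(\pm)}$ cancel, to the totals $(2-N)\sum_i n(\lambda^{(i)'})+(1-N)|\vl|$ and $(N-2)\sum_i n(\lambda^{(i)})+(N-1)|\vl|$ respectively, which coincide with the $q$- and $t$-exponents of the target. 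The only real obstacle is organizing this last bookkeeping cleanly; there is no analytic or structural difficulty, since the orthogonality of the $\ket{P_{\vl}}$ and the self-duality identity do all the conceptual work.
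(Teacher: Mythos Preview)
Your proof is correct and follows essentially the same approach as the paper: write $\braket{K_{\vl}|K_{\vl}}=\mathcal{C}^{(+)}_{\vl}\mathcal{C}^{(-)}_{\vl}\prod_i c'_{\lambda^{(i)}}/c_{\lambda^{(i)}}$, convert the resulting $\prod_i c_{\lambda^{(i)}}c'_{\lambda^{(i)}}$ into diagonal Nekrasov factors, combine the off-diagonal ones into $\prod_{i\neq j}$, and match monomials. The only cosmetic difference is that the paper records the identity $c_\lambda c'_\lambda=(-1)^{|\lambda|}q^{n(\lambda')+|\lambda|}t^{n(\lambda)}N_{\lambda,\lambda}(1)$ together with the reflection formula $N_{\lambda,\mu}(\gamma^{-1}x)=N_{\mu,\lambda}(\gamma^{-1}x^{-1})x^{|\lambda|+|\mu|}f_\lambda/f_\mu$, whereas you compute directly at the argument $q/t$ that appears in the target; these are equivalent inputs and your route is slightly more streamlined.
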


\begin{proof} 
This follows from 
\begin{align}
&c_{\lambda}c'_{\lambda}=(-1)^{(|\lambda|)} q^{n(\lambda') +|\lambda|}t^{n(\lambda)}N_{\lambda,\lambda}(1), \\ 
&N_{\lambda,\mu}(\gamma^{-1}x)=N_{\mu, \lambda}(\gamma^{-1}x^{-1})x^{|\lambda|+|\mu|}\frac{f_{\lambda}}{f_{\mu}}. 
\end{align}
\end{proof}

\section{Proof of Main Theorem}\label{Sec_matrix element}
This section is organized as follows.
In Section \ref{sec_4.1}, before going into the details of the proof of the main theorem, 
we recall what is  our main claim in this paper. 
In Section \ref{sec_4.2}, we derive a transformation formula for the multiple basic hypergeometric series, which is one of the key ingredients
in the proof in Section \ref{sec_4.3} of the main theorem.

\subsection{Statement of main theorem}\label{sec_4.1}
First, we define the vertex operator $\cV(x)$ by relations with the algebra $\WA$. 
We call $\cV(x)$ the Mukad\'{e} operator as explained in Introduction.

\begin{definition}\label{def_V_gen}
Define the linear operator $\cV(x)=\cVweight{\vv \\ \vu}{x} : \cF_{\boldsymbol{u}} \to \cF_{\boldsymbol{v}}$ 
by the following relations: 
\begin{equation}\label{def_V}
\left(1-\frac{x}{z}\right)X^{(i)}(z) \cV(x) 
=\left(1- (t/q)^i\frac{x}{z}\right)\cV(x) X^{(i)}(z), 
\qquad i \in \{1,2,\dots,N\}\,
\end{equation}
and $\bra{\boldsymbol{0}}\cV(x) \ket{\boldsymbol{0}}=1$. 
\end{definition}

Then we have the following proposition.

\begin{proposition}\label{prop: uniqueness of V}
The $\cV(x)$ exists uniquely.
\begin{proof}
If the operator $\cV(x)$ exists, 
the uniqueness is clear by definition of $\cV(w)$ and the fact that 
the vectors $\ket{X_{\vl}}$ form a basis (Fact \ref{fact: PBW basis}). 
(See also Appendix \ref{sec: Ex of Mat el}) 
The existence of $\cV(w)$ will be shown late in the Proposition \ref{prop: existance_V}.
\end{proof}
\end{proposition}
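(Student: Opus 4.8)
The plan is to prove Proposition \ref{prop: uniqueness of V} in two separate parts, \emph{uniqueness} and \emph{existence}, with the understanding that existence is deferred to Proposition \ref{prop: existance_V} via the explicit topological-vertex construction. For the uniqueness half, which is what I would actually write out here, the strategy is to show that the defining relations (\ref{def_V}) together with the normalization $\bra{\boldsymbol 0}\cV(x)\ket{\boldsymbol 0}=1$ pin down every matrix element of $\cV(x)$ uniquely. Since Fact \ref{fact: PBW basis} tells us that the vectors $\Ket{X_{\vl}}$ form a PBW-type basis of $\cF_{\vu}$ and $\Bra{X_{\vl}}$ form one of $\cF^*_{\vv}$ (under the genericity hypotheses on the spectral parameters), it suffices to determine all the numbers $\Bra{X_{\vm}(\vv)}\cV(x)\Ket{X_{\vl}(\vu)}$.

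First I would set up the recursion that the commutation relations induce on these matrix elements. Expanding (\ref{def_V}) in powers of $z$ gives, for each $i$ and each Fourier mode, a relation of the schematic form
\begin{equation}
X^{(i)}_{n}\cV(x) - (t/q)^i x\, X^{(i)}_{n+1}\cV(x) = \cV(x) X^{(i)}_{n} - x\,\cV(x) X^{(i)}_{n+1},
\end{equation}
equivalently $X^{(i)}_{n}\cV(x)-\cV(x)X^{(i)}_n = x\bigl((t/q)^i X^{(i)}_{n+1}\cV(x)-\cV(x)X^{(i)}_{n+1}\bigr)$. The point is that each such relation lets me move a creation/annihilation generator $X^{(i)}_{n}$ from one side of $\cV(x)$ to the other at the cost of introducing generators of strictly controlled degree. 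Concretely, I would compute $\Bra{X_{\vm}}\cV(x)\Ket{X_{\vl}}$ by peeling off the outermost generator in $\Ket{X_{\vl}}$ (say a factor $X^{(i)}_{-\lambda^{(i)}_k}$ acting on the right) using these relations: commuting it through $\cV(x)$ turns it into a combination of $X^{(i)}$-generators acting on the bra side, which can then be absorbed into $\Bra{X_{\vm}}$ or evaluated against the PBW basis. This reduces the total degree $|\vl|$ by one, so by induction on $|\vl|$ (with the base case $|\vl|=|\vm|=0$ fixed by the normalization $\bra{\boldsymbol 0}\cV(x)\ket{\boldsymbol 0}=1$) every matrix element is determined. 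Two operators satisfying the same relations and normalization therefore have identical matrix elements in a basis, hence are equal.

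The main obstacle I anticipate is verifying that this recursion genuinely \emph{terminates} and never becomes circular, i.e.\ that the degree bookkeeping is consistent. The subtlety is that each application of the commutation relation produces, via the factor $X^{(i)}_{n+1}$ on the right-hand side, a generator of \emph{higher} mode index, so one must check that repeatedly moving generators across $\cV(x)$ strictly decreases a well-chosen partial-order statistic (here the natural candidate is $|\vl|$, the total number of boxes, together with the $\overstar{>}$-ordering of Definition \ref{def:ordering1}) rather than looping. Making this precise requires the genericity assumptions on $q,t,\vu,\vv$ that guarantee the $\Ket{X_{\vl}}$ are linearly independent, so that the linear system for the matrix elements is triangular and invertible. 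I would therefore emphasize that the uniqueness argument is purely formal given Fact \ref{fact: PBW basis}, and explicitly flag that existence is \emph{not} established by this argument; for existence one must exhibit $\cV(x)$ concretely, which is accomplished in Proposition \ref{prop: existance_V} by fusing trivalent intertwining operators of the DIM algebra into the Mukad\'e configuration. Thus the proof closes by combining the formal uniqueness shown here with the deferred existence statement.
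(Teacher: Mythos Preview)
Your approach is essentially the paper's own: uniqueness via a recursion on the PBW basis $\Ket{X_{\vl}}$ using the defining relations, with existence deferred to Proposition~\ref{prop: existance_V}. The paper's Appendix~\ref{sec: Ex of Mat el} spells out exactly the recursion you describe.

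One concrete correction: your mode expansion has both the index shift and the $(t/q)^i$ on the wrong side. From $(1-x/z)X^{(i)}(z)\cV(x)=(1-(t/q)^i x/z)\cV(x)X^{(i)}(z)$, the coefficient of $z^{-n}$ reads
\[
X^{(i)}_{n}\cV(x)-x\,X^{(i)}_{n-1}\cV(x)=\cV(x)X^{(i)}_{n}-(t/q)^i x\,\cV(x)X^{(i)}_{n-1},
\]
i.e.\ the shift goes to $n-1$, not $n+1$. With the correct relation the termination you worry about is actually straightforward: peeling a positive mode $X^{(j)}_{\lambda^{(j)}_1}$ off $\bra{X_{\vl}}$ produces one term with the bra replaced by $\bra{X_{\vl'}}$, $|\vl'|=|\vl|-1$ (the $X^{(j)}_{\lambda^{(j)}_1-1}$ piece), and another term where $X^{(j)}_{\lambda^{(j)}_1}$ and $X^{(j)}_{\lambda^{(j)}_1-1}$ act on the ket, lowering $|\vm|$. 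So the recursion monotonically decreases $|\vl|+|\vm|$ and reaches the vacuum normalization without needing the partial order $\overstar{>}$.
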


\begin{remark}
The realization of $\cV(x)$ given in Section \ref{Sec_Sdual}
shows that  $\cV(x)$
satisfies the relation with the algebra $\mathcal{U}$ 
introduced in \cite[Definition 3.12]{AFHKSY1} 
after some simple renormalization. 
However, if we adopt its relation as definition of the vertex operator, 
the uniqueness is nontrivial.  
\end{remark}

In Appendix \ref{sec: Ex of Mat el}, 
we demonstrate the calculation of the matrix elements of $\cV(x)$ 
by using the defining relation (\ref{def_V}). 
The matrix elements with respect to generalized Macdonald functions are factorized 
and reproduce the Nekrasov factor: 

\begin{theorem}\label{thm: matrix elements of V}
\begin{align}
\bra{K_{\boldsymbol{\lambda}}(\vv)}\cV (x)\ket{K_{\boldsymbol{\mu}}(\vu)}
=&
\frac{\left((-\gamma^2)^N e_{N}(\vu)x \right)^{|\vl|}}
{\left( \gamma^2 x \right)^{|\vm|}}
\prod_{i=1}^N
\frac{u_i^{|\mu^{(i)}|}g_{\mu^{(i)}}}
{\left(v_i^{|\lambda^{(i)}|}g_{\lambda^{(i)}}\right)^{N-1}} \cdot
\prod_{i,j=1}^N N_{\lambda^{(i)},\mu^{(j)}}(qv_i/tu_j). \label{eq: matrix element} 
\end{align}
\end{theorem}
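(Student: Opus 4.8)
The plan is to reduce the matrix element $\bra{K_{\vl}(\vv)}\cV(x)\ket{K_{\vm}(\vu)}$ to a single multiple basic hypergeometric series of Kajihara--Noumi type, and then collapse that series to the advertised product of Nekrasov factors by an Euler transformation. First I would rewrite both generalized Macdonald functions through the explicit screened vertex operator formula of Theorem \ref{thm: GM}. Thus $\ket{K_{\vm}(\vu)}$ becomes, up to the renormalization $\mathcal{C}^{(+)}_{\vm}(\vu)$ of Definition \ref{def: K} and the constant $\cR^{\vmm}_{\vm}$ of Proposition \ref{Prop_R}, a constant-term extraction of $\vy^{-\vm} f_{|\vmm|}\, V^{(\vmm)}\ketzero$, while $\bra{K_{\vl}(\vv)}$ is pinned down by its pairing $\bra{P_{\vl}(\vv)} V^{(\vnn)}\ketzero = \cR^{\vnn}_{\vl}\, p_{|\vnn|}$ with the screened vertex operators. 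Using the realization of $\cV(x)$ as a fusion of the screened vertex operators $\Phi^{(k)}$ (Definition \ref{Def_screendV}), which is consistent with its defining relations and is legitimate by the uniqueness of Proposition \ref{prop: uniqueness of V}, the composite $\bra{P_{\vl}(\vv)}\cV(x)\, V^{(\vmm)}\ketzero$ is a matrix element of one long composition of screened vertex operators. By Lemma \ref{lem: <P|VV|0>}, applied to the two consecutive blocks with the parameters $\vs$ matched as in (\ref{eq: form of s 1})--(\ref{eq: form of s 2}), this matrix element is precisely a single series $p_{|\vnn|+|\vmm|}$ whose parameters encode $\vl$ through $\vv$ and $\vm$ through $\vu$, and whose variables include the insertion point $x$ of $\cV$.

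Proving (\ref{eq: matrix element}) then becomes a transformation problem: the series so produced must equal the finite product $\prod_{i,j=1}^N N_{\lambda^{(i)},\mu^{(j)}}(qv_i/tu_j)$ of Definition \ref{def_NekFac}, up to the explicit scalar prefactor. I would carry this out with the Euler transformation formula for Kajihara--Noumi's multiple basic hypergeometric series derived in Section \ref{sec_4.2}: the transformation converts the flowing summation defining $p_{|\vnn|+|\vmm|}$ into a terminating one, and under the substitution $s_{[i,k]}=q^{\lambda_k^{(i)}}t^{1-k}v_i$ (and the analogous substitution for the second block) the resulting finite product reorganizes into the Nekrasov factors. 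Throughout, the analyticity established in Theorem \ref{thm_Analytic V}, together with Fact \ref{fact_analyticity of p}, is essential: it allows me to treat the formal series as genuine holomorphic functions on a common domain, to specialize $\vs$ at the points where ordinary partitions appear, and to take the required small-variable limits without any convergence concern.

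Finally I would assemble the scalar prefactor. This means combining the renormalization constants $\mathcal{C}^{(\pm)}_{\vl}$, the proportionality factors $\cR^{\vnn}_{\vl}$ of Proposition \ref{Prop_R}, and the monomial normalizations $\vx^{-\vl}$ and $\vy^{-\vm}$, and then simplifying with the elementary identities $c_{\lambda}c'_{\lambda}=(-1)^{|\lambda|}q^{n(\lambda')+|\lambda|}t^{n(\lambda)}N_{\lambda,\lambda}(1)$ and $N_{\lambda,\mu}(\gamma^{-1}x)=N_{\mu,\lambda}(\gamma^{-1}x^{-1})\,x^{|\lambda|+|\mu|}f_{\lambda}/f_{\mu}$, until the flaming factors $g_{\lambda}$ of Definition \ref{def: flaming factor} and the powers of $\gamma$, $e_N(\vu)$, $u_i$, $v_i$ and $x$ reproduce exactly the right-hand side of (\ref{eq: matrix element}). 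Uniqueness of $\cV(x)$ guarantees that the quantity so computed is indeed the genuine matrix element.

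The hard part will be the transformation step: arranging the correspondence so that the matrix element lands on precisely the instance of the Kajihara--Noumi series to which the Euler transformation of Section \ref{sec_4.2} applies, and verifying that the transformed terminating sum reassembles into exactly $\prod_{i,j}N_{\lambda^{(i)},\mu^{(j)}}(qv_i/tu_j)$ rather than a twisted variant. The prefactor bookkeeping of the last paragraph is lengthy but routine by comparison.
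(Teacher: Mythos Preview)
Your overall strategy matches the paper's, but two concrete mechanisms that make it work are missing from your sketch.

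First, you say you will realize $\cV(x)$ ``as a fusion of the screened vertex operators $\Phi^{(k)}$'', but you never specify how. The paper does this only on the degenerate locus $\vv = t^{\vnn}\cdot\vu$ (with $n_i\ge\ell(\lambda^{(i)})$), and the realization is the \emph{principal specialization}
\[
\widetilde V^{(\vnn)}(x)\;=\;\lim_{x_i\to t^{|\vnn|-i}x}\;\Bigl(\textstyle\prod_{i<j}\frac{(tx_j/x_i;q)_\infty}{(qx_j/tx_i;q)_\infty}\Bigr)\,V^{(\vnn)}(x_1,\dots,x_{|\vnn|})\,A^{-1}_{(|\vnn|)}(x).
\]
This limit is precisely what makes the transformation formula of Section~\ref{sec_4.2} applicable: Proposition~\ref{prop: trf formula} assumes $x_i = x\,t^{n-i}$ on the first block of variables. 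Without the principal specialization you have a function $p_{|\vnn|+|\vmm|}((\vx,\vy);\vs)$ depending on all the $x_i$'s separately, and the Euler transformation does not collapse it. The general $(\vu,\vv)$ is then recovered by noting that both sides of \eqref{eq: matrix element} are rational in the spectral parameters, so equality for all $v_i=t^{n_i}u_i$ with large $n_i$ suffices by analytic continuation. This reduction step is absent from your plan.

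Second, after Proposition~\ref{prop: trf formula} you do not obtain a terminating product directly. The transformation produces
\[
\sum_{\nu\in\mathbb Z_{\ge0}^{|\vmm|}}\mathsf N^{|\vnn|,|\vmm|}_{\nu}(\vs)\;p_{|\vmm|}\bigl(\vy;(q^{\nu_i}s_{|\vnn|+i})\bigr)\prod_k(ty_k/x)^{\nu_k},
\]
still an infinite sum over $\nu$. A single term $\nu=[\vm]^{\vmm}$ is isolated only after pairing against $\vy^{-\vm}f_{|\vmm|}(\vy;\vs'|q,q/t)$ and invoking the duality of Lemma~\ref{Lem_delta1}. The identification of the surviving coefficient $\mathsf N^{|\vnn|,|\vmm|}_{[\vm]^{\vmm}}$ with the product $\prod_{i,j}N_{\lambda^{(i)},\mu^{(j)}}$ is then a separate inductive calculation (Proposition~\ref{prop: step3}). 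Your sentence ``the transformation converts the flowing summation \dots\ into a terminating one, and \dots\ the resulting finite product reorganizes into the Nekrasov factors'' compresses these three distinct steps into one and misstates what the Euler transformation actually does.
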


As a direct result of this theorem, we obtain an algebraic description of the 5D (K-theoretic) AGT correspondence \cite{AGT, AY1, AY2}. 
Theorem \ref{thm: matrix elements of V} makes us possible to calculate the multi-point functions. 
In particular, the two-point function, 
which can be regarded as $q$-analogue of the four-point conformal block in 2D CFT
\footnote{
As $X^{(1)}_0 \ketzero=(u_1+\cdots+u_N)\ketzero$, 
the vectors $\brazero$ and $\ketzero$ are not the ordinary vacuum states for the algebra $\WA$ 
but highest weight vectors of highest weight $\vu$. 
Hence, 
the function $\brazero \cV (x_1)\cdots \cV (x_n) \ketzero$ 
can be regarded as $q$-analogue of the $(n+2)$-point conformal block.}, 
corresponds to the Nekrasov partition function 
of the 5-dimensional $\mathcal{N}=1$ $U(N)$ gauge theory 
with $2N$ fundamental matters \cite{AK1, AK2}. (See also Remark 3.14, 3.15 in \cite{AFHKSY1} for more detail.): 
\begin{align}
\brazero \cVweight{\vw \\ \vv}{z_1}\cVweight{\vv \\ \vu}{z_2}\ketzero
&=\sum_{\vl}\frac{\brazero\cVweight{\vw \\ \vv}{z_1}\ket{K_{\vl}} \bra{K_{\vl}} \cVweight{\vv \\ \vu}{z_2} \ketzero}
{\braket{K_{\vl}|K_{\vl}}}\nonumber \\ 
&=\sum_{\vl}\left( \frac{e_N(\vu)z_2}{e_N(\vv)z_1 }\right)^{|\vl|}
\prod_{i,j=1}^N 
\frac{N_{\emptyset, \lambda^{(j)}} (qw_i/tv_j) N_{\lambda^{(i)}, \emptyset} (qv_i/tu_j)}
{N_{\lambda^{(i)}, \lambda^{(j)}} (qv_i/tv_j)}. 
\end{align}

In what follows in this section, we prove Theorem \ref{thm: matrix elements of V}.

\subsection{Transformation formula}\label{sec_4.2}
\label{sec: trf foumula}

First of all, 
we show a certain transformation formula 
(Proposition \ref{prop: trf formula}) 
for multiple series with respect to Macdonald functions $p_n(x;s|q,t)$, 
which plays an essential role in our proof 
of the main theorem. 
That formula is based on the Euler transformation 
of Kajihara and Noumi's multiple basic hypergeometric series, 
which is defined as follows.

\begin{definition}\label{def_phi_m_n}
Define 
\begin{align}
\Mphiu{m,n}
{a_1,\ldots,a_m\\ x_1,\ldots,x_m}
{b_1,\ldots,b_n\\ c_1,\ldots,c_n}{u} =
\sum_{\mu \in \mathbb{Z}_{\geq 0}^m} 
u^{\sum_{i=1}^m \mu_i}
\Mphi{m,n}{\mu}
{a_1,\ldots,a_m\\ x_1,\ldots,x_m}
{b_1y_1,\ldots,b_ny_n\\ c_1y_1,\ldots,c_ny_n}
\end{align}
with 
\begin{align}
&\Mphi{m,n}{\mu}
{a_1,\ldots,a_m\\ x_1,\ldots,x_m}
{b_1,\ldots,b_n\\ c_1,\ldots,c_n} =
\prod_{i<j}{}
\frac{q^{\mu_i}x_i-q^{\mu_j}x_j}{x_i-x_j}\ 
\prod_{i,j}{}
\frac{(a_jx_i/x_j;q)_{\mu_i}}
{(qx_i/x_j;q)_{\mu_i}}\ 
\prod_{i,k}{}
\frac{(b_kx_i;q)_{\mu_i}}
{(c_kx_i;q)_{\mu_i}}. 
\end{align}
\end{definition}

Let us also mention that the elliptic analogue is studied in \cite{KN}. 
Kajihara and Noumi gave the following $q$-Euler transformation formula. 

\begin{fact}[\cite{K,KN}] \label{fact: Kajihara Noumi}
\begin{align}
&
\Mphiu{m,n}
{a_1,\ldots,a_m\\ x_1,\ldots,x_m}
{b_1y_1,\ldots,b_ny_n\\ c_1y_1,\ldots,c_ny_n}{u}  \nonumber \\
&=\frac{(a_1\cdots a_m b_1\cdots b_n u / c^n; q)_\infty}
{(u; q)_\infty} \\ 
& \quad \times 
\Mphiu{n,m}
{c/b_1&,\ldots,&c/b_n\\ y_1&,\ldots,&y_n}
{cx_1/a_1,&\ldots,& cx_m/a_m\\c x_1,&\ldots,&c x_m}
{a_1\cdots a_m b_1\cdots b_n u / c^n}. \nonumber
\end{align}
\end{fact}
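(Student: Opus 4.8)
The statement is the type-$A$ multiple $q$-Euler transformation, and the duality it displays --- an $m$-fold series on the left turning into an $n$-fold series on the right, with the roles of $(a_i,x_i)$ and $(c/b_k,y_k)$ interchanged --- strongly suggests that both sides are two expansions of a single symmetric object. The plan is therefore to produce a generating identity that specializes to each side, rather than to transform one side into the other term by term. Before doing so I would pin down the base case $m=n=1$: there the Vandermonde-type prefactor $\prod_{i<j}(q^{\mu_i}x_i-q^{\mu_j}x_j)/(x_i-x_j)$ is empty, both $\phi$-series collapse to ordinary ${}_2\phi_1$'s, and the claimed identity is exactly the $q$-analogue of Euler's transformation for ${}_2\phi_1$, itself a consequence of Heine's transformation and the $q$-binomial theorem. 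Checking this degeneration fixes the normalization and, in particular, shows that the prefactor $(a_1\cdots a_m b_1\cdots b_n u/c^n;q)_\infty/(u;q)_\infty$ is forced.

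For the general case my primary route follows Kajihara and Noumi's ``approach from the Cauchy determinant.'' I would introduce an auxiliary kernel built from the Cauchy determinant identity $\det\!\big(1/(1-x_i z_k)\big)=\prod_{i<j}(x_i-x_j)\prod_{k<l}(z_k-z_l)\big/\prod_{i,k}(1-x_i z_k)$ in the two groups of variables, insert the $q$-shifted parameters $a_i,b_k,c$, and expand the entries as geometric/$q$-series. Summing the expansion by grouping contributions according to the exponent multi-index produces a multiple series; carrying out the summation of the free ``diagonal'' index by the $q$-binomial theorem yields precisely the infinite-product prefactor, and the residual sum is $\Mphiu{m,n}{\cdots}{\cdots}{u}$. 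Performing the complementary expansion --- interchanging which group of variables is summed first --- reproduces the very same kernel but organizes it into $\Mphiu{n,m}{\cdots}{\cdots}{\,\cdot\,}$ with the transformed arguments $c/b_k$, $cx_i/a_i$, $cx_i$ and the rescaled summation variable $a_1\cdots a_m b_1\cdots b_n u/c^n$. Equality of the two expansions of one kernel is then the transformation formula.

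As a fallback I would prove the identity by induction, say on $n$, with the case of arbitrary $m$ and $n=1$ handled by a single-variable Heine transformation applied to the innermost summation while the remaining $m-1$ summations are carried along as parameters; the inductive step then peels off one $(b_k,c_k)$ pair and feeds its transformed data into the dual series. In either route the main obstacle is the combinatorial bookkeeping of the $q$-shifted Pochhammer symbols: one must verify that the coefficients emerging from the double expansion match the explicit products $\prod_{i,j}(a_j x_i/x_j;q)_{\mu_i}/(qx_i/x_j;q)_{\mu_i}\cdot\prod_{i,k}(b_k x_i;q)_{\mu_i}/(c_k x_i;q)_{\mu_i}$ on the one side and their duals on the other, where the shift structure $q^{-\theta}s$ must thread correctly through the recursion. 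Alongside this, I would need to justify interchanging the (infinite) summations and to control convergence of the rearranged series in a suitable domain --- or, to stay safe, carry out the whole argument at the level of formal power series in $u$ and in the ratios $x_{i+1}/x_i$, $y_{k+1}/y_k$, which is all that is required for the application to $p_n$ in this paper.
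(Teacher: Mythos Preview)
The paper does not prove this statement: it is recorded as a \emph{Fact} and attributed to the references [K, KN], so there is no ``paper's own proof'' to compare against. Your proposal is therefore not competing with anything in the text; you are effectively reconstructing the arguments of the cited sources.

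On substance, your two routes line up with those references. The Cauchy-determinant approach --- expanding a single kernel in two ways and reading off the $m$-fold and $n$-fold series from the complementary expansions --- is precisely the method of Kajihara--Noumi [KN], and the inductive/peeling argument you sketch as a fallback is closer to Kajihara's original proof in [K]. Your $m=n=1$ check is correct and is the right sanity test for the normalization. The only place I would push back is the fallback as stated: inducting on $n$ with a single Heine transformation at each step does not by itself produce the full $m\leftrightarrow n$ duality; one needs a genuinely bilateral object (the Cauchy kernel) or an additional symmetry argument to land on the $n$-fold dual series rather than just a reparametrized $m$-fold one. If you pursue the Cauchy route your outline is sound, and working formally in $u$ is indeed enough for how the identity is used in Section~\ref{sec: trf foumula}.
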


We prepare the following notation. 

\begin{definition}\label{Def_N_mu}
For nonnegative integers $n$, $m$ and 
$\mu=(\mu_i)_{1\leq i \leq m} \in \mathbb{Z}^m$, 
we introduce 
\begin{align}
\mathsf{N}^{n,m}_{\mu}(s_1,\ldots,s_{n+m}) :=
\prod_{k=1}^m\left( \prod_{i=1}^{n+k} \frac{(qs_{n+k}/ts_i;q)_{\mu_k}}{(qs_{n+k}/s_i;q)_{\mu_k}} \right)\cdot 
\prod_{1\leq i <j \leq m} \frac{(t\, q^{-\mu_i}s_{n+j}/s_{n+i};q)_{\mu_j}}{(q^{-\mu_i}s_{n+j}/s_{n+i};q)_{\mu_j}}. 
\end{align}
\end{definition}

By using Fact \ref{fact: Kajihara Noumi}, 
we can show the following formula 
which transforms the Macdonald function $p_{n+m}(\vx;\vs|q,t)$ 
to another series that contains $p_m$ as inner summation. 

\begin{proposition}\label{prop: trf formula}
Let $s_i$ ($i=1,\ldots, n+m$) be generic complex parameters and $|t|>|q|^{-(n-2)}$. 
We put $x_i = x t^{n-i}$ for $i=1,\ldots, n$ and $x_{n+k}=y_k$ for $k=1,\ldots, m$. 
Then 
\begin{align}\label{eq: trf formula}
&\prod_{k=1}^m \frac{(qy_k/t^nx;q)_{\infty}}{(ty_k/x;q)_{\infty}} \cdot
p_{n+m}(x_1,\ldots, x_{n+m}; s_1,\ldots,s_{n+m}|q,t)  \nonumber \\
&=\prod_{i=1}^n \frac{(q/t;q)_{\infty}}{(q/t^i;q)_{\infty}} \cdot
\prod_{1\leq i<j\leq n} \frac{(qs_j/ts_i;q)_{\infty}}{(qs_j/s_i;q)_{\infty}} \\
&\quad  \times \sum_{\mu\in \mathbb{Z}_{\geq 0}^m }
\mathsf{N}^{n,m}_{\mu}(s_1,\ldots,s_{n+m}) \,
p_m(y_1,\ldots, y_{m};q^{\mu_1} s_{n+1},\ldots,q^{\mu_m} s_{n+m} | q,t) 
\prod_{k=1}^m (ty_k/x)^{\mu_k}. \nonumber
\end{align}
\end{proposition}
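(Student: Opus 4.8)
The plan is to reduce the identity to Kajihara and Noumi's Euler transformation (Fact \ref{fact: Kajihara Noumi}) by expanding $p_{n+m}$ as the explicit multiple series of Definition \ref{def: p_n f_n} and organizing the summation matrix $\theta \in M_{n+m}$ into three blocks according to the split $\{1,\ldots,n\}\sqcup\{n+1,\ldots,n+m\}$ of the indices: the entries $\theta_{i,j}$ with $i<j\le n$ (the ``frozen block $A$''), the entries $\theta_{i,n+k}$ with $i\le n<n+k$ (the ``cross block $B$''), and the entries $\theta_{n+i,n+j}$ with $i<j\le m$ (the ``tail block $C$''). Under the specialization $x_i=xt^{n-i}$ the ratios $x_j/x_i=t^{i-j}$ entering the $A$-part become pure powers of $t$, so that block is the one to evaluate in closed form, block $C$ is the one to keep intact (it will reassemble into $p_m$), and block $B$ is the one carrying the genuine hypergeometric content.

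First I would carry out the summation over the frozen block $A$. Iterating the recurrence of Definition \ref{def: p_n f_n} to peel off the last $m$ variables $x_{n+m},\ldots,x_{n+1}$ rewrites $c_{n+m}$ as $c_n$ of the first $n$ indices, with each $s_i$ ($i\le n$) shifted to $q^{-\sum_k\theta_{i,n+k}}s_i$, times a product of the $d$-factors encoding the cross- and tail-block data. Summing $c_n$ against $\prod_{1\le i<j\le n}(x_j/x_i)^{\theta_{i,j}}$ at the geometric point $x_i=xt^{n-i}$ is then a principal specialization of $p_n$, which I expect to collapse, by iterated $q$-binomial (Chu--Vandermonde/$q$-Gauss) summation, to a product of the shape $\prod_{i=1}^n\frac{(q/t;q)_\infty}{(q/t^i;q)_\infty}\prod_{1\le i<j\le n}\frac{(qs_j/ts_i;q)_\infty}{(qs_j/s_i;q)_\infty}$, i.e.\ precisely the scalar prefactor on the right-hand side.

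Next I would recast the surviving cross-block sum over $B$ as one of Kajihara--Noumi's multiple series $\phi^{m,n}$ (Definition \ref{def_phi_m_n}), reading off the dictionary between the parameters $s_1,\ldots,s_{n+m}$, the variables $x,y_1,\ldots,y_m$, and the data $a,b,c,x,y,u$ of that series. Applying the Euler transformation of Fact \ref{fact: Kajihara Noumi} swaps the two index sets, replaces the row-sum summation by the desired $m$-fold summation over $\mu\in\mathbb{Z}_{\ge0}^m$, produces the geometric weight $\prod_{k=1}^m(ty_k/x)^{\mu_k}$, and generates the infinite-product factor $\prod_{k=1}^m\frac{(qy_k/t^nx;q)_\infty}{(ty_k/x;q)_\infty}$ that appears on the left. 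The $\mu$-dependent rational coefficients then collect into $\mathsf{N}^{n,m}_\mu(s_1,\ldots,s_{n+m})$ of Definition \ref{Def_N_mu}, while the tail block $C$, read now with each $s_{n+k}$ shifted to $q^{\mu_k}s_{n+k}$, is exactly the series defining $p_m(y_1,\ldots,y_m;q^{\mu_1}s_{n+1},\ldots,q^{\mu_m}s_{n+m}\mid q,t)$. Throughout, the hypothesis $|t|>|q|^{-(n-2)}$ guarantees, via the analyticity result Fact \ref{fact_analyticity of p}, that all series converge on the relevant polydisc, so the rearrangements and the term-by-term application of the transformation are legitimate.

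The \emph{main obstacle} I anticipate is the coupling among the three blocks inside $c_{n+m}$: the exponents $q^{\sum_{a>j}(\theta_{i,a}-\theta_{j,a})}$ in the explicit form of $c_{n+m}$ mix the $A$-, $B$- and $C$-entries, so the blocks do not separate naively. The crux is to show that after the geometric specialization the $A$-summation decouples into the claimed product, while the parameter shifts $s_i\mapsto q^{-\sum_k\theta_{i,n+k}}s_i$ inherited from peeling off block $C$ are exactly what is needed to present block $B$ as a genuine $\phi^{m,n}$ with the correct arguments; one must then check that these shift factors together with the residual infinite products from the $p\leftrightarrow\phi$ conversion recombine with the Kajihara--Noumi prefactor into the stated $\mu$-independent prefactors. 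The case $n=1$, where block $A$ is empty and the prefactor is trivial, is a useful consistency check, since there the cross block alone, transformed by Fact \ref{fact: Kajihara Noumi}, already yields that instance of the claim.
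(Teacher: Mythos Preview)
Your block decomposition captures the right picture, but the plan to recast the cross-block sum as \emph{one} Kajihara--Noumi series $\phi^{m,n}$ and apply Fact \ref{fact: Kajihara Noumi} once cannot work for $m\geq 2$: after summing out block $A$, the remaining sum runs over all $\theta_{i,n+k}$ with $1\le i<n+k$, $1\le k\le m$, which is $mn+\binom{m}{2}$ indices, whereas a single $\phi^{m,n}$ is only an $m$-fold sum. Kajihara--Noumi swaps an $a$-fold sum for a $b$-fold one, so one application can reduce the index count by at most $n-1$ here, not the required $m(n-1)$. The coupling you flag as the ``main obstacle'' is therefore not just a bookkeeping nuisance but a structural obstruction to the one-shot scheme; in particular, your principal specialization of block $A$ produces the product $\prod_{i<j\le n}\frac{(q\tilde s_j/t\tilde s_i;q)_\infty}{(q\tilde s_j/\tilde s_i;q)_\infty}$ with the \emph{shifted} $\tilde s_i=q^{-\sum_k\theta_{i,n+k}}s_i$, which depends on the $B$-entries and is not the $\mu$-independent prefactor on the right-hand side.

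The paper's proof avoids this by induction on $m$. The base $m=0$ is exactly the principal specialization (Fact \ref{fact:princ. spec.}), and each inductive step peels off a single variable $y_m$ via the recurrence $p_{n+m}=\sum_{\theta}d_{n+m}(\theta;\vs)\prod_i(x_{n+m}/x_i)^{\theta_i}p_{n+m-1}(\cdot;q^{-\theta}\vs)$, applies the hypothesis to $p_{n+m-1}$ (introducing $\mathsf N^{n,m-1}_\sigma$ and $p_{m-1}$), and then uses Lemmas \ref{lem: trf formula 1} and \ref{lem: trf formula 2} to package the $(n+m-1)$-fold $\theta$-sum as a genuine $\phi^{n+m-1,m}$, transform it by Fact \ref{fact: Kajihara Noumi} into a $\phi^{m,n+m-1}$, and reassemble into $\mathsf N^{n,m}_\mu$ times the $d_m$-factor that upgrades $p_{m-1}$ to $p_m$. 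Thus Kajihara--Noumi is applied $m$ times in total, each time cutting the index count by $n-1$, which is precisely the $m(n-1)$ reduction your direct plan needs but cannot achieve in a single stroke. If you want to salvage the block picture, the natural fix is to run your steps column by column---which is the paper's induction.
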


\begin{remark}
Throughout this subsection, 
$s_i$ are treated as generic parameters 
with $s_i\neq 0$ and $s_i \neq q^{r_1} t^{r_2}s_j$ ($r_1\in \mathbb{Z}$, $r_2=0, \pm1$, $\forall i,j$). 
The above transformation formula holds 
even if $s_i$ are not specialized as (\ref{eq: form of s 1}) and (\ref{eq: form of s 2}). 
\end{remark}

In particular, if $m=0$, 
this proposition is nothing but the following specialization formula.

\begin{fact}[\cite{NS}]\label{fact:princ. spec.}
Let $|t|>|q|^{-(n-2)}$. Then 
\begin{align}
p_n(\vx;\vs|q,t) \Big|_{x_i \rightarrow t^{n-i}} = \prod_{i=1}^n \frac{(q/t;q)_{\infty}}{(q/t^i;q)_{\infty}}\cdot 
\prod_{1\leq i<\leq <n} \frac{(qs_j/ts_i;q)_{\infty}}{(qs_j/s_i;q)_{\infty}}.
\end{align}
\end{fact}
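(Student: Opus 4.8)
The plan is to prove Fact \ref{fact:princ. spec.} by induction on $n$, using the column-peeling recursion built into the coefficients $c_n$ of Definition \ref{def: p_n f_n} and discharging each inductive step with a single application of Kajihara and Noumi's Euler transformation (Fact \ref{fact: Kajihara Noumi}). First I would record that the hypothesis $|t|>|q|^{-(n-2)}$ is exactly the convergence condition of Fact \ref{fact_analyticity of p} with $\tau=t$: at $x_i=t^{n-i}$ one has $z_i=x_{i+1}/x_i=t^{-1}$ with $|t^{-1}|<r_0=|q/t|^{(n-2)/(n-1)}$, so the defining series for $p_n(\vx;\vs|q,t)$ converges absolutely at the specialization point and every rearrangement below is legitimate. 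The base case $n=1$ is immediate, since $p_1=1$ and both products on the right-hand side are empty.

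For the inductive step I would unfold the recursion $c_n=c_{n-1}\cdot d_n$ by splitting a matrix $\theta\in M_n$ into its last column $(\theta_{1,n},\dots,\theta_{n-1,n})$ and its upper-left block $\theta'\in M_{n-1}$, giving
\begin{equation*}
p_n(\vx;\vs|q,t)=\sum_{(\theta_{i,n})\in\mathbb{Z}_{\ge0}^{n-1}} d_n\big((\theta_{i,n});\vs|q,t\big)\,\prod_{i=1}^{n-1}(x_n/x_i)^{\theta_{i,n}}\; p_{n-1}\big(x_1,\dots,x_{n-1};(q^{-\theta_{i,n}}s_i)_i\big|q,t\big).
\end{equation*}
At $x_i=t^{n-i}$ the inner ratios satisfy $x_j/x_i=t^{i-j}$ for $i,j\le n-1$, so the inner factor is the $(n-1)$-variable principal specialization with shifted parameters $s_i'=q^{-\theta_{i,n}}s_i$; the induction hypothesis applies (its convergence condition $|t|>|q|^{-(n-3)}$ follows from the one for $n$ since $|q|<1$), and $\prod_{i=1}^{n-1}(x_n/x_i)^{\theta_{i,n}}=\prod_i t^{(i-n)\theta_{i,n}}$ supplies a geometrically decaying weight because $|t|>1$.

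Substituting the induction hypothesis, the $\theta$-independent factor $\prod_{i=1}^{n-1}\frac{(q/t;q)_\infty}{(q/t^i;q)_\infty}$ pulls out, while the shifted products $\prod_{i<j\le n-1}\frac{(q^{1+\theta_{i,n}-\theta_{j,n}}s_j/ts_i;q)_\infty}{(q^{1+\theta_{i,n}-\theta_{j,n}}s_j/s_i;q)_\infty}$, compared against the target $\prod_{i<j\le n-1}\frac{(qs_j/ts_i;q)_\infty}{(qs_j/s_i;q)_\infty}$, collapse via $\frac{(q^{1+m}z;q)_\infty}{(qz;q)_\infty}=(qz;q)_m^{-1}$ into finite $q$-Pochhammer symbols $\frac{(qs_j/s_i;q)_{\theta_{i,n}-\theta_{j,n}}}{(qs_j/ts_i;q)_{\theta_{i,n}-\theta_{j,n}}}$. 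These combine cleanly with the explicit $d_n$ of \eqref{eq: form of d}, so the entire inductive step reduces to one multiple-sum identity over $(\theta_{i,n})\in\mathbb{Z}_{\ge0}^{n-1}$ whose asserted value is $\frac{(q/t;q)_\infty}{(q/t^n;q)_\infty}\prod_{i<n}\frac{(qs_n/ts_i;q)_\infty}{(qs_n/s_i;q)_\infty}$, i.e. exactly the $j=n$ contributions to the two products in the statement.

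The final step is to identify this $d_n$-weighted sum with a Kajihara--Noumi series $\phi^{m,n}$ under a suitable parameter dictionary and apply the Euler transformation: the transformed (dual) series has numerator $q$-Pochhammer factors that vanish away from $\mu=0$, so it terminates to its leading term $1$, and the resulting prefactor $\frac{(a_1\cdots b_1\cdots u/c^{n};q)_\infty}{(u;q)_\infty}$ reproduces precisely the right-hand side above. I expect the main obstacle to be this last identification --- matching the $d_n$-weighted sum (including the $t^{(i-n)\theta_{i,n}}$ weights and the $q^{-\theta_j}$-shifted cross terms inside $d_n$) to the correct $\phi^{m,n}$ and verifying that the Euler-dual series genuinely collapses to a single term; tracking the $q$-shifts and the convergence of the iterated sums will demand care, but no new ideas beyond Fact \ref{fact: Kajihara Noumi}.
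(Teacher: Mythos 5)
First, a remark on what you are comparing against: the paper never proves this Fact --- it is imported from \cite{NS} and then used as the $m=0$ induction base in the proof of Proposition \ref{prop: trf formula}. So your attempt targets a statement the paper treats as known; the only meaningful in-paper comparison is with the proof of Proposition \ref{prop: trf formula}, which runs into precisely the analytic obstacle that your last step glosses over. Up to that step your plan is sound: the column-peeling identity
\begin{equation*}
p_n(\vx;\vs|q,t)\Big|_{x_i\to t^{n-i}}
=\sum_{(\theta_{i,n})\in\mathbb{Z}_{\geq0}^{n-1}} d_n\bigl((\theta_{i,n});\vs|q,t\bigr)\prod_{i=1}^{n-1}t^{(i-n)\theta_{i,n}}\,
p_{n-1}\bigl(\,\cdot\,;(q^{-\theta_{i,n}}s_i)\,|q,t\bigr)\Big|_{\mathrm{princ.}}
\end{equation*}
is correct, the convergence bookkeeping via Fact \ref{fact_analyticity of p} is right, and after inserting the induction hypothesis and rearranging the cross terms (the same Pochhammer identity as (\ref{eq: 1}) in Appendix \ref{sec: trf formula lem1}) the inductive step does reduce to evaluating a single Kajihara--Noumi series: with $x_i=s_i^{-1}$ it is $\phi^{n-1,1}$ with all upper parameters equal to $t$, one extra pair $(ts_n,qs_n)$, and argument $u=q/t^{n}$, all $i$-dependence of the weights cancelling.

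The genuine gap is your final evaluation. At these parameters the Euler transformation of Fact \ref{fact: Kajihara Noumi} is degenerate: the transformed argument equals $t^{n-1}\cdot t\cdot(q/t^{n})/q=1$, the prefactor equals $(1;q)_\infty/(q/t^{n};q)_\infty=0$, and the dual series is $\sum_{\mu\geq0}\frac{(q/t;q)_\mu}{(q;q)_\mu}\prod_{k=1}^{n-1}\frac{(qs_n/ts_k;q)_\mu}{(qs_n/s_k;q)_\mu}$, whose terms tend to a nonzero constant, so it diverges; in particular it does \emph{not} terminate, since for generic $t,s_i$ none of its numerator Pochhammers vanish. Your step therefore reads $0\times\infty$. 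The proposed mechanism is also internally inconsistent: the Euler prefactor is independent of $s_1,\ldots,s_n$, so it can never ``reproduce precisely the right-hand side'', which contains the $s$-dependent product $\prod_{i<n}(qs_n/ts_i;q)_\infty/(qs_n/s_i;q)_\infty$. The repair is exactly the paper's device for Proposition \ref{prop: trf formula} (Lemmas \ref{lem: trf formula 1}, \ref{lem: trf formula 2} and the remark following Lemma \ref{lem: trf formula 1}): regularize by running your argument with argument $hq/t^{n}$, $0<h<1$. Then the transformed argument is $h$, the prefactor is $(h;q)_\infty/(hq/t^{n};q)_\infty=(1-h)(qh;q)_\infty/(hq/t^{n};q)_\infty$, and since the dual terms $a_\mu$ converge to $C=\frac{(q/t;q)_\infty}{(q;q)_\infty}\prod_{k<n}\frac{(qs_n/ts_k;q)_\infty}{(qs_n/s_k;q)_\infty}$, Abel's theorem gives $(1-h)\sum_\mu a_\mu h^\mu\to C$, so the product tends to $\frac{(q/t;q)_\infty}{(q/t^{n};q)_\infty}\prod_{k<n}\frac{(qs_n/ts_k;q)_\infty}{(qs_n/s_k;q)_\infty}$, which are exactly the missing $j=n$ factors; finally $|q/t^{n}|<1$ under your hypothesis lets you pass $h\to1$ through the outer sum by dominated convergence. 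With this regularization and Tauberian step supplied, your induction closes; as written, the proposal fails at its decisive step.
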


For the proof of Proposition \ref{prop: trf formula}, 
we prepare two lemmas. 
They are tools for attributing our transformation formula (\ref{eq: trf formula}) 
to the one of the multiple basic hypergeometric series. 
The proofs of these lemmas are by direct calculation of factorials (See Appendix \ref{sec: trf formula lem1} and \ref{sec: trf formula lem2}).

\begin{lemma}\label{lem: trf formula 1}
Let $\sigma=(\sigma_k)_{1\leq k \leq m-1} \in \mathbb{Z}^{m-1}$ 
and $\theta =(\theta_{i})_{1\leq i \leq n+m-1}\in \mathbb{Z}_{\geq 0}^{n+m-1}$. 
Then under the transformation $\rho_k=\sigma_k -\theta_{n+k}$, 
we have 
\begin{align}\label{eq: trf. rho=sigma-theta}
&\prod_{1\leq i<j\leq n} 
\frac{(q\ts{j}/t\ts{i};q)_{\infty}}{(q\ts{j}/\ts{i};q)_{\infty}} \cdot
d_{n+m}(\theta;s|q,t) 
\mathsf{N}^{n,m-1}_{\sigma}(\ts{1},\ldots,\ts{n+m-1}) \nonumber \\
&= \prod_{1\leq i<j\leq n} 
\frac{(q s_j/t s_i;q)_{\infty}}{(q s_j/ s_i;q)_{\infty}} \cdot 
\lim_{h \rightarrow 1}  \widetilde{\mathsf{N}}^{n,m-1}_{\rho}(h; s_1,\ldots, s_{n+m-1}) \nonumber\\
&\times \Mphi{n+m-1,m}{\theta}
{t&, \ldots, &t \\ h s_1^{-1} &, \ldots ,& h s_{n+m-1}^{-1}}
{q q^{\rho_1} s_{n+1}/t &,\ldots ,&q q^{\rho_{m-1}} s_{n+m-1}/t &, ts_{n+m} \\ 
q q^{\rho_1} s_{n+1}&, \ldots ,&q q^{\rho_{m-1}} s_{n+m-1} &, qs_{n+m}} \nonumber \\ 
&\times \prod_{i=1}^n q^{\theta_i} t^{-i\theta_i} 
\prod_{i=n+1}^{m-1} q^{\theta_i} t^{-n\theta_i}.
\end{align}
Here, $h$ is a parameter which goes to $1$ in the limit, 
satisfying $q^{r_1}t^{r_2}h\neq 1$ ($\forall r_1, r_2 \in \mathbb{Z}$). 
$\widetilde{\mathsf{N}}^{n,m}_{\mu}$ is defined by 
\begin{align}
\widetilde{\mathsf{N}}^{n,m}_{\mu}(h;s_1,\ldots,s_{n+m}) := 
\prod_{k=1}^m\left( \prod_{i=1}^{n+k} \frac{(qs_{n+k}/ts_i;q)_{\mu_k}}{(h qs_{n+k}/s_i;q)_{\mu_k}} \right)\cdot 
\prod_{1\leq i <j \leq m} \frac{(t\, q^{-\mu_i}s_{n+j}/s_{n+i};q)_{\mu_j}}{(q^{-\mu_i}s_{n+j}/s_{n+i};q)_{\mu_j}}. 
\end{align}
\end{lemma}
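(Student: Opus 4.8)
Since both sides of (\ref{eq: trf. rho=sigma-theta}) are completely explicit products of $q$-shifted factorials, the plan is a direct, bookkeeping-heavy comparison, organized so that most factors match one-to-one and only a small entangled block requires real work. First I would expand the left-hand side by inserting the definition of $d_{n+m}(\theta;s|q,t)$ from Definition \ref{def: p_n f_n} and of $\mathsf{N}^{n,m-1}_{\sigma}(\ts{1},\ldots,\ts{n+m-1})$ from Definition \ref{Def_N_mu}, retaining the shorthand $\ts{i}=q^{-\theta_i}s_i$. The key preliminary remark is that, with the assignments $a_j=t$ and $x_i=h s_i^{-1}$ used in $\phi^{n+m-1,m}_{\theta}$ on the right (Definition \ref{def_phi_m_n}), one has $a_j x_i/x_j = t s_j/s_i$ and $q x_i/x_j = q s_j/s_i$, so several blocks of $d_{n+m}$ coincide factor-for-factor with blocks of $\phi^{n+m-1,m}_{\theta}$. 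Concretely, the diagonal ($i=j$) part of $\prod_{i,j}$ in $\phi$ reproduces $\tfrac{(t;q)_{\theta_i}}{(q;q)_{\theta_i}}$, the strictly upper part ($i<j$) reproduces $\prod_{i<j}\tfrac{(ts_j/s_i;q)_{\theta_i}}{(qs_j/s_i;q)_{\theta_i}}$, and the $(b,c)$-column with $k=m$ (where $b_m=ts_{n+m}$, $c_m=qs_{n+m}$) reproduces $\tfrac{(ts_{n+m}/s_i;q)_{\theta_i}}{(qs_{n+m}/s_i;q)_{\theta_i}}$ once $h\to 1$. I would isolate all these matches and set them aside.

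The genuinely nontrivial identification is between the leftover $d$-block $\prod_{i<j}\tfrac{(q^{-\theta_j}qs_j/ts_i;q)_{\theta_i}}{(q^{-\theta_j}s_j/s_i;q)_{\theta_i}}$ and the lower part ($i>j$) of $\prod_{i,j}$ in $\phi$ together with the Vandermonde-type prefactor $\prod_{i<j}\tfrac{q^{\theta_i}x_i-q^{\theta_j}x_j}{x_i-x_j}$. With $x_i=h s_i^{-1}$ this prefactor equals $\prod_{i<j}\tfrac{q^{\theta_i}s_j-q^{\theta_j}s_i}{s_j-s_i}$, independent of $h$. I would reconcile these using the splitting rule $(x;q)_{a+b}=(x;q)_a\,(q^a x;q)_b$ together with the reversal identity $(x;q)_{k}=(-x)^{k}q^{\binom{k}{2}}(q^{1-k}/x;q)_{k}$ applied to the $q^{-\theta_j}$-shifted factorials: this turns $(q^{-\theta_j}a;q)_{\theta_i}$ into a factor whose argument involves $q^{\theta_j}$ and which carries a cross-power $q^{-\theta_i\theta_j}$, producing exactly the $i>j$ factors of $\phi$ and the Vandermonde numerator. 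The signs and $q$-powers generated here feed into the final monomial.

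Next I would carry out the index substitution $\sigma_k=\rho_k+\theta_{n+k}$ inside $\mathsf{N}^{n,m-1}_{\sigma}(\ts{1},\ldots,\ts{n+m-1})$. Each factorial $(a;q)_{\sigma_k}$ splits as $(a;q)_{\theta_{n+k}}(q^{\theta_{n+k}}a;q)_{\rho_k}$, and similarly for the two-index blocks, so the $\rho$-dependent halves should assemble into $\widetilde{\mathsf{N}}^{n,m-1}_{\rho}(h;s_1,\ldots,s_{n+m-1})$. The regularizing parameter $h$ enters precisely in the denominators $(h q s_{n+k}/s_i;q)_{\rho_k}$; its role is to keep the $\rho$- and $\theta$-halves separately well defined when the $q^{-\theta}$ shifts would otherwise collide with $q^{\rho}$, so that taking $h\to 1$ at the very end recovers the unregularized products on the left. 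The $\theta$-dependent halves are absorbed into the $(b,c)$-columns of $\phi^{n+m-1,m}_{\theta}$ indexed by $k=1,\ldots,m-1$, where $c_k=q\,q^{\rho_k}s_{n+k}$ and $b_k=c_k/t$; this is exactly why the $\rho$'s reappear inside the parameters of $\phi$ on the right-hand side.

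Finally I would collect the scalar prefactors: the $(q/t)^{\theta_i}$ coming from $d_{n+m}$, the sign-and-power factors produced by each application of the reversal identity in the second step, and the common infinite products $\prod_{1\le i<j\le n}\tfrac{(qs_j/ts_i;q)_{\infty}}{(qs_j/s_i;q)_{\infty}}$ appearing on both sides (after accounting for the $q^{-\theta}$ shifts in the left-hand arguments $\ts{i}=q^{-\theta_i}s_i$, which contribute only finitely many factors). Matching these against the displayed monomial in $q$ and $t$ completes the proof. I expect the main obstacle to be the second step: disentangling the two theta-indices $\theta_i,\theta_j$ in the $q^{-\theta_j}$-shifted block and tracking every sign and $q$-power through the reversal identities so that they collapse precisely into the lower part of $\phi$ and its Vandermonde prefactor, with the $h$-regularization of the third step being the supporting technical point that makes the $\rho$--$\theta$ split legitimate.
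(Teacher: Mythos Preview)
Your proposal is correct and follows essentially the same route as the paper's proof. The paper also proceeds by direct calculation of $q$-factorials: it first rewrites the coupled block $\prod_{i<j}\tfrac{(q^{-\theta_j}qs_j/ts_i;q)_{\theta_i}}{(q^{-\theta_j}s_j/s_i;q)_{\theta_i}}$ to extract the Vandermonde factor and the lower-triangular piece of $\phi^{n+m-1,m}_\theta$ (your ``second step''), converts the infinite products with $\ts{i}$ to those with $s_i$, and then performs the substitution $\sigma_k=\rho_k+\theta_{n+k}$ inside $\mathsf{N}^{n,m-1}_\sigma$ to produce $\widetilde{\mathsf{N}}^{n,m-1}_\rho$ and the remaining $(b,c)$-columns of $\phi$, with the $h$-regularization playing exactly the role you describe.
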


\begin{remark}
If $\rho_{k}<0$ and $\rho_k +\theta_{n+k}>0$ for some $k$, 
then $\widetilde{\mathsf{N}}^{n,m-1}_{\rho}$ is $0$ 
and $ \phi^{n+m-1,m}_{\theta}$ diverges in the limit $h \rightarrow 1$. 
However, (\ref{eq: trf. rho=sigma-theta}) converges as a result. 
In order to avoid the problem with respect to this divergence, 
we inserted the parameter $h$. 
\end{remark}

\begin{lemma}\label{lem: trf formula 2}
Let $\rho =(\rho_k)_{1\leq k \leq m-1} \in \mathbb{Z}^{m-1}$ and 
$\nu=(\nu_k)_{1\leq k \leq m} \in \mathbb{Z}_{\geq 0}^{m}$. 
Then 
\begin{align}
&\lim_{h\rightarrow 1} \widetilde{\mathsf{N}}^{n,m-1}_{\rho}(h; s_1,\ldots, s_{n+m-1}) \nonumber \\ 
&\times \Mphi{m,n+m-1}{\nu}
{t&, \ldots, &t , &q/t\\ q^{\rho_1} s_{n+1}& ,\ldots, &q^{\rho_{m-1}} s_{n+m-1}, &s_{n+m}}
{h q/ts_1 &, \ldots ,& h q/ts_{n+m-1} \\ h q/s_1 &, \ldots ,& h q/s_{n+m-1}} \nonumber \\ 
&= \mathsf{N}^{n,m}_{\mu}(s_1,\ldots, s_{n+m}) 
\times d_m((\theta_i); (q^{\mu_i}s_{n+i})|q,t) 
\end{align}
under the transformation 
\begin{align}
&\rho_k=\mu_k-\theta_k \quad (k=1,\ldots, m-1), \\
&\nu_k=\theta_k \quad (k=1,\ldots, m-1), \\ 
&\nu_{m}=\mu_m.
\end{align}
\end{lemma}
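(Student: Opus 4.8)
The plan is to establish the identity by a direct manipulation of $q$-Pochhammer symbols, since after clearing the limit both sides are finite products of such factors. First I would insert the definition of $\widetilde{\mathsf{N}}^{n,m-1}_{\rho}$ and the explicit form of the single summand $\phi^{m,n+m-1}_{\nu}$ (Definition \ref{def_phi_m_n}) into the left-hand side, and the definitions of $\mathsf{N}^{n,m}_{\mu}$ (Definition \ref{Def_N_mu}) and of $d_m$, i.e.\ the expression (\ref{eq: form of d}) with the shift $s_{n+i}\mapsto q^{\mu_i}s_{n+i}$, into the right-hand side. I would then read the prescribed substitution $\theta_k=\nu_k$, $\mu_k=\rho_k+\theta_k$ $(1\le k\le m-1)$, $\mu_m=\nu_m$ as a bijection between the data $(\rho,\nu)$ and $(\mu,\theta)$, and compare the two sides factor by factor.

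The key structural simplifications are as follows. In the Vandermonde-type prefactor $\prod_{i<j}(q^{\nu_i}x_i-q^{\nu_j}x_j)/(x_i-x_j)$ of $\phi^{m,n+m-1}_{\nu}$, the upper variables are $x_i=q^{\rho_i}s_{n+i}$ for $i<m$ and $x_m=s_{n+m}$, so that $q^{\nu_i}x_i=q^{\mu_i}s_{n+i}$ holds uniformly; hence the numerator becomes symmetric in the shifted variables $q^{\mu_i}s_{n+i}$, which is exactly the combination that governs $d_m$ after the same shift. For the remaining factors I would repeatedly apply the splitting rule $(a;q)_{\rho_k+\theta_k}=(a;q)_{\theta_k}\,(aq^{\theta_k};q)_{\rho_k}$ to each Pochhammer whose length is $\mu_k=\rho_k+\theta_k$, thereby separating a $\theta$-indexed piece, to be collected into $d_m((\theta_i);(q^{\mu_i}s_{n+i})|q,t)$, from a $\rho$-indexed piece, to be combined with $\widetilde{\mathsf{N}}^{n,m-1}_{\rho}$ so as to rebuild $\mathsf{N}^{n,m}_{\mu}$. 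Matching the $a_j=t$ and $a_m=q/t$ numerator factors $(a_jx_i/x_j;q)_{\nu_i}$ and the $b_k,c_k=hq/ts_k,\,hq/s_k$ factors of $\phi$ against the two products of $d_m$ and the single product of $\mathsf{N}^{n,m}_{\mu}$ then reduces to a reindexing of the labels $i,j,k$.

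The main obstacle is the $h\to1$ limit, which cannot be taken factor by factor. As already flagged in the Remark following Lemma \ref{lem: trf formula 1}, whenever some $\rho_k<0$ the naive limit produces a $0\times\infty$ ambiguity: the diagonal factor $(hq;q)_{\rho_k}$ in the denominator of $\widetilde{\mathsf{N}}^{n,m-1}_{\rho}$ (the $i=n+k$ term, where $s_{n+k}/s_i=1$) forces $\widetilde{\mathsf{N}}^{n,m-1}_{\rho}\to0$, while the matching factor $(c_{n+k}x_k;q)_{\nu_k}=(hq^{1+\rho_k};q)_{\nu_k}$ in the denominator of $\phi^{m,n+m-1}_{\nu}$ vanishes at $h=1$ and makes $\phi$ diverge; the parameter $h$ is precisely what keeps both finite and nonzero during the computation. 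My strategy is therefore to pair these factors before passing to the limit: applying $(a;q)_{\rho_k}(aq^{\rho_k};q)_{\nu_k}=(a;q)_{\mu_k}$ with $a=q$ and with $a=q/t$, the product of the two diagonal contributions collapses to the manifestly finite ratio $(q/t;q)_{\mu_k}/(q;q)_{\mu_k}$ as $h\to1$, which is precisely the $i=n+k$ factor of $\mathsf{N}^{n,m}_{\mu}$. Carrying out the same pairing for every resonant index shows that all apparent singularities cancel and leaves the finite right-hand side. The remaining care is to verify that the orders of each zero and its compensating pole coincide exactly, so that no spurious factor $(1-q^{0})$ or $(1-q^{0})^{-1}$ survives; this bookkeeping is the content of the ``direct calculation of factorials'' promised in the text, and it is where I expect the genuine effort to lie.
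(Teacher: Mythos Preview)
Your proposal is correct and follows essentially the same approach as the paper's proof in Appendix~\ref{sec: trf formula lem2}: both proceed by expanding $\phi^{m,n+m-1}_{\nu}$ and $\widetilde{\mathsf{N}}^{n,m-1}_{\rho}$ into explicit Pochhammer products, applying the splitting identity $(a;q)_{\rho_k}(aq^{\rho_k};q)_{\nu_k}=(a;q)_{\mu_k}$ to recombine pieces, and pairing the resonant $h$-dependent diagonal factors before the limit to resolve the $0\times\infty$ ambiguity. The paper merely packages this computation by first decomposing $\phi^{m,n+m-1}_{\nu}=ABCD$ into four blocks (roughly: the Vandermonde and $s_{n+m}$-factors, the cross $(k,l)$-factors, the $h$-dependent factors with $k<m$, and the $k=m$ row), then showing that $\lim_{h\to1}\widetilde{\mathsf{N}}^{n,m-1}_{\rho}\cdot C=\mathsf{N}^{n,m-1}_{\mu}\cdot E$ and that $A$, $BE$, $D$ together with the remainder rebuild $\mathsf{N}^{n,m}_{\mu}\cdot d_m$; your description of the Vandermonde simplification, the splitting, and the diagonal pairing is exactly this calculation narrated without the block labels.
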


Now, we prove Proposition \ref{prop: trf formula} 
by using these lemmas and the $q$-Euler transformation formula. 

\vspace{1em}

\noindent
\textit{Proof of Proposition \ref{prop: trf formula}}
The proof is done by induction on $m$. 
If $m=0$, (\ref{eq: trf formula}) follows from Fact \ref{fact:princ. spec.}. 
By assumption that it holds for $m-1$, 
it can be shown that the left hand side of (\ref{eq: trf formula}) is 
\begin{align}\label{eq: pf of trf formula 1}
&\sum_{\theta \in \mathbb{Z}_{\geq 0}^{n+m-1}}
\frac{(qy_m/t^nx;q)_{\infty}}{(ty_m/x;q)_{\infty}} 
d_{n+m}(\theta,\vs|q,t)
\prod_{i=1}^{n+m-1}(x_{n+m}/x_i)^{\theta_i}\nonumber \\ 
&\qquad \qquad \times \prod_{k=1}^{m-1} \frac{(qy_k/t^nx;q)_{\infty}}{(ty_k/x;q)_{\infty}} \cdot
p_{n+m-1}(x_1,\ldots,x_{n+m-1};\ts{1},\ldots,\ts{n+m-1}|q,t)\nonumber \\ 
&=
\sum_{\substack{\theta \in \mathbb{Z}_{\geq 0}^{n+m-1}\\ \sigma \in \mathbb{Z}_{\geq 0}^{m-1}}}
\prod_{i=1}^n \frac{(q/t;q)_{\infty}}{(q/t^i;q)_{\infty}} \cdot
\frac{(qy_m/t^nx;q)_{\infty}}{(ty_m/x;q)_{\infty}}
\prod_{1\leq i<j\leq n} \frac{(q\ts{j}/t\ts{i};q)_{\infty}}{(q\ts{j}/\ts{i};q)_{\infty}} \nonumber \\
& \qquad  \qquad \times 
d_{n+m}(\theta,\vs|q,t)
\mathsf{N}^{n,m-1}_{\sigma}(\ts{1},\ldots,\ts{n+m}) 
p_{m-1}((y_i);(q^{\sigma_i} \ts{n+i}) | q,t) \nonumber \\ 
& \qquad \qquad \times \prod_{k=1}^{m-1} (ty_k/x)^{\sigma_k}. 
\end{align}
By the contribution of the factor $\prod_{k=1}^{m-1} 1/(q;q)_{\sigma_k}$ 
in $\mathsf{N}^{n,m-1}_{\sigma}$, 
we can extend the range which $\theta$ and $\sigma$ ran over 
to 
\begin{equation}
\theta\in \mathbb{Z}_{\geq 0}^{n+m-1},\quad  \sigma \in \mathbb{Z}^{m-1}. 
\end{equation}
Under this range, by applying Lemma \ref{lem: trf formula 1}, 
we can rewrite (\ref{eq: pf of trf formula 1}) as 
\begin{align}\label{eq: pf of trf formula 2}
&\lim_{h\rightarrow 1}
\sum_{\substack{\theta\in \mathbb{Z}_{\geq 0}^{n+m-1}\\ \rho \in \mathbb{Z}^{m-1}}}
\prod_{i=1}^n \frac{(q/t;q)_{\infty}}{(q/t^i;q)_{\infty}} \cdot
\frac{(qy_m/t^nx;q)_{\infty}}{(ty_m/x;q)_{\infty}}
\prod_{1\leq i<j\leq n} \frac{(qs_j/t s_i;q)_{\infty}}{(q s_j/ s_i;q)_{\infty}} \nonumber\\ 
&\times \widetilde{\mathsf{N}}^{n,m-1}_{\rho}(h; s_1,\ldots, s_{n+m-1}) 
p_{m-1}(y;(q^{\rho_i} s_{n+i}) | q,t)
\prod_{k=1}^{m-1} (ty_k/x)^{\rho_k} \nonumber \\
&\times \Mphi{n+m-1,m}{\theta}
{t&, \ldots, &t \\ h s_1^{-1} &, \ldots ,& h s_{n+m-1}^{-1}}
{q q^{\rho_1} s_{n+1}/t &,\ldots ,&q q^{\rho_{m-1}} s_{n+m-1}/t &, ts_{n+m} \\ 
	q q^{\rho_1} s_{n+1}&, \ldots ,&q q^{\rho_{m-1}} s_{n+m-1} &, qs_{n+m}} \nonumber \\ 
&\times \prod_{i=1}^{n+m-1}(qy_m/t^nx)^{\theta_i}. 
\end{align}
By using the Euler transformation formula for the multiple hypergeometric series 
(Fact \ref{fact: Kajihara Noumi}), 
(\ref{eq: pf of trf formula 2}) can be written as 
\begin{align}\label{eq:  pf of trf formula 3}
&\lim_{h\rightarrow 1}
\sum_{\substack{\nu\in \mathbb{Z}_{\geq 0}^{m}\\ \rho \in \mathbb{Z}^{m-1}}}
\prod_{i=1}^n \frac{(q/t;q)_{\infty}}{(q/t^i;q)_{\infty}} \cdot
\prod_{1\leq i<j\leq n} \frac{(qs_j/t s_i;q)_{\infty}}{(q s_j/ s_i;q)_{\infty}} \nonumber\\ 
&\times \widetilde{\mathsf{N}}^{n,m-1}_{\rho}(h; s_1,\ldots, s_{n+m-1}) 
p_{m-1}(y;(q^{\rho_i} s_{n+i}) | q,t)
\prod_{k=1}^{m-1} (ty_k/x)^{\rho_k} \nonumber \\
&\times \Mphi{m,n+m-1}{\nu}
{t&, \ldots, &t , &q/t\\ q^{\rho_1} s_{n+1}& ,\ldots, &q^{\rho_{m-1}} s_{n+m-1}, &s_{n+m}}
{h q/ts_1 &, \ldots ,& h q/ts_{n+m-1} \\ h q/s_1 &, \ldots ,& h q/s_{n+m-1}} \nonumber \\ 
&\times \prod_{k=1}^m(ty_m/x)^{\nu_k}.
\end{align}
Finally, Lemma \ref{lem: trf formula 2} shows 
that (\ref{eq:  pf of trf formula 3}) is equal to 
\begin{align}
&\prod_{i=1}^n \frac{(q/t;q)_{\infty}}{(q/t^i;q)_{\infty}} \cdot
\prod_{1\leq i<j\leq n} \frac{(qs_j/ts_i;q)_{\infty}}{(qs_j/s_i;q)_{\infty}} \label{eq:  pf of trf formula 4}\\
&\quad  \times 
\sum_{\substack{(\mu_i)_{i=1}^{m-1} \in \mathbb{Z}^{m-1}  \\ \mu_m \geq 0}}
\mathsf{N}^{n,m}_{\mu}(s_1,\ldots,s_{n+m}) \,
p_m(y_1,\ldots, y_{m};q^{\mu_1} s_{n+1},\ldots,q^{\mu_m} s_{n+m} | q,t) 
\prod_{k=1}^m (ty_k/x)^{\mu_k}. \nonumber
\end{align}
Noting that the summation is restricted to $\mu\in \mathbb{Z}_{\geq 0}^m$ 
by the contribution of the factor in $\mathsf{N}^{n,m}_{\mu}$, 
we can show that 
(\ref{eq:  pf of trf formula 4}) coincides with 
the right hand side of (\ref{eq: trf formula}). 
\qed

\subsection{Proof of Theorem \ref{thm: matrix elements of V}} \label{sec_4.3}

We are now in a position to show Theorem \ref{thm: matrix elements of V}. 
This subsection is devoted to its proof.

Since the defining relation of $\cV(w)$ and  the expansion coefficients of $\ket{K_{\vl}}$ in the basis of $\ket{X_{\vl}}$ are 
written by rational functions of $q$, $t$, $u_i$ and $v_i$, 
the matrix elements 
$\bra{K_{\boldsymbol{\lambda}}}\cV (z)\ket{K_{\boldsymbol{\mu}}}$ 
determine the unique rational function of them by Proposition \ref{prop: uniqueness of V}. 
Therefore, 
it suffices to prove (\ref{eq: matrix element}) in the case $v_i=t^{n_i}u_i$ 
($i=1,\ldots, N$) 
for all sufficiently large $n_i \in \mathbb{Z}$, i.e., 
$n_i\geq \ell(\lambda^{(i)})$ by analytic continuation.

\subsubsection*{Step1: Realization of $\cV(x)$} 

Firstly, 
we give a realization of $\cV(x)$ in the case $\vv=t^{\vnn}\cdot \vu$ 
as follows.

\begin{definition}\label{def_Vspecial}
Let $|t|>|q|^{-(n-2)}$. 
Define $\widetilde V^{(\vnn)}(x)=\tVweight{(\vnn)}{\vv \\ \vu}{x} 
: \mathcal{F}_{\vu} \rightarrow \mathcal{F}_{\vv}$ 
with $\vu=t^{-\vnn}\cdot \vv$ 
by 
\begin{align}
\widetilde V^{(\vnn)}(x)=
\lim_{x_i \rightarrow t^{|\vnn|-i} x}
\prod_{1\leq i< j\leq |\vnn|} \frac{(tx_j/x_i;q)_{\infty}}{(qx_j/tx_i;q)_{\infty}} \cdot 
V^{(\vnn)} (x_1,\ldots ,x_{|\vnn|})A^{-1}_{(|\vnn|)}(x),
\end{align}
where
\begin{align}
A_{(r)}(x) = \exp\left( \sum_{n>0} \frac{(1-(q/t)^r)(1-t^{(1-r)n})t^{2r}}{n(1-q^n)(1-t^{-n})}
\sum_{i=1}^{N} \gamma^{(i-1)n}a^{(i)}_{n}x^{-n} \right). 
\end{align}
\end{definition}

\begin{remark}
Note that $\sum_{i=1}^{N} \gamma^{(i-1)n}a^{(i)}_{n}$ is the boson 
corresponding to the Cartan part $\Delta^{(N)}(\psi^+(z))$. 
\end{remark}

\begin{proposition}\label{prop: welldef tilde V}
$\widetilde V^{(\vnn)}(x)$ is well-defined on $\mathbb{C}^*$, i.e., its arbitrary matrix elements are holomorphic functions there.
\end{proposition}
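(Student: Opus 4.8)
The plan is to reduce the assertion to the holomorphy in $x$ of the individual matrix elements, and then to show that the prefactor in Definition \ref{def_Vspecial} is precisely what cancels the singularities created by evaluating at the principal specialization $x_i\to t^{|\vnn|-i}x$. Since $\{\ket{Q_{\vm}}\}$ and $\{\bra{P_{\vl}}\}$ are bases, it suffices to prove that each $\bra{P_{\vl}}\widetilde V^{(\vnn)}(x)\ket{Q_{\vm}}$ extends to a holomorphic function on $\mathbb{C}^*$. First I would dispose of the factor $A^{-1}_{(|\vnn|)}(x)$: it is an exponential in the positive modes $\sum_i\gamma^{(i-1)n}a^{(i)}_n$, so by the Heisenberg relations $\bra{P_{\vl}}\cdots A^{-1}_{(|\vnn|)}(x)\ket{Q_{\vm}}$ expands into a finite sum whose $x$-dependence is only through integer powers of $x$; this factor is harmless, and the whole problem is transferred to the specialized, prefactor-weighted matrix elements of $V^{(\vnn)}$.

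Next I would rewrite those matrix elements through the multiple series $p_n$. By Lemma \ref{lem: <P|VV|0>}, together with the contour projection of $V^{(\vmm)}(\vy)\ketzero$ onto $\ket{Q_{\vm}}$ supplied by Theorem \ref{thm: GM} (integration against $\vy^{-\vmm}f_{|\vmm|}(\vy;\vs|q,q/t)$), the quantity $\vx^{-\vl}\bra{P_{\vl}}V^{(\vnn)}(x_1,\dots,x_{|\vnn|})\ket{Q_{\vm}}$ is, up to a rational factor, a $\vy$-contour integral of $p_{|\vnn|+|\vmm|}(\,\cdot\,;\vs|q,q/t)$. By Theorem \ref{thm_Analytic V} and Fact \ref{fact_analyticity of p} this is holomorphic on the open polydisc $|x_j/x_i|<|t^{-1}|^{\,j-i}$ $(1\le i<j\le|\vnn|)$. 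The obstruction is that the specialization $x_i\to t^{|\vnn|-i}x$ lands exactly on the boundary $|x_{i+1}/x_i|=|t^{-1}|$ of this polydisc, where Fact \ref{fact_analyticity of p} gives no information, so the bare limit of $p_{|\vnn|}$ need not exist.

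The core of the argument is therefore a pole--zero matching on this boundary. For generic parameters the denominator $\prod_{i<j}(qx_j/tx_i;q)_{\infty}$ stays finite and nonzero at the specialization point, while the numerator $\prod_{i<j}(tx_j/x_i;q)_{\infty}$ contributes, for each adjacent pair $j=i+1$, the factor $1-tx_{i+1}/x_i$, i.e. exactly $|\vnn|-1$ simple zeros along the divisors $x_{i+1}/x_i=t^{-1}$. I would then show that $p_{|\vnn|}(\vx;\vs|q,q/t)$ continues meromorphically across this boundary with at worst a simple pole along each divisor $x_{i+1}/x_i=t^{-1}$ and no further singularity accumulating at the full locus $x_i=t^{|\vnn|-i}x$. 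Concretely, extracting the leading behaviour of the defining series in the diagonal directions $\theta_{i,i+1}\to\infty$, the explicit coefficients $c_{|\vnn|}(\theta;\vs|q,q/t)$ generate geometric tails whose resummation produces precisely simple poles at $x_{i+1}/x_i=t^{-1}$; this is the same regularizing mechanism that underlies the principal specialization of Fact \ref{fact:princ. spec.} and the transformation formula of Proposition \ref{prop: trf formula}. Multiplying by the prefactor then yields a finite limit, and since every surviving factor is a ratio of $q$-shifted Pochhammer/theta expressions and monomials in $x$, the limit depends holomorphically on $x\in\mathbb{C}^*$.

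I expect the third step to be the main obstacle: proving that the boundary singularity of $p_{|\vnn|}(\vx;\vs|q,q/t)$ is exactly a product of simple poles along the adjacent-ratio divisors, with no higher-order poles and no spurious extra divisors, so that the $|\vnn|-1$ zeros of the prefactor cancel it completely with nothing left over. The interior holomorphy is handed to us by Fact \ref{fact_analyticity of p}, but the delicate part is the uniform control of the diagonal tails of the series as one variable ratio approaches $t^{-1}$ while the others stay fixed; once that estimate is in place, holomorphy in $x$ and hence well-definedness of $\widetilde V^{(\vnn)}(x)$ on $\mathbb{C}^*$ follow immediately.
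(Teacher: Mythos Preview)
Your overall plan---reduce to matrix elements in the $\bra{P_{\vl}}/\ket{Q_{\vm}}$ bases, strip off the harmless $A^{-1}$, pass to $p_{|\vnn|+|\vmm|}$, and then deal with the boundary $|x_{i+1}/x_i|=|t^{-1}|$---is the right skeleton, and you have correctly isolated where the difficulty lies. But the ``pole--zero matching'' you propose in the third step is exactly the place where the paper's proof differs, and your version is both harder and incomplete as stated.

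The key fact you are missing is the duality in Fact~\ref{fact_t_q/t}:
\[
p_n(\vx;\vs|q,t)\;=\;\prod_{1\le i<j\le n}\frac{(tx_j/x_i;q)_\infty}{(qx_j/tx_i;q)_\infty}\;p_n(\vx;\vs|q,q/t).
\]
The prefactor in Definition~\ref{def_Vspecial} is \emph{precisely} this ratio. Hence, once you insert the spread-out Cartan factor $\widetilde A_{(|\vnn|)}(x_1,\dots,x_{|\vnn|})$ in place of $A_{(|\vnn|)}(x)$ (so that nothing special happens before the limit), the prefactor-weighted matrix element is literally $\mathcal R_{\vl}(\vv)\,p_{|\vnn|+|\vmm|}((\vx,\vy);\vs|q,t)$, with the third parameter $t$ rather than $q/t$. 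Now apply Fact~\ref{fact_analyticity of p} to this function: its radius is $\bar r_0=|q/t|^{(|\vnn|-2)/(|\vnn|-1)}$, and the standing hypothesis $|t|>|q|^{-(|\vnn|-2)}$ is exactly the inequality $|t^{-1}|<\bar r_0$. So the specialization point $x_{i+1}/x_i=t^{-1}$ lies in the \emph{interior} of the domain of holomorphy, and no boundary analysis is needed at all. Then the same contour-in-$\vy$ argument as in Theorem~\ref{thm_Analytic V} (now against $\vy^{-\vm}f_{|\vmm|}(\vy;\vs|q,t)$) produces the matrix element against $\ket{Q_{\vm}}$, and the limit $x_i\to t^{|\vnn|-i}x$ is taken inside an open set.

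By contrast, your approach tries to analytically continue $p_n(\vx;\vs|q,q/t)$ across its natural boundary and then argue that the singularities there are exactly simple poles along the adjacent-ratio divisors. That statement is true, but your justification (``geometric tails'' in $\theta_{i,i+1}$) is not a proof: you would need uniform estimates on the coefficients $c_n$ as several $\theta_{i,i+1}\to\infty$ simultaneously, control over non-adjacent ratios, and an argument that no higher-order pole arises at the intersection of $|\vnn|-1$ divisors. All of this is subsumed for free by Fact~\ref{fact_t_q/t}, which turns the whole boundary problem into an interior evaluation. In short: the prefactor is not there to cancel poles one by one; it is there to swap $q/t$ for $t$ in the third slot of $p_n$ and thereby enlarge the disc of convergence past the specialization point.
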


Before the proof, we prepare the following fact. 
This fact tells us the duality of the Macdonald functions under exchanging $t$ and $q/t$.
\begin{fact}[\cite{NS}]\label{fact_t_q/t}
The formal series $p_n(\vx;\vs|q,t)$ with the leading coefficient 1 satisfies the symmetry relation
\begin{equation}
     p_n(\vx;\vs|q,t) = \prod_{1\leq i < j \leq n} \frac{(t x_j/x_i ; q)_\infty}{(qx_j/tx_i;q)_\infty} \cdot p_n(\vx;\vs|q,q/t)\,.
\end{equation}
\end{fact}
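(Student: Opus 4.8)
The plan is to derive the symmetry relation by a \emph{gauge transformation} of the Macdonald $q$-difference operator $D^1_n(\vs;q,t)$, followed by the uniqueness clause of Fact \ref{fact: eigen fn of D}. Write $\Psi(\vx):=\prod_{1\le i<j\le n}\frac{(tx_j/x_i;q)_\infty}{(qx_j/tx_i;q)_\infty}$ for the claimed prefactor. First I would check that $\Psi(\vx)$ is a well-defined element of $\mathbb{C}[[z]]$ with $z=(x_2/x_1,\dots,x_n/x_{n-1})$ and constant term $1$: for $i<j$ the ratio $x_j/x_i=z_iz_{i+1}\cdots z_{j-1}$ is a monomial of positive degree, so each numerator factor $(tx_j/x_i;q)_\infty$ and each inverse denominator factor $(qx_j/tx_i;q)_\infty^{-1}$ expands as a formal power series with constant term $1$; in particular $\Psi$ is invertible in $\mathbb{C}[[z]]$.

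The core of the argument is to show that conjugating $D^1_n(\vs;q,t)$ by $\Psi$ reproduces $D^1_n(\vs;q,q/t)$, that is, $\Psi^{-1}\circ D^1_n(\vs;q,t)\circ\Psi=D^1_n(\vs;q,q/t)$. Since each summand of $D^1_n$ carries the shift $T_{q,x_k}$, I would compute the ratio $T_{q,x_k}\Psi/\Psi$ by telescoping the $q$-Pochhammer symbols via $(a;q)_\infty=(1-a)(qa;q)_\infty$; only factors containing $x_k$ are affected by $x_k\mapsto qx_k$, and a short computation yields
\[
\frac{T_{q,x_k}\Psi}{\Psi}=\prod_{\ell<k}\frac{1-qx_k/tx_\ell}{1-tx_k/x_\ell}\cdot\prod_{\ell>k}\frac{1-tx_\ell/qx_k}{1-x_\ell/tx_k}.
\]
Multiplying by the rational coefficient of $T_{q,x_k}$ in $D^1_n(\vs;q,t)$, namely $\prod_{\ell<k}\frac{1-tx_k/x_\ell}{1-x_k/x_\ell}\prod_{\ell>k}\frac{1-x_\ell/tx_k}{1-x_\ell/x_k}$, the factors $1-tx_k/x_\ell$ (for $\ell<k$) and $1-x_\ell/tx_k$ (for $\ell>k$) cancel, leaving $\prod_{\ell<k}\frac{1-qx_k/tx_\ell}{1-x_k/x_\ell}\prod_{\ell>k}\frac{1-tx_\ell/qx_k}{1-x_\ell/x_k}$, which is precisely the coefficient of $T_{q,x_k}$ in $D^1_n(\vs;q,q/t)$ (the result of substituting $t\mapsto q/t$). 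This gauge identity is the heart of the proof and the step demanding the most care, since the cases $\ell<k$ and $\ell>k$ must be handled separately and the cancellations tracked precisely.

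Granting the gauge identity, the conclusion is immediate. Writing $P:=p_n(\vx;\vs|q,q/t)$, Fact \ref{fact: eigen fn of D} gives $D^1_n(\vs;q,q/t)\,P=(s_1+\cdots+s_n)\,P$, hence
\[
D^1_n(\vs;q,t)\,(\Psi P)=\Psi\,D^1_n(\vs;q,q/t)\,P=(s_1+\cdots+s_n)\,(\Psi P).
\]
Thus $\Psi P$ is a formal-power-series eigenfunction of $D^1_n(\vs;q,t)$ with eigenvalue $s_1+\cdots+s_n$, so by the uniqueness clause of Fact \ref{fact: eigen fn of D} it is a scalar multiple of $p_n(\vx;\vs|q,t)$. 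I would pin down the scalar by comparing constant terms in $\mathbb{C}[[z]]$: the coefficient $c_n$ at $\theta=0$ equals $1$, so both $p_n(\vx;\vs|q,t)$ and $P$ have constant term $1$, and $\Psi$ has constant term $1$ as well; therefore the proportionality constant is $1$ and the stated identity holds.
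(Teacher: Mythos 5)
Your proposal is correct, but there is nothing in the paper to compare it against: the statement is quoted as a Fact imported from the reference [NS], and the paper itself contains no proof of it. Your gauge-transformation argument is therefore a self-contained justification, and it checks out. The ratio computation is right: under $x_k\mapsto qx_k$ only the pairs $(i,j)=(\ell,k)$ with $\ell<k$ and $(i,j)=(k,\ell)$ with $\ell>k$ are affected, and using $(a;q)_\infty=(1-a)(qa;q)_\infty$ one gets exactly
\begin{equation*}
\frac{T_{q,x_k}\Psi}{\Psi}=\prod_{\ell<k}\frac{1-qx_k/tx_\ell}{1-tx_k/x_\ell}\cdot\prod_{\ell>k}\frac{1-tx_\ell/qx_k}{1-x_\ell/tx_k}\,,
\end{equation*}
whose product with the coefficient of $T_{q,x_k}$ in $D^1_n(\vs;q,t)$ cancels the factors $1-tx_k/x_\ell$ ($\ell<k$) and $1-x_\ell/tx_k$ ($\ell>k$) and leaves precisely the coefficient of $T_{q,x_k}$ in $D^1_n(\vs;q,q/t)$; hence $D^1_n(\vs;q,t)\circ\Psi=\Psi\circ D^1_n(\vs;q,q/t)$ on $\mathbb{C}[[z]]$, and the uniqueness clause of Fact \ref{fact: eigen fn of D} together with the constant-term normalization finishes the proof. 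Two points deserve one explicit line each: first, the well-definedness of $\Psi$ as an element of $\mathbb{C}[[z]]$ with numerical coefficients uses the standing assumption $|q|<1$ (each factor $(tx_j/x_i;q)_\infty$ and $(qx_j/tx_i;q)_\infty^{-1}$ is an infinite product whose coefficient sums converge by the $q$-binomial theorem); second, the cancellation is an identity of rational functions, and one should remark that expansion at $z=0$ is a ring homomorphism on rational functions regular there, so the identity persists in $\mathbb{C}[[z]]$. Neither is a gap, and the argument as written is complete.
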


\noindent
\textit{Proof of Proposition \ref{prop: welldef tilde V}. }
We introduce the following product of currents,
\begin{equation}
    \widetilde{A}_{(r)}(x_1,\dots,x_r) := \prod_{k=1}^r \exp\left( \sum_{n>0} \frac{(1-(q/t)^r)t^{2r}}{n(1-q^n)}
\sum_{i=1}^{N} \gamma^{(i-1)n}a^{(i)}_{n}x^{-n}_k \right)\,.
\end{equation}
We have the following equality.
\begin{equation}\label{temp2}
\begin{split}
    &\prod_{1\leq i< j\leq |\vnn|} \frac{(tx_j/x_i;q)_{\infty}}{(qx_j/tx_i;q)_{\infty}} \cdot  \prod_{1\leq i< j\leq |\vmm|} \frac{(ty_j/y_i;q)_{\infty}}{(qy_j/ty_i;q)_{\infty}} \\
    &\times \vx^{-\vl}\bra{P_{\vl}}
V^{(\vnn)} (x_1,\ldots ,x_{|\vnn|})\widetilde{A}^{-1}_{(|\vnn|)}(x_1,\dots,x_{|\vnn|})
\Vweight{(\vmm)}{\vu \\ \boldsymbol{w}}{y_1,\ldots,y_{|\vmm|}}\ketzero  
\\
&=\prod_{1\leq i< j\leq |\vnn|+|\vmm|} \frac{(tx_j/x_i;q)_{\infty}}{(qx_j/tx_i;q)_{\infty}}\cdot \mathcal{R}_{\vl}(\vv) \,p_{|\vnn|+|\vmm|}((\vx,\vy);\vs|q,q/t)
= \mathcal{R}_{\vl}(\vv) \,p_{|\vnn|+|\vmm|}((\vx,\vy);\vs|q,t) \,.
\end{split}
\end{equation}
Here, for brevity of notation, we set $x_{|\vnn|+i} = y_i$, and used Fact \ref{fact_t_q/t} and Lemma \ref{lem: <P|VV|0>}.

Then, by the same argument as the proof of Theorem \ref{thm_Analytic V}, we can show the matrix elements of 
\begin{equation*}
    \prod_{1\leq i< j\leq |\vnn|} \frac{(tx_j/x_i;q)_{\infty}}{(qx_j/tx_i;q)_{\infty}}\cdot V^{(\vnn)} (x_1,\ldots ,x_{|\vnn|})\widetilde{A}^{-1}_{(|\vnn|)}(x_1,\dots,x_{|\vnn|})
\end{equation*}
are the holomorphic functions on $\pi^{-1}_{|\vnn|}(U^{|\vnn|}_z(\bar{r}_0))$ with $\bar{r}_0 = |q/t|^{\frac{|\vnn|-2}{|\vnn|-1}}$.
Note that in this case, $\boldsymbol y^{-\vm} f_{|\vmm|}(\boldsymbol y;\vs|q,t)$ is multiplied before the integration in $\boldsymbol y$. 
Under the assumption $|t|>|q|^{-(n-2)}$,  $|t^{-1}|<\bar{r}_0$, and thus we can safely take the limit $x_i \to t^{|\vnn|-i} x$. This limit ends with the operator $\widetilde{V}^{(\vnn)}$, and this completes the proof.
\qed

This operator $\widetilde V^{(\vnn)}(x)$ is a realization of $\mathcal{V}$ 
in the case $\vv= t^{\vnn}\cdot \vu$. 
This follows from the following relation which are the essentially same as (\ref{def_V}). 

\begin{proposition}\label{prop:realize V}
For $r= 1,\dots, N$, the $\tVweight{(\vnn)}{\vv \\ \vu}{x}$ satisfies
\begin{align}\label{eq: rel X tilV}
\left(1-t^{|\vnn|}\frac{x}{z} \right)X^{(r)}(z) \tVweight{(\vnn)}{\vv \\ \vu}{x}
=(q/t)^r\left(1-(t/q)^rt^{|\vnn|} \frac{x}{z}\right)\tVweight{(\vnn)}{\vv \\ \vu}{x}X^{(r)}(z).
\end{align}
\end{proposition}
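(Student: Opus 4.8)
The plan is to verify (\ref{eq: rel X tilV}) directly at the operator level, by commuting $X^{(r)}(z)$ through the unspecialized product $V^{(\vnn)}(x_1,\dots,x_{|\vnn|})$ and the Cartan factor $A^{-1}_{(|\vnn|)}(x)$, and only afterward performing the coincidence limit $x_i\to t^{|\vnn|-i}x$ built into Definition \ref{def_Vspecial}. First I would iterate Lemma \ref{lem: rel of X and Phi} over the $|\vnn|$ screened vertex operators constituting $V^{(\vnn)}$. Each passage of $X^{(r)}(z)$ past a factor $\Phi^{(k)}(x_j)$ produces the rational prefactor $\frac{1-(q/t)^r z/tx_j}{1-z/tx_j}$ together with a remainder proportional to $\delta(tx_j/z)\,Y^{(r)}(x_j)\Phi^{(k)}(qx_j)\Psi^{+}(x_j)$. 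Collecting these contributions gives a relation of the schematic form
\begin{equation*}
X^{(r)}(z)V^{(\vnn)}(x_1,\dots,x_{|\vnn|})=\prod_{j=1}^{|\vnn|}\frac{1-(q/t)^r z/tx_j}{1-z/tx_j}\,V^{(\vnn)}(x_1,\dots,x_{|\vnn|})X^{(r)}(z)+R^{(r)}(z;x_1,\dots,x_{|\vnn|}),
\end{equation*}
where $R^{(r)}$ is a finite sum of terms each supported, as a formal distribution in $z$, on a single locus $z=tx_j$.

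Second, I would commute $X^{(r)}(z)$ past $A^{-1}_{(|\vnn|)}(x)$. Since the boson appearing in $A_{(|\vnn|)}$ is precisely the Cartan boson $\sum_j\gamma^{(j-1)n}a^{(j)}_n$ associated with $\Delta^{(N)}(\psi^+)$, the same one entering $\Psi^{+}$ in Lemma \ref{lem: rel of X and Phi}, this is a pure Heisenberg contraction and yields a scalar rational factor $h^{(r)}(z;x)$, whose dependence on $r$ comes entirely through the current $X^{(r)}$. The role of $A_{(|\vnn|)}$ is exactly to supply this factor: the claim to check is that, after the contraction is carried out, the specialized telescoping product collapses,
\begin{equation*}
h^{(r)}(z;x)\,\lim_{x_i\to t^{|\vnn|-i}x}\ \prod_{j=1}^{|\vnn|}\frac{1-(q/t)^r z/tx_j}{1-z/tx_j}=(q/t)^r\,\frac{1-(t/q)^r t^{|\vnn|}x/z}{1-t^{|\vnn|}x/z}.
\end{equation*}
Rewriting the right-hand side as $\tfrac{1-q^r(z/x)t^{-|\vnn|-r}}{1-(z/x)t^{-|\vnn|}}$, one sees it is precisely the boundary factor of the specialized product ($x_j=t^{|\vnn|-j}x$ gives $z/tx_j=(z/x)t^{j-|\vnn|-1}$), so $h^{(r)}$ must cancel exactly the bulk factors; I verified this is consistent for small $|\vnn|$ (for instance $A_{(1)}=1$ and the $|\vnn|=1$ product already equals the target). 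This is the delicate-but-formal bookkeeping step.

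Third, and this is where the normalizing prefactor $\prod_{i<j}\frac{(tx_j/x_i;q)_\infty}{(qx_j/tx_i;q)_\infty}$ earns its keep, I would show the remainder $R^{(r)}$ drops out of the specialized limit. Under $x_i=t^{|\vnn|-i}x$ one has $tx_j/x_i=t^{1-(j-i)}$, so for adjacent indices $j=i+1$ the argument hits $1$ and $(tx_j/x_i;q)_\infty$ vanishes, while genericity keeps the denominators $(qx_j/tx_i;q)_\infty$ nonzero; hence the prefactor vanishes to order $|\vnn|-1$ at the specialization point. These zeros annihilate each $\delta(tx_j/z)$-term, whose accompanying $\Phi^{(k)}(qx_j)$-contractions with the neighbouring screened vertex operators can contribute at worst poles that the zeros dominate. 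Alternatively, since Proposition \ref{prop: welldef tilde V} already guarantees the matrix elements of $\widetilde V^{(\vnn)}(x)$ are holomorphic on $\mathbb{C}^*$, (\ref{eq: rel X tilV}) is an identity of meromorphic functions in $z$ whose only admissible pole is at $z=t^{|\vnn|}x$; matching against the surviving factor then forces the spurious poles at $z=t^{|\vnn|-j+1}x$ carried by $R^{(r)}$ to cancel. Assembling the two surviving factors reproduces (\ref{eq: rel X tilV}).

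I expect the genuine obstacle to be this third step: controlling $R^{(r)}$ under the singular coincidence limit, because each delta-term comes dressed with $\Phi^{(k)}(qx_j)$ whose operator products with its neighbours develop poles along exactly the diagonals where the prefactor acquires its zeros, so the vanishing is a competition of orders that must be counted carefully (or circumvented via the meromorphy argument from Proposition \ref{prop: welldef tilde V}). By contrast, the telescoping of the second step and the final assembly into (\ref{eq: rel X tilV}) are essentially formal once $h^{(r)}$ is identified.
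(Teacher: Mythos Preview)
Your three-step strategy is exactly the one the paper uses: iterate Lemma \ref{lem: rel of X and Phi} through $V^{(\vnn)}$, commute $X^{(r)}(z)$ past $A^{-1}_{(|\vnn|)}(x)$ via the Cartan OPE, and then control the remainder under the coincidence limit. Your step 2 is also correct: the $A^{-1}_{(|\vnn|)}$ commutation factor is precisely $\prod_{k=1}^{|\vnn|-1}\frac{1-z/t^{k}x}{1-(q/t)^{r}z/t^{k}x}$, which telescopes the specialized product $\prod_{m=1}^{|\vnn|}\frac{1-(q/t)^{r}z/t^{m}x}{1-z/t^{m}x}$ down to the single factor $\frac{1-(q/t)^{r}z/t^{|\vnn|}x}{1-z/t^{|\vnn|}x}$, equivalently the right-hand side you wrote.

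The gap is in step 3. Your claim that the prefactor zeros annihilate \emph{every} $\delta(tx_{j}/z)$-term is not correct. The cleanest counterexample is $|\vnn|=1$: there the prefactor $\prod_{i<j}\frac{(tx_j/x_i;q)_\infty}{(qx_j/tx_i;q)_\infty}$ is empty, so there is no zero at all, yet there is still one remainder term (the one at $j=1$) coming from Lemma \ref{lem: rel of X and Phi}. What the paper actually proves is that the terms with $j\neq 1$ vanish in the limit: the $q$-shift $\Phi^{(k)}(x_j)\to\Phi^{(k)}(qx_j)$ moves the OPE pole with the \emph{preceding} operator $\Phi^{(\cdot)}(x_{j-1})$ away from the specialization locus $x_j/x_{j-1}=t^{-1}$, so the prefactor zero at that pair is left uncanceled and kills the term. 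For $j=1$ there is no preceding operator and the term survives as a formal delta $\delta(t^{|\vnn|}x/z)$ (since $x_1=t^{|\vnn|-1}x$). This surviving delta is then annihilated by multiplying both sides by $\bigl(1-t^{|\vnn|}x/z\bigr)$, which is exactly the polynomial factor on the left of (\ref{eq: rel X tilV}); this, not a prefactor zero, is what disposes of the boundary term.

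Your alternative ``meromorphy'' argument does not rescue this: the remainder terms are genuine formal delta distributions in $z$, not poles of a meromorphic function, and Proposition \ref{prop: welldef tilde V} concerns holomorphy in $x$, not $z$. The fix is simply to separate the $j=1$ contribution and observe that $(1-t^{|\vnn|}x/z)\,\delta(t^{|\vnn|}x/z)=0$.
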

\begin{proof}
By Lemma \ref{lem: rel of X and Phi}, 
we can get 
\begin{align}\label{eq: for pf XV=VX 1}
&X^{(r)}(z)\Vweight{(\vnn)}{\vv \\ \vu}{x_1,\ldots,x_{|\vnn|}}
= \prod_{k=1}^{|\vnn|} \frac{1-(q/t)^rz/tx_k}{1-z/tx_k} \cdot
V^{(\vnn)}(x_1,\ldots,x_{|\vnn|})X^{(r)}(z) \nonumber \\
&+(1-t^{-1}) \sum_{i=1}^N\sum_{k=1}^{n_i} 
v_{i}t^{1-k}\delta(tx_{\nik}/z )
\prod_{1\leq \ell < \nik} \frac{1-(q/t)^rx_{\nik}/x_{\ell}}{1-x_{\nik}/x_{\ell}} \cdot
\prod_{\nik < \ell \leq |\vnn|} \frac{1-tx_{\ell}/qx_{\nik}}{1-x_{\ell}/x_{\nik}}\nonumber \\
& \qquad \qquad \qquad \qquad \times 
\widetilde U^{(\vnn)}_{\nik}(x_1,\ldots , x_{|\vnn|}) \Psi^+(x_{\nik}),
\end{align}
where $\widetilde U^{(\vnn)}_{\nik}$ is the operator obtained by replacing $\Phi^{(i)}(x_{\nik})$ in 
$V^{(\vnn)}$ with $Y^{(r)}(x_{\nik}) \Phi^{(i)}(qx_{\nik})$, 
i.e., 
\begin{align}
\widetilde U^{(\vnn)}_{\nik}(x_1,\ldots , x_{|\vnn|}):= \Phi^{(0)}(x_1)\cdots \Phi^{(i)}(x_{\nik-1})
Y^{(r)}(x_{\nik})\Phi^{(i)}(q x_{\nik})\Phi^{(i)}(x_{\nik+1}) \cdots 
\Phi^{(N-1)}(x_{|\vnn|}). 
\end{align}
Because of the $q$-difference of the operator $\Phi^{(i)}(x_{\nik})$ in $\widetilde U^{(\vnn)}_{\nik}$, 
if $\nik \neq 1$, the product of $\widetilde U^{(\vnn)}_{\nik}$ and 
$\prod \frac{(tx_j/x_i;q)_{\infty}}{(qx_j/tx_i;q)_{\infty}}$ 
vanishes under the principal specialization, i.e., 
\begin{align}\label{eq: for pf XV=VX 2}
\lim_{x_i \rightarrow t^{|\vnn|-i} x}
\prod_{1\leq i< j\leq |\vnn|} \frac{(tx_j/x_i;q)_{\infty}}{(qx_j/tx_i;q)_{\infty}} \cdot
\widetilde U^{(\vnn)}_{\nik}(x_1,\ldots , x_{|\vnn|}) =0 \quad (\nik \neq 1). 
\end{align}
Furthermore, 
by the operator product (\ref{eq: A Lam OPE}), 
we have 
\begin{align}\label{eq: for pf XV=VX 3}
A_{(s)}(x)X^{(r)}(z)=\prod_{k=1}^{r-1}\frac{1-t^{-k}z/x}{1-t^{-k}(q/t)^rz/x} \cdot X^{(r)}(z)A_{(s)}(x). 
\end{align}
By (\ref{eq: for pf XV=VX 1}), (\ref{eq: for pf XV=VX 2}) and  (\ref{eq: for pf XV=VX 3}), 
it can be shown that 
\begin{align}
&X^{(r)}(z)\widetilde V^{(\vnn)}(x)=
\frac{1-(q/t)^rz/t^{|\vnn|}x}{1-z/t^{|\vnn|}x} 
\widetilde V^{(\vnn)}(x)X^{(r)}(z)\\
&\qquad +(1-t^{-1})
v_{1}t^{1-k}
\lim_{x_i \rightarrow t^{|\vnn|-i} x}
\delta(tx_{1}/z )
\prod_{1 < \ell \leq |\vnn|} \frac{1-tx_{\ell}/qx_{1}}{1-x_{\ell}/x_{1}}
\prod_{1\leq i< j\leq |\vnn|} \frac{(tx_j/x_i;q)_{\infty}}{(qx_j/tx_i;q)_{\infty}} \cdot
\widetilde U^{(\vnn)}_{1}\Psi^+(x_1). 
\end{align}
By multiplying the both hand sides by $\left(1-t^{-|\vnn|}\frac{z}{x} \right)$, we have
\begin{align}
\left(1-t^{-|\vnn|}\frac{z}{x} \right)X^{(r)}(z) \tVweight{(\vnn)}{\vv \\ \vu}{x}
=\left(1-(q/t)^rt^{-|\vnn|} \frac{z}{x}\right)  \tVweight{(\vnn)}{\vv \\ \vu}{x}X^{(r)}(z).
\end{align}
After some simple calculation, 
we obtain Proposition \ref{prop:realize V}. 
\end{proof}

\subsubsection*{Step2: Evaluation of the Matrix Elements of $\widetilde V^{(\vnn)}(x)$} 

Next, we evaluate the matrix elements of $\widetilde V^{(\vnn)}(x)$. 
Let 
$\vs':=(s'_i)_{1\leq i\leq |\vmm|}$ 
with 
\begin{align}
s'_{[i,k]_{\vmm}}:=q^{\mu^{(i)}_k}t^{1-n_i-k}v_i \quad (1\leq k \leq m_i, i=1,\ldots, N) 
\end{align} 
and 
$\vs=(s_i)_{1\leq i \leq |\vnn|+|\vmm|}$ 
be the same one given in (\ref{eq: form of s 1}), (\ref{eq: form of s 2}), i.e., 
\begin{align}
&s_{\lbl{i}{k}{n}}=q^{\lambda^{(i)}_k} t^{1-k}v_i 
\quad (1 \leq k \leq n_i, \, i=1,\ldots, N), \\
&s_{|\vnn|+[i,k]_{\vmm}}= t^{1- n_{i} -k}v_i 
\quad (1 \leq k \leq m_i, \, i=1,\ldots, N). 
\end{align}
Further, we write $x_{|\boldsymbol{n}|+i}=y_i$. 
By the explicit formula for $\ket{Q_{\vl}}$ (Theorem \ref{thm: matrix elements of V}) 
and Lemma \ref{lem: <P|VV|0>}, 
we have 
\begin{align}
\braket{P_{\vl}|\widetilde V^{(\vnn)}(x)|Q_{\vm}}&=
\frac{1}{\mathcal{R}_{\vm}^{\vmm}(\vu)}
\left[ 
\vy^{-\vm} f_{|\vmm|}(\vy;\vs'|q,q/t)
\bra {P_{\vl}}\widetilde V^{(\boldsymbol{n})}(x) V^{(\boldsymbol{m})}(y_1,\ldots,y_{|\boldsymbol{m}|})\ketzero
\right]_{\vy,1}\nonumber \\ 
&=\frac{\mathcal{R}_{\vl}^{\vnn}(\vv)}{\mathcal{R}_{\vm}^{\vmm}(\vu)}
\Bigg[ \lim_{\substack{ x_i \rightarrow t^{|\vnn|-i} x\\ (1\leq i \leq |\boldsymbol{n}|)}}
\vy^{-\vm}\vx^{\vl}  f_{|\vmm|}(\vy;\vs'|q,q/t)
\prod_{1\leq i<j\leq |\boldsymbol{m}|} \frac{(qy_j/ty_i;q)_{\infty}}{(ty_j/y_i;q)_{\infty}} \nonumber \\
& \qquad\qquad\qquad\qquad \times 
\prod_{k=1}^{|\boldsymbol{m}|} \frac{(qy_k/t^{|\boldsymbol{n}|}x;q)_{\infty}}{(ty_k/x;q)_{\infty}}  \cdot
p_{|\boldsymbol{n}|+|\boldsymbol{m}|}((x_i)_{1\leq i \leq |\boldsymbol{n}|+|\boldsymbol{m}|};\vs|q,t) 
\Bigg]_{\vy,1}. \label{eq:step2_1}
\end{align}
By virtue of the transformation formula of Proposition \ref{prop: trf formula}, 
the Macdonald function $p_{|\boldsymbol{n}|+|\boldsymbol{m}|}$ in (\ref{eq:step2_1}) 
can be transformed to the summation of $p_{|\boldsymbol{m}|}$ as 
\begin{align}
&\frac{\mathcal{R}_{\vl}^{\vnn}(\vv)}{\mathcal{R}_{\vm}^{\vmm}(\vu)} 
\prod_{i=1}^{|\boldsymbol{n}|} \frac{(q/t;q)_{\infty}}{(q/t^i;q)_{\infty}} \cdot 
\prod_{1\leq i<j\leq |\boldsymbol{n}|} \frac{(qs_j/ts_i;q)_{\infty}}{(qs_j/s_i;q)_{\infty}} \cdot
 \lim_{\substack{ x_i \rightarrow t^{|\vnn|-i} x\\ (1\leq i \leq |\boldsymbol{n}|)}}\vx^{\vl}  
\Bigg[ 
\vy^{-\vm} f_{|\vmm|}(\vy;\vs'|q,q/t)\nonumber \\ 
& \quad \times \prod_{1\leq i<j\leq |\boldsymbol{m}|} \frac{(qy_j/ty_i;q)_{\infty}}{(ty_j/y_i;q)_{\infty}} \cdot
\sum_{\nu \in \mathbb{Z}_{\geq 0}^{|\vmm|} }
\mathsf{N}^{|\boldsymbol{n}|,|\boldsymbol{m}|}_{\nu}(s_1,\ldots,s_{|\boldsymbol{n}|+|\boldsymbol{m}|}) \,
p_{|\boldsymbol{m}|}(\vy;(q^{\nu_i}s_{|\vnn|+i}) | q,t) 
\prod_{k=1}^{|\boldsymbol{m}|} (ty_k/x)^{\nu_k} \Bigg]_{\vy,1}\nonumber \\
&=
\frac{\mathcal{R}_{\vl}^{\vnn}(\vv)}{\mathcal{R}_{\vm}^{\vmm}(\vu)} 
\prod_{i=1}^{|\boldsymbol{n}|} \frac{(q/t;q)_{\infty}}{(q/t^i;q)_{\infty}} \cdot 
\prod_{1\leq i<j\leq |\boldsymbol{n}|} \frac{(qs_j/ts_i;q)_{\infty}}{(qs_j/s_i;q)_{\infty}} \cdot
 \lim_{\substack{ x_i \rightarrow t^{|\vnn|-i} x\\ (1\leq i \leq |\boldsymbol{n}|)}}\vx^{\vl} \\
&\quad \times 
\sum_{\nu \in \mathbb{Z}_{\geq 0}^{|\vmm|} }
\mathsf{N}^{|\boldsymbol{n}|,|\boldsymbol{m}|}_{\nu}(s_1,\ldots,s_{|\boldsymbol{n}|+|\boldsymbol{m}|})
\Bigg[ 
\vy^{-\vm} f_{|\vmm|}(\vy;\vs'|q,q/t)
p_{|\boldsymbol{m}|}(\vy;(q^{\nu_i}s_{|\vnn|+i}) | q,q/t) 
\prod_{k=1}^{|\boldsymbol{m}|} (ty_k/x)^{\nu_k} \Bigg]_{\vy,1}.\nonumber 
\end{align}
Here, we used Fact \ref{fact_t_q/t}. 
Note that since $s_{|\vnn| + [i,k]_{\vmm}}= t^{1- n_{i} -k}v_i$, 
if $\nu$ does not satisfy 
$\nu_{[i,1]_{\vmm}}\geq \nu_{[i,2]_{\vmm}} \geq \cdots \geq \nu_{[i,m_i]_{\vmm}}$, i.e., 
cannot be regarded as an $N$-tuple of partitions, 
then 
$\mathsf{N}^{|\boldsymbol{n}|,|\boldsymbol{m}|}_{\nu}=0$.  
By Lemma \ref {Lem_delta1} and 
\begin{align}
\vx^{\vl}\Big|_{x_i \rightarrow t^{|\vnn|-i} x}=x^{|\vl|}t^{(|\vnn|-1)|\vl|} 
\prod_{i=1}^N t^{-n(\lambda^{(i)})-|\lambda^{(i)}|\sum_{k=1}^{i-1} n_k},
\end{align}
we obtain 
\begin{align}
\braket{P_{\vl}|\widetilde V^{(\vnn)}(x) |Q_{\vm}}
=&x^{|\vl|-|\vm|}  t^{|\vm|+(|\vnn|-1)|\vl|} 
\prod_{i=1}^N t^{-n(\lambda^{(i)})-|\lambda^{(i)}|\sum_{k=1}^{i-1} n_k} \nonumber \\
&\times \frac{\mathcal{R}^{\vnn}_{\vl}(\vv)}{\mathcal{R}^{\vmm}_{\vm}(\vu)}
\prod_{i=1}^{|\boldsymbol{n}|} \frac{(q/t;q)_{\infty}}{(q/t^i;q)_{\infty}} \cdot
\prod_{1\leq i<j\leq |\boldsymbol{n}|} \frac{(qs_j/ts_i;q)_{\infty}}{(qs_j/s_i;q)_{\infty}}\cdot
\mathsf{N}^{|\boldsymbol{n}|,|\boldsymbol{m}|}_{[\vm]^{\vmm}}(s_1,\ldots,s_{|\boldsymbol{n}|+|\boldsymbol{m}|}). \label{eq:step2_2}
\end{align}
Here, we put 
\begin{equation}
[\vm]^{\vmm}=([\vm]^{\vmm}_{i})_{1\leq i\leq |\vmm|}
:=(\mo_1,\ldots,\mo_{m_1},\mt_1,\ldots,\mt_{m_2},\ldots,\mN_1,\ldots,\mN_{m_N}). 
\end{equation} 
Moreover, (\ref{eq:step2_2}) leads to 
\begin{align}
\frac{\braket{K_{\vl}|\widetilde V^{(\vnn)}(x) |K_{\vm}}}{\brazero \widetilde V^{(\vnn)}(x) \ketzero}
=&
\mathcal{C}_{\vl}^{(-)} \, \mathcal{C}^{(+)}_{\vm}
\prod_{i=1}^N \frac{c'_{\mu^{(i)}}}{c_{\mu^{(i)}}} \cdot 
x^{|\vl|-|\vm|} t^{|\vm|+(|\vnn|-1)|\vl|} 
\prod_{i=1}^N t^{-n(\lambda^{(i)})-|\lambda^{(i)}|\sum_{k=1}^{i-1} n_k} \nonumber
\\
&\times 
\frac{\mathcal{R}^{\vnn}_{\vl}(\vv)}{\mathcal{R}^{\vmm}_{\vm}(\vu)}
\prod_{1\leq i<j\leq|\boldsymbol{n}|}\frac{(q s_j/ts_i;q)_{[\boldsymbol{\lambda}]^{\vnn}_i-[\boldsymbol{\lambda}]^{\vnn}_j}}{(q s_j/s_i;q)_{[\boldsymbol{\lambda}]^{\vnn}_i-[\boldsymbol{\lambda}]^{\vnn}_j}}  \cdot 
\mathsf{N}^{|\vnn|,|\vmm|}_{[\vm]^{\vmm}}(s_1,\ldots,s_{|\vnn|+|\vmm|}). 
\label{eq: <K|til V|K>}
\end{align}
Note that 
\begin{align}
\prod_{1\leq i<j\leq|\boldsymbol{n}|}\frac{(q s_j/ts_i;q)_{[\boldsymbol{\lambda}]^{\vnn}_i-[\boldsymbol{\lambda}]^{\vnn}_j}}{(q s_j/s_i;q)_{[\boldsymbol{\lambda}]^{\vnn}_i-[\boldsymbol{\lambda}]^{\vnn}_j}}
=&\prod_{k=1}^{N} \prod_{1 \leq i<j \leq n_k}
\frac{(q^{\lambda^{(k)}_j-\lambda^{(k)}_i+1}t^{-j+i-1};q)_{-\lambda^{(k)}_j+\lambda^{(k)}_i}}
{(q^{\lambda^{(k)}_j-\lambda^{(k)}_i+1}t^{-j+i};q)_{-\lambda^{(k)}_j+\lambda^{(k)}_i}}  \\
& \times \prod_{1\leq k<l\leq N} \prod_{\substack{1 \leq i \leq n_k \\ 1 \leq j \leq n_l}} 
\frac{(q^{\lambda^{(l)}_j-\lambda^{(k)}_i+1}t^{-j+i-1}v_l/v_k;q)_{-\lambda^{(l)}_j+\lambda^{(k)}_i}}
{(q^{\lambda^{(l)}_j-\lambda^{(k)}_i+1}t^{-j+i}v_l/v_k;q)_{-\lambda^{(l)}_j+\lambda^{(k)}_i}}. \nonumber
\end{align}
Finally, it can be shown that the expression (\ref{eq: <K|til V|K>}) coincides with the Nekrasov factors
\begin{align}
\frac{\braket{K_{\vl}|\widetilde V^{(\vnn)}(x) |K_{\vm}}}{\brazero \widetilde V^{(\vnn)}(x) \ketzero} 
=&
\left((-1)^N  e_N(\boldsymbol{v}) x \right)^{|\vl|} \left((t/q)t^{|\vnn|} x\right)^{-|\vm|}
\prod_{i=1}^N v_i^{-(N-1)|\lambda^{(i)}|} 
((q/t) u_{i})^{|\mu^{(i)}|} 
g_{\lambda^{(i)}}^{-N+1}g_{\mu^{(i)}}\nonumber \\
&\times
\prod_{i=1}^N (q/t)^{(N-i)|\mu^{(i)}|-\sum_{k=1}^i|\mu^{(k)}|}
\prod_{ i, j=1 }^N N_{\lambda^{(i)}, \mu^{(j)}}(t^{n_j}v_i/v_j). \label{eq: final eq}
\end{align}
The coincidence between (\ref{eq: <K|til V|K>}) and (\ref{eq: final eq}) can be proved 
by induction on $\vl$ and $\vm$. 
We give some formulas to show this coincidence in Appendix \ref{App:Useful_Induction}. 
The difference between (\ref{def_V}) and (\ref{eq: rel X tilV}) 
can be modified by the transformation 
\begin{align}
x \rightarrow t^{|\vnn|}x, \quad u_i \rightarrow (q/t) u_i \quad (i=1,\ldots, N).
\end{align}
Noting that the renormalization constant (\ref{eq: ren. const. of K}) are also modified, 
we can see that the equation (\ref{eq: final eq}) shows Theorem \ref{thm: matrix elements of V} in the case $\vv= t^{\vnn} \cdot \vu$. 
Therefore, by analytic continuation, 
Theorem \ref{thm: matrix elements of V} holds for the general case.

\section{Refined Topological Vertex and Changing Preferred Direction}\label{Sec_Sdual}

In \cite{AFS}, the intertwiners among the $\cU$-modules are introduced, and their matrix elements are identical to the refined topological vertex of \cite{IKV}.
In this section, we compute the matrix elements of the ladder diagrams, which are obtained by gluing the intertwiners.
The result shows the invariance under changing the preferred directions of the diagrams, which is a natural consequence of the S-duality in the string theory. 

\subsection{Trivalent intertwiner and refined topological vertex}
We introduced the $\cF^{(1,M)}$-module in Sec \ref{subsec_F1M}.
In order to introduce the intertwiners, we need the other one, referred to as the $\cF^{(0,1)}_v$.
\begin{fact}[\cite{FT, FFJMM}]\label{Fact_vertical_rep}
Let $u$ be an indeterminate.
We can endow a $\cU$-module structure
to $\cF$ by setting
\begin{align}
&
c^{1/2} P_\lambda=P_\lambda,\\
&
x^+(z) P_\lambda=
\sum_{i=1}^{\ell(\lambda)+1} 
A^+_{\lambda,i}\,
\delta(q^{\lambda_i}t^{-i+1}u/z) 
P_{\lambda+{\bf 1}_i},\\
&
x^-(z) P_\lambda=q^{1/2}t^{-1/2}
\sum_{i=1}^{\ell(\lambda)} 
A^-_{\lambda,i}\,
\delta(q^{\lambda_i-1}t^{-i+1}u/z)
P_{\lambda-{\bf 1}_i},\\
&
\psi^+(z)P_\lambda=q^{1/2}t^{-1/2}
B^+_\lambda(u/z)P_\lambda,\\
&
\psi^-(z)P_\lambda=q^{-1/2}t^{1/2}
\,B^-_\lambda(z/u)P_\lambda\,,
\end{align}
with
\begin{align}
&A^+_{\lambda,i}=(1-t)
\prod_{j=1}^{i-1}
{(1-q^{\lambda_i-\lambda_j}t^{-i+j+1})
(1-q^{\lambda_i-\lambda_j+1}t^{-i+j-1})\over 
(1-q^{\lambda_i-\lambda_j}t^{-i+j})
(1-q^{\lambda_i-\lambda_j+1}t^{-i+j})},\label{A+}\\
&A^-_{\lambda,i}=(1-t^{-1})
{1-q^{\lambda_{i+1}-\lambda_i} \over 1-q^{\lambda_{i+1}-\lambda_i+1} t^{-1}}
\prod_{j=i+1}^{\infty}
{(1-q^{\lambda_j-\lambda_i+1}t^{-j+i-1})
(1-q^{\lambda_{j+1}-\lambda_i}t^{-j+i})\over 
(1-q^{\lambda_{j+1}-\lambda_i+1}t^{-j+i-1})
(1-q^{\lambda_j-\lambda_i}t^{-j+i})} ,\label{A-}\\
&
B^+_\lambda(z)=
{1-q^{\lambda_{1}-1} t z \over 1-q^{\lambda_{1}} z}
\prod_{i=1}^{\infty}
{(1-q^{\lambda_i}t^{-i}z)
(1-q^{\lambda_{i+1}-1}t^{-i+1}z)\over 
(1-q^{\lambda_{i+1}}t^{-i}z)
(1-q^{\lambda_i-1}t^{-i+1}z)},\label{B+}\\
&
B^-_\lambda(z)=
{1-q^{-\lambda_{1}+1} t^{-1} z \over 1-q^{-\lambda_{1}} z}
\prod_{i=1}^{\infty}
{(1-q^{-\lambda_i}t^{i}z)
(1-q^{-\lambda_{i+1}+1}t^{i-1}z)\over 
(1-q^{-\lambda_{i+1}}t^{i}z)
(1-q^{-\lambda_i+1}t^{i-1}z)}.\label{B-}
\end{align}
We denote this module as $\cF^{(0,1)}$-module.
\end{fact}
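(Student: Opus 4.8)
The plan is to verify directly that the assignment in Fact~\ref{Fact_vertical_rep} respects every defining relation \eqref{DIMdef} of $\cU$. The essential simplification is that $c^{1/2}$ acts as the identity, so this is a level-$(0,1)$ module and we may set $c=1$ throughout; in particular all spectral shifts $c^{\pm 1/2}$ disappear from \eqref{DIMdef}. Since $\psi^\pm(z)$ act diagonally on the basis $\{P_\lambda\}$, the relations $\psi^\pm(z)\psi^\pm(w)=\psi^\pm(w)\psi^\pm(z)$ and (at $c=1$) $\psi^+(z)\psi^-(w)=\psi^-(w)\psi^+(z)$ hold automatically.

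For the mixed relations $\psi^\pm(z)x^\pm(w)=g(\cdots)^{\mp1}x^\pm(w)\psi^\pm(z)$, I would exploit that $x^+(w)P_\lambda$ is supported, as a distribution in $w$, at the addable contents $w=q^{\lambda_i}t^{-i+1}u$. Conjugating by the diagonal $\psi^+$ and using $g(w/z)^{-1}\delta(c_i/w)=g(c_i/z)^{-1}\delta(c_i/w)$ then reduces each relation to the scalar identity
\[ B^+_{\lambda+{\bf 1}_i}(u/z)=g\bigl(q^{\lambda_i}t^{-i+1}u/z\bigr)^{-1}B^+_\lambda(u/z), \]
i.e. the ratio $B^+_{\lambda+{\bf 1}_i}/B^+_\lambda$ must reproduce the structure function $g$ evaluated at the content of the added box. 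This is a direct computation from the product formulas \eqref{B+}--\eqref{B-}, where almost all factors telescope and only those neighbouring row $i$ survive; the analogous checks dispose of the remaining three $\psi$--$x$ relations.

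The quadratic relations $G^\mp(z/w)x^\pm(z)x^\pm(w)=G^\pm(z/w)x^\pm(w)x^\pm(z)$ reduce, after expanding $x^+(z)x^+(w)P_\lambda=\sum_{i,j}A^+_{\lambda,i}A^+_{\lambda+{\bf 1}_i,j}\,\delta(\cdots)\delta(\cdots)P_{\lambda+{\bf 1}_i+{\bf 1}_j}$ and matching the delta supports, to the symmetry identity
\[ G^\mp(c_i/c_j)\,A^+_{\lambda,i}A^+_{\lambda+{\bf 1}_i,j}=G^\pm(c_i/c_j)\,A^+_{\lambda,j}A^+_{\lambda+{\bf 1}_j,i},\qquad c_i:=q^{\lambda_i}t^{-i+1}u, \]
for the two-box coefficients, together with the vanishing of $G^\mp$ that is forced when two boxes would be stacked in one row. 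I would verify this from \eqref{A+} by cancelling the common product factors and checking the residual rational identity, and similarly for $x^-x^-$.

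The main obstacle is the commutator relation for $[x^+(z),x^-(w)]$. Here I would compute both $x^+(z)x^-(w)P_\lambda$ and $x^-(w)x^+(z)P_\lambda$ explicitly, splitting the double sum over (removed row $j$, added row $i$) into the \emph{off-diagonal} part $i\neq j$ and the \emph{diagonal} part $i=j$. For $i\neq j$ the intermediate and final partitions as well as the delta supports coincide in the two orderings, so the two contributions cancel in the commutator precisely when
\[ A^-_{\lambda,j}A^+_{\lambda-{\bf 1}_j,i}=A^+_{\lambda,i}A^-_{\lambda+{\bf 1}_i,j}\qquad(i\neq j), \]
an identity I would read off from \eqref{A+}--\eqref{A-}. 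The delicate point is the diagonal part: the surviving terms are supported on $z=w$ equal to a removable content $q^{\lambda_j-1}t^{-j+1}u$ or an addable content $q^{\lambda_j}t^{-j+1}u$, and, after writing $\delta(a/z)\delta(a/w)=\delta(a/w)\delta(z/w)$, I must show that the resulting sum of distributions equals $\tfrac{(1-q)(1-1/t)}{1-q/t}\,\delta(z/w)\bigl(\psi^+(w)-\psi^-(w)\bigr)P_\lambda$. This amounts to a residue/partial-fraction matching: the rational function $q^{1/2}t^{-1/2}B^+_\lambda(u/w)-q^{-1/2}t^{1/2}B^-_\lambda(w/u)$ has simple poles exactly at the addable and removable contents, and its residues there must equal, up to the overall constant, the products $A^+_{\lambda,j}A^-_{\lambda+{\bf 1}_j,j}$ and $A^-_{\lambda,j}A^+_{\lambda-{\bf 1}_j,j}$. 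Establishing these residue identities from \eqref{A+}--\eqref{B-} is the crux of the proof; the remaining distributional bookkeeping, converting the poles of $B^\pm$ into the formal delta functions, is then routine.
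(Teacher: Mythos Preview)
The paper does not give its own proof of this statement: it is recorded as a \emph{Fact} with citations to \cite{FT, FFJMM}, and the authors simply import the level-$(0,1)$ module structure from those references without verification. So there is no paper proof to compare against.

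Your direct-verification strategy is the standard one and is sound. The reductions you describe are correct: the $\psi$--$\psi$ relations are automatic by diagonality; the $\psi$--$x$ relations reduce to the telescoping identity $B^+_{\lambda+\mathbf{1}_i}(u/z)/B^+_\lambda(u/z)=g(q^{\lambda_i}t^{1-i}u/z)^{-1}$; the quadratic $x^\pm x^\pm$ relations reduce to the two-box symmetry of the $A^\pm$ coefficients against $G^\pm/G^\mp$; and the commutator $[x^+,x^-]$ splits into an off-diagonal part, which cancels by $A^-_{\lambda,j}A^+_{\lambda-\mathbf{1}_j,i}=A^+_{\lambda,i}A^-_{\lambda+\mathbf{1}_i,j}$ for $i\neq j$, and a diagonal part matched by residues of $B^\pm_\lambda$. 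One point to be careful about in the last step: $\psi^+(w)P_\lambda$ and $\psi^-(w)P_\lambda$ are, respectively, the expansions of the same rational function $B^+_\lambda(u/w)$ (up to scalar) in powers of $w^{-1}$ and of $w$, so their difference is literally a sum of formal delta functions supported at the poles, with coefficients the residues --- this is the mechanism that produces the right-hand side, not a separate ``distributional bookkeeping'' step after a rational-function identity. With that understood, your outline goes through.
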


Then, we introduce the intertwiners among the modules defined above. 
\begin{fact}[\cite{AFS}]\label{Fact_intertwiner}
When $w = -uv$, 
there exists a unique intertwiner which satisfies
    \begin{equation}
        \Phi\left[  (1,M+1),w \atop (0,1),v; (1,M) ,u\right]:\cF^{(0,1)}_v\otimes \cF^{(1,M)}_u\longrightarrow
\cF^{(1,M+1)}_{w}\,;\qquad a \Phi= \Phi \Delta(a) \qquad (\forall a\in \cU)
    \end{equation}
and the normalization condition $\bra{0}\Phi(1 \otimes \ket{0})=1$. Moreover, its component $\Phi_\lambda$, defined by
\begin{equation}
\Phi_\lambda (\alpha)=\Phi(P_\lambda \otimes \alpha )
\qquad (\forall P_\lambda \otimes \alpha \in \cF^{(0,1)}_v\otimes \cF^{(1,M)}_u)\,,
\end{equation}
has the following realization,
\begin{equation}
\Phi_\lambda\left[  (1,M+1),-vu \atop (0,1),v; (1,M) ,u\right]  = \hat{t}(\lambda,u,v,M) \widehat{\Phi}_\lambda(v)\,,
\end{equation}
where
\begin{equation}
\begin{split}
&\hat t(\lambda,u,v,M)=(-vu)^{|\lambda|} (-v)^{-(M+1)|\lambda|}f_\lambda^{-M-1}q^{n(\lambda')}/ c_\lambda\,,\\
&\widehat{\Phi}_\lambda(v) = :\Phi_{\emptyset}(v) \eta_\lambda(v):\,,\\
&\Phi_{\emptyset}(v) =
 \exp \Bigl(
 -\sum_{n=1}^{\infty} \dfrac{1}{n}\dfrac{1}{1-q^n} a_{-n}v^n 
      \Bigr)
 \exp\Bigl(
-  \sum_{n=1}^{\infty} \dfrac{1}{n}\dfrac{q^n}{1-q^{n}} a_{n}v^{-n}
     \Bigr),\\
&\eta_\lambda(v)=\,
:\prod_{i=1}^{\ell(\lambda)}\prod_{j=1}^{\lambda_i}
\eta(q^{j-1}t^{-i+1} v):\,.
\end{split}    
\end{equation}
Similarly, the following intertwiner exists uniquely,
\begin{equation}
\Phi^*\left[  (1,M) ,u;(0,1),v \atop (1,M+1),-vu\right]: \cF^{(1,M+1)}_{-uv}\longrightarrow
\cF^{(1,M)}_u\otimes \cF^{(0,1)}_v,\qquad
\Delta(a) \Phi^*= \Phi^* a \qquad (\forall a\in \cU)\,,
\end{equation}
with normalization $\Phi^\ast(\ket{0}) = \ket{0}\otimes 1+ \cdots$, and its component, defined by
\begin{equation}
\Phi^*( \alpha )=\sum_\lambda \Phi^*_\lambda (\alpha)\otimes Q_\lambda
\qquad (\forall \alpha \in \cF^{(1,M+1)}_{-uv})\,,
\end{equation}
 is realized by
\begin{equation}
\Phi^*_\lambda\left[  (1,M) ,v;(0,1),u \atop (1,M+1),-vu\right] = \hat{t}^*(\lambda,u,v,M) \wPhi^*_\lambda(u) \,,
\end{equation}
where
\begin{equation}
     \begin{split}
         &\hat{t}^*(\lambda,u,v,M)=(q^{-1} v)^{-|\lambda|} (-u)^{M|\lambda|}f_\lambda^{M}q^{n(\lambda')}/ c_\lambda\,,\\
         &\wPhi^*_\lambda (u) =:\Phi^*_{\emptyset}(u) \xi_\lambda(u): \,,\\
         &\Phi^*_{\emptyset}(u) =\exp \Bigl(\sum_{n=1}^{\infty} \dfrac{1}{n}\dfrac{1}{1-q^n} q^{-n/2}t^{n/2}a_{-n}u^n 
      \Bigr)
 \exp\Bigl(
  \sum_{n=1}^{\infty} \dfrac{1}{n}\dfrac{q^n}{1-q^{n}} q^{-n/2}t^{n/2}a_{n}u^{-n}
     \Bigr)\,,\\
     &\xi_\lambda(u)=\,
:\prod_{i=1}^{\ell(\lambda)}\prod_{j=1}^{\lambda_i}\xi(q^{j-1}t^{-i+1} u):.
     \end{split}
 \end{equation}
\end{fact}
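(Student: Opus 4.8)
The plan is to establish existence, uniqueness, and the explicit realization in one stroke by reducing the intertwining condition $a\Phi = \Phi\Delta(a)$ to the Drinfeld generators $x^\pm(z)$, $\psi^\pm(z)$, $c^{1/2}$ of Definition \ref{def: DIM}, and then solving the resulting operator equations component by component. Since $\cF^{(0,1)}_v$ carries the basis $\{P_\lambda\}$ (Fact \ref{Fact_vertical_rep}), I would set $\Phi_\lambda(\alpha) = \Phi(P_\lambda\otimes\alpha)$, so each $\Phi_\lambda$ is an operator $\cF^{(1,M)}_u \to \cF^{(1,M+1)}_{w}$ expressible through the Heisenberg generators $a_n$. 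The central relation $\Delta(c^{1/2}) = c^{1/2}\otimes c^{1/2}$ from (\ref{copro}) merely forces the target level to be the sum $(0,1)+(1,M)=(1,M+1)$, so it is automatic.

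First I would treat the Cartan currents $\psi^\pm(z)$. Using $\Delta(\psi^\pm(z)) = \psi^\pm(c_{(2)}^{\pm1/2}z)\otimes\psi^\pm(c_{(1)}^{\mp1/2}z)$ together with the diagonal action (\ref{B+})--(\ref{B-}) on $P_\lambda$ and the bosonic form of $\varphi^\pm(z)$ in Fact \ref{Fact_Hor_rep}, the relation $\psi^\pm(z)\Phi_\lambda = \Phi_\lambda\cdot(\text{diagonal factor})$ becomes a commutation relation between $\varphi^\pm(z)$ and $\Phi_\lambda$. This pins down the normal-ordered exponential shape of $\Phi_\lambda$ up to a $\lambda$-dependent scalar: one verifies that $:\Phi_\emptyset(v)\eta_\lambda(v):$ has precisely the required commutator with $\varphi^\pm(z)$, the product $\eta_\lambda(v)=\,:\prod_{i,j}\eta(q^{j-1}t^{-i+1}v):$ supplying exactly the shift recorded by $B^\pm_\lambda$. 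The entire problem thus collapses to determining the scalars $\hat t(\lambda,u,v,M)$.

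Next I would impose the $x^+(z)$ relation (the $x^-(z)$ relation gives an equivalent recursion). Expanding $\Delta(x^+(z)) = x^+(z)\otimes1 + \psi^-(c_{(1)}^{1/2}z)\otimes x^+(c_{(1)}z)$ and applying it to $P_\lambda\otimes\alpha$, the vertical action $x^+(z)P_\lambda = \sum_i A^+_{\lambda,i}\,\delta(q^{\lambda_i}t^{-i+1}v/z)P_{\lambda+{\bf 1}_i}$ produces delta functions supported exactly at the addable-box spectral values, while the second term feeds $\eta(z)$ (the image of $x^+$) into the horizontal argument weighted by $B^-_\lambda$. Comparing with the left-hand side $\rho_w(x^+(z))\Phi_\lambda$ turns the functional identity into a recursion $\hat t(\lambda+{\bf 1}_i,\dots)/\hat t(\lambda,\dots) = (\text{residue ratio})$, the residues being read off at the addable positions $q^{\lambda_i}t^{-i+1}v$. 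Matching the $\lambda=\emptyset$ sector fixes the spectral constraint $w=-uv$, and the normalization $\bra{0}\Phi(1\otimes\ket{0})=1$ fixes both the scalar inside $\Phi_\emptyset(v)$ and $\hat t(\emptyset,\dots)$. Solving the recursion in $|\lambda|$ then yields the closed form $\hat t(\lambda,u,v,M)=(-vu)^{|\lambda|}(-v)^{-(M+1)|\lambda|}f_\lambda^{-M-1}q^{n(\lambda')}/c_\lambda$; uniqueness is immediate because every step is forced. The dual intertwiner $\Phi^*$ is handled identically, using $\Delta(x^-(z))$, the creation action on $\cF^{(1,M+1)}$, and the conjugate vertex operator $\xi_\lambda(u)$.

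The main obstacle will be the operator-product bookkeeping in the $x^\pm$ step. One must check that the rational prefactor coming from $:\eta(z)\,\Phi_\emptyset(v)\eta_\lambda(v):$ has its zeros and poles located precisely at the addable and removable boxes of $\lambda$, and that the residues there reproduce the combinatorial coefficients $A^\pm_{\lambda,i}$ of (\ref{A+})--(\ref{A-}). This requires a delicate telescoping of the arm/leg factors over the boxes of $\lambda$, so that the infinite products defining $A^-_{\lambda,i}$ and $B^\pm_\lambda$ collapse to the finite Nekrasov-type expressions; tracking the half-integer powers of $q,t$ and the framing factor $f_\lambda$ through these cancellations is where the genuine care is needed.
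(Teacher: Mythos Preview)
The paper does not prove this statement: it is stated as a \emph{Fact} quoted from \cite{AFS}, with no argument supplied here. So there is no ``paper's own proof'' to compare against.

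That said, your proposal is a faithful outline of the proof given in \cite{AFS} itself. The strategy there is exactly what you describe: reduce the intertwining relation to the Drinfeld generators, use the $\psi^\pm$ relations to fix the bosonic shape $:\Phi_\emptyset(v)\eta_\lambda(v):$ up to a scalar, and then extract from the $x^+$ relation a one-step recursion in $|\lambda|$ for $\hat t(\lambda,u,v,M)$ by matching residues at the addable-box spectral points. The constraint $w=-uv$ and the normalization $\langle 0|\Phi(1\otimes|0\rangle)=1$ arise precisely where you say they do, and uniqueness follows because each step is determined.

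Your identification of the main obstacle is also accurate: the nontrivial computation is the verification that the OPE prefactor of $\eta(z)$ against $:\Phi_\emptyset(v)\eta_\lambda(v):$ has the correct pole/zero pattern at the addable/removable boxes, with residues matching $A^\pm_{\lambda,i}$. In \cite{AFS} this is handled by rewriting those prefactors as finite products over the boundary of $\lambda$ (equivalently, as ratios of Nekrasov factors $N_{\lambda,\lambda\pm\mathbf{1}_i}$), after which the telescoping you anticipate goes through and the framing factor $f_\lambda$ emerges from the accumulated powers of $q,t$. Nothing in your outline is wrong or missing; it is the standard argument.
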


\begin{notation}
In what follows, we mainly consider the $M=0$ case, and we introduce the simplified notations for the intertwiners.
\begin{equation}
            \Phi[u,v] := \Phi\left[  (1,1),-vu \atop (0,1),v; (1,0) ,u\right]\,,\qquad\Phi^*[u,v]: = \Phi^*\left[  (1,0) ,u;(0,1),v \atop (1,1),-vu\right]\,,
\end{equation}
and their components,
\begin{equation}
        \Phi_\lambda[u,v] := \Phi_\lambda\left[  (1,1),-vu \atop (0,1),v; (1,0) ,u\right]\,,\qquad\Phi_\lambda^*[u,v]: = \Phi^*_\lambda\left[  (1,0) ,u;(0,1),v \atop (1,1),-vu\right].
\end{equation}

We also assign the trivalent diagrams to each intertwiner as follows. The arrows stand for the $\cF^{(0,1)}$-modules, and we refer to this direction as the preferred direction, following the terminology of the refined topological vertex in \cite{IKV}.
\begin{center}
	\begin{tikzpicture}[scale = 1]
		\draw[MRarrow] (0,0)--(0,1);
		\draw (0,0)--(1,0);
		\draw (0,0)--(-0.71,-0.71);
		\node at (0,-1.7) {$\Phi^*[u,v]$};
		\node[right] at (1,0) {$u$};
		\node[above] at (0,1) {$v$};
		\node[left, below] at (-0.71,-0.71) {$-uv$};
	\end{tikzpicture}
	\hspace{1cm}
	\begin{tikzpicture}[scale = 1]
		\draw[MRarrow] (0,-1)--(0,0);
		\draw (0,0)--(-1,0);
		\draw (0,0)--(0.71,0.71);
		\node[left] at (-1,0) {$u$};
		\node[below] at (0,-1) {$v$};
		\node[right, above] at (0.71,0.71) {$-uv$};
		\node at (0,-1.7) {$\Phi[u,v]$};
	\end{tikzpicture}
\end{center}
\end{notation}

These intertwiners can be identified with the refined topological vertex, invented in \cite{IKV}.
To see this, we define the refined topological vertex.
\begin{definition}
The refined topological vertex is defined by
\begin{align}
C^{{\rm (IKV)}}_{\lambda\mu\nu}(t,q)=
\left(q\over t\right)^{||\mu||^2\over 2} t^{\kappa(\mu)\over 2} q^{||\nu||^2\over 2}
{1\over c_\lambda}
\sum_\eta 
\left(q\over t\right)^{|\eta|+|\lambda|-|\mu|\over 2} 
s_{\lambda'/\eta}(t^{-\rho}q^{-\nu})s_{\mu/\eta}(t^{-\nu'}q^{-\rho}),
\end{align}
where $c_\lambda$ is defined in (\ref{eq: c and c'}), $||\lambda||^2=\sum_i\lambda_i^2$, $\rho = ( -1/2, -3/2, -5/2, \dots)$ and
$\kappa(\lambda)=\sum_i \lambda_i(\lambda_i+1-2 i)$.
\end{definition}
The following fact shows the intertwiners can be regarded as the refined topological vertex.
\begin{fact}[\cite{AFS}]
\begin{align}
&{1\over \langle P_\lambda,P_\lambda\rangle_{q,t}}
\bra{S_\mu(q,t)}\Phi_\lambda\left[  (1,M+1),-vu \atop (0,1),v; (1,M) ,u\right] 
\ket{s_\nu}\\
&\qquad
=\left( q^{-1/2}u\over (-v)^{M} \right)^{|\lambda|}
f_\lambda^{-M}\cdot \,\,
(-q^{-1/2}v)^{-|\nu|} f_\nu \cdot 
(t^{-1/2}v)^{|\mu|} \cdot 
 (-1)^{|\mu|+|\nu|+|\lambda|}
\,\,C^{\rm(IKV)}_{\mu\nu' \lambda'}(q,t),\nonumber\\
&
\bra{S_\nu(q,t)}\Phi^*_\lambda\left[  (1,M) ,v;(0,1),u \atop (1,M+1),-vu\right] 
\ket{s_\mu}\\
&\qquad
=\left( (-u)^M\over q^{-1/2} v \right)^{|\lambda|}
f_\lambda^M \cdot \,\,
(-q^{-1/2}u)^{|\nu|} f_\nu^{-1} \cdot 
(t^{-1/2}u)^{-|\mu|} \cdot 
 \,C^{\rm (IKV)}_{\mu'\nu \lambda}(t,q).\nonumber
\end{align}
\end{fact}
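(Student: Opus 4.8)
The plan is to prove both identities by a direct free-field computation, taking the explicit bosonic realizations of $\Phi_\lambda$ and $\Phi^*_\lambda$ from Fact~\ref{Fact_intertwiner} as the starting point. For the first identity I would use $\widehat{\Phi}_\lambda(v) = \,:\Phi_{\emptyset}(v)\eta_\lambda(v):$ with $\eta_\lambda(v) = \,:\prod_{i=1}^{\ell(\lambda)}\prod_{j=1}^{\lambda_i}\eta(q^{j-1}t^{-i+1}v):$ and reduce $\bra{S_\mu(q,t)}\widehat{\Phi}_\lambda(v)\ket{s_\nu}$ to a vacuum expectation value of normal-ordered currents. The box contents $q^{j-1}t^{-i+1}$ carried by $\eta_\lambda$ are precisely the data that, after contraction, reproduce the principal specializations $t^{-\rho}q^{-\nu}$ and $t^{-\nu'}q^{-\rho}$ occurring in the definition of $C^{\rm (IKV)}$.

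Next I would realize the horizontal states $\ket{s_\nu}$ and $\bra{S_\mu(q,t)}$ as symmetric functions in the Heisenberg modes $a_{-n}$ (resp. $a_n$) and contract them against $\widehat{\Phi}_\lambda(v)$ using the operator products of the $\eta$ currents. Each Wick contraction contributes a standard factor of the form $(1-q^{a}t^{b}\,\cdot\,)$; collecting these produces, on the one hand, the scalar prefactor $\hat{t}(\lambda,u,v,M)$ together with the framings $f_\nu$, the powers of $q^{\pm1/2},t^{\pm1/2},u,v$ and the sign on the right-hand side, and on the other hand a Cauchy-type kernel in the specialized variables. I would then expand this kernel by the skew-Schur identity $\sum_\eta s_{\lambda'/\eta}(\boldsymbol x)\,s_{\mu/\eta}(\boldsymbol y)$, which is exactly what introduces the auxiliary partition $\eta$ summed over in $C^{\rm (IKV)}$; matching term by term then yields $C^{\rm (IKV)}_{\mu\nu' \lambda'}(q,t)$.

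The main obstacle will be this combinatorial re-summation together with the sheer volume of scalar bookkeeping. Recognizing the bosonic contraction as a sum of products of skew-Schur functions, rather than a bare product, forces the states $\ket{s_\nu}$ and $\bra{S_\mu(q,t)}$ to be taken with the correct Schur-versus-Macdonald normalizations; this is also what the factor $1/\langle P_\lambda,P_\lambda\rangle_{q,t}$ and the asymmetric appearance of $q$ and $t$ between the two formulas encode. Getting the transpositions $\nu\mapsto\nu'$, $\lambda\mapsto\lambda'$ and the swap $q\leftrightarrow t$ right, so that one arrives at $C^{\rm (IKV)}_{\mu\nu'\lambda'}(q,t)$ in the first line and $C^{\rm (IKV)}_{\mu'\nu\lambda}(t,q)$ in the second, is the delicate step, and I expect it to rest on the identity relating $s_{\mu/\eta}$ at the specialization $t^{-\nu'}q^{-\rho}$ with its $q\leftrightarrow t$ conjugate.

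Finally, the second identity, for $\Phi^*_\lambda$, I would obtain either by running the same contraction argument with $\widehat{\Phi}^*_\lambda(u)=\,:\Phi^*_{\emptyset}(u)\xi_\lambda(u):$ in place of $\widehat{\Phi}_\lambda(v)$, so that $\eta$ is replaced by $\xi$ throughout and the framing $f_\lambda^{M}$ and normalization $\hat{t}^*(\lambda,u,v,M)$ appear instead, or more economically by invoking the duality between $\Phi$ and $\Phi^*$ as $\cU$-module maps, which exchanges the two realizations under conjugation of partitions and the interchange $q\leftrightarrow t$.
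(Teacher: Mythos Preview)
The paper does not prove this statement: it is stated as a \emph{Fact} cited from \cite{AFS}, so there is no proof in the present paper to compare your proposal against. Your plan---direct free-field computation from the bosonized intertwiners of Fact~\ref{Fact_intertwiner}, contraction against Schur/Macdonald states, and identification of the resulting principal specializations with the skew-Schur expansion defining $C^{\rm(IKV)}$---is exactly the route taken in the original reference \cite{AFS}, and is the natural one given the explicit form of $\widehat{\Phi}_\lambda$ and $\widehat{\Phi}^*_\lambda$.

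One small caution on your sketch: the combinatorial core is not really a ``re-summation'' but rather the standard Cauchy/skew-Cauchy expansion of the matrix element $\bra{s_\mu}\,{:}\Phi_\emptyset(v)\eta_\lambda(v){:}\,\ket{s_\nu}$, which one obtains by writing $\Phi_\emptyset$ and each factor of $\eta_\lambda$ as exponentials of Heisenberg modes and using that $\ket{s_\nu}$ diagonalizes the half-vertex operators in the Schur specialization. The transpositions and the $q\leftrightarrow t$ swap you flag as delicate are forced automatically once you keep track of which of $a_n$, $a_{-n}$ acts on which state and use $s_{\lambda'}(x)=s_\lambda(-x)$ together with the $(q,t)$-dependence built into $\eta$ versus $\xi$; there is no separate identity needed beyond the involution $\omega$ on symmetric functions.
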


\subsection{Changing preferred direction of ladder diagrams}
In this subsection, we will show the  matrix elements of the following ladder diagrams are identical up to some monomial factors.

\begin{figure}[H]
    \centering
    \includegraphics{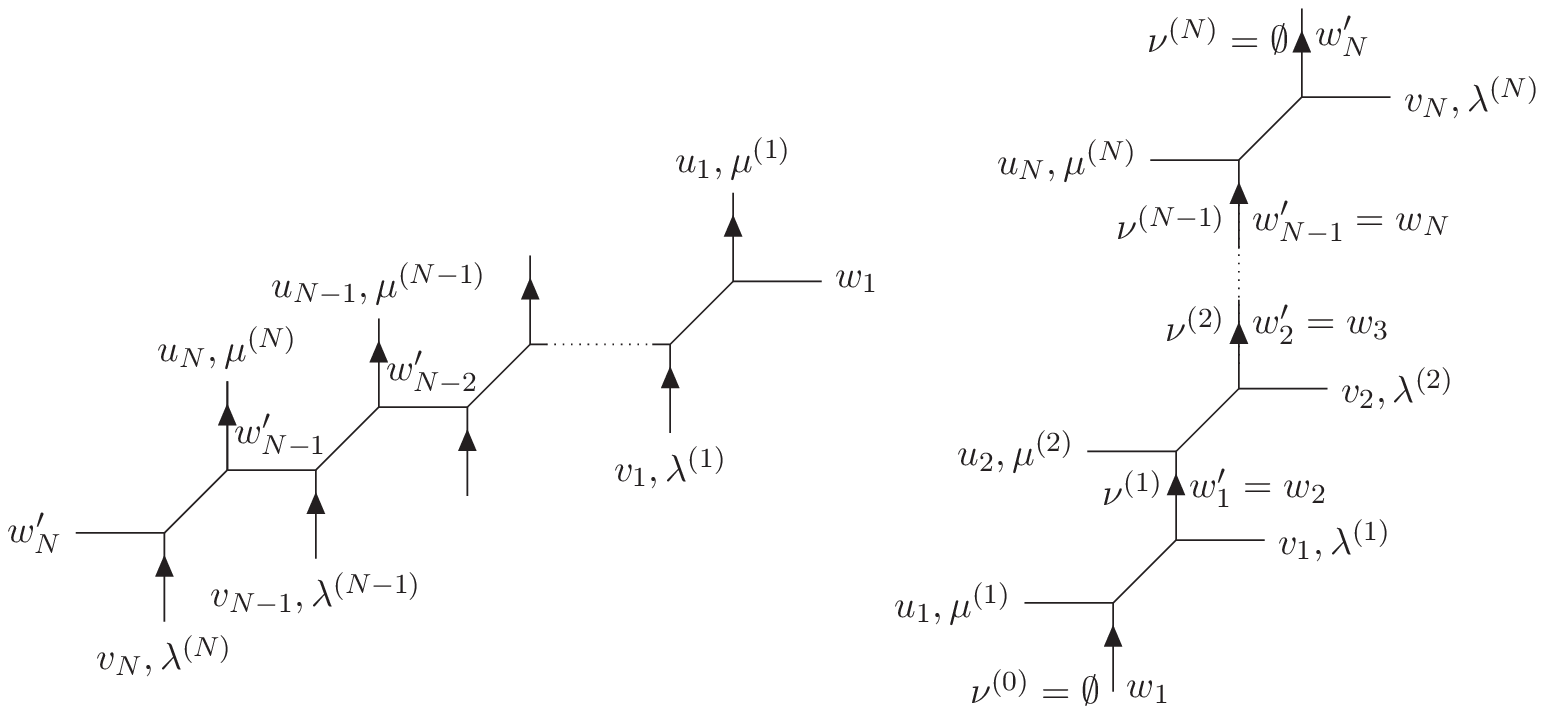}
	\captionof{figure}{Left:$\cTH_{\boldsymbol{\lambda},\boldsymbol{\mu}}(\boldsymbol{u}, \boldsymbol{v};w)$  and Right:$\cTV_{\boldsymbol{\lambda},\boldsymbol{\mu}}(\boldsymbol{u}, \boldsymbol{v};w)$ (up to the overall factor)}
	\label{fig_TH}
\end{figure}

\begin{notation}
We introduce some notations $w_i$ and $w'_i$ related to the spectral parameters $\boldsymbol{u}, \boldsymbol{v}$ of the modules.
\begin{equation}
    \begin{split}
         &w_1 = w, \\
         &w'_i = \frac{u_i}{v_i}w_i, \quad \text{for} \quad i=1,\dots,N,\\
         &w_{i+1} = w'_i, \quad \text{for} \quad i=1,\dots,N-1.
    \end{split}
\end{equation}
\end{notation}

\begin{notation}\label{not: aotimes}
For integers $n\leq m$, we write 
\begin{align}
\aotimes{i=n}{m}A_i:=A_{n} \otimes \cdots \otimes A_{m}.
\end{align}
\end{notation}

\begin{definition}\label{def_TV_TH}
Define the map $\cTH(\boldsymbol{u}, \boldsymbol{v};w) : \left(\aotimes{i=1}{N} \cF^{(0,1)}_{v_i}\right) \to \left(\aotimes{i=1}{N} \cF^{(0,1)}_{u_i}\right) $ by the following composition
\begin{align*}
   & \cF^{(0,1)}_{v_1}\otimes\cdots\otimes\cF^{(0,1)}_{v_{N-1}}\otimes\cF^{(0,1)}_{v_N}\otimes \ket{0}  \xrightarrow{{\rm id}\otimes \cdots\otimes \Phi} \cF^{(0,1)}_{v_1}\otimes\cdots\otimes\cF^{(0,1)}_{v_{N-1}}\otimes\cF^{(1,1)}_{-v_N w'_N}\\
    \xrightarrow{{\rm id}\otimes \cdots\otimes \Phi^*} &\cF^{(0,1)}_{v_1}\otimes\cdots\otimes\cF^{(0,1)}_{v_{N-1}}\otimes\cF^{(1,0)}_{w_N} \otimes\cF^{(0,1)}_{u_N} 
     \xrightarrow{{\rm id}\otimes \cdots\otimes \Phi\otimes {\rm id}}
     \cF^{(0,1)}_{v_1}\otimes\cdots\otimes\cF^{(1,1)}_{-v_{N-1}w'_{N-1}}\otimes\cF^{(0,1)}_{u_N}
     \\
     \xrightarrow{{\rm id}\otimes \cdots\otimes \Phi^*\otimes {\rm id}} 
    &\cdots\cdots
     \xrightarrow{\Phi^*\otimes \cdots\otimes {\rm id}} \ket{0}\otimes \cF^{(0,1)}_{u_1}\otimes\cdots\otimes\cF^{(0,1)}_{u_{N-1}}\otimes\cF^{(0,1)}_{u_N}\,.
\end{align*}
Here, 
$\cdots \otimes \ket{0}$ and 
$\ket{0}\otimes \cdots$ mean 
taking the vacuum expectation value at the level (1,0) modules. 
For simplicity, we introduce the following notation,
\begin{equation}
    \cTH(\boldsymbol{u}, \boldsymbol{v};w) = \bra{0} \Phi^*[w_1, u_1]\Phi[w'_1,  v_1]\Phi^*[w_2, u_2]\Phi[w'_2,v_2]\cdots\Phi^*[w_N, u_N]\Phi[w'_N,v_N]\ket{0} \,.
\end{equation}
We denote its matrix elements by 
\begin{equation}
\begin{split}
    \cTH_{\boldsymbol{\lambda},\boldsymbol{\mu}}(\boldsymbol{u}, \boldsymbol{v};w) = \bra{0} \Phi^*_{\mu^{(1)}}[w_1,  u_1]\Phi_{\lambda^{(1)}}[w'_1, v_1]&\Phi^*_{\mu^{(2)}}[w_2,  u_2]\Phi_{\lambda^{(2)}}[w'_2,v_2]\cdots
    \\
    &\cdots\Phi^*_{\mu^{(N)}}[w_N,  u_N]\Phi_{\lambda^{(N)}}[w'_N,v_N]\ket{0}\,. 
\end{split}
\end{equation}

Also, define the vertex operator $\cTV(\boldsymbol{u}, \boldsymbol{v};w) : \cF_{\boldsymbol{u}} \to \cF_{\boldsymbol{v}}$ by
\begin{equation}
    \cTV(\boldsymbol{u}, \boldsymbol{v};w) :=  T(\boldsymbol{u}, \boldsymbol{v};w)\big/\bra{\boldsymbol{0}}T(\boldsymbol{u}, \boldsymbol{v};w)\ket{\boldsymbol{0}}\,,
\end{equation}
with
\begin{equation}
\begin{split}
&T(\boldsymbol{u}, \boldsymbol{v};w) := \sum_{\nu^{(1)},\dots,\nu^{(N-1)}} \prod_{i=1}^{N-1}\frac{c_{\nu^{(i)}}}{c'_{\nu^{(i)}}}\,\,\Phi^*_{\nu^{(1)}}[v_1, w'_1]\Phi_{\emptyset}[u_1, w_1]\otimes \Phi^*_{\nu^{(2)}}[v_2, w'_2]\Phi_{\nu^{(1)}}[u_2, w_2]\otimes \cdots 
    \\
    & \hspace{7cm}\cdots\otimes \Phi^*_{\emptyset}[v_N, w'_N]\Phi_{\nu^{(N-1)}}[u_N, w_N]\,,
\end{split}
\end{equation}
and its matrix elements are denoted by
\begin{equation}
        \cTV_{\boldsymbol{\lambda},\boldsymbol{\mu}}(\boldsymbol{u}, \boldsymbol{v};w) =  \bra{P_{\boldsymbol{\lambda}}} \cTV(\boldsymbol{u}, \boldsymbol{v};w) \ket{P_{\boldsymbol{\mu}}}\,.
\end{equation}
\end{definition}

The operator $\cTV$ guarantees the existence of the Mukad\'{e} operator, defined in Def. \ref{def_V}, and the following proposition completes the proof of Proposition \ref{prop: uniqueness of V}.
\begin{proposition}\label{prop: existance_V}
For arbitrary $i \in \{1,2,\dots,N\}$, the operator $ \cTV(\boldsymbol{u}, \boldsymbol{v};w)$ satisfies
\begin{equation}\label{eq_5.28}
    \left(1- \frac{w}{z}\right)X^{(i)}(z) \cTV(\boldsymbol{u}, \boldsymbol{v};w) = \gamma^{-i}\left(1- \gamma^{2 i} \frac{w}{z}\right)\cTV(\boldsymbol{u}, \boldsymbol{v};w) X^{(i)}(z) \,.
\end{equation}
\begin{proof}
The proof is done by the direct calculation. For more details, see Sec \ref{proof of existence of V}.
\end{proof}
\end{proposition}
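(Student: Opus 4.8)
The plan is to prove Proposition~\ref{prop: existance_V} by reducing the intertwining relation \eqref{eq_5.28} for the composite operator $\cTV(\boldsymbol{u},\boldsymbol{v};w)$ to the known defining relations of the individual trivalent intertwiners $\Phi$ and $\Phi^*$ from Fact~\ref{Fact_intertwiner}. The starting point is the observation that $X^{(1)}(z)=\rho_{\vu}^{(N,0)}(x^+(z))$ and, more generally, each $X^{(i)}(z)$ is built by fusion from the Drinfeld current $x^+(z)$ acting through the $N$-fold coproduct $\Delta^{(N)}$. Since $\Phi$ and $\Phi^*$ intertwine the $\cU$-action, i.e.\ $a\Phi=\Phi\Delta(a)$ and $\Delta(a)\Phi^*=\Phi^* a$, the action of $x^+(z)$ (hence of $X^{(1)}(z)$) can be pushed through the ladder $T(\boldsymbol{u},\boldsymbol{v};w)$ one intertwiner at a time, picking up the $\delta$-function and $\psi^\pm$ factors prescribed by the coproduct \eqref{copro} at each crossing.

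First I would establish the relation for $i=1$, that is for $X^{(1)}(z)$ itself. The key computation is to commute $\rho_{\vu}^{(N,0)}(x^+(z))$ past each pair $\Phi^*_{\nu^{(k)}}[v_k,w'_k]\,\Phi_{\nu^{(k-1)}}[u_k,w_k]$ using the coproduct formula for $\Delta(x^+(z))$, which splits $x^+$ into a term acting on the horizontal ($(1,0)$) leg and a term dressed by $\psi^-$ acting on the vertical ($(0,1)$) leg. Because the vertical $(0,1)$-modules are summed over internal partitions $\nu^{(i)}$ with the normalization factors $c_{\nu^{(i)}}/c'_{\nu^{(i)}}$, the contributions of the vertical legs telescope: the creation/annihilation pieces $A^\pm_{\lambda,i}$ of Fact~\ref{Fact_vertical_rep} reorganize into a total shift that collapses upon summation, leaving only the scalar factor coming from the spectral parameters and the $B^\pm_\lambda$ (Cartan) eigenvalues. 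The upshot should be a relation of the form $(1-w/z)X^{(1)}(z)\cTV = \gamma^{-1}(1-\gamma^2 w/z)\cTV\,X^{(1)}(z)$, where the prefactors $1-w/z$ and $1-\gamma^2 w/z$ arise precisely from the $\psi^\pm$-dressing evaluated at the spectral parameter $w$ of the horizontal line, and the power $\gamma^{-1}$ tracks the level shift $(1,0)\to(1,1)$ across a single $\Phi$.

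Next I would bootstrap from $i=1$ to general $i$ using the fusion definition $X^{(i)}(z)=\Xo(\gamma^{2(1-i)}z)\cdots\Xo(z)$ from Definition~\ref{def: X}. Assuming the relation \eqref{eq_5.28} holds for $i=1$ at every shifted argument $\gamma^{2(j-i)}z$, one multiplies the $i$ copies together; each factor contributes $(1-\gamma^{2(j-i)}w/z)$ on the left and $(1-\gamma^2\gamma^{2(j-i)}w/z)$ on the right together with a $\gamma^{-1}$, and the product telescopes. Concretely, the left-hand scalar $\prod_{j=1}^{i}(1-\gamma^{2(j-i)}w/z)$ and the right-hand scalar $\prod_{j=1}^{i}(1-\gamma^{2(j-i+1)}w/z)$ share all but their extreme factors, so after cancellation only $(1-w/z)$ survives on the left and $(1-\gamma^{2i}w/z)$ on the right, while the accumulated level shifts give $\gamma^{-i}$, reproducing exactly \eqref{eq_5.28}. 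This telescoping is the same mechanism by which the single-current relation of Lemma~\ref{lem: rel of X and Phi} was promoted to all $X^{(r)}$ in Step~1 of Section~\ref{sec_4.3}, so the bookkeeping is already under control.

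The main obstacle I anticipate is the careful treatment of the vertical summation and the $\delta$-function contact terms: when $x^+(z)$ is commuted through $\Phi$ and $\Phi^*$, the coproduct produces terms supported on $\delta(q^{\lambda_i}t^{-i+1}v/z)$ that add or remove a box on the internal partitions, and one must verify that, after including the ratios $c_{\nu^{(i)}}/c'_{\nu^{(i)}}$ and the explicit $A^\pm_{\lambda,i}$ coefficients, these contact terms cancel in pairs across adjacent rungs so that no anomalous inhomogeneous term survives in \eqref{eq_5.28}. This is the refined-topological-vertex analogue of showing that the screening-current contributions in Lemma~\ref{lem: rel of X and Phi} drop out under the principal specialization; here the cancellation is combinatorial rather than analytic, relying on the recursion satisfied by $A^\pm_{\lambda,i}$ and on the normalization $w'_i=(u_i/v_i)w_i$ that matches the spectral parameters across each horizontal internal line. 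Verifying this cancellation rigorously, rather than the algebraic telescoping of the scalar prefactors, is where the real work lies, and it is the part I would relegate to the detailed computation in Section~\ref{proof of existence of V}.
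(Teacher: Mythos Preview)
Your plan for the $i=1$ case is essentially the paper's own argument: the paper works directly with each $\Lambda^{(j)}(z)$ (the summands of $X^{(1)}(z)$), computes the commutator with $\cTV_{\vn}(w)$ via the free-field OPEs, and then checks that the $\delta$-function supported on $R(\nu^{(j)})$ produced by $\Lambda^{(j)}$ cancels, after re-indexing the internal partition $\nu^{(j)}\mapsto\nu^{(j)}-y$, against the $\delta$-function supported on $A(\nu^{(j)})$ produced by $\Lambda^{(j+1)}$. Your description in terms of the intertwining relations $a\Phi=\Phi\Delta(a)$, $\Delta(a)\Phi^*=\Phi^*a$ and the coefficients $A^{\pm}_{\lambda,i}$ is the representation-theoretic rephrasing of the same computation; the hard part you correctly flag (the pairwise cancellation of contact terms across adjacent rungs) is exactly what the paper verifies.

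Your reduction from general $i$ to $i=1$, however, is genuinely different from the paper and more economical. The paper does \emph{not} telescope: for each $k>1$ it repeats the $\delta$-function cancellation analysis directly for the normal-ordered products $\Lambda^{(i_1,\dots,i_k)}(z)$, tracking how the poles and zeros attached to each $\nu^{(i_j)}$ cancel when $i_j=i_{j-1}+1$ or $i_j=i_{j+1}-1$. Your observation that the $i=1$ relation, once established, can simply be iterated along the fusion $X^{(i)}(z)=X^{(1)}(\gamma^{2(1-i)}z)\cdots X^{(1)}(z)$, with the scalar prefactors $(1-\gamma^{2(i-j)}w/z)$ and $(1-\gamma^{2(i-j+1)}w/z)$ cancelling telescopically, is valid and bypasses this repetition. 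The only care needed is that the intermediate objects $X^{(j)}(\gamma^{2(j-i)}z)\,\cTV\,X^{(i-j)}(z)$ are well-defined formal Laurent series, which they are by the same reasoning that makes Definition~\ref{def: X} sensible. One small correction: the analogy you draw with Lemma~\ref{lem: rel of X and Phi} and Step~1 of Section~\ref{sec_4.3} is not quite apt, since there the relation is established for all $r$ simultaneously rather than bootstrapped from $r=1$; but this does not affect the correctness of your argument here.
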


\begin{remark}
The factor $\gamma^{-i}$ in the right hand side of (\ref{eq_5.28}) can be compensated by redefining the spectral parameters as $\gamma^{-1} u_i$. Under this redefinition, the matrix elements of $\cTV(\vu,\vv;w)$ agree with those of $\cV(w)$.
\end{remark}

\begin{theorem}\label{prop: changing PD}
The following equality between the two matrix elements holds:
\begin{equation}
     \cTH_{\boldsymbol{\lambda},\boldsymbol{\mu}}(\boldsymbol{u}, \boldsymbol{v};w)  \sim \cTV_{\boldsymbol{\lambda},\boldsymbol{\mu}}(\boldsymbol{u}, \boldsymbol{v};w) \,.
\end{equation}
Here, $\sim$ means the both sides are identical up to some monomial factor and $\mathcal{G}$-factors $\mathcal{G}(z) := \prod_{i,j=0}^\infty (1 - z q^i t^{-j})$, 
which appear from the normal orderings among $\Phi_\emptyset$'s and $\Phi^*_\emptyset$'s.
\begin{proof}
We can show, up to the monomial factors and $\mathcal{G}$-factors, the both sides are equal to
\begin{equation}
 \frac{\prod_{ i, j=1 }^N N_{\lambda^{(i)}, \mu^{(j)}}(v_i/\gamma u_j)}{\prod_{1\leq i<j \leq N} N_{\mu^{(i)}, \mu^{(j)}}(qu_i/tu_j) 
\prod_{k=1}^N c_{\mu^{(k)}} \prod_{1\leq i<j \leq N} N_{\lambda^{(j)}, \lambda^{(i)}}(qv_j/tv_i) 
\prod_{k=1}^N c_{\lambda^{(k)}}
}\,.
\end{equation}
For the right hand side, it is obvious from Theorem \ref{thm: matrix elements of V} and Definition \ref{def: K}.
For the left hand side, it can be shown by a direct computation, using some formulas in Appendix \ref{App: Useful Formula}.
\end{proof}
\end{theorem}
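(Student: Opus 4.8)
The plan is to compute the two matrix elements $\cTH_{\vl,\vm}(\vu,\vv;w)$ and $\cTV_{\vl,\vm}(\vu,\vv;w)$ independently and to check that each, after discarding monomial and $\mathcal{G}$-factors, reduces to the single closed form
\[
\frac{\prod_{ i, j=1 }^N N_{\lambda^{(i)}, \mu^{(j)}}(v_i/\gamma u_j)}{\prod_{1\leq i<j \leq N} N_{\mu^{(i)}, \mu^{(j)}}(qu_i/tu_j) \prod_{k=1}^N c_{\mu^{(k)}} \prod_{1\leq i<j \leq N} N_{\lambda^{(j)}, \lambda^{(i)}}(qv_j/tv_i) \prod_{k=1}^N c_{\lambda^{(k)}}}.
\]
Since $\sim$ already throws away all monomials and all $\mathcal{G}(z)=\prod_{i,j\geq 0}(1-zq^it^{-j})$ factors, the normalization denominators $\brazero T\ketzero$ and $\brazero\cTV\ketzero$, being themselves products of monomials and $\mathcal{G}$-factors, are irrelevant, and it suffices to reproduce the genuinely $\vl,\vm$-dependent Nekrasov and $c$-factor content on both sides.

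The vertical side is essentially a restatement of the main theorem. By the Remark after Proposition \ref{prop: existance_V}, under $u_i\mapsto\gamma^{-1}u_i$ the operator $\cTV(\vu,\vv;w)$ agrees with the Mukad\'e operator $\cV(w)$, so $\cTV_{\vl,\vm}=\braket{P_{\vl}|\cV(w)|P_{\vm}}$ up to monomials. Theorem \ref{thm: matrix elements of V} gives $\braket{K_{\vl}(\vv)|\cV(w)|K_{\vm}(\vu)}$ in factorized form, and passing from $\ket{K}$ to $\ket{P}$ via Definition \ref{def: K} divides out $\mathcal{C}^{(-)}_{\vl}(\vv)$ and $\mathcal{C}^{(+)}_{\vm}(\vu)$. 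These two renormalization constants contribute exactly the denominator factors $\prod_{i<j}N_{\lambda^{(j)},\lambda^{(i)}}(qv_j/tv_i)$, $\prod_{i<j}N_{\mu^{(i)},\mu^{(j)}}(qu_i/tu_j)$ and $\prod_k c_{\lambda^{(k)}}$, $\prod_k c_{\mu^{(k)}}$, while the numerator $\prod_{i,j}N_{\lambda^{(i)},\mu^{(j)}}$ comes from the theorem itself; the rescaling $u_j\mapsto\gamma^{-1}u_j$ turns the main theorem's argument $qv_i/tu_j$ into $v_i/\gamma u_j$, as required.

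The horizontal side is handled by an explicit free-field computation. Substituting the realizations of Fact \ref{Fact_intertwiner} (with $M=0$), each vertex becomes a prefactor $\hat{t}$ or $\hat{t}^{*}$ times a normal-ordered operator $\wPhi_{\lambda^{(i)}}(v_i)$ or $\wPhi^*_{\mu^{(i)}}(u_i)$; the prefactors supply framing data $f_\lambda$, $q^{n(\lambda')}$ and, crucially, the non-monomial factors $1/c_{\lambda^{(i)}}$ and $1/c_{\mu^{(i)}}$ that will form the denominator $c$-products. One then normal-orders the full string $\brazero\, \wPhi^*_{\mu^{(1)}}(u_1)\wPhi_{\lambda^{(1)}}(v_1)\cdots \wPhi^*_{\mu^{(N)}}(u_N)\wPhi_{\lambda^{(N)}}(v_N)\,\ketzero$: the vacuum expectation value equals the product of all pairwise contractions. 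The contractions among the $\Phi_\emptyset$- and $\Phi^*_\emptyset$-pieces produce precisely the $\mathcal{G}$-factors absorbed by $\sim$, whereas the contractions among the dressings $\eta_{\lambda^{(i)}}$, $\xi_{\mu^{(j)}}$ give finite products over boxes of the Young diagrams. Resumming these by the box-product identities in Appendix \ref{App: Useful Formula}, the $\eta$-$\xi$ cross contractions assemble into the numerator $\prod_{i,j}N_{\lambda^{(i)},\mu^{(j)}}(v_i/\gamma u_j)$, and the $\eta$-$\eta$ and $\xi$-$\xi$ contractions into the denominator $N_{\lambda^{(j)},\lambda^{(i)}}$ and $N_{\mu^{(i)},\mu^{(j)}}$; the leftover $q,t,u,v,w$-dependence must then be checked to be a pure monomial.

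The main obstacle is this last step on the horizontal side: verifying that the doubly-indexed product of $\eta$-$\xi$ contractions over pairs of boxes reorganizes into the Nekrasov factor with the exact argument $v_i/\gamma u_j$ (and similarly for the same-flavor products), and that after collecting the $\hat{t}$, $\hat{t}^{*}$ prefactors and the diagonal contraction residues the remaining dependence is genuinely monomial, so that it is annihilated by $\sim$. Keeping track of the spectral parameters threaded through the chain $w'_i=(u_i/v_i)w_i$, $w_{i+1}=w'_i$ and confirming that they enter only through such monomials is the delicate bookkeeping; by contrast the vertical side is immediate once the main theorem and Definition \ref{def: K} are in hand.
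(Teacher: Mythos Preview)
Your proposal is correct and follows essentially the same approach as the paper: reduce both sides to the common Nekrasov-factor expression, obtaining the vertical side from Theorem \ref{thm: matrix elements of V} combined with Definition \ref{def: K}, and the horizontal side by a direct free-field normal-ordering computation using the OPE formulas of Appendix \ref{App: Useful Formula}. The paper's proof is simply a terser statement of exactly this plan; note that the Appendix already records the contractions in Nekrasov-factor form (e.g.\ $\Phi^*_{\mu}(u_j)\Phi_{\lambda}(v_i)=\mathcal{G}(v_i/\gamma u_j)^{-1}N_{\lambda\mu}(v_i/\gamma u_j):\cdots:$), so the box-by-box resummation you describe is effectively pre-packaged there.
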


\begin{remark}
This proposition implies the invariance under changing preferred directions of the refined topological vertex.
\end{remark}

\subsection{Proof of Proposition \ref{prop: existance_V}}\label{proof of existence of V}

First, we give the proof for $X^{(1)}(z)$. For $X^{(k)}(z)$ with $k>1$, the strategy of the proof is similar to that of $k=1$ case, and thus we just show the sketch of the proof.

\paragraph{\bf{$k=1$ case}}
In order to simplify the notation, we rewrite the operator $\cTV(\boldsymbol{u}, \boldsymbol{v};w)$ as
\begin{align}
\cTV(\boldsymbol{u}, \boldsymbol{v};w) &= 
\sum_{\nu^{(1)},\dots,\nu^{(N-1)}} \mathcal{C}_{\boldsymbol{\nu}} \,\,
     :\wPhi_{\emptyset}(w_1)\wPhi^*_{\nu^{(1)}}(w'_1):\otimes :\wPhi_{\nu^{(1)}}(w_2)\wPhi^*_{\nu^{(2)}}(w'_2):\nonumber\\
     &\hspace{5cm}\otimes \cdots \otimes :\wPhi_{\nu^{(N-1)}}(w_N)\wPhi^*_{\emptyset}(w'_N):\nonumber 
\\
&=: \sum_{\nu^{(1)},\dots,\nu^{(N-1)}}  \cTV_{\vn}(\boldsymbol{u}, \boldsymbol{v};w),
\end{align}
with
\begin{equation}
\begin{split}
    \mathcal{C}_{\boldsymbol{\nu}} &:= \prod_{i=1}^{N}\frac{c_{\nu^{(i)}}}{c'_{\nu^{(i)}}}\,\,\mathcal{G}(w_i/\gamma w'_i)^{-1}\\
    &\qquad\times\hat t(\nu^{(i-1)},u_i ,w_i,0)\,\,\hat t^*(\nu^{(i)} ,w'_i ,v_i,0)\,\, N_{\nu^{(i-1)}\nu^{(i)}}(w_i/\gamma w'_i)\big/\bra{\boldsymbol{0}}\cTV(\boldsymbol{u}, \boldsymbol{v};w)\ket{\boldsymbol{0}},
\end{split}
\end{equation}
which appear from the normal orderings.
Here, we put $\nu^{(0)} = \nu^{(N)} = \emptyset$ for convenience.
We also omit the parameters $\boldsymbol{u}, \boldsymbol{v}$ in $\cTV(\boldsymbol{u}, \boldsymbol{v};w)$ in what follows.

We first compute the commutation relations between $\Lambda^{(j)}, j= 1,\dots,N$ and $\cTV(w)$.
Putting $z_j = \gamma^{j-1} z$, we have
\begin{align}
& (w_j/\gamma w'_j) \,\, v_j \Lambda^{(j)}(z)\,\, \cTV_{\vn}(w) - \frac{1- z/\gamma^2 w_1}{1-z/w_1}\cTV_{\vn}(w)\,\, u_j\Lambda^{(j)}(z) \nonumber
\\
&= -u_j\left(\text{delta functions from $A(\nu^{(j-1)})$}\right)\nonumber\\ 
&- 
u_j\sum_{y \in R(\nu^{(j)})}\delta(\gamma z_j/w'_j \chi_y)\,\, \mathcal{C}_{\boldsymbol{\nu}} \,\,\frac{\prod_{x\in R(\nu^{(j-1)})}1-\gamma w'_j \chi_y/ w_j \chi_x}{\prod_{x\in A(\nu^{(j-1)})}1- w'_j \chi_y/\gamma w_j \chi_x}
\frac{\prod_{x\in A(\nu^{(j)})}1-  \chi_y/\gamma^2  \chi_x}{\prod_{x\in R(\nu^{(j)}), x\neq y}1-\chi_y/\chi_x}
\nonumber \\
&\times
\cdots \otimes :\eta(\gamma^{-1}w'_j\chi_y)\wPhi_{\nu^{(j-1)}}\wPhi^*_{\nu^{(j)}}:\otimes:\wPhi_{\nu^{(j)}}\wPhi^*_{\nu^{(j+1)}}:\otimes \cdots\nonumber
\\
&= -u_j\left(\text{delta functions from $A(\nu^{(j-1)})$}\right)\\ 
&- 
u_j\sum_{y \in R(\nu^{(j)})}\delta(\gamma z_j/w'_j \chi_y)\,\, \mathcal{C}_{\boldsymbol{\nu}} \,\,\frac{\prod_{x\in R(\nu^{(j-1)})}1-\gamma w'_j \chi_y/ w_j \chi_x}{\prod_{x\in A(\nu^{(j-1)})}1- w'_j \chi_y/\gamma w_j \chi_x}
\frac{\prod_{x\in A(\nu^{(j)})}1-  \chi_y/\gamma^2  \chi_x}{\prod_{x\in R(\nu^{(j)}), x\neq y}1-\chi_y/\chi_x}
\nonumber \\
&\times
\cdots \otimes :\varphi^-(\gamma^{-1/2}w'_j \chi_y)\wPhi_{\nu^{(j-1)}}(w_j)\wPhi^*_{\nu^{(j)}-y}(w'_j):\otimes:\wPhi_{\nu^{(j)}}(w_{j+1})\wPhi^*_{\nu^{(j+1)}}(w'_{j+1}):\otimes \cdots\,,\nonumber
\end{align}
where $A(\nu)$ and $R(\nu)$ is defined in Section \ref{sec: Intro}. 
Through the computation, we used the equality
\begin{equation}
\eta (\gamma^{-1}u) =\,\, :\varphi^{-}(\gamma^{-1/2} u)\xi^{-1}(u):\,.
\end{equation}

Now, for $\nu^{(j)}\neq \emptyset$, fix a $y \in R(\nu^{(j)})$ and pick up the corresponding term from the above equation. Then, we rewrite $\nu^{(j)}-y$ as $\nub^{(j)}$, and express the term using $\nub^{(j)}$. This procedure ends with the following expression: 
\begin{align}
&-u_j\delta(\gamma z_j/w'_j \chi_y)\,\, \mathcal{C}_{\boldsymbol{\nu}} \,\,\frac{\prod_{x\in R(\nu^{(j-1)})}1-\gamma w'_j \chi_y/ w_j \chi_x}{\prod_{x\in A(\nu^{(j-1)})}1- w'_j \chi_y/\gamma w_j \chi_x}
\frac{\prod_{x\in A(\nu^{(j)})}1-  \chi_y/\gamma^2  \chi_x}{\prod_{x\in R(\nu^{(j)}), x\neq y}1-\chi_y/\chi_x}
\nonumber \\
&\times
\cdots \otimes :\varphi^-(\gamma^{-1/2}w'_j \chi_y)\wPhi_{\nu^{(j-1)}}(w_j)\wPhi^*_{\nu^{(j)}-y}(w'_j):\otimes:\wPhi_{\nu^{(j)}}(w_{j+1})\wPhi^*_{\nu^{(j+1)}}(w'_{j+1}):\otimes \cdots\nonumber 
\\
&=u_{j+1}\delta(\gamma z_j/w'_j \chi_y)\,\, \mathcal{C}_{(\dots,\nub^{(j)},\dots)} \,\,
\frac{\prod_{x\in R(\nu^{(j)})}1-\gamma^2  \chi_y/\chi_x}{\prod_{x\in A(\nu^{(j)}), x\neq y}1- \chi_y/\chi_x}
\frac{\prod_{x\in A(\nu^{(j+1)})}1-w_{j+1}\chi_y/\gamma w'_{j+1}\chi_x}{\prod_{x\in R(\nu^{(j+1)})}1-\gamma w_{j+1}\chi_y / w'_{j+1}\chi_x}\nonumber 
\\
&\times
\cdots \otimes :\varphi^-(\gamma^{-1/2}w'_j \chi_y)\wPhi_{\nu^{(j-1)}}(w_j)\wPhi^*_{\nub^{(j)}}(w'_j):\otimes:\wPhi_{\nub^{(j)}+y}(w_{j+1})\wPhi^*_{\nu^{(j+1)}}(w'_{j+1}):\otimes \cdots\,.
\end{align}
While taking summation over $\vn$, these terms are canceled by the terms which appear from the commutation relation between $\Lambda^{(j+1)}$ and $\cTV_{\vn'}(w)$ with $\nu^{(j)'} = \nub^{(j)}$, 
\begin{align}
& (w_{j+1}/\gamma w'_{j+1}) \,\, v_{j+1}\Lambda^{(j+1)}(z) \,\,\cTV_{\vn'}(w) - \frac{1- z/\gamma^2 w_1}{1-z/w_1}\cTV_{\vn'}(w)\,\, u_{j+1}\Lambda^{(j+1)}(z) \nonumber
\\
&= -u_{j+1}\left(\text{delta functions from $R(\nu^{(j+1)'})$}\right)\nonumber \\
&- u_{j+1}\sum_{y \in A(\nu^{(j)'})}
\delta(z_{j+1}/w_{j+1} \chi_y)
\frac{\prod_{x\in R(\nu^{(j)'})}1-\gamma^2  \chi_y/\chi_x}{\prod_{x\in A(\nu^{(j)'}), x\neq y}1- \chi_y/\chi_x}
\times 
\frac{\prod_{x\in A(\nu^{(j+1)'})}1-w_{j+1} \chi_y/\gamma w'_{j+1} \chi_x }{\prod_{x\in R(\nu^{(j+1)'})}1-\gamma w_{j+1} \chi_y/ w'_{j+1} \chi_x}\nonumber
\\
&\times
\cdots \otimes :\varphi(\gamma^{-1/2}w_{j+1}\chi_y)\wPhi_{\nu^{(j-1)'}}(w_j)\wPhi^*_{\nu^{(j)'}}(w'_j):\otimes:\wPhi_{\nu^{(j)'}+y}(w_{j+1})\wPhi^*_{\nu^{(j+1)'}}(w'_{j+1}):\otimes \cdots\,.
\end{align}

While summing up for $j$ from $1$ to $N$, the delta function which is not cancelled by the above mechanism exists at $A(\nu^{(0)}) = A(\emptyset)$.
This delta function vanishes when we multiply the both sides by $1-w_1/z$, and we finally obtain the main claim of Proposition \ref{prop: existance_V} for the $k=1$ case.

\paragraph{\bf{$k>1$ case}}

We use the following simplified notation,
\begin{equation}
    \Lambda^{(i_1,\dots,i_k)}(z) : = : \Lambda^{(i_1)}(z) \cdots \Lambda^{(i_k)}(\gamma^{2(1-k)}z):\,.
\end{equation}

We can compute the commutation relations between $ \Lambda^{(i_1,\dots,i_k)}(z)$ and $\cTV_{\vn}(w)$ as,
\begin{align}
        &\prod_{j=1}^k (w_{i_j}/ \gamma w'_{i_j})\,\, v_{i_1}\cdots v_{i_k}\Lambda^{(i_1,\dots,i_k)}(z) \,\,\cTV_{\vn}(w)  -   \frac{1-z/\gamma^{2k} w_1}{1-z/w_1}  \cTV_{\vn}(w)\,\,  u_{i_1}\cdots u_{i_k}\Lambda^{(i_1,\dots,i_k)}(z)\nonumber 
        \\
        & \quad  =  -u_{i_1}\cdots u_{i_k}\Big( \text{delta functions from } A(\nu^{(i_1-1)}), R(\nu^{(i_1)}), A(\nu^{(i_2-1)}), R(\nu^{(i_2)}),\nonumber\\
        &\hspace{5cm}\dots, A(\nu^{(i_k-1)}) \text{ and } R(\nu^{(i_k)})   \Big)
\end{align}

The delta functions related to $R(\nu^{(i_j)})$ (for $j = 1,\dots,k$) cancel those related to $A(\nu^{(i_j)})$ which appear in the commutation relations between $ \Lambda^{(\dots,i_j+1,\dots)}(z)$ and $\cTV_{\vn}(w)$.
This sequence of the cancellation begins when $i_j = i_{j-1}+1$ and terminates when $i_j = i_{j+1}-1$, because in those cases, the poles and zeros related to $\nu^{(i_j)}$ cancel each other, and no delta functions related to them appear.

As a result, the only delta function which survives the above cancellation mechanism is that related to $A(\nu^{(0)})$.
Similarly to the $k=1$ case, it vanishes when $1-w_1/z$ is multiplied, and we obtain the expected commutation relation between $X^{(k)}(z)$ and $\cTV(w)$.

\section*{Acknowledgment}
The authors would like to thank H.~Awata, B.~Feigin, A.~Hoshino, M.~Kanai, H.~Kanno, Y.~Matsuo, M.~Noumi and S.~Yanagida for valuable discussions. 
The research of J.S.~is supported by JSPS KAKENHI (Grant Numbers 15K04808 and 16K05186).
Y.O. and M.F.~are partially supported by Grant-in-Aid for JSPS Research Fellow 
(Y.O.: 18J00754, M.F.: 	17J02745).

\appendix

\section{Construction of Macdonald Symmetric Functions in terms of Topological Vertex}\label{App: Mac from top vertex}

We can obtain Macdonald functions as matrix elements of 
some compositions of the intertwining operators. 
The similar derivation and its supersymmetric version are given in \cite{Z}. 
Set 
\begin{align}
&\widetilde{\mathcal{T}}_i(x)=\widetilde{\mathcal{T}}_i(\vu; x)\nonumber\\
&:=\prod_{k=1}^N\mathcal{G}(u_k/\gamma v_k)\cdot\bra{\boldsymbol{0}}\cTV( \vv, \vu ;x)\ket{\boldsymbol{0}}\cdot\mathcal{T}^V\big(\vv, \vu ;x\big)
\Bigg|_{ \substack{ v_k \to \gamma^{-1} t^{-\delta_{k,i}}u_k \\ (1\leq k\leq N)}},
\end{align}
where the overall factor is chosen for the later convenience.
Though the prefactor consisting of the $\mathcal{G}$-factors gives zeros under the restriction of $\vv$, but the $\mathcal{G}$-factors which appear from the normal orderings among $\Phi$'s and $\Phi^*$'s in $\cTV$, cancel those zeros, and thus the operator is well-defined.
By operator products (\ref{eq: Phi* Phi OPE}) and Lemma \ref{lem: nonzero cond of N}, 
it can be seen that the Young diagrams 
are restricted to only one row 
when gluing intertwiners over the vertical representations (Fig. \ref{fig: til Ti}), i.e., 
\begin{align}
\widetilde {\mathcal{T}}_i(x) &=
\sum_{0\leq m_1\leq m_2\leq \cdots \leq m_{i-1} <\infty} 
t^{-m_{i-1}} 
\prod_{k=1}^{i-1} \frac{q^{2n((1^{m_k}))}}{c'_{(m_k)} c_{(m_k)}}
\left(\frac{qu_{k+1}}{\gamma u_k}\right)^{m_k}f^{-1}_{(m_k)}  \nonumber \\
&\qquad\qquad \times N_{\emptyset,(m_{i-1})}(t^{-1})\prod_{k=1}^{i-1} N_{(m_{k-1}),(m_k)}(1)  
\nonumber\\
&\qquad\qquad\times:\wPhi^*_{(m_1)}(\gamma^{-1}x)\wPhi_{(\emptyset)}(x): \otimes  
\aotimes{k=2}{i-1} :\wPhi^*_{(m_k)}(\gamma^{-k}x)\wPhi_{(m_{k-1})}(\gamma^{-k+1}x):\nonumber \\
&\qquad\qquad \otimes  :\wPhi^*_{\emptyset}(\gamma^{-i}t^{-1}x)\wPhi_{(m_{i-1})}(\gamma^{-i+1}x):
\otimes \aotimes{k=i+1}{N} 
:\wPhi^*_{\emptyset}(\gamma^{-k}t^{-1}x)\wPhi_{\emptyset}(\gamma^{-k+1}t^{-1}x):,
\end{align}
where we put $m_0 = 0$.
Here, $\aotimes{}{}$ is introduced in Notation \ref{not: aotimes}. 
Then, we have $\widetilde{\mathcal{T}}_1(x)=\Phi^{(0)}(t^{-1}x)$. 
Note that the operators $\Phi^{(k)}(x)\,\,(k=0,\dots,N-1)$ in this section slightly differ from those in Sec. \ref{SSec_VO}. In this section, $\Phi^{(k)}(x)$ is a map 
$\cF_{t^{-\delta_{k+1}}\cdot \tilde{\vu}} \to \cF_{\vu}$ with  $\tilde{\vu}=(\gamma^{-1}u_1, \ldots, \gamma^{-1}u_N)$, 
that is, the spectral parameter differs by the factor $\gamma^{-1}$.

\begin{figure}[H]
\begin{center}
\includegraphics[width=0.6\hsize]{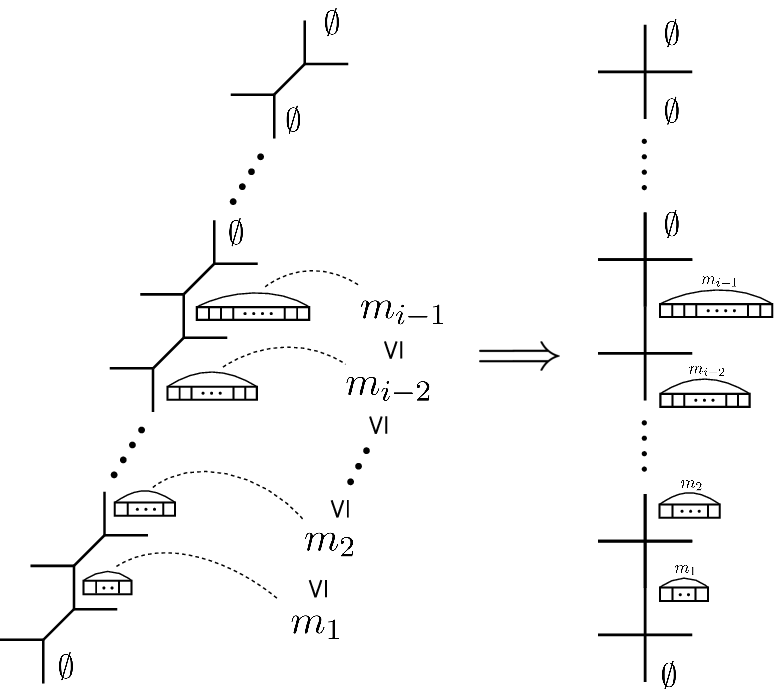}
\caption{The operator $\widetilde{\mathcal{T}}_i(x)$. 
($\Longrightarrow$ stands for a simplification of the diagram for convenience) }\label{fig: til Ti}
\end{center}
\end{figure} 

We obtain the expression of Macdonald functions in terms of intertwiners of the DIM algebra. 
The following proposition says the vacuum expectation value of Fig. \ref{fig: sharp mark} gives the Macdonald function of the $A$ type.  
\begin{figure}[H]
\begin{center}
\includegraphics[height=8cm]{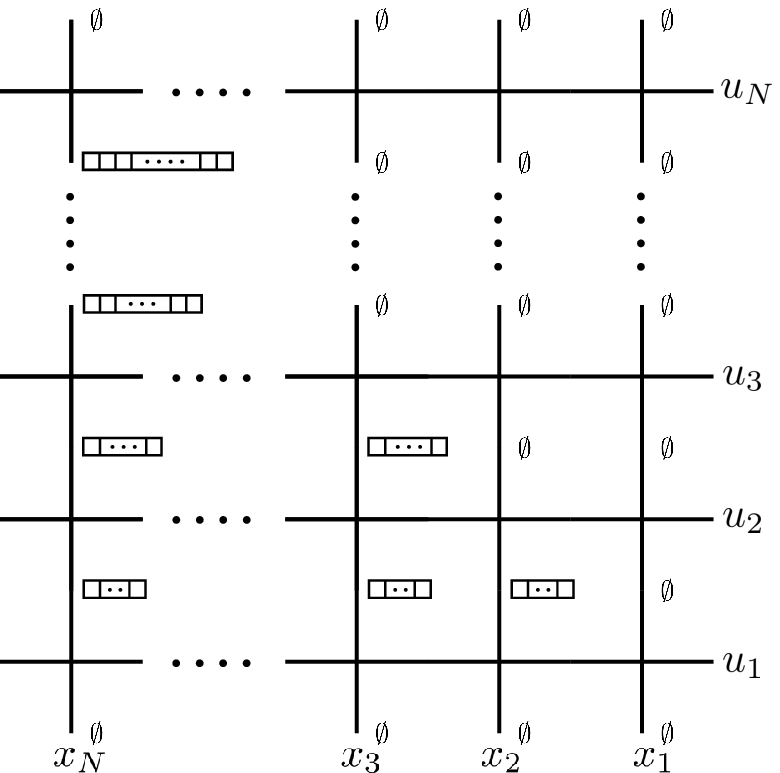}
\caption{The diagram for the Macdonald function $p_{N}(\vx;\vu|q,q/t)$. }\label{fig: sharp mark}
\end{center}
\end{figure} 

\begin{proposition}
\begin{align}
\bra{\boldsymbol{0}}\widetilde{\mathcal{T}}_1(\vu;x_1) \widetilde{\mathcal{T}}_2(x_2) \cdots 
\widetilde{\mathcal{T}}_N(x_N)\ket{\boldsymbol{0}} 
\propto 
p_{N}(\vx;\vu|q,q/t).
\end{align}

\begin{proof}
The operators $\wPhi_{(m)}(z)$ and $\wPhi^*_{(m)}(z)$ 
can be decomposed as 
\begin{align}
\wPhi_{(m)}(z)=:\wPhi_{\emptyset}(t^{-1}z) \mathcal{A}(q^mz):, \quad 
\wPhi^*_{(m)}(z)=:\wPhi_{\emptyset}^*(t^{-1}z) \mathcal{A}^*(q^mz):, 
\end{align}
where 
\begin{align}
&\mathcal{A}(z)=
\exp\left( -\sum_{n>0}\frac{1-t^{-n}}{n(1-q^n)} a_{-n}z^n\right) 
\exp\left( \sum_{n>0}\frac{1-t^n}{n(1-q^{-n})} a_{n} z^{-n}\right), \\
&\mathcal{A}^*(z)=
\exp\left( \sum_{n>0}\frac{1-t^{-n}}{n(1-q^n)}\gamma^n a_{-n}z^n\right) 
\exp\left( -\sum_{n>0}\frac{1-t^n}{n(1-q^{-n})} \gamma^{n}a_{n} z^{-n}\right).  
\end{align}
Then the tensor product of $\mathcal{A}(z)$ and $\mathcal{A}^*(z)$ 
corresponds to the screening current (Definition \ref{def: screening}): 
\begin{align}
\mathcal{A}^*(z)\otimes \mathcal{A}(z)=\phi^{sc}(\gamma^{-1}t^{-1}z). 
\end{align}
Therefore, 
we have 
\begin{align}\label{eq: Jackson til Ti}
\widetilde{\mathcal{T}}_i(\vu; x) =&
 \left( \frac{(q/t;q)_{\infty}}{(q;q)_{\infty}}\right)^{i-1} \\
&\times \sum_{0\leq m_1\leq m_2\leq \cdots \leq m_{i-1} <\infty} 
\Phi^{(0)}(t^{-1}x) \widetilde{S}^{(1)}(q^{m_1}x)  \cdots  \widetilde{S}^{(i-1)}(q^{m_{i-1}}x) 
\prod_{k=1}^{i-1}(u_{k+1}/u_{k})^{m_k}. \nonumber 
\end{align}
Here, we put 
\begin{align}
\widetilde{S}^{(k)} (z)=S^{(k)}(\gamma^{-2k}t^{-1} z). 
\end{align}
As in the proof of Proposition \ref{prop: commutativity of S} (Appendix \ref{sec: pf of screening}), 
$X^{(1)}(z)$ commutes with $\widetilde{S}^{(k)}(w)$ up to 
$q$-difference:  
\begin{align}
\left[X^{(1)}(z),\widetilde{S}^{(k)}(w)\right]
=(t-1)(u_{k+1}T_{q,w}-u_k) \left( \delta\left( \frac{\gamma^{-2k}w}{qz} \right):\Lambda^{(k)}(\gamma^{-2k }w/q)\widetilde{S}^{(k)}(w):\right). 
\end{align}
Further,  
by the property of the operator products 
\begin{align}
\widetilde{S}^{(k-1)}(x)\Lambda^{(k)}(\gamma^{-2k }x/q)
=\Phi^{(0)}(t^{-1}x) \Lambda^{(1)}(\gamma^{-2}x/q)=0, 
\end{align}
we obtain 
\begin{align}
&\Phi^{(0)}(t^{-1}x) \cdot \left[\Xo_{0}, \sum_{m=0}^{\infty} \widetilde{S}^{(1)}(q^{m}x)(u_{2}/u_{1})^{m}\right]=0, \\
&\widetilde{S}^{(k-1)}(x)\cdot \left[\Xo_{0}, \sum_{m=0}^{\infty} \widetilde{S}^{(k)}(q^{m}x)(u_{k+1}/u_{k})^{m}\right]=0 \quad (k\geq 2). 
\end{align}
This leads to 
\begin{align}
&\Xo(z) \widetilde{\mathcal{T}}_i(\vu;w)
- \gamma \frac{1-qz/tw}{1-z/w} \widetilde{\mathcal{T}}_i(\vu;w)\Xo(z) =u_i(1-t^{-1}) \widetilde{\mathcal{T}_i}(\vu;qw)\Psi^+(t^{-1}w) \delta(w/z). 
\end{align}
By this relation, we get  
\begin{align}
&\bra{\boldsymbol{0}}\Xo(z)\widetilde{\mathcal{T}}_1(x_1) \cdots \widetilde{\mathcal{T}}_N(x_N)\ket{\boldsymbol{0}} 
= \gamma^N \prod_{k=1}^N \frac{1-qz/tx_k}{1-z/x_k} \cdot 
\bra{\boldsymbol{0}}\widetilde{\mathcal{T}}_1(x_1) \cdots \widetilde{\mathcal{T}}_N(x_N)\Xo(z)\ket{\boldsymbol{0}} \nonumber \\
&\qquad +(1-t^{-1}) \sum_{i=1}^{N} \delta(x_i/z) u_i 
\prod_{k=1}^{i-1}\frac{1-qx_i/tx_k}{1-x_i/x_k}
\prod_{k=1}^{i-1}\frac{1-tx_k/qx_i}{1-x_k/x_i}T_{q,x_i} 
\bra{\boldsymbol{0}}\widetilde{\mathcal{T}}_1(x_1)  \cdots \widetilde{\mathcal{T}}_N(x_N)\ket{\boldsymbol{0}}. 
\end{align}
Thus, the considered matrix elements are eigenfunctions of the difference operator $D_{N}^1$: 
\begin{align}
D_{N}^1(u;q, q/t)
\bra{\boldsymbol{0}}\widetilde{\mathcal{T}}_1(x_1) \cdots \widetilde{\mathcal{T}}_N(x_N)\ket{\boldsymbol{0}} =(u_1+\cdots +u_N)\bra{\boldsymbol{0}}\widetilde{\mathcal{T}}_1(x_1) \cdots \widetilde{\mathcal{T}}_N(x_N)\ket{\boldsymbol{0}}. 
\end{align}
\end{proof}
\end{proposition}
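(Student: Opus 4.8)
The plan is to recognize the left-hand side as an eigenfunction of the $A$-type Macdonald $q$-difference operator and then to close the argument by uniqueness. Concretely, I would aim to prove the eigenvalue equation
\begin{equation*}
D^1_N(\vu;q,q/t)\,\bra{\boldsymbol{0}}\widetilde{\mathcal{T}}_1(x_1)\cdots\widetilde{\mathcal{T}}_N(x_N)\ket{\boldsymbol{0}}
=(u_1+\cdots+u_N)\,\bra{\boldsymbol{0}}\widetilde{\mathcal{T}}_1(x_1)\cdots\widetilde{\mathcal{T}}_N(x_N)\ket{\boldsymbol{0}}.
\end{equation*}
Since $p_N(\vx;\vu|q,q/t)$ is, by Fact \ref{fact: eigen fn of D}, the unique formal solution of this equation up to scalar multiples, the proposition follows at once. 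This mirrors the way the generalized Macdonald functions are singled out by $\Xo_0$, transported now to the ladder of vertical intertwiners.

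The first step is to bring the operators $\widetilde{\mathcal{T}}_i(x)$ into contact with the algebra $\WA$ by exhibiting them as fusions of a vacuum vertex with screening currents. I would factor each vertical-intertwiner component into a vacuum piece and a dressing current, so that the tensor product of the two dressing currents on neighbouring legs coincides with the screening current $\phi^{\mathrm{sc}}$ of Definition \ref{def: screening}. This should recast $\widetilde{\mathcal{T}}_i(\vu;x)$ as a summation of $\Phi^{(0)}(t^{-1}x)$ composed with shifted screening currents $\widetilde{S}^{(k)}(z)=S^{(k)}(\gamma^{-2k}t^{-1}z)$, weighted by monomials $\prod_k(u_{k+1}/u_k)^{m_k}$; this is the form in which the commutativity machinery of Section \ref{SSec_VO} becomes available.

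The core computation is then the single intertwining relation
\begin{equation*}
\Xo(z)\widetilde{\mathcal{T}}_i(\vu;w)-\gamma\frac{1-qz/tw}{1-z/w}\widetilde{\mathcal{T}}_i(\vu;w)\Xo(z)
=u_i(1-t^{-1})\,\widetilde{\mathcal{T}}_i(\vu;qw)\,\Psi^+(t^{-1}w)\,\delta(w/z).
\end{equation*}
To obtain it I would use that $\Xo(z)$ commutes with each $\widetilde{S}^{(k)}(w)$ up to a total $q$-difference in $w$ (the analogue of Proposition \ref{prop: commutativity of S}), and that the potentially dangerous delta-function terms drop out because the relevant operator products, such as $\widetilde{S}^{(k-1)}(x)\Lambda^{(k)}(\gamma^{-2k}x/q)$ and $\Phi^{(0)}(t^{-1}x)\Lambda^{(1)}(\gamma^{-2}x/q)$, vanish. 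Propagating this relation across the full product, inserting it between $\brazero$ and $\ketzero$, and extracting the constant term in $z$ should collapse the accumulated rational prefactors together with the residues at $\delta(x_i/z)$ into precisely the sum over $k$ defining $D^1_N(\vu;q,q/t)$.

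The hard part will be this last bookkeeping: checking that the unwanted poles cancel, that the surviving residues reorganize into the exact arm/leg coefficient functions $\prod_{k<i}\frac{1-qx_i/tx_k}{1-x_i/x_k}\prod_{k<i}\frac{1-tx_k/qx_i}{1-x_k/x_i}$ of $D^1_N$, and that the Cartan boundary factor $\Psi^+(t^{-1}w)$ is accounted for consistently. Once the eigenvalue equation is established, proportionality to $p_N(\vx;\vu|q,q/t)$ is immediate from the uniqueness in Fact \ref{fact: eigen fn of D}.
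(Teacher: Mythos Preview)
Your proposal is correct and follows essentially the same route as the paper: decompose the one-row intertwiner components so that neighbouring dressings recombine into the screening current $\phi^{\mathrm{sc}}$, thereby expressing $\widetilde{\mathcal{T}}_i$ as $\Phi^{(0)}(t^{-1}x)$ times a Jackson-type sum of shifted screening currents; then use the total-$q$-difference commutator of $\Xo(z)$ with $\widetilde{S}^{(k)}$ together with the vanishing products $\widetilde{S}^{(k-1)}(x)\Lambda^{(k)}(\gamma^{-2k}x/q)=\Phi^{(0)}(t^{-1}x)\Lambda^{(1)}(\gamma^{-2}x/q)=0$ to obtain the single intertwining relation you wrote, propagate it through the product (the $\Psi^+$ factor passing through via the OPE \eqref{eq: Psi+ Phi0 OP}), and read off the $D^1_N(\vu;q,q/t)$ eigenvalue equation before invoking uniqueness from Fact~\ref{fact: eigen fn of D}. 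The only cosmetic slip is in the coefficient function you quote for the residue at $z=x_i$: the second product should run over $k>i$ rather than $k<i$ to match the definition of $D^1_N$ (the paper's displayed formula has the same index-range typo).
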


\begin{remark}
By using the expression (\ref{eq: Jackson til Ti}), 
it is shown that the operator $\widetilde{\mathcal{T}}_i(x)$ also represents the screened vertex operator $\Phi^{(i-1)}(x)$. 
Whereas $\widetilde{\mathcal{T}}_i(x)$ is written by a sort of Jackson integrals, 
$\Phi^{(i-1)}(x)$ is introduced by counter integrals. 
In order to justify the equivalence of the two expressions, see the following argument.
\end{remark}
We first note the following deformation of the formal series,
\begin{equation}\label{eq_Jack_to_Cont}
\begin{split}
       \text{(Jackson integral)}\quad \sum_{m\in\mathbb{Z}_{\geq 0}} \delta(q^m z) \alpha^m &=\sum_{m\in\mathbb{Z}_{\geq 0}, k \in \mathbb{Z}} (q^m z)^k \alpha^m \\
        &= \sum_{k\in \mathbb{Z}} \frac{1}{1-q^k \alpha} z^k = \frac{\theta_q(\alpha)}{(q;q)_\infty (q^{-1};q)_\infty}\frac{\theta_q (\alpha z )}{\theta_q (z)}\quad \text{(Contour integral)}\,.
\end{split}
\end{equation}
Of course, this series does not converge for arbitrary $q$ and $\alpha$. However, by the following reason, we can choose some domains for $q$ and $\alpha$ so that the matrix elements of $\widetilde{\mathcal{T}}_i(x)$ converge there, and thus they are identical to those of $\Phi^{(i-1)}(x)$. We discuss the example of the $i=2$ case. The matrix elements of $\widetilde{\mathcal{T}}_2(x)$ are of the form,
\begin{equation}
\begin{split}
        \bra{P_{\vl}} \widetilde{\mathcal{T}}_2(x)\ket{Q_{\vm}} &\sim \oint \frac{dw}{w} \sum_{m\geq 0}\delta(q^m x/w) \alpha^m \bra{P_{\vl}} \Phi^{(0)}(x) \widetilde{S}^{(1)}(w)\ket{Q_{\vm}}\\
        &\sim \oint \frac{dw}{w} \sum_{m\geq 0}\sum_{k\in \mathbb{Z}}(q^m x/w)^k \alpha^m \sum_{n\geq 0} \frac{(t;q)_n}{(q;q)_n}(w/x)^n \bra{P_{\vl}} :\Phi^{(0)}(x) \widetilde{S}^{(1)}(w):\ket{Q_{\vm}}\,,
\end{split}
\end{equation}
where $\alpha = u_2/u_1$, and $\sim$ means the both sides are equal up to some overall factors.
The factor $\bra{P_{\vl}} :\Phi^{(0)}(x) \widetilde{S}^{(1)}(w):\ket{Q_{\vm}}$ is some finite sum of the monomial of the form $x^a w^b$ with the maximums and minimums of $a$'s and $b$'s depending on $\vl$ and $\vm$.
Then, by taking the constant term in $w$ in the above, the running index $k$ is fixed as $k = n + b \geq b$.
Therefore, when we choose the $q$ and $\alpha$ so that
\begin{equation}
    |\alpha q^{\mathrm{min}(b)}| \ll 1\,,
\end{equation}
the deformation (\ref{eq_Jack_to_Cont}) is justified by putting $k< \mathrm{min} (b)$ terms zero, and thus 
\begin{equation}
    \begin{split}
        \bra{P_{\vl}} \widetilde{\mathcal{T}}_2(x)\ket{Q_{\vm}} &\sim \oint \frac{dw}{w} \sum_{m\geq 0}\sum_{k\in \mathbb{Z}}(q^m x/w)^k \alpha^m \sum_{n\geq 0} \frac{(t;q)_n}{(q;q)_n}(w/x)^n \bra{P_{\vl}} :\Phi^{(0)}(x) \widetilde{S}^{(1)}(w):\ket{Q_{\vm}}\\
        &\sim \oint \frac{dw}{w} \frac{\theta_q (\alpha x/w )}{\theta_q (x/w)}  \bra{P_{\vl}} \Phi^{(0)}(x) \widetilde{S}^{(1)}(w)\ket{Q_{\vm}} \sim \bra{P_{\vl}} \Phi^{(1)}(x) \ket{Q_{\vm}}\,.
    \end{split}
\end{equation}
This discussion can extend to the general $i$ case, and the equivalence between the Jackson integral and contour integral is justified.

\section{Basic Facts for Ordinary Macdonald Functions}\label{App: ord. Macdonald}

In this section, we give the definition and basic facts for ordinary Macdonald functions. 
Let $\Lambda_n=\mathbb{C}[x_1,\ldots,x_n]^{\mathfrak{S}_n}$ be the ring of symmetric polynomials, 
and $\Lambda=\lim_{\leftarrow}\Lambda_n$ be the projective limit in the category of graded rings, 
i.e., the ring of symmetric functions. 
$m_{\lambda}$ denotes the monomial symmetric function. 
We denote the power sum symmetric functions by $\pp_n=\pp_n(x)=\sum_{i \geq 1} x^n_i$. 
For a partition $\lambda$, set $\pp_{\lambda}=\prod_{i\geq 1} \pp_{\lambda_{i}}$. 
Macdonald functions are defined as orthogonal functions with respect to  
the following scalar product. 

\begin{definition}
Define the bilinear form $\langle -,- \rangle_{q,t}:\Lambda \otimes \Lambda \rightarrow \mathbb{C}$ 
by 
\begin{align}
\langle \pp_{\lambda}, \pp_{\mu} \rangle_{q,t}
=\delta_{\lambda, \mu}z_{\lambda} \prod_{i=1}^{\ell(\lambda)} 
\frac{1-q^{\lambda_i}}{1-t^{\lambda_i}}, \quad 
z_{\lambda}:=\prod_{i\geq 1}i^{m_i}\cdot m_i!,
\end{align}
where $m_i$ is the number of entries in $\lambda$ equal to $i$. 
\end{definition}

\begin{fact}[\cite{M}]
There exists an unique function $P_{\lambda}\in \Lambda$ such that 
\begin{align}
&P_{\lambda}=m_{\lambda}+\sum_{\mu< \lambda} \alpha_{\lambda,\mu} m_{\mu}\quad (\alpha_{\lambda,\mu}  \in \mathbb{C});\\ 
&\langle P_{\lambda}, P_{\mu} \rangle_{q,t}=0 \quad (\lambda \neq \mu). 
\end{align}
\end{fact}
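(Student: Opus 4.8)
The plan is to realize the $P_\lambda$ as the eigenfunctions of a single degree-preserving operator that is self-adjoint for $\langle -,-\rangle_{q,t}$ and acts triangularly with respect to the dominance order, so that mutual orthogonality is an automatic consequence of the spectral theorem rather than something to be engineered by hand.

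First I would reduce to a fixed degree. Since both conditions only compare partitions of the same size, it suffices to work inside the finite-dimensional homogeneous component $\Lambda^n=\{f\in\Lambda:\deg f=n\}$ for each $n=|\lambda|$, which has $\{m_\mu:|\mu|=n\}$ as a $\mathbb{C}$-basis. On this space the form $\langle-,-\rangle_{q,t}$ is nondegenerate for generic $q,t$: by definition it is diagonal in the power-sum basis, $\langle \pp_\mu,\pp_\nu\rangle_{q,t}=\delta_{\mu\nu}\,z_\mu\prod_{i\ge1}\tfrac{1-q^{\mu_i}}{1-t^{\mu_i}}$, and since the $\pp_\mu$ with $|\mu|=n$ also form a basis of $\Lambda^n$, the Gram matrix is invertible as long as no diagonal entry vanishes. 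Next I would bring in Macdonald's difference operator $D$ and use two properties: (i) it is self-adjoint, $\langle Df,g\rangle_{q,t}=\langle f,Dg\rangle_{q,t}$; and (ii) it is dominance-triangular in the monomial basis, $D\,m_\lambda=c_\lambda\,m_\lambda+\sum_{\mu<\lambda}(\ast)\,m_\mu$, with eigenvalues $c_\lambda=c_\lambda(q,t)$ pairwise distinct for distinct partitions of a common size. Granting (i)--(ii), the triangularity exhibits each $c_\lambda$ as a genuine eigenvalue with one-dimensional eigenspace spanned by a vector $m_\lambda+\sum_{\mu<\lambda}\alpha_{\lambda\mu}m_\mu$; normalizing the leading coefficient to $1$ defines $P_\lambda$ and yields the first condition, while self-adjointness plus the distinctness of the $c_\lambda$ forces $\langle P_\lambda,P_\mu\rangle_{q,t}=0$ for $\lambda\ne\mu$, which is the second.

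Uniqueness then follows formally: if $P_\lambda$ and $P'_\lambda$ both satisfy the conditions, their difference lies in $\mathrm{span}\{m_\mu:\mu<\lambda\}$ and is orthogonal to every $P_\mu$ with $\mu\ne\lambda$. Expanding $P_\lambda-P'_\lambda$ in the orthogonal basis $\{P_\nu:|\nu|=n\}$, pairing against each $P_\nu$, and invoking nondegeneracy kills every coefficient with $\nu\ne\lambda$; comparing $m_\lambda$-coefficients then kills the remaining one, so $P_\lambda=P'_\lambda$.

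The main obstacle is establishing (i) and (ii), which is the technical core of the construction. Self-adjointness is proved by presenting $D$ (or a commuting family generating it) in a manifestly symmetric form, typically through the scalar product on finitely many variables and a constant-term/residue computation; dominance-triangularity and the separation $c_\lambda\ne c_\mu$ require tracking how $D$ moves monomials and a direct comparison of the eigenvalue formulas. An operator-free alternative is Gram--Schmidt orthogonalization of the $m_\lambda$ along any linear refinement of dominance, which gives orthogonality to all strictly smaller $P_\mu$ immediately; but securing orthogonality to the incomparable and larger $\mu$ again reduces to the triangularity of a self-adjoint operator, so this input cannot be circumvented.
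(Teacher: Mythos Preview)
Your outline is correct and is precisely the standard argument: diagonalize a self-adjoint, dominance-triangular operator (Macdonald's $q$-difference operator) with simple spectrum on each homogeneous piece, then read off existence from triangularity and orthogonality from self-adjointness plus distinct eigenvalues. The paper does not prove this statement at all; it is recorded as a Fact with a citation to Macdonald's book \cite{M}, so there is no ``paper's own proof'' to compare against. What you have sketched is essentially the proof given in that reference, so there is no discrepancy to discuss.

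One small remark on presentation: in your uniqueness paragraph you invoke the orthogonal basis $\{P_\nu\}$ to expand $P_\lambda-P'_\lambda$, which is circular as written (you are in the middle of constructing that basis). The clean fix is to argue by induction on dominance: assume $P_\mu=P'_\mu$ for all $\mu<\lambda$, note that $P_\lambda-P'_\lambda\in\mathrm{span}\{m_\mu:\mu<\lambda\}=\mathrm{span}\{P_\mu:\mu<\lambda\}$, and then pair with each such $P_\mu$ to kill the coefficients. This is the same idea you intended, just organized so that it only uses what has already been established.
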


The symmetric functions $P_{\lambda}$ are called Macdonald symmetric functions. 
It is known that 
\begin{align}
\langle P_{\lambda}, P_{\lambda} \rangle_{q,t}=c'_{\lambda}/c_{\lambda}. 
\end{align} 
Here, $c_{\lambda}$ and $c'_{\lambda}$ are defined in (\ref{eq: c and c'}). 
Set $Q_{\lambda}:=\langle P_{\lambda}, P_{\lambda} \rangle_{q,t}^{-1}\cdot P_{\lambda}$, so that 
$\langle P_{\lambda}, Q_{\lambda} \rangle_{q,t}=1$. 
In this paper, 
we regard power sum symmetric functions as variables of Macdonald functions 
and write $P_{\lambda}=P_{\lambda}(\pp_n)$, $Q_{\lambda}=Q_{\lambda}(\pp_n)$. 
These are abbreviation for $P_{\lambda}(\pp_1,\pp_2,\ldots)$ and $Q_{\lambda}(\pp_1,\pp_2,\ldots)$. 
We often substitute the generators $a_{n}$ 
in the Heisenberg algebra into Macdonald functions 
as $P_{\lambda}(a_{-n})$. 
Note that this substitution preserves the properties of Macdonald functions as follows. 
The space of symmetric functions $\Lambda$ 
and the Fock space $\mathcal{F}$ are isomorphic 
as graded vector spaces, 
and they can be identified by 
\begin{align}
\Lambda \rightarrow \mathcal{F}, \quad \pp_{\lambda} \mapsto \ket{a_{\lambda}}. 
\end{align}
Further, the bilinear form on the Fock spaces preserves the structure of the scalar product 
$\langle -, - \rangle_{q,t}$ 
under this identification, i.e., 
\begin{align}
\langle \pp_{\lambda}, \pp_{\mu} \rangle=\braket{a_{\lambda}|a_{\mu}}. 
\end{align}

We describe some facts for Macdonald functions that are used in this paper. 

\subsection*{Kernel function}

The following function $\Pi(x,y;q,t)$ is called the kernel function: 
\begin{align}
\Pi(x,y;q,t):=\prod_{n\geq 1}\exp \left( \frac{1-t^n}{1-q^n}  \pp_n(x)\pp_n(y) \right). 
\end{align}
This function $\Pi(x,y;q,t)$ can be expanded in terms of dual bases parametrized by partitions 
as follows. 

\begin{fact}[\cite{M}]\label{fact: kernel}
Let $u_{\lambda}$, $v_{\lambda} \in \Lambda$ 
be homogeneous symmetric functions of level $|\lambda|$ 
and $\{u_{\lambda}\}$, $\{v_{\lambda}\}$ form $\mathbb{C}$-bases on $\Lambda$. 
Then, the followings are equivalent: 
\begin{align}
&\cdot\; \langle u_{\lambda}, v_{\lambda} \rangle_{q,t} =\delta_{\lambda, \mu} \quad \mbox{for all $\lambda, \mu$}; \\
&\cdot\; \Pi(x,y;q,t) =\sum_{\lambda}u_{\lambda}(x) v_{\lambda}(y). 
\end{align}
\end{fact}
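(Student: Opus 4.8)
The plan is to reduce the whole statement to the power-sum basis $\{\pp_\lambda\}$, for which the pairing $\langle-,-\rangle_{q,t}$ is diagonal, and then to read off the equivalence as a standard duality argument for a nondegenerate bilinear form. Throughout I would work degree by degree: since $u_\lambda,v_\lambda$ are homogeneous of degree $|\lambda|$ and $\langle-,-\rangle_{q,t}$ pairs only equal degrees, everything restricts to the finite-dimensional space $\Lambda_{(n)}$ of homogeneous symmetric functions of degree $n$, indexed by the partitions $\lambda$ of $n$. Nondegeneracy of $\langle-,-\rangle_{q,t}$ on each $\Lambda_{(n)}$ holds because the norms $\langle \pp_\lambda,\pp_\lambda\rangle_{q,t}=z_\lambda\prod_i\frac{1-q^{\lambda_i}}{1-t^{\lambda_i}}$ are nonzero for generic $q,t$.

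The one computational input is the lemma that $\Pi$ is the reproducing kernel of $\langle-,-\rangle_{q,t}$. Expanding the exponential in the definition of $\Pi(x,y;q,t)$ and collecting terms according to the partition $\lambda$ recording the multiplicities of each $\pp_n$, one obtains the $(q,t)$-Cauchy identity
\[
\Pi(x,y;q,t)=\sum_{\lambda}\frac{\pp_\lambda(x)\,\pp_\lambda(y)}{\langle \pp_\lambda,\pp_\lambda\rangle_{q,t}}\,.
\]
Since $\Pi$ is manifestly symmetric in $x$ and $y$, orthogonality of the $\pp_\lambda$ then gives the reproducing property in either variable: $\langle \Pi(x,\cdot),f\rangle_{q,t}=f(x)$ and $\langle \Pi(\cdot,y),g\rangle_{q,t}=g(y)$ for all $f,g\in\Lambda$. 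Equivalently, $\{\pp_\lambda\}$ and $\{\pp_\lambda/\langle\pp_\lambda,\pp_\lambda\rangle_{q,t}\}$ are dual bases.

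Given the reproducing property, the equivalence is immediate in both directions. For the implication from the dual-basis condition to the kernel expansion, note that the condition $\langle u_\lambda,v_\mu\rangle_{q,t}=\delta_{\lambda\mu}$ means exactly that every $g\in\Lambda$ admits the expansion $g=\sum_\mu \langle v_\mu,g\rangle_{q,t}\,u_\mu$; applying this to $g=\Pi(\cdot,y)$ and using $\langle \Pi(\cdot,y),v_\mu\rangle_{q,t}=v_\mu(y)$ yields $\Pi(x,y)=\sum_\mu u_\mu(x)v_\mu(y)$. Conversely, pairing the kernel expansion against $u_\mu$ in the $y$-variable gives $u_\mu(x)$ on the left by the reproducing property and $\sum_\lambda \langle v_\lambda,u_\mu\rangle_{q,t}\,u_\lambda(x)$ on the right; since $\{u_\lambda\}$ is a basis, comparison of coefficients forces $\langle u_\mu,v_\lambda\rangle_{q,t}=\delta_{\mu\lambda}$.

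An equivalent and perhaps more transparent route is purely matricial: write $u_\lambda=\sum_\nu A_{\lambda\nu}\pp_\nu$ and $v_\lambda=\sum_\nu B_{\lambda\nu}\,\pp_\nu/\langle\pp_\nu,\pp_\nu\rangle_{q,t}$ with $A,B$ finite square matrices in each degree. Then the dual-basis condition reads $AB^{\mathsf T}=I$, whereas the kernel expansion, after inserting the power-sum form of $\Pi$, reads $A^{\mathsf T}B=I$; for finite square matrices these one-sided inverse conditions are equivalent, both asserting $B=(A^{\mathsf T})^{-1}$. In either presentation the genuine content is the Cauchy-type expansion of $\Pi$ in the power-sum basis together with the nondegeneracy of the pairing; the remaining steps are formal linear algebra. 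The only point demanding care — and hence the mild ``main obstacle'' — is the bookkeeping in that expansion and the verification that homogeneity confines the change-of-basis matrices to the finite-dimensional graded pieces, so that left- and right-inverses coincide.
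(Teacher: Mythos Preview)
The paper does not prove this statement; it is stated as a \emph{Fact} with a citation to Macdonald's book \cite{M} and no argument is given. Your proof is correct and is essentially the standard argument found there: expand $\Pi$ in the power-sum basis to identify it as the reproducing kernel of $\langle-,-\rangle_{q,t}$, then deduce the equivalence either via the reproducing property or the equivalent finite-dimensional matrix identity $AB^{\mathsf T}=I\Leftrightarrow A^{\mathsf T}B=I$ in each graded piece.
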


Note that especially, we have
\begin{align}
\Pi(x,y;q,t)=\sum_{\lambda} P_{\lambda}(\pp_n(x)) Q_{\lambda}(\pp_n(y)). 
\end{align}

\subsection*{Pieri formula}

Define the symmetric function $g_r\in \Lambda$ to be the expansion coefficient 
of $y^r$ in the following series: 
\begin{align}
\exp \left( \sum_{n\geq 1} \frac{1}{n}\frac{1-t^n}{1-q^n} \pp_n y^n\right) =\sum_{r\geq 0} g_r \, y^r. 
\end{align}
For a partition $\lambda$ and a coordinate $s$,  we write 
\begin{align}
b_{\lambda}(s)=
\begin{cases}
\dfrac{1-q^{a_{\lambda}(s)}t^{l_{\lambda}(s)+1}}{1-q^{a_{\lambda}(s)+1}t^{l_{\lambda}(s)}}, & s \in \lambda; \\
1, & \mathrm{otherwise}.
\end{cases}
\end{align}

\begin{fact}[\cite{M}]\label{fact: Pieri}
We have the Pieri rules:
\begin{align}
g_rQ_{\mu}=\sum_{\lambda}\prod_{s\in R_{\lambda/\mu}- C_{\lambda/\mu}}\frac{b_{\mu}(s)}{b_{\lambda}(s)} \cdot Q_{\lambda}\,.
\end{align}
Here, the summation is over the partitions $\lambda$ such that 
$\lambda/\mu$ is a horizontal $r$-strip, i.e.,  
$\lambda/\mu$ has at most one box in each column. 
$R_{\lambda/\mu}$ (resp. $C_{\lambda/\mu}$) is the union of the rows (resp. columns) that intersect 
$\lambda-\mu$. 
\end{fact}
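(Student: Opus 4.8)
The plan is to derive the Pieri rule from the self-duality of the Macdonald scalar product carried by the kernel function $\Pi(x,y;q,t)$ (Fact \ref{fact: kernel}), reducing the Pieri coefficient to a single-variable specialization of a skew Macdonald function. First I would identify $g_r$ with the one-row Macdonald function $Q_{(r)}$. Specializing the second argument of $\Pi$ to a single variable $y$ (so that $\pp_n(y)=y^n$) and using Fact \ref{fact: kernel} in the form $\Pi(x,y;q,t)=\sum_\lambda Q_\lambda(x)P_\lambda(y)$, which is valid since $\langle Q_\lambda,P_\mu\rangle_{q,t}=\delta_{\lambda\mu}$, only single-row $\lambda$ survive, because $P_\nu(y)=0$ for $\ell(\nu)>1$ while $P_{(r)}(y)=y^r$. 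Hence $\Pi(x,(y))=\sum_{r\geq 0}Q_{(r)}(x)\,y^r$, which matches the defining generating function of $g_r$, so that $g_r=Q_{(r)}$.

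Next I would expand $g_rQ_\mu=\sum_\lambda c_{\lambda/\mu}^{(r)}Q_\lambda$ and extract $c_{\lambda/\mu}^{(r)}=\langle g_rQ_\mu,P_\lambda\rangle_{q,t}$ from the duality $\langle Q_\lambda,P_\nu\rangle_{q,t}=\delta_{\lambda\nu}$. Writing $f^\perp$ for the adjoint of multiplication by $f$ with respect to $\langle-,-\rangle_{q,t}$, and introducing the skew function $P_{\lambda/\mu}:=Q_\mu^\perp P_\lambda$, whose defining relation $\langle P_{\lambda/\mu},Q_\nu\rangle_{q,t}=\langle P_\lambda,Q_\mu Q_\nu\rangle_{q,t}$ follows from Fact \ref{fact: kernel} applied to a split set of variables (through the factorization $\Pi(x,(y',y''))=\Pi(x,y')\Pi(x,y'')$), adjointness turns this into $c_{\lambda/\mu}^{(r)}=\langle g_r,P_{\lambda/\mu}\rangle_{q,t}=\langle Q_{(r)},P_{\lambda/\mu}\rangle_{q,t}$. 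Since $\{Q_\nu\}$ and $\{P_\nu\}$ are dual, the last scalar product is exactly the coefficient of $P_{(r)}$ in the $P$-expansion of the homogeneous degree-$r$ function $P_{\lambda/\mu}$; this coefficient is read off from the single-variable specialization, because $P_\nu(y)=0$ for $\ell(\nu)>1$ and $P_{(r)}(y)=y^r$, so $c_{\lambda/\mu}^{(r)}=y^{-r}P_{\lambda/\mu}(y)$.

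The final step, which is the main obstacle, is the explicit one-variable evaluation of the skew function: one must show that $P_{\lambda/\mu}(y)$ vanishes unless $\lambda\supset\mu$ with $\lambda/\mu$ a horizontal $r$-strip, and that in that case $P_{\lambda/\mu}(y)=\psi_{\lambda/\mu}\,y^{r}$ with $\psi_{\lambda/\mu}=\prod_{s\in R_{\lambda/\mu}-C_{\lambda/\mu}}b_\mu(s)/b_\lambda(s)$. The horizontal-strip constraint is forced by the triangularity of $P_\lambda$ in the monomial basis together with the observation that a single extra variable can enlarge the diagram only by a strip. The explicit coefficient $\psi_{\lambda/\mu}$ is the genuinely combinatorial input: I would obtain it from the branching rule $P_\lambda(x_1,\dots,x_n)=\sum_\mu P_{\lambda/\mu}(x_n)\,P_\mu(x_1,\dots,x_{n-1})$ by induction on the number of variables, pinning down the one-step coefficients using the orthogonality and the norm formula $\langle P_\lambda,P_\lambda\rangle_{q,t}=c'_\lambda/c_\lambda$ (equivalently, by comparison with Macdonald's explicit column-strict tableau formula). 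Substituting $c_{\lambda/\mu}^{(r)}=\psi_{\lambda/\mu}$ back into $g_rQ_\mu=\sum_\lambda c_{\lambda/\mu}^{(r)}Q_\lambda$ then yields the asserted Pieri rule.
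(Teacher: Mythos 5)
The paper offers no proof of this statement at all: it is quoted verbatim as a known fact from Macdonald's book \cite{M}, so your proposal is in effect an attempt to reconstruct Macdonald's Chapter VI argument. Your first two reductions are correct and are exactly the standard ones: the identification $g_r=Q_{(r)}$ via the one-variable specialization of the kernel $\Pi$, and the rewriting of the Pieri coefficient as a one-variable value of a skew function, $c^{(r)}_{\lambda/\mu}=y^{-r}P_{\lambda/\mu}(y)$, are both sound and cleanly argued. The problem is that these reductions only \emph{restate} the Pieri rule; the entire content of the theorem sits in your third step, and there your proposal has a genuine gap.

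Concretely, neither of the two routes you offer can be completed as stated. First, the horizontal-strip vanishing is \emph{not} forced by triangularity in the monomial basis: restricting $m_\nu$ to one extra variable $y$ only deletes a single part of $\nu$, and after re-expanding in the $P$-basis the support grows beyond such partitions. For example, for $\lambda=(2,2)$ and $r=2$, dominance triangularity permits a nonzero $P_{(1,1)}$-component in the coefficient of $y^2$ of $P_{(2,2)}(x_1,\dots,x_{n-1},y)$ (it enters through $m_{(2,1,1)}\leq (2,2)$), whereas the Pieri rule asserts that this component vanishes because $(2,2)/(1,1)$ has two boxes in the same column; proving that cancellation is precisely the theorem, and your argument does not address it. Second, orthogonality together with the norm formula $\langle P_\lambda,P_\lambda\rangle_{q,t}=c'_\lambda/c_\lambda$ only relates the coefficient $\psi_{\lambda/\mu}$ to its $Q$-counterpart $\varphi_{\lambda/\mu}$ through $b_\lambda=c_\lambda/c'_\lambda$; it cannot determine either family, and Macdonald's actual derivation requires independent input (the dual Pieri rule for $e_r$, the involution exchanging $(q,t)$ with $(t,q)$ on conjugate partitions, and an explicit evaluation coming from the difference-operator/principal-specialization analysis). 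Third, "comparison with Macdonald's column-strict tableau formula" is circular: in \cite{M} that tableau expansion is itself obtained by iterating exactly the one-variable branching rule whose coefficients you are trying to compute. So the proposal is a correct framing of the problem but not a proof; the combinatorial core must still be supplied.
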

Note that by this formula, 
we can see that 
$Q_{\mu}$ ($\mu\ngeq\lambda$) does not appear in the expansion of the product $\prod_{i \geq 1} g_{\lambda_i}$ in the basis of Macdonald functions.

\subsection*{Another scalar product}
There is another scalar product for which Macdonald functions $P_{\lambda}$'s are pairwise orthogonal. 
Let $L_r=\mathbb{C}[x_{1}^{\pm 1},\ldots, x_{r}^{\pm}]$ be the $\mathbb{C}$-algebra of Laurent polynomials in $r$ variables. 
For $f(x_1,\ldots, x_r) \in L_r$, we put $\bar f= f(x_1^{-1},\ldots, x_r^{-1})$.

\begin{definition}
The scalar product $\langle -,- \rangle'_r$ on $L_r$ is defined by 
\begin{align}
\langle f,g \rangle'_r:=\oint\prod_{i=1}^r\frac{dz_i}{2 \pi \sqrt{-1}} f \bar g \Delta(x), 
\end{align}
where 
\begin{align}
\Delta(x)&:=\prod_{ i<j} 
\frac{(x_i/ x_j;q)_{\infty}(x_j/ x_i;q)_{\infty}}{(tx_i/x_j;q)_{\infty}(tx_j/x_i;q)_{\infty}}. 
\label{eq: Delta(z)}
\end{align}
\end{definition}

\begin{fact}[\cite{M}]\label{fact: <P,Q>'}
Let $P_{\lambda}^{(r)}=P_{\lambda} \big|_{x_i \rightarrow 0\, (i>r)}, \,
Q_{\lambda}^{(r)}=Q_{\lambda} \big|_{x_i \rightarrow 0\, (i>r)} \in \Lambda_r$ be the Macdonald symmetric polynomials in $r$ variables. 
Then 
\begin{align}
\langle P_{\lambda}^{(r)},Q_{\mu}^{(r)} \rangle'_r
=\delta_{\lambda, \mu} 
\langle 1,1 \rangle'_r \prod_{(i,j) \in \lambda}\frac{1-q^{j-1}t^{r-i+1} }{1-q^{j}t^{r-i} },
\end{align}
where $\ell(\lambda), \ell(\mu) \leq r$. 
\end{fact}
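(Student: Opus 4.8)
The plan is to prove the two halves of the statement separately: the orthogonality $\langle P_\lambda^{(r)},Q_\mu^{(r)}\rangle'_r=0$ for $\lambda\neq\mu$, and the evaluation of the diagonal values. Both rest on the observation that the finite-variable Macdonald polynomials $P_\lambda^{(r)}$ are simultaneous eigenfunctions of Macdonald's $q$-difference operators, the first of which is self-adjoint for $\langle-,-\rangle'_r$. Throughout I use that $q,t$ are generic in the sense already assumed in the paper, so that no Pochhammer ratio degenerates and all eigenvalue tuples are distinct.

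First I would establish self-adjointness of the Macdonald operator
\[
E_r=\sum_{k=1}^{r}A_k(x)\,T_{q,x_k},\qquad A_k(x)=\prod_{j\neq k}\frac{tx_k-x_j}{x_k-x_j},
\]
with respect to $\langle-,-\rangle'_r$. The essential inputs are that the contour integral defining $\langle-,-\rangle'_r$ is invariant under each dilation $x_k\mapsto qx_k$ of the integration variables (both sides extract the same constant term), together with the first-order $q$-difference identity satisfied by the weight $\Delta(x)$ of (\ref{eq: Delta(z)}),
\[
\frac{T_{q,x_k}\Delta}{\Delta}=\frac{A_k(x)}{T_{q,x_k}\overline{A_k}(x)},
\]
where $\overline{\phantom{A}}$ denotes the substitution $x\mapsto x^{-1}$, under which $T_{q,x_k}\mapsto T_{q^{-1},x_k}$. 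Moving the dilation $T_{q,x_k}$ onto the whole integrand in $\langle f,E_r g\rangle'_r$ and applying this identity termwise rearranges it into $\langle E_r f,g\rangle'_r$. Since $E_r P_\lambda^{(r)}=\bigl(\sum_{i=1}^r q^{\lambda_i}t^{r-i}\bigr)P_\lambda^{(r)}$ and these eigenvalues are pairwise distinct for $\ell(\lambda),\ell(\mu)\leq r$, self-adjointness forces $\langle P_\lambda^{(r)},P_\mu^{(r)}\rangle'_r=0$ when $\lambda\neq\mu$; as $Q_\mu^{(r)}$ is a scalar multiple of $P_\mu^{(r)}$, the factor $\delta_{\lambda,\mu}$ follows.

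For the diagonal value I would reduce to the empty partition. Because $Q_\lambda=(c_\lambda/c'_\lambda)P_\lambda$, we have $\langle P_\lambda^{(r)},Q_\lambda^{(r)}\rangle'_r=(c_\lambda/c'_\lambda)\langle P_\lambda^{(r)},P_\lambda^{(r)}\rangle'_r$, so by (\ref{eq: c and c'}) the claimed product is equivalent to Macdonald's finite-variable norm formula
\[
\langle P_\lambda^{(r)},P_\lambda^{(r)}\rangle'_r=\langle 1,1\rangle'_r\prod_{(i,j)\in\lambda}\frac{1-q^{a_\lambda(i,j)+1}t^{\ell_\lambda(i,j)}}{1-q^{a_\lambda(i,j)}t^{\ell_\lambda(i,j)+1}}\cdot\frac{1-q^{j-1}t^{r-i+1}}{1-q^{j}t^{r-i}},
\]
whose first product equals $c'_\lambda/c_\lambda$ and thus cancels, leaving exactly the asserted co-arm/co-leg product. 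To obtain this I would run a box-adding induction on $|\lambda|$ driven by the Pieri rule (Fact \ref{fact: Pieri}): multiplication by $g_s$ assembles $Q_\lambda$ from the $Q_\mu$ over horizontal strips, while its adjoint for $\langle-,-\rangle'_r$ removes boxes, and pairing both against $P_\lambda^{(r)}$ while invoking orthogonality isolates a single one-step ratio of the two sides of the norm formula. Alternatively one can specialize the kernel identity (Fact \ref{fact: kernel}) $\Pi(x,y)=\sum_\lambda P_\lambda(x)Q_\lambda(y)$ at $y=(1,t,\dots,t^{r-1})$ and combine it with the principal specialization of $P_\lambda$ to read off the factors $\frac{1-q^{j-1}t^{r-i+1}}{1-q^{j}t^{r-i}}$ directly.

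I expect the main obstacle to be the diagonal computation rather than the orthogonality. The self-adjointness step is a clean single-operator manipulation, but making the Pieri recursion (or the principal specialization) reproduce \emph{precisely} the finite-$r$ co-arm/co-leg product — and verifying that the base value $\langle 1,1\rangle'_r$ factors out cleanly at every stage — is the genuinely delicate bookkeeping. A secondary but necessary point is the genericity of $q,t$, which guarantees both the distinctness of eigenvalues used for orthogonality and the well-definedness of every Pochhammer ratio appearing in the induction.
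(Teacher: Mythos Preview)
The paper does not give a proof of this statement at all: it is labelled a \emph{Fact} and simply cited from Macdonald's book \cite{M}, so there is nothing to compare against on the paper's side. Your outline is the standard argument (Macdonald, Chapter~VI, \S9): self-adjointness of the first Macdonald operator for $\langle-,-\rangle'_r$ gives the orthogonality, and the diagonal norm is computed either via a Pieri-type recursion or by combining the kernel expansion with the principal specialization. The self-adjointness step and the eigenvalue separation are exactly right. The diagonal step is correct in spirit but your description remains a sketch: the clean route in Macdonald is to compare $\langle-,-\rangle'_r$ directly with the abstract scalar product $\langle-,-\rangle_{q,t}$ via the symmetric-function identity relating $\Pi$ and $\Delta$, which yields the ratio $\langle P_\lambda,P_\lambda\rangle'_r/\langle 1,1\rangle'_r$ in one stroke rather than box by box. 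Either way the argument is sound; just be aware that the Pieri induction you propose requires knowing the adjoint of multiplication by $g_s$ for $\langle-,-\rangle'_r$, which is itself a nontrivial lemma.
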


\section{Some Useful Formulas}\label{App: Useful Formula}

\subsection{List of operator products}
In this subsection, we list some formulas for the normal ordering among the various operators appeared in the main text. 
We have 
\begin{align}
& \Lambda^{(i)}(z) S^{(i)}(w) = \frac{1-t^2w/qz}{1-tw/qz}:\Lambda^{(i)}(z) S^{(i)}(w):\,,\label{eq: Lam S OPE 1}\\ 
&\Lambda^{(i+1)}(z) S^{(i)}(w) = \frac{1-w/z}{1-tw/z}:\Lambda^{(i+1)}(z) S^{(i)}(w):\,,\\
& \Lambda^{(j)}(z) S^{(i)}(w) = :\Lambda^{(j)}(z) S^{(i)}(w): \quad \text{for} \quad j<i\quad\text{and}\quad j>i+1\,,\\
& S^{(i)}(w)\Lambda^{(i)}(z)  = \frac{1-qz/t^2w}{1-qz/tw}:S^{(i)}(w)\Lambda^{(i)}(z):\,,\\ 
&S^{(i)}(w)\Lambda^{(i+1)}(z)  = \frac{1-z/w}{1-z/tw}:S^{(i)}(w)\Lambda^{(i+1)}(z):\,,\\
& S^{(i)}(w)\Lambda^{(j)}(z)   = :S^{(i)}(w)\Lambda^{(j)}(z): \quad \text{for} \quad j<i\quad\text{and}\quad j>i+1\,,\label{eq: Lam S OPE 2}
\end{align}

\begin{align}
&\Phi^{(0)}(z) S^{(1)}(w)=\frac{(t w/ z ; q)_\infty}{(w/z ; q)_\infty}:\Phi^{(0)}(z) S^{(1)}(w):, \label{eq: Phi0 S OPE} \\ 
&\Phi^{(0)}(z) S^{(i)}(w)=:\Phi^{(0)}(z) S^{(i)}(w): \quad (i\geq 2), \\
&S^{(1)}(z)\Phi^{(0)}(w)=\frac{(qw/  z ; q)_\infty}{(q w/t z ; q)_\infty}:S^{(1)}(z)\Phi^{(0)}(w):,\\
&S^{(i)}(z)\Phi^{(0)}(w)=:S^{(i)}(z)\Phi^{(0)}(w): \quad (i\geq 2), \\
&\Phi^{(0)}(z)\Phi^{(0)}(w)=\frac{(qw/tz;q)_{\infty}}{(tw/z;q)_{\infty}} :\Phi^{(0)}(z)\Phi^{(0)}(w):,
\label{eq:  Phi0 Phi0 OPE}
\end{align}

\begin{align}
& S^{(i)}(z)S^{(i)}(w) = (1- w/z) \frac{(q w/ t z ; q)_\infty}{(t w/z ; q)_\infty}:S^{(i)}(z)S^{(i)}(w):\,,\\
& S^{(i)}(z)S^{(i+1)}(w) = \frac{(t w/  z ; q)_\infty}{( w/z ; q)_\infty}:S^{(i)}(z)S^{(i+1)}(w):, \\
& S^{(i+1)}(z)S^{(i)}(w)= \frac{(qw/  z ; q)_\infty}{(q w/t z ; q)_\infty}:S^{(i+1)}(z)S^{(i)}(w): \quad (\forall i),\label{eq: Si+1 Si OPE}\\
&S^{(i)}(z)S^{(j)}(w) = :S^{(i)}(z)S^{(j)}(w): \quad \text{for} \quad |i-j|>2, \label{eq: S S OPE1} 
\end{align}

\begin{align}
& \Lambda^{(1)}(z) \Phi^{(0)}(x) = \frac{1-x/z}{1-tx/z}:\Lambda^{(1)}(z) \Phi^{(0)}(x) :\,,\label{eq: Lam Phi0 OPE1}\\
&\Phi^{(0)}(x) \Lambda^{(1)}(z) = \frac{1-z/x}{1-qz/t^2x}:\Phi^{(0)}(x) \Lambda^{(1)}(z):\,, \\
& \Lambda^{(i)}(z) \Phi^{(0)}(x) = : \Lambda^{(i)}(z) \Phi^{(0)}(x):\,, \\ 
& \Phi^{(0)}(x) \Lambda^{(i)}(z) = \frac{1-z/tx}{1-qz/t^2x}:\Phi^{(0)}(x) \Lambda^{(i)}(z):\quad (i>1)\,, \label{eq: Lam Phi0 OPE2}
\end{align}

\begin{align}
&\Psi^+(z)S^{(i)}(w)=S^{(i)}(w)\Psi^+(z)=:\Psi^+(z)S^{(i)}(w): 
\quad (\forall i), \\
&\Psi^+(z)\Phi^{(0)}(w) = \frac{1-tw/qz}{1-w/z}:\Psi^+(z)\Phi^{(0)}(w):\,, \label{eq: Psi+ Phi0 OP}
\end{align}

\begin{align}
&A_{(s)}(x)\Lambda^{(i)}(z) 
= \prod_{k=1}^{r-1}\frac{1-t^{-k}z/x}{1-t^{-k-1}qz/x} :\Lambda^{(i)}(z) A_{(s)}(x):, \label{eq: A Lam OPE} \\
&A^{(r)}(x) \Phi^{(0)}(y)=\prod_{k=0}^{r-2}\frac{(t^{-k}qy/tx;q)_{\infty}}{(t^{-k}y/x;q)_{\infty}}:A^{(r)}(x) \Phi^{(0)}(y):, \label{eq: Ar Phi0}\\
&A^{(r)}(x) S^{(i)}(y)=:A^{(r)}(x) S^{(i)}(y):, \label{eq: Ar Si}
\end{align}
and with $\mathcal{G}(z) = \prod_{i,j=0}^\infty (1 - z q^i t^{-j})$,
\begin{align}
&\Phi_{\lambda}(v_i) \Phi^*_{\mu}(u_j) = \mathcal{G}(u_j/\gamma v_i)^{-1} N_{\mu \lambda}(u_j/\gamma v_i):\Phi_{\lambda}(v_i) \Phi^*_{\mu}(u_j):,
\\
 &\Phi^*_{\mu}(u_j)\Phi_{\lambda}(v_i)
= \mathcal{G}(v_i/\gamma u_j)^{-1} N_{ \lambda \mu}(v_i/\gamma u_j):\Phi^*_{\mu}(u_j)\Phi_{\lambda}(v_i):, \label{eq: Phi* Phi OPE}
\\
&\Phi_{\lambda^{(i)}}(v_i) \Phi_{\lambda^{(j)}}(v_j) 
= \frac{\mathcal{G}(v_j/\gamma^2 v_i)}{N_{\lambda^{(j)}\lambda^{(i)}}(v_j/\gamma^2 v_i)}:\Phi_{\lambda^{(i)}}(v_i) \Phi_{\lambda^{(j)}}(v_j):,
\\
&\Phi^*_{\mu^{(i)}}(u_i) \Phi^*_{\mu^{(j)}}(u_j) 
= \frac{\mathcal{G}(u_j/u_i)}{N_{\mu^{(j)}\mu^{(i)}}(u_j/u_i)}:\Phi^*_{\mu^{(i)}}(u_i) \Phi^*_{\mu^{(j)}}(u_j): .    
\end{align}

\subsection{On Nekrasov factors}

For a partitions $\lambda$ and non-negative integers $r, s \in \mathbb{Z}_{\geq 0}$, 
define $\mathsf{B}_{r,s}(\lambda)$ to be 
the partition obtained by removing the 1st to $s$-th rows 
and the 1st to $r$-th columns, i.e., 
\begin{align}
\mathsf{B}_{r,s}(\lambda) :=(\mathsf{P}(\lambda_{s+i}-r) )_{i\geq 1}, 
\quad \mathsf{P}(n)=
\begin{cases}
n, \quad n\geq 0;\\
0, \quad n<0. 
\end{cases}
\end{align}
For example, if $\lambda=(5,5,4,4,4,1,1)$, then 
$\mathsf{B}_{2,1}(\lambda)=(3,2,2,2)$. 
We have the following vanishing condition for the Nekrasov factor. 

\begin{lemma}\label{lem: nonzero cond of N}
If $m\geq 0$ and $n\leq 0$, then 
\begin{align}
N_{\lambda,\mu}(q^n t^m) \neq 0\quad \Longleftrightarrow \quad 
\mu \supset \mathsf{B}_{-n,m}(\lambda) .
\end{align}
If $m\leq -1$ and $n\geq 1$, then 
\begin{align}
N_{\lambda,\mu}(q^n t^m) \neq 0 \quad \Longleftrightarrow \quad 
\lambda \supset \mathsf{B}_{n-1,-m-1}(\mu).
\end{align}
\end{lemma}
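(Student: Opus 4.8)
The plan is to reduce the (non)vanishing of $N_{\lambda,\mu}(q^nt^m)$ to that of its individual factors and then to match the surviving condition against the stated containment. First I would exploit the genericity hypothesis $q^{a/2}t^{b/2}\neq 1$ for $(a,b)\neq(0,0)$: a factor $1-q^{a}t^{b}$ with $a,b\in\mathbb{Z}$ vanishes precisely when $a=b=0$. Reading off Definition \ref{def_NekFac} at $u=q^nt^m$, the factor indexed by a box $(i,j)\in\lambda$ vanishes exactly under condition (A), and the factor indexed by $(i,j)\in\mu$ vanishes exactly under condition (B), where
\[
\text{(A)}\ \ a_\lambda(i,j)=-n,\ \ \ell_\mu(i,j)=-m-1; \qquad \text{(B)}\ \ a_\mu(i,j)=n-1,\ \ \ell_\lambda(i,j)=m.
\]
Hence $N_{\lambda,\mu}(q^nt^m)=0$ if and only if (A) or (B) holds for some box.

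Next I would observe that in each regime exactly one condition is vacuous for sign reasons. When $m\geq 0$ and $n\leq 0$, condition (B) would require $a_\mu(i,j)=n-1<0$, impossible for a box of $\mu$, so only (A) can occur. Setting $r=-n\geq 0$ and $s=m\geq 0$, the leg equation $\ell_\mu(i,j)=-m-1$ forces the row index to be $\geq s+1$; after reindexing the row by $i\mapsto i-s$, condition (A) reads: there is $i\geq 1$ with $\lambda_{s+i}\geq r+1$ and $\mu'_{\lambda_{s+i}-r}=i-1$. On the other hand, the failure of $\mu\supset\mathsf{B}_{-n,m}(\lambda)=\mathsf{B}_{r,s}(\lambda)$ means there is $i\geq 1$ with $\mu_i<\mathsf{P}(\lambda_{s+i}-r)$, i.e. $\lambda_{s+i}\geq r+1$ and $\mu_i\leq\lambda_{s+i}-r-1$, which is equivalent to $\mu'_{\lambda_{s+i}-r}\leq i-1$.

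Writing $j_i:=\lambda_{s+i}-r$ for the indices $i\in\{1,\dots,I\}$ on which $\lambda_{s+i}\geq r+1$ (an initial segment, since $\lambda_{s+i}$ is weakly decreasing), condition (A) asserts $\mu'_{j_i}=i-1$ for some $i$, while non-containment asserts $\mu'_{j_i}\leq i-1$ for some $i$; the former clearly implies the latter. For the converse I would run a discrete intermediate-value argument on $h(i):=\mu'_{j_i}-(i-1)$: one has $h(1)\geq 0$, and since $j_i$ is weakly decreasing while $\mu'$ is weakly decreasing in its argument, $\mu'_{j_{i+1}}\geq\mu'_{j_i}$, whence $h(i+1)\geq h(i)-1$; any integer-valued $h$ with $h(1)\geq 0$ and steps bounded below by $-1$ that attains a nonpositive value must attain $0$. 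This produces an index with $\mu'_{j_i}=i-1$, i.e. condition (A), establishing $N_{\lambda,\mu}(q^nt^m)\neq 0\iff\mu\supset\mathsf{B}_{-n,m}(\lambda)$.

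Finally, for $m\leq -1$, $n\geq 1$ I would deduce the claim from the first case via the symmetry $N_{\lambda,\mu}(\gamma^{-1}x)=N_{\mu,\lambda}(\gamma^{-1}x^{-1})\,x^{|\lambda|+|\mu|}f_\lambda/f_\mu$ recorded earlier: the monomial prefactor is nonzero, and writing $q^nt^m=\gamma^{-1}x$ gives $\gamma^{-1}x^{-1}=q^{-(n-1)}t^{-(m+1)}$ with $-(n-1)\leq 0$ and $-(m+1)\geq 0$, so the already-proved case applied to $N_{\mu,\lambda}$ yields $N_{\lambda,\mu}(q^nt^m)\neq 0\iff\lambda\supset\mathsf{B}_{n-1,-m-1}(\mu)$. (Alternatively one repeats the combinatorial argument verbatim with $\lambda$ and $\mu$ interchanged, condition (B) now being the surviving one.) The main obstacle is exactly the converse direction of the translation step — turning an inequality that holds at some index into the exact equality needed for a factor to vanish; the monotonicity of $h$ is what makes the intermediate-value argument go through, and everything else is routine bookkeeping with arms, legs, and the definition of $\mathsf{B}_{r,s}$.
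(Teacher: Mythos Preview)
Your argument is correct. The paper states this lemma without proof (it appears in Appendix~\ref{App: Useful Formula} as a bare statement followed only by the remark on the special case $N_{\emptyset,\mu}(t)$), so there is nothing to compare against; your proposal supplies exactly the missing verification.

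A few small comments on the write-up. The reduction to conditions (A) and (B) and the sign dichotomy are clean. In the intermediate-value step, you might make explicit that the index set $\{i:\lambda_{s+i}\geq r+1\}$ is an initial segment $\{1,\dots,I\}$ (you do say this, but it is the reason $h$ is defined on a contiguous interval so the step bound $h(i+1)\geq h(i)-1$ can be iterated). The clincher is then: let $i_1$ be the first index with $h(i_1)\leq 0$; if $i_1=1$ then $h(1)=\mu'_{j_1}\geq 0$ forces $h(1)=0$, and if $i_1>1$ then $h(i_1-1)\geq 1$ together with $h(i_1)\geq h(i_1-1)-1$ gives $h(i_1)=0$. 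Your use of the symmetry $N_{\lambda,\mu}(\gamma^{-1}x)=N_{\mu,\lambda}(\gamma^{-1}x^{-1})\,x^{|\lambda|+|\mu|}f_\lambda/f_\mu$ for the second regime is efficient and correct; the monomial and framing factors are manifestly nonzero, and the exponent bookkeeping $q^nt^m=\gamma^{-1}x\Rightarrow \gamma^{-1}x^{-1}=q^{-(n-1)}t^{-(m+1)}$ lands squarely in the first case with the roles of $\lambda$ and $\mu$ swapped.
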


Note that in particular, 
$N_{\emptyset,\mu}(t)\neq 0$ if and only if $\mu=(m)$ for some $m \in \mathbb{Z}_{\geq 0}$.

\subsection{Duality of functions $p_n(x;s|q,t)$ and $f_n(x;s|q,t)$}

\begin{lemma}\label{Lem_delta1}
Let $\vl$ and $\vm$ satisfy $\ell (\lambda^{(i)}), \ell(\mu^{(i)})\leq n_i$.  
\begin{align}
\left[ x^{-\vl}x^{\vm} f_{|\vnn|}(x;s(\vl)|q,q/t)p_{|\vnn|}(x;s(\vm)|q,q/t) \right]_{x, 1}=\delta_{\vl, \vm}.
\end{align}
Here, $s(\vl)=(s_j(\vl))_{1\leq j\leq |\vnn|}$, 
$s_{[i,k]_{\vnn}}(\vl)=q^{\lambda^{(i)}_{k}}t^{1-k}u_i$. 
\begin{proof}
We denote the LHS by $F(\vl|\vm)$.
Inserting the Macdonald operator $D_n^1(s;q,q/t)$ and integrating by parts give
 \begin{equation}
     \begin{split}
        &\left[ x^{-\vl} f_{|\vnn|}(x;s(\vl)|q,q/t) \left(D_{|\vnn|}^1(\bar{s};q,q/t) x^{\vm} p_{|\vnn|}(x;s(\vm)|q,q/t)\right) \right]_{x, 1}  \\
         &=\left[\left(\widetilde{D^1_{|\vnn|}}(\bar{s};q,q/t) x^{-\vl} f_{|\vnn|}(x;s(\vl)|q,q/t)  \right) x^{\vm} p_{|\vnn|}(x;s(\vm)|q,q/t) \right]_{x, 1}\,,
     \end{split}
 \end{equation}
 where $\bar{s}_{\lbl{i}{k}{n}} = u_i t^{1-k}$.
 The LHS gives $\epsilon_{\vm}F(\vl|\vm)$, and  the RHS gives $\epsilon_{\vl} F(\vl|\vm)$.
 Thus we can show $F(\vl|\vm) = C(\vl) \delta_{\vl, \vm}$ with the coefficient $C(\vl) = F(\vl|\vl)$.
 We can show $C(\vl) = 1$ because
 both $f_{|\vnn|}(x;s(\vl)|q,q/t)$ and $p_{|\vnn|}(x;s(\vm)|q,q/t)$ are in $\mathbb{C}(s_1,\ldots, s_{|\vnn|})[[x_2/x_1, x_3/x_2,\ldots , x_{|\vnn|}/x_{|\vnn|-1}]]$ and so is their product. 
\end{proof}
\end{lemma}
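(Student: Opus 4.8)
The plan is to read the constant term
$F(\vl\,|\,\vm):=\left[ x^{-\vl}x^{\vm} f_{|\vnn|}(x;s(\vl)|q,q/t)\,p_{|\vnn|}(x;s(\vm)|q,q/t) \right]_{x,1}$
as a pairing between an eigenfunction of the modified Macdonald operator $D^1_{|\vnn|}$ and an eigenfunction of its formal adjoint $\widetilde{D^1_{|\vnn|}}$, and to deduce orthogonality from the separation of eigenvalues, following the same ``integration by parts'' mechanism used in the proof of Theorem \ref{thm: GM}. Concretely, I would first transport the eigenfunction property (Fact \ref{fact: eigen fn of D}) through the monomial prefactors $x^{\pm\vl}$, then prove that $D^1_{|\vnn|}$ and $\widetilde{D^1_{|\vnn|}}$ are adjoint for the functional $[\,\cdot\,]_{x,1}$, and finally pin down the normalization on the diagonal.

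For the transport step, set $\bar s_{[i,k]_{\vnn}}=u_i t^{1-k}$. Conjugating $D^1_{|\vnn|}(\bar s;q,q/t)$ by $x^{\vm}$ leaves its rational coefficients unchanged and only multiplies each shift $T_{q,x_k}$ by $q^{\mu^{(i)}_k}$; since $q^{\mu^{(i)}_k}\bar s_{[i,k]_{\vnn}}=s_{[i,k]_{\vnn}}(\vm)$, this gives $x^{-\vm}D^1_{|\vnn|}(\bar s)x^{\vm}=D^1_{|\vnn|}(s(\vm))$, so by Fact \ref{fact: eigen fn of D} the function $x^{\vm}p_{|\vnn|}(x;s(\vm)|q,q/t)$ is an eigenfunction of $D^1_{|\vnn|}(\bar s;q,q/t)$ with eigenvalue $\sum_j s_j(\vm)$. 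The identical computation for $\widetilde{D^1_{|\vnn|}}$ shows that $x^{-\vl}f_{|\vnn|}(x;s(\vl)|q,q/t)$ is an eigenfunction of $\widetilde{D^1_{|\vnn|}}(\bar s;q,q/t)$ with eigenvalue $\sum_j s_j(\vl)$.

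For the adjointness, I would use that the constant term is invariant under $x_k\mapsto qx_k$, so $[\,(1-T_{q,x_k})h\,]_{x,1}=0$ for every Laurent series $h$. Moving a single summand $s_k a_k T_{q,x_k}$ of $D^1_{|\vnn|}$ across the pairing then replaces it by $T_{q^{-1},x_k}(s_k a_k\,\cdot\,)$ acting on the other factor, and a direct check that $T_{q^{-1},x_k}$ carries the coefficient $a_k$ of $D^1_{|\vnn|}$ to the coefficient of $\widetilde{D^1_{|\vnn|}}$ yields $[\,F\cdot D^1_{|\vnn|}(\bar s)G\,]_{x,1}=[\,(\widetilde{D^1_{|\vnn|}}(\bar s)F)\cdot G\,]_{x,1}$. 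Taking $F=x^{-\vl}f_{|\vnn|}(x;s(\vl))$ and $G=x^{\vm}p_{|\vnn|}(x;s(\vm))$ and inserting the two eigenvalues above gives $\bigl(\sum_j s_j(\vm)-\sum_j s_j(\vl)\bigr)F(\vl\,|\,\vm)=0$. Because the eigenvalue noted after Fact \ref{fact:existence thm of Gn Mac} is an affine function of $\sum_j s_j(\vl)$ with a $\vl$-independent offset, the genericity hypothesis $\epsilon_{\vl}\neq\epsilon_{\vm}$ for $\vl\neq\vm$ forces $F(\vl\,|\,\vm)=0$ off the diagonal. On the diagonal the prefactors cancel, $x^{-\vl}x^{\vl}=1$, and since $f_{|\vnn|}$ and $p_{|\vnn|}$ are power series in $x_2/x_1,\dots,x_{|\vnn|}/x_{|\vnn|-1}$ with leading coefficient $1$ (as $f_0=p_0=1$), the constant term of their product is $1$, i.e.\ $F(\vl\,|\,\vl)=1$.

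The step I expect to be the main obstacle is the adjointness bookkeeping: verifying that the backward shift $T_{q^{-1},x_k}$ sends the rational coefficient of $D^1_{|\vnn|}$ exactly to that of $\widetilde{D^1_{|\vnn|}}$, and that the constant-term functional kills all the total-difference remainders with no surviving boundary terms. This is a finite but fiddly $q$-difference computation; once it is secured, the eigenvalue separation and the normalization follow immediately.
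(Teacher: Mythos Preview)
Your proposal is correct and follows essentially the same route as the paper: insert $D^1_{|\vnn|}(\bar s;q,q/t)$, integrate by parts to pass to $\widetilde{D^1_{|\vnn|}}(\bar s;q,q/t)$, compare eigenvalues $\sum_j s_j(\vm)$ and $\sum_j s_j(\vl)$ to get orthogonality, and read off the diagonal value from the fact that $f_{|\vnn|}$ and $p_{|\vnn|}$ are formal power series in the ratios $x_{j+1}/x_j$ with constant term $1$. Your conjugation identity $x^{-\vm}D^1_{|\vnn|}(\bar s)x^{\vm}=D^1_{|\vnn|}(s(\vm))$ and the adjointness verification are exactly the details the paper leaves implicit under the phrase ``integrating by parts,'' already used in the proof of Theorem~\ref{thm: GM}; one minor remark is that the parenthetical ``$f_0=p_0=1$'' is not quite the right citation for the leading coefficient being $1$---that comes from $c_n(0;\vs|q,t)=1$ via the recursion in Definition~\ref{def: p_n f_n}.
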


\subsection{Some formulas to prove (\ref{eq: final eq})}\label{App:Useful_Induction}

The coincidence between (\ref{eq: <K|til V|K>}) and (\ref{eq: final eq}) can be identified 
with the following equation. 

\begin{proposition}\label{prop: step3}
\begin{align}
&\frac{\mathcal{R}^{\vnn}_{\vl}}{\mathcal{R}^{\vmm}_{\vm}}
\mathsf{N}^{|\vnn|,|\vmm|}_{[\vm]}(s_1,\ldots,s_{|\vnn|+|\vmm|}) \prod_{1\leq i<j\leq|\boldsymbol{n}|}\frac{(q s_j/ts_i)_{[\boldsymbol{\lambda}]_i-[\boldsymbol{\lambda}]_j}}{(q s_j/s_i)_{[\boldsymbol{\lambda}]_i-[\boldsymbol{\lambda}]_j}}\nonumber 
\\
&=
\prod_{i=1}^N t^{-(|\vnn|+n_i)|\lambda^{(i)}|}
(-\gamma^{-2})^{|\mu^{(i)}|}(t^{n_i})^{-(N-i)|\mu^{(i)}|} t^{(|\lambda^{(i)}|-|\mu^{(i)}|)\sum_{s=1}^in_s}
\gamma^{-(N-1)(|\lambda^{(i)}|+|\mu^{(i)}|)}t^{2n(\lambda^{(i)})+|\lambda^{(i)}|}q^{n(\mu^{(i)'})}\nonumber 
\\
&\times
\prod_{i=1}^N \left(\frac{f_{\mu^{(i)}}}{f_{\lambda^{(i)}}}\right)^{N-i}
\prod_{1\leq i<j\leq N}(v_i/v_j)^{-|\lambda^{(i)}|+|\mu^{(i)}|}\nonumber 
\\
&\times
        \prod_{i=1}^N\frac{1}{c_{\lambda^{(i)}}c'_{\mu^{(i)}}}\frac{\prod_{ i, j=1 }^N N_{\lambda^{(i)}, \mu^{(j)}}(t^{n_j}v_i/v_j)}{\prod_{1\leq i< j\leq N}N_{\lambda^{(j)}\lambda^{(i)}}(q v_j/t v_i)\prod_{1\leq i< j\leq N}N_{\mu^{(i)}\mu^{(j)}}(qt^{-n_i+n_j}v_i/t v_j)}.\label{eq_Step3}
\end{align}
\end{proposition}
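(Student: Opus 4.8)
The plan is to prove (\ref{eq_Step3}) by a double induction, adding boxes one at a time first to $\vl$ and then to $\vm$. For the base case $\vl=\vm=\emptyset$ both sides collapse to $1$: on the left $\mathcal{R}^{\vnn}_{\emptyset}=\mathcal{R}^{\vmm}_{\emptyset}=1$ by Proposition \ref{Prop_R}, every $q$-Pochhammer symbol with vanishing shift equals $1$, and $\mathsf{N}^{|\vnn|,|\vmm|}_{\boldsymbol{0}}=1$ by Definition \ref{Def_N_mu}; on the right every Nekrasov factor with an empty partition is $1$, $c_{\emptyset}=1$, and all monomial exponents vanish. This fixes the overall normalization, so that it suffices to match the single-box increments.

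For the inductive step I would isolate the dependence of each side on a single box and show the two increment ratios agree. Adding a box to $\lambda^{(a)}$ in row $r$ sends $\lambda^{(a)}_r\mapsto\lambda^{(a)}_r+1$, which, through the specialization $s_{[i,k]_{\vnn}}=q^{\lambda^{(i)}_k}t^{1-k}v_i$ of (\ref{eq: form of s 1}), shifts only finitely many $s$-variables by a factor of $q$. On the left the effect is a telescoping of the $q$-Pochhammer symbols appearing in $\mathcal{R}^{\vnn}_{\vl}$, in the factor $\prod_{i<j}(qs_j/ts_i)_{[\vl]_i-[\vl]_j}/(qs_j/s_i)_{[\vl]_i-[\vl]_j}$, and in $\mathsf{N}^{|\vnn|,|\vmm|}_{[\vm]}$; on the right the effect is the single-box recursion for the cross-terms $N_{\lambda^{(a)}+\square,\mu^{(j)}}/N_{\lambda^{(a)},\mu^{(j)}}$ and for the self-terms $N_{\lambda^{(j)}\lambda^{(a)}}$ in the denominator, together with elementary ratios for $c_{\lambda^{(a)}}$ from (\ref{eq: c and c'}) and for the flaming factors of Definition \ref{def: flaming factor}. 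The box-addition to $\mu^{(a)}$ is handled symmetrically using (\ref{eq: form of s 2}) and the recursion of $N$ in its second argument.

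The main obstacle, and the heart of the computation, is the translation between the two bookkeeping schemes. The left-hand side encodes the partitions \emph{row by row}, through ratios of $q$-Pochhammer symbols indexed by pairs of rows $i<j$, whereas the right-hand side encodes the same data \emph{box by box}, through the arm and leg lengths inside the Nekrasov factors of Definition \ref{def_NekFac}. I would show that, after the increment at $(r,c)$, the telescoping of the row-indexed products on the left reorganizes precisely into the arm-leg product that governs the Nekrasov increments on the right; in this matching the powers of $\gamma$, $q$ and $t$, the factors $g_{\lambda^{(i)}}$, $q^{n(\mu^{(i)'})}$ and $t^{2n(\lambda^{(i)})}$, the ratios $f_{\mu^{(i)}}/f_{\lambda^{(i)}}$, and the monomials $(v_i/v_j)^{\pm}$ are all pinned down simultaneously. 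The delicate point is accounting for how the shift $\lambda^{(a)}_r\mapsto\lambda^{(a)}_r+1$ redistributes the $v_i/v_j$-dependence across the cross-terms $N_{\lambda^{(i)},\mu^{(j)}}(t^{n_j}v_i/v_j)$ and the intra-tuple terms.

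Once the single-box ratios have been verified for every addable box of $\vl$ (with $\vm$ fixed) and of $\vm$ (with $\vl$ arbitrary), the double induction closes and (\ref{eq_Step3}) follows for all $(\vl,\vm)$. The auxiliary $q$-Pochhammer telescoping identities and the single-box Nekrasov recursions needed to carry out the row-to-box reorganization are the elementary but lengthy computations one would record explicitly to complete the argument.
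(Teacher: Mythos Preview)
Your proposal is correct and follows essentially the same route as the paper: the identity is established by induction on box additions to $\vl$ and to $\vm$, reducing the claim to matching single-box increment ratios on both sides, with the base case $\vl=\vm=\emptyset$ trivial. The paper organizes the computation slightly differently: it first peels off the ``diagonal'' blocks (those with the same tuple index, giving $N_{\lambda^{(i)},\mu^{(i)}}(t^{n_i})/c_{\lambda^{(i)}}c'_{\mu^{(i)}}$) via a closed-form identity expressing this ratio directly as a product of row-indexed $q$-Pochhammer symbols, and only then applies the box-addition induction to the remaining cross-terms, recording explicit formulas for the ratios of $\mathcal{R}^{\vnn}_{\vl}$, of a reduced version $\hat{\mathsf{N}}$ of $\mathsf{N}$, and of the off-diagonal Nekrasov factors under $\lambda^{(i)}_k\to\lambda^{(i)}_k+1$ and $\mu^{(i)}_k\to\mu^{(i)}_k+1$. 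This split shortens the bookkeeping but is not conceptually different from what you outline; your ``row-to-box reorganization'' is exactly the content of the paper's first lemma, and the single-box Nekrasov recursions you anticipate needing are precisely the ones the paper records.
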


The following lemmas are formulas for proving (\ref{eq_Step3}). 

\begin{lemma}
With  $n = \ell(\lambda)$ and $m = \ell(\mu)$, the following holds: 
\begin{align}
        \frac{N_{\lambda\mu }(t^{n})}{c_{\lambda}c'_{\mu}} \nonumber
    &= (-t/q)^{|\mu|}t^{n(\mu)-2n(\lambda)-|\lambda|}(t^{n})^{|\lambda|+|\mu|} q^{-n(\mu')}\\
        &\times\prod_{1\leq i<j\leq n}\frac{(q s_j/t s_i)_{\lambda_i-\lambda_j}}{(q s_j/ s_i)_{\lambda_i-\lambda_j}}\prod_{i=1}^{m}\frac{(q/t)_{\mu_{i}}}{(q)_{\mu_i}}\prod_{j=1}^{m}\prod_{i=1}^{n+j-1}
    \frac{(q s_{j+n}/t s_i)_{\mu_j}}{(q s_{j+n}/s_i)_{\mu_j}}\prod_{1\leq i<j\leq m}\frac{(q\sigma_i/t\sigma_j)_{\mu_j}}{(q\sigma_i/\sigma_j)_{\mu_j}}\,,
\end{align}
where
\begin{equation}
    s_{i} = q^{\lambda_i}t^{1-i}\,\, (i=1,\dots,n)\,,\quad s_{n+i} = t^{1-n-i}\,\, (i=1,\dots,m)\,,\quad\text{and}\quad \sigma_i = q^{\mu_i}t^{1-i}\,.\nonumber
\end{equation}
\end{lemma}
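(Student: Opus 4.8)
The plan is to treat this as a finite combinatorial identity between products of binomials $(1-q^{a}t^{b})$, proved by rewriting each box product as a product over rows. Both $c_\lambda$, $c'_\mu$ and the two products making up $N_{\lambda\mu}(t^n)$ are products over the cells of a Young diagram weighted by arm- and leg-lengths, whereas the right-hand side is indexed by pairs of rows through the variables $s_i=q^{\lambda_i}t^{1-i}$ ($1\le i\le n$), $s_{n+j}=t^{1-n-j}$ and $\sigma_j=q^{\mu_j}t^{1-j}$. The observation that organizes everything is that $s_{n+1},\dots,s_{n+m}$ are exactly the $s$-variables of the diagram having $\lambda$ in its first $n$ rows and $m$ empty rows appended below; so the mixed factor $\prod_{j=1}^{m}\prod_{i=1}^{n+j-1}\frac{(qs_{j+n}/ts_i;q)_{\mu_j}}{(qs_{j+n}/s_i;q)_{\mu_j}}$ is destined to be the row form of the $\lambda$-dependence of $N_{\lambda\mu}(t^n)$, while $\sigma_j$ carries the genuine $\mu$-data.

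First I would expand the two diagonal factors. Substituting $a_\lambda(i,j)=\lambda_i-j$ and $\ell_\lambda(i,j)=\lambda'_j-i$ into $c_\lambda$ and grouping the columns of each row $i$ according to the value of $\lambda'_j$, the product collapses into $\prod_{1\le i<j\le n}\frac{(qs_j/ts_i;q)_{\lambda_i-\lambda_j}}{(qs_j/s_i;q)_{\lambda_i-\lambda_j}}$ up to an explicit monomial in $q,t$ that assembles from the extracted powers into a combination of $q^{n(\lambda')}$, $t^{n(\lambda)}$ and $t^{-2n(\lambda)-|\lambda|}$. The same manipulation applied to $c'_\mu=\prod_{(i,j)\in\mu}(1-q^{a_\mu(i,j)+1}t^{\ell_\mu(i,j)})$ yields the diagonal piece $\prod_{i=1}^{m}\frac{(q/t;q)_{\mu_i}}{(q;q)_{\mu_i}}$ together with the off-diagonal piece $\prod_{1\le i<j\le m}\frac{(q\sigma_i/t\sigma_j;q)_{\mu_j}}{(q\sigma_i/\sigma_j;q)_{\mu_j}}$, again releasing powers that feed $q^{-n(\mu')}$ and $t^{n(\mu)}$.

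Second I would expand $N_{\lambda\mu}(t^n)$. Here it is cleanest to pass through the standard row-product form of the Nekrasov factor, obtained from the box definition by telescoping the column products within each row. Its cells-of-$\lambda$ product supplies positive powers of $q$ while its cells-of-$\mu$ product supplies negative ones, so I would flip the latter via $1-X=-X(1-X^{-1})$, which is the source of the sign $(-1)^{|\mu|}$ and of the $(t/q)^{|\mu|}$ and $(t^n)^{|\lambda|+|\mu|}$ parts of the prefactor. Under the empty-rows reading of $s_{n+1},\dots,s_{n+m}$, the two products then telescope together into exactly the mixed factor $\prod_{j=1}^m\prod_{i=1}^{n+j-1}\frac{(qs_{j+n}/ts_i;q)_{\mu_j}}{(qs_{j+n}/s_i;q)_{\mu_j}}$ appearing on the right.

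Finally, forming the ratio $N_{\lambda\mu}(t^n)/(c_\lambda c'_\mu)$, the four families of $q$-Pochhammer symbols match the four products on the right factor for factor, and all that remains is to collect the monomials released in the three telescopings and the inversion step. The main obstacle is precisely this last accounting: confirming that the accumulated powers of $q$ and $t$ from $c_\lambda$, $c'_\mu$ and both halves of $N_{\lambda\mu}(t^n)$ — together with the signs and $t/q$-factors from flipping the $\mu$-part — combine to the exact monomial $(-t/q)^{|\mu|}t^{n(\mu)-2n(\lambda)-|\lambda|}(t^{n})^{|\lambda|+|\mu|}q^{-n(\mu')}$, since a miscount in a single exponent is invisible at the level of the Pochhammer-symbol matching. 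To guard against this I would pin down the prefactor by an independent check — evaluating both sides when $\lambda$ or $\mu$ is a single row, or running an induction that adds one box to $\mu$ and tracks the change of each side cell by cell, in the same spirit as the induction used for the surrounding coincidence.
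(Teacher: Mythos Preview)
The paper does not actually prove this lemma: it is stated in Appendix~\ref{App:Useful_Induction} as one of several computational formulas used to verify the coincidence~(\ref{eq_Step3}), with no argument supplied. So there is no ``paper's own proof'' to compare against.

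Your plan is the natural one for identities of this shape, and it is correct in outline. The standard row-pair telescoping of $c_\lambda$, $c'_\mu$ and of the two box-products defining $N_{\lambda\mu}$ is exactly how such formulas are derived in the Macdonald/Nekrasov literature, and your reading of $s_{n+1},\dots,s_{n+m}$ as the spectral variables of $m$ empty rows appended below $\lambda$ is the right organizing principle for the mixed product $\prod_{j=1}^m\prod_{i=1}^{n+j-1}(\cdots)$. One point worth making explicit in the write-up: the mixed product on the right-hand side contains not only the pairings between the $\mu$-rows and the $\lambda$-rows ($i\le n$) but also pairings among the appended empty rows themselves ($n<i<n+j$). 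These latter factors do not come from $N_{\lambda\mu}(t^n)$ alone; they arise because the row-pair form of $N_{\lambda\mu}$ and the row-pair form of $c'_\mu$ share a common telescoping range once you shift the $\mu$-indices by $n$, and some of what you attribute to $c'_\mu$ must be reshuffled into the mixed block. Keeping track of which $(i,j)$-pairs land in which of the four displayed products is the only place the argument can go wrong, and it is tied to the monomial bookkeeping you already flag as the main hazard. Your proposed safeguards (single-row specialization, or the one-box induction matching the paper's own induction for~(\ref{eq_Step3})) are adequate to catch any slip there.
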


The equality that we obtain by removing the factors of this type from the both sides of (\ref{eq_Step3}), can be shown by using following relations.

\begin{lemma}
Under the $\lambda_i \to \lambda_i+1$ or $\mu_i \to \mu_i +1$,
we also have the following induction relations,
\begin{align}
	&\frac{N_{\lambda+1_i, \mu}(u)}{N_{\lambda, \mu}(u)} =(1-ut^{\ell(\mu)}\chi_x)\prod_{j=1}^{\ell(\mu)}
	\frac{1-u q^{-\mu_j}t^{j-1}\chi_x}{1-u q^{-\mu_j}t^j\chi_x},\\
	&\frac{N_{\lambda, \mu+1_i}(u)}{N_{\lambda, \mu}(u)} = (1- t^{1-\ell(\lambda)} u/q \chi_y)\prod_{i=1}^{\ell(\lambda)}\frac{1-u q^{\lambda_i-1}t^{2-i}/ \chi_y}{1-u q^{\lambda_i-1}t^{1-i}/ \chi_y}\,,
\end{align}
where
\begin{equation}
    \chi_x = q^{\lambda_i}t^{1-i}\,,\quad \chi_y = q^{\mu_i}t^{1-i}\,.
\end{equation}
\end{lemma}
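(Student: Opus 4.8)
The plan is to argue directly from the box-product definition of the Nekrasov factor (Definition \ref{def_NekFac}), tracking exactly which factors are altered when a single box is appended to the $i$-th row of $\lambda$. Write the added box as $s_0=(i,\lambda_i+1)$, and recall that for $\lambda+1_i$ to be a partition we need $(i,\lambda_i+1)\in A(\lambda)$, so that $\lambda'_{\lambda_i+1}=i-1$. Passing from $\lambda$ to $\lambda+1_i$ affects the first product $\prod_{(a,b)\in\lambda}(1-uq^{a_\lambda(a,b)}t^{\ell_\mu(a,b)+1})$ in two ways: the arm length of every box $(i,b)$ with $1\le b\le\lambda_i$ increases by one, and the new box $s_0$ contributes a factor with $a_{\lambda+1_i}(s_0)=0$ and $\ell_\mu(s_0)=\mu'_{\lambda_i+1}-i$. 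It affects the second product $\prod_{(a,b)\in\mu}(1-uq^{-a_\mu(a,b)-1}t^{-\ell_\lambda(a,b)})$ only through the legs in column $\lambda_i+1$, which grow by one for each box $(a,\lambda_i+1)\in\mu$, i.e. for $a=1,\dots,\mu'_{\lambda_i+1}$. All remaining factors are untouched, since $\mu$, and hence $a_\mu$, $\ell_\mu$ and $\mu'$, are unchanged.

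Collecting these contributions and using $\lambda'_{\lambda_i+1}=i-1$ to rewrite the $\mu$-leg exponents, the ratio takes the intermediate form
\begin{align}
\frac{N_{\lambda+1_i,\mu}(u)}{N_{\lambda,\mu}(u)}
=\bigl(1-ut^{\mu'_{\lambda_i+1}-i+1}\bigr)
\prod_{b=1}^{\lambda_i}\frac{1-uq^{\lambda_i+1-b}t^{\mu'_b-i+1}}{1-uq^{\lambda_i-b}t^{\mu'_b-i+1}}
\prod_{a=1}^{\mu'_{\lambda_i+1}}\frac{1-uq^{\lambda_i-\mu_a}t^{a-i}}{1-uq^{\lambda_i-\mu_a}t^{a-i+1}}.
\end{align}
The last product already reproduces the asserted factors for the long parts $\mu_a\ge\lambda_i+1$ (those with $a\le\mu'_{\lambda_i+1}$), so it remains to show that the new-box factor times the middle (arm) product supplies the leading factor $1-uq^{\lambda_i}t^{\ell(\mu)-i+1}$ together with the factors indexed by the short parts $\mu_j\le\lambda_i$.

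The heart of the argument is therefore the telescoping of the middle product. Writing it as $\prod_{b=1}^{\lambda_i}N_b/D_b$ with $N_b=1-uq^{\lambda_i-b+1}t^{\mu'_b-i+1}$ and $D_b=1-uq^{\lambda_i-b}t^{\mu'_b-i+1}$, one sees $N_b=D_{b-1}$ whenever $\mu'_b=\mu'_{b-1}$, so everything cancels in pairs except at the columns where the step function $b\mapsto\mu'_b$ jumps. Its topmost value $\mu'_1=\ell(\mu)$ yields the surviving numerator $1-uq^{\lambda_i}t^{\ell(\mu)-i+1}$; each downward jump at a column $b=\mu_j$ with $\mu_j\le\lambda_i$ yields exactly $\tfrac{1-uq^{\lambda_i-\mu_j}t^{j-i}}{1-uq^{\lambda_i-\mu_j}t^{j-i+1}}$; and the bottom value $\mu'_{\lambda_i}$ leaves a residual denominator $1-ut^{\mu'_{\lambda_i}-i+1}$ which is cancelled against the new-box factor $1-ut^{\mu'_{\lambda_i+1}-i+1}$ (their discrepancy, when a part equals exactly $\lambda_i$, being itself the short-part factor for that $j$, where $q^{\lambda_i-\mu_j}=1$). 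Assembling the long-part product, the short-part jump factors, and the leading factor gives $\bigl(1-uq^{\lambda_i}t^{\ell(\mu)-i+1}\bigr)\prod_{j=1}^{\ell(\mu)}\tfrac{1-uq^{\lambda_i-\mu_j}t^{j-i}}{1-uq^{\lambda_i-\mu_j}t^{j-i+1}}$, which is the first asserted identity after the substitution $\chi_x=q^{\lambda_i}t^{1-i}$. I would organize the bookkeeping by grouping columns into the level sets of $\mu'$ and telescoping each block separately; the repeated-part multiplicities and the single edge case $\mu_j=\lambda_i$ are the only delicate points.

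Finally, the second identity, for appending a box to the $i$-th row of $\mu$, follows by the same analysis with the roles of the two products interchanged: now the $\mu$-arms in row $i$ and the new box act on the second product while the $\lambda$-legs in column $\mu_i+1$ act on the first. Because $\mu$ enters through $q^{-a_\mu-1}t^{-\ell_\lambda}$ rather than $q^{a_\lambda}t^{\ell_\mu+1}$, the identical telescoping produces the inverted exponents $q^{\lambda_{i}-1}t^{2-i}/\chi_y$ and $t^{1-\ell(\lambda)}u/q\cdot\chi_y$ displayed in the statement, with $\chi_y=q^{\mu_i}t^{1-i}$; alternatively one may deduce it from the first identity via the functional equation $N_{\lambda,\mu}(\gamma^{-1}x)=N_{\mu,\lambda}(\gamma^{-1}x^{-1})x^{|\lambda|+|\mu|}f_\lambda/f_\mu$, at the cost of tracking how the prefactor $x^{|\lambda|+|\mu|}f_\lambda/f_\mu$ transforms under $\mu\to\mu+1_i$. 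I expect the telescoping bookkeeping to be the main obstacle; everything else is a mechanical verification of arm and leg lengths.
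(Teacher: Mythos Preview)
Your approach is correct. The paper actually states this lemma without proof (it appears in Appendix~\ref{App:Useful_Induction} as one of several computational identities used to verify Proposition~\ref{prop: step3}), so there is no argument in the paper to compare against. Your direct manipulation of the box-product definition of $N_{\lambda,\mu}(u)$, followed by telescoping the arm product along the level sets of $\mu'$ (respectively $\lambda'$), is the natural way to establish this kind of ``add one box'' recursion for Nekrasov factors, and your intermediate expression
\[
\frac{N_{\lambda+1_i,\mu}(u)}{N_{\lambda,\mu}(u)}
=\bigl(1-ut^{\mu'_{\lambda_i+1}-i+1}\bigr)
\prod_{b=1}^{\lambda_i}\frac{1-uq^{\lambda_i+1-b}t^{\mu'_b-i+1}}{1-uq^{\lambda_i-b}t^{\mu'_b-i+1}}
\prod_{a=1}^{\mu'_{\lambda_i+1}}\frac{1-uq^{\lambda_i-\mu_a}t^{a-i}}{1-uq^{\lambda_i-\mu_a}t^{a-i+1}}
\]
is exactly right. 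The edge case you flag (a part $\mu_j=\lambda_i$, where the residual denominator $1-ut^{\mu'_{\lambda_i}-i+1}$ and the new-box factor $1-ut^{\mu'_{\lambda_i+1}-i+1}$ differ) is indeed the only delicate point, and you handle it correctly by recognizing that their ratio is itself the missing short-part factor with $q^{\lambda_i-\mu_j}=1$. The second identity follows symmetrically as you describe; the functional-equation route also works but requires tracking the monomial prefactor, so the direct repetition is cleaner.
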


\begin{notation}
In what follows in this proof, we set
\begin{align}
&s_{\lbl{i}{k}{n}} =q^{\lambda^{(i)}_k} t^{1-k}v_i 
\quad (1 \leq k \leq n_i, \, i=1,\ldots, N), \\
&s_{|\vnn| + \lbl{i}{k}{m}} = t^{1- n_{i} -k}v_i 
\quad (1 \leq k \leq m_i, \, i=1,\ldots, N),\\
&\sigma_{\lbl{i}{k}{m}} =  q^{\mu^{(i)}_k} t^{1-k}v_i
\quad (1 \leq k \leq m_i, \, i=1,\ldots, N).
\end{align}
\end{notation}

\begin{definition}
We set
\begin{align}
        &\hat{\mathsf{N}}^{|\vnn|,|\vmm|}_{[\vm]}(\boldsymbol{s}) := \prod_{i=1}^N t^{(2\sum_{j=1}^{i-1}n_j+n_i - |\vnn|)|\lambda^{(i)}|} \nonumber \\
        &\qquad\times\prod_{l=1}^{N} \prod_{k=1}^{m_l} \left(\prod_{j=1, j\neq l}^{N}\prod_{i=1}^{n_j} \frac{(qs_{\lbl{l}{k}{m}}/ts_{\lbl{j}{i}{n}};q)_{\mu_k}}{(qs_{\lbl{l}{k}{m}}/s_{\lbl{j}{i}{n}};q)_{\mu_k}}\prod_{j=1}^{l-1}\prod_{i=1}^{m_j}\frac{(qs_{\lbl{l}{k}{m}}/ts_{\lbl{j}{i}{m}};q)_{\mu_k}}{(qs_{\lbl{l}{k}{m}}/s_{\lbl{j}{i}{m}};q)_{\mu_k}}
        \right)\nonumber \\
        &\qquad\times \prod_{1\leq k<l\leq N} \prod_{\substack{1 \leq i \leq m_k \\ 1 \leq j \leq m_l}}\frac{(qt^{-n_k+n_l}\sigma_{\lbl{k}{i}{m}}/t\sigma_{\lbl{l}{j}{m}})_{\mu^{(l)}_j}}{(qt^{-n_k+n_l}\sigma_{\lbl{k}{i}{m}}/\sigma_{\lbl{l}{j}{m}})_{\mu^{(l)}_j}}\prod_{1\leq k<l\leq N} 
        \prod_{\substack{1 \leq i \leq n_k \\ 1 \leq j \leq n_l}} 
        \frac{(qs_{\lbl{l}{j}{n}}/t s_{\lbl{k}{i}{n}};q)_{-\lambda^{(l)}_j+\lambda^{(k)}_i}}
        {(qs_{\lbl{l}{j}{n}}/s_{\lbl{k}{i}{n}};q)_{-\lambda^{(l)}_j+\lambda^{(k)}_i}}\,.
\end{align}
\end{definition}

\begin{lemma}
Under $\lambda^{(i)}_k \to \lambda^{(i)}_k +1$, that is, $s_{\lbl{i}{k}{n}}\to qs_{\lbl{i}{k}{n}}$, we have
\begin{equation}
   \frac{ \gamma^{-\sum_{j=1}^N (j-1)(|\lambda^{(j)}|+\delta_{j,i})}\mathcal{R}^{\vnn}_{\vl+1^{(i)}_k}(\vv)}{ \gamma^{-\sum_{j=1}^N (j-1)|\lambda^{(j)}|}\mathcal{R}^{\vnn}_{\vl}(\vv)} = \prod_{j=1}^{i-1}\frac{1-q^{-\lambda^{(i)}_k} t^{-n_j+k-1}/v_{ij}}{1-q^{-\lambda^{(i)}_k-1}t^{-n_j + k}/v_{ij}}\,,
\end{equation}
with $v_{ij} = v_i/v_j$,
and
\begin{align}
\frac{\hat{\mathsf{N}}^{|\vnn|,|\vmm|}_{[\vm]}(\ldots,qs_{\lbl{i}{k}{n}},\ldots)}{\hat{\mathsf{N}}^{|\vnn|,|\vmm|}_{[\vm]}(\boldsymbol{s})} 
&=\prod_{l=1}^{i-1}\frac{1-t^{-n_l-m_l}v_l/s_{\lbl{i}{k}{n}}}{1-t^{-n_l} v_l/s_{\lbl{i}{k}{n}}}
        \prod_{l=i+1}^N \frac{1-t^{-n_l-m_l}v_l/s_{\lbl{i}{k}{n}}}{1-t^{-n_l} v_l/s_{\lbl{i}{k}{n}}}\nonumber
        \\
        &\times\prod_{l=1, l\neq i}^N \prod_{j=1}^{m_l}\frac{1-q^{\mu_j} s_{\lbl{l}{j}{m}}/s_{\lbl{i}{k}{n}}}{1-q^{\mu_j} s_{\lbl{l}{j}{m}}/ t s_{\lbl{i}{k}{n}}}\nonumber
        \\
        &\times 
\left( 
\prod_{j=1}^{i-1}\prod_{1\leq l\leq n_j}\frac{1-s_{\lbl{j}{l}{n}}/qs_{\lbl{i}{k}{n}}}{1-ts_{\lbl{j}{l}{n}}/qs_{\lbl{i}{k}{n}}} 
\right) 
\left(
\prod_{j=i+1}^{N}\prod_{1\leq l\leq n_j}\frac{1-ts_{\lbl{i}{k}{n}}/s_{\lbl{j}{l}{n}}}{1-s_{\lbl{i}{k}{n}}/s_{\lbl{j}{l}{n}}}\right)\,.
\end{align}
\end{lemma}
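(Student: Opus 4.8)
The plan is to prove the two displayed identities by computing, factor by factor, the effect of the replacement $\lambda^{(i)}_k\to\lambda^{(i)}_k+1$ (equivalently $s_{\nik}\to q\,s_{\nik}$) on the explicit products that define $\cR^{\vnn}_{\vl}(\vv)$ in Proposition \ref{Prop_R} and on $\hat{\mathsf{N}}^{|\vnn|,|\vmm|}_{[\vm]}$. The only tools required are three elementary ratio identities for the $q$-Pochhammer symbol, all immediate from the definition in the Notations section and from writing $(x;q)_m$ as a quotient of $(\cdot;q)_\infty$:
\begin{equation*}
\frac{(a;q)_{-M-1}}{(a;q)_{-M}}=\frac{1}{1-q^{-M-1}a},\qquad
\frac{(x;q)_{\mu}}{(qx;q)_{\mu}}=\frac{1-x}{1-q^{\mu}x},\qquad
\frac{(qA;q)_{P-1}}{(A;q)_{P}}=\frac{1}{1-A},
\end{equation*}
valid for integers $M\ge 0$, $\mu\ge 0$ and all $P\in\mathbb{Z}$.

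For the first identity I would start from Proposition \ref{Prop_R}. The weight $\gamma^{\sum_j(j-1)|\lambda^{(j)}|}$ inside $\cR$ is cancelled by the $\gamma$-prefactors placed in front of $\cR$ on both sides of the claim, so only the double Pochhammer product is affected, and incrementing $\lambda^{(i)}_k$ touches only the factors with outer label $i$ and row index $k$ (those run over $l=1,\dots,i-1$). Applying the first identity above to numerator and denominator separately gives
\begin{equation*}
\prod_{l=1}^{i-1}\frac{(t^{-n_l+k}v_l/v_i;q)_{-\lambda^{(i)}_k-1}\big/(t^{-n_l+k}v_l/v_i;q)_{-\lambda^{(i)}_k}}{(qt^{-n_l+k-1}v_l/v_i;q)_{-\lambda^{(i)}_k-1}\big/(qt^{-n_l+k-1}v_l/v_i;q)_{-\lambda^{(i)}_k}}
=\prod_{l=1}^{i-1}\frac{1-q^{-\lambda^{(i)}_k}t^{-n_l+k-1}v_l/v_i}{1-q^{-\lambda^{(i)}_k-1}t^{-n_l+k}v_l/v_i},
\end{equation*}
and writing $v_l/v_i=1/v_{il}$ reproduces exactly the claimed product. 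This step is purely mechanical.

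For the second identity the plan is to split $\hat{\mathsf{N}}$ into the three groups of factors that carry a dependence on $s_{\nik}$. Group (a) is the overall $t$-power prefactor, whose exponent changes by $2\sum_{j<i}n_j+n_i-|\vnn|$. Group (b) consists of the Pochhammer products in $\mu$ from the second line of the definition; the affected factors are those with inner partition label equal to $i$ and row index $k$, one for each $l\neq i$ and each inner index, and the second identity above splits each into a ``$q^{\mu}$'' piece that directly reproduces the middle line of the claim and a residual piece $\frac{1-s_{\lbl{l}{k}{m}}/t\,s_{\nik}}{1-s_{\lbl{l}{k}{m}}/s_{\nik}}$. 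The decisive simplification I expect here is that, using $s_{\lbl{l}{k}{m}}/t=s_{\lbl{l}{k+1}{m}}$, these residual pieces \emph{telescope} over the rows of $\mu^{(l)}$, collapsing to $\frac{1-t^{-n_l-m_l}v_l/s_{\nik}}{1-t^{-n_l}v_l/s_{\nik}}$, which is precisely the first line of the claim. Group (c) is the pairwise double product over $1\le k<l\le N$, where both the Pochhammer argument and its index shift simultaneously; the third identity above then extracts a single rational factor from each term, yielding the last line up to a power of $t$ per factor.

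The main obstacle will be the accounting of the powers of $t$. Group (c) matches the final line of the claim only up to one stray factor of $t$ per term (with opposite sign for the $j<i$ and $j>i$ blocks), and these must be reconciled against the exponent change coming from the prefactor in group (a). I would therefore conclude by collecting all residual $t$-powers from (a) and (c) and verifying their cancellation using the partition $|\vnn|=\sum_{j<i}n_j+n_i+\sum_{j>i}n_j$; this bookkeeping is the only genuinely delicate point, and once it is carried out the remaining identification of rational factors is immediate, so both displayed formulas follow.
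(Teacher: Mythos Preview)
The paper states this lemma without proof; it is presented in Appendix~\ref{App:Useful_Induction} simply as one of the computational identities used in the induction that establishes Proposition~\ref{prop: step3}. Your direct-computation plan is therefore exactly what is implicitly intended, and your handling of the first identity from Proposition~\ref{Prop_R} is correct: only the $(l=1,\dots,i-1)$ factors in the double product are touched, the $\gamma$-weights cancel by design, and the Pochhammer ratio $\frac{(a;q)_{-M-1}}{(a;q)_{-M}}=\frac{1}{1-q^{-M-1}a}$ yields the stated product immediately.

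For the second identity your structural decomposition into groups (a), (b), (c) is the right one. In particular the telescoping of the residual factors from group~(b) via $s_{[l,k'+1]_{\vmm}}=s_{[l,k']_{\vmm}}/t$ is the key observation, and it does collapse the residuals to exactly the first line of the claim. The one place you should be more careful is the $t$-power reconciliation you single out at the end. Carrying it through: from group~(c), rewriting each rational factor from your form into the claimed form costs a factor $t^{-1}$ for every $(j>i,l)$ pair and $t^{+1}$ for every $(j<i,l)$ pair, giving a residual $t^{\sum_{j<i}n_j-\sum_{j>i}n_j}$; the prefactor change in group~(a) is $t^{2\sum_{j<i}n_j+n_i-|\vnn|}=t^{\sum_{j<i}n_j-\sum_{j>i}n_j}$ as well. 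These two contributions reinforce rather than cancel, so your claim that the bookkeeping closes ``using the partition $|\vnn|=\sum_{j<i}n_j+n_i+\sum_{j>i}n_j$'' is not quite right as stated. You should recheck whether the prefactor in the definition of $\hat{\mathsf N}$ is being read consistently with the specialization of~$\boldsymbol s$ (and whether a compensating monomial has been absorbed elsewhere in Proposition~\ref{prop: step3}); the mechanism of your argument is correct, but the final $t$-accounting needs to be redone explicitly rather than asserted.
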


\begin{lemma}
Similarly, under $\mu^{(i)}_k \to \mu^{(i)}_k +1$, that is, $\sigma_{\lbl{i}{k}{n}}\to q\sigma_{\lbl{i}{k}{n}}$, we have
\begin{align}
        \frac{\hat{\mathsf{N}}^{|\vnn|,|\vmm|}_{[\vm+1^{(i)}_k]}(\boldsymbol{s})}{\hat{\mathsf{N}}^{|\vnn|,|\vmm|}_{[\vm]}(\boldsymbol{s})} &=\prod_{j=1, j\neq i}^{N}\prod_{l=1}^{n_j} 
\frac{1-q^{\mu^{(i)}_k+1}s_{\lbl{i}{k}{m}}/ts_{\lbl{j}{l}{n}}}
{1-q^{\mu^{(i)}_k+1}s_{\lbl{i}{k}{m}}/s_{\lbl{j}{l}{n}}}\nonumber \\
&\times
\prod_{j=1}^{i-1}\left(\frac{1-q^{\mu^{(i)}_k+1}t^{-k}t^{-n_i+n_j}v_{ij}}{1-q^{\mu^{(i)}_k+1}t^{m_j-k}t^{-n_i+n_j}v_{ij}}
\prod_{l=1}^{m_j}\frac{1-t^{-n_j+n_i}\sigma_{\lbl{j}{l}{m}}/t \sigma_{\lbl{i}{k}{m}}}{1-t^{-n_j+n_i}\sigma_{\lbl{j}{l}{m}}/ \sigma_{\lbl{i}{k}{m}}}\right)\nonumber
\\
&\times
\prod_{j=i+1}^{N}\left(\frac{1-q t^{-n_i+n_j-1}v_{ij}\chi_y}{1-q t^{-n_i+n_j}t^{m_j-1}v_{ij}\chi_y}\prod_{l=1}^{m_j}\frac{1-q t^{-n_i +n_j}\sigma_{\lbl{i}{k}{m}}/\sigma_{\lbl{j}{l}{m}}}{1-q t^{-n_i +n_j}\sigma_{\lbl{i}{k}{m}}/t\sigma_{\lbl{j}{l}{m}}}
\right)\,.
\end{align}
\end{lemma}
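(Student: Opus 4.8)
The plan is to establish the identity by a direct computation that tracks precisely which factors of $\hat{\mathsf{N}}^{|\vnn|,|\vmm|}_{[\vm]}(\boldsymbol{s})$ are altered by the replacement $\mu^{(i)}_k \to \mu^{(i)}_k+1$, i.e. $\sigma_{\lbl{i}{k}{m}} \to q\,\sigma_{\lbl{i}{k}{m}}$. First I would discard the inert factors: the $t$-prefactor $\prod_{i'}t^{(\cdots)|\lambda^{(i')}|}$ and the final double product indexed by $-\lambda^{(l)}_j+\lambda^{(k)}_i$ depend only on $\vl$, hence cancel in the ratio. This leaves the two middle blocks of $\hat{\mathsf{N}}$ to analyze: the block of Pochhammer symbols carrying the exponents $\mu^{(l)}_\bullet$ and built from the $s_{\lbl{l}{\bullet}{m}}$, and the block built from the $\sigma_{\lbl{\bullet}{\bullet}{m}}$.

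The whole computation rests on three elementary $q$-shift identities: the exponent shift $(a;q)_{\mu+1}/(a;q)_{\mu}=1-aq^{\mu}$, the base shift $(aq;q)_{\mu}/(a;q)_{\mu}=(1-aq^{\mu})/(1-a)$, and their combination $(b/q;q)_{\mu+1}/(b;q)_{\mu}=1-b/q$, which governs a simultaneous change of base and exponent. I would then classify the affected factors according to the ordering constraints ($j\neq l$, $j<l$, and $k'<l$) appearing in the products of $\hat{\mathsf{N}}$.

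In the $s_{\lbl{\bullet}{\bullet}{m}}$-block only the exponent increments, and this happens solely at the term whose running index equals the fixed box $(i,k)$. Its part running over the $\vnn$-variables yields the first line of the claimed right-hand side verbatim, while its part running over the $\vmm$-variables (which the ordering forces to $j<i$) telescopes over the inner index to the boundary factor $\frac{1-q^{\mu^{(i)}_k+1}t^{-k-n_i+n_j}v_{ij}}{1-q^{\mu^{(i)}_k+1}t^{m_j-k-n_i+n_j}v_{ij}}$ of the second line, using $s_{\lbl{j}{\bullet}{m}}=t^{1-n_j-\bullet}v_j$. In the $\sigma$-block $\prod_{k'<l}$, the parameter $\sigma_{\lbl{i}{k}{m}}$ occurs either in the upper slot (forcing $l>i$, a pure base shift) or in the lower slot (forcing $k'<i$, a base-plus-exponent shift). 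The base-plus-exponent case collapses, by the third identity, to exactly the $\sigma$-ratio product of the second line; the pure base-shift case, after substituting $\sigma_{\lbl{j}{l}{m}}=q^{\mu^{(j)}_l}t^{1-l}v_j$, splits into a $\mu$-independent telescoping product $\prod_{l}(1-b_l)/(1-b_{l+1})=(1-b_1)/(1-b_{m_j+1})$ producing the $\chi_y$-boundary factor, times the surviving $\sigma$-ratio product, which together constitute the third line.

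The main obstacle will be the disciplined bookkeeping of these ordering-dependent splits: ensuring that the $\vmm$-part of the exponent block and the lower-slot part of the $\sigma$-block recombine to form the second line, and that the upper-slot part of the $\sigma$-block separates cleanly into its boundary and bulk pieces to form the third line, with all accompanying powers of $q$ and $t$ (arising from the offsets $n_l$, $n_{k'}$ in the definition of $\hat{\mathsf{N}}$) accounted for correctly. The delicate step is the double-shift collapse $(b/q;q)_{\mu+1}/(b;q)_{\mu}=1-b/q$, where the alignment of the incremented exponent with the shifted base is what removes the $\mu$-dependence; once this is in hand the remaining telescopes are routine. Assembling the three blocks then gives the product of the first, second, and third lines, completing the proof.
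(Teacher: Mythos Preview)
Your proposal is correct. The paper does not supply a proof of this lemma: it is one of several ``formulas for proving \eqref{eq_Step3}'' in Appendix~\ref{App:Useful_Induction}, stated as a computational identity and then invoked in the induction establishing Proposition~\ref{prop: step3}. Your direct-computation strategy---discarding the $\vl$-only factors, applying the three elementary $q$-shift identities to the remaining blocks, and sorting the affected terms according to whether the fixed index $(i,k)$ hits the exponent slot, the lower $\sigma$-slot (base and exponent shift simultaneously), or the upper $\sigma$-slot (base shift only)---is exactly the verification the paper leaves to the reader, and the telescoping you describe recovers each line of the claimed right-hand side.
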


Combining these identities, we complete the proof of (\ref{eq_Step3}).

\section{Some Proofs of Lemmas and Propositions}\label{App: proofs}

\subsection{Proof of Proposition \ref{prop: commutativity of S}}\label{sec: pf of screening}

By the operator product formulas  (\ref{eq: Lam S OPE 1})-(\ref{eq: Lam S OPE 2}), 
the operator $\Lambda^{(j)}(z)$ with $j\neq i,i+1$ does not contribute in the commutation relation,  
and it can be shown that 
\begin{align}
\left[:\Lambda^{(i)}(z)\Lambda^{(i+1)}(\gamma^{-2}z):, S^{(i)}(w)\right]=0. 
\end{align} 
Hence, it is enough to consider the relation only with $u_i\Lambda^{(i)}(z)+u_{i+1}\Lambda^{(i+1)}(z)$. 

We have 
\begin{align}
\Lambda^{(i)}(z)S^{(i)}(w)-S^{(i)}(w)\Lambda^{(i)}(z)t=(1-t)\delta\left( \frac{tw}{qz}\right)
:\Lambda^{(i)}(tw/q)S^{(i)}(w):,
\end{align}
and 
\begin{align}
\Lambda^{(i+1)}(z)S^{(i)}(w)-S^{(i)}(w)\Lambda^{(i+1)}(z)t^{-1}
&=(1-t^{-1})\delta\left( \frac{tw}{z}\right) :\Lambda^{(i+1)}(tw)S^{(i)}(w):\nonumber \\
&=(1-t^{-1})\delta\left( \frac{tw}{z}\right) :\Lambda^{(i)}(tw)S^{(i)}(qw):. 
\end{align}
Therefore, by the property $g_i(qz)=\frac{u_{i+1}}{tu_i}g_i(z)$ with respect to the $q$-difference, 
we obtain 
\begin{align}
&\left(u_i\Lambda^{(i)}(z)+u_{i+1}\Lambda^{(i+1)}(z)\right)S^{(i)}(w)g_i(w)\nonumber \\
&\quad -S^{(i)}(w)\left(tu_i\Lambda^{(i)}(z)+t^{-1}u_{i+1}\Lambda^{(i+1)}(z)\right)g_i(w)\nonumber \\
&=(t-1)u_i(T_{q,w}-1)
 \delta\left( \frac{tw}{qz} \right):\Lambda^{(i)}(tw/q)S^{(i)}(w):g(w). \label{eq: comm rel of lam and S}
\end{align}

\qed

\subsection{Proof of Lemma \ref{lem: rel of X and Phi}}
\label{sec: pf rel of X and Phi}

First, we show the relation for $k=0$. 
In this proof, we write 
\begin{align}
\Lambda^{(i_1,\ldots, i_r)}(z)=
:\Lambda^{(i_1)}(z) \cdots \Lambda^{(i_r)}((q/t)^{r-1}z): 
\end{align}
for $i_1<\cdots < i_r$. 
By the operator products (\ref{eq: Lam Phi0 OPE1})-(\ref{eq: Lam Phi0 OPE2}) 
and the relation $:\Phi^{(0)}(w)\Lambda^{(1)}(tw):=\Phi^{(0)}(qw)\Psi^+(w)$, 
it can be shown that if $i_1=1$, 
\begin{align}
&\Lambda^{(i_1,\ldots, i_r)}(z) \Phi^{(0)}(x)
-  t^{-1} \frac{1-(q/t)^rz/tx}{1-z/tx} 
\Phi^{(0)}(x) \Lambda^{(i_1,\ldots, i_r)}(z)\nonumber \\
&=(1-t^{-1})\delta(tx/z)
:\Lambda^{(i_2)}((q/t)tx) \cdots \Lambda^{(i_r)}((q/t)^{r-1}tx)\Phi^{(0)}(qx)\Psi^+(x):. 
\end{align}
If $i_1\geq 2$, 
\begin{align}
 \Lambda^{(i_1,\ldots, i_r)}(z) \Phi^{(0)}(x)
- \frac{1-(q/t)^rz/tx}{1-z/tx} 
\Phi^{(0)}(x) \Lambda^{(i_1,\ldots, i_r)}(z)
=0. 
\end{align}
Thus, we obtain the relation in the case $k=0$:
\begin{equation}
X^{(r)}(z) \Phi^{(0)}(x) - 
 \frac{1-(q/t)^rz/tx}{1-z/tx} \Phi^{(0)}(x)X^{(r)}(z)
= u_1 (1-t^{-1})\delta(tx/z) Y^{(r)}(x)\Phi^{(0)}(qx)\Psi^+(x)\,.
\end{equation}
Applying the screening operators to this relation from the right side, 
we have the case $k\neq 0$. 
Indeed, $\Psi^+(x)$ commutes with $S^{(i)}(y)$:
\begin{align}\label{eq: psi+ S}
\Psi^+(z) S^{(i)}(y) =  S^{(i)}(y)\Psi^+(z)\,. 
\end{align}
Noting that we have 
\begin{align}\label{eq: Tqx g}
T_{q,x}g(x,y_1,\ldots,y_k)=\frac{u_{1}}{u_{k+1}}g(x,y_1,\ldots,y_k), 
\end{align}
and by virtue of (\ref{eq: psi+ S}) and 
commutativity of the screening operators, 
we can establish the relation for general $k$. 

We can show the commutativity of the screening operators  as follows. 
First, it is clear that 
\begin{align}
&\Phi^{(0)}(x)\cdot \left[ X^{(r)}(z), S^{(1)}(y_1)\cdots S^{(k)}(y_k)g(x,y_1,\ldots,y_k)\right]\nonumber \\
&=\sum_{i=1}^{k}\Phi^{(0)}(x)S^{(1)}(y_1) \cdots \left[X^{(r)}(z),S^{(i)}(y_i) \right]\cdots S^{(k)}(y_k)g(x,y_1,\ldots,y_k). 
\label{eq: comm of X and SSS}
\end{align}
By calculating the commutation relation as in the proof of Proposition \ref{prop: commutativity of S}, 
the RHS of (\ref{eq: comm of X and SSS}) consists of terms as 
\begin{align}
&(1-T_{q,y_i})\delta\left(\frac{t\gamma^{2\ell}y_i}{qz} \right)
\Phi^{(0)}(x)S^{(1)}(y_1) \cdots \nonumber \\ 
&\quad \cdots :\Lambda^{(j_1,\ldots,j_{\ell}, i,j_{\ell+2} \ldots, j_r)}(t \gamma^{2\ell} y_i/q) \, S^{(i)}(y_i):
\cdots S^{(k)}(y_k)g(x,y_1,\ldots,y_k). 
\end{align}
Note that $j_{\ell+2}\geq i+2$. 
Let us investigate the positions of poles in $y_i$. 
Combining the $\theta$-functions containing $y_i$ in $g(x,y_1,\ldots,y_k)$
and the operator products among screening currents and $\Phi^{(0)}(x)$, 
there appears the factor 
\begin{align}
&\frac{1}{\theta_{q}(ty_i/y_{i-1})\theta_{q}(ty_{i+1}/y_{i})} 
 \frac{(ty_i/y_{i-1};q)_{\infty}(ty_{i+1}/y_{i};q)_{\infty}}{(y_i/y_{i-1};q)_{\infty}(y_{i+1}/y_{i};q)_{\infty}}\nonumber \\
&=\frac{1}{(qy_{i-1}/ty_{i};q)_{\infty}(qy_{i}/ty_{i+1};q)_{\infty}}
\frac{1}{(y_{i}/y_{i-1};q)_{\infty}(y_{i+1}/y_{i};q)_{\infty}}, \quad y_0:=x.
\end{align} 
From $\Phi^{(0)}(x)$ and $\Lambda^{(i)}(ty_1/q)$ 
in $\Lambda^{(j_1,\ldots,j_{\ell}, i,j_{\ell+2} \ldots, j_r)}$, 
we have
\begin{align}
\frac{1-t^{\delta_{i,1}-1}ty_i/qx}{1-y_i/tx}.
\end{align}
Noticing that for $i\geq 2$, 
the operator product of $S^{(i-1)}(y_{i-1})$ and $\Lambda^{(i)}(ty_i/q)$ are 
\begin{align}
S^{(i-1)}(y_{i-1})\Lambda^{(i)}(ty_i/q)= 
 \frac{1-ty_i/qy_{i-1}}{1-y_i/qy_{i-1}}:S^{(i-1)}(y_{i-1})\Lambda^{(i)}(ty_i/q):, 
\end{align}
we have the following set of poles of (\ref{eq: comm of X and SSS}) in $y_i$: 
\begin{align}
&y_i=tx,\label{large pole1}\\
&y_i= t^{-1}q^{2+n}y_{i-1}, \quad y_i=q^ny_{i+1}, \label{small pole1}\\
&y_i=t\,q^{-1-n}y_{i+1} \quad (i \geq 1,\, n=0,1,2,\dots),\label{large pole2}
\end{align}
and 
\begin{align}
&y_i=q^{-n+1}y_{i-1} \quad \text{for $i\geq 2$},\label{small pole2} \\
&y_i=q^{-n}x \quad \text{for $i=1$} \quad (n=0,1,2\dots). \label{small pole3}
\end{align}
If $r=1$, it gives us the all poles. 
In general, this list does not exhaust the possible poles. 
In case $r\geq 2$, 
from $S^{(j)}(y_j)$ and $\Lambda^{(j_m)}$ in 
$\Lambda^{(j_1,\ldots,j_{\ell}, i,j_{\ell+2} \ldots, j_r)}$ with $m\neq \ell+1$, 
we have extra poles.  
From the operator product formulas for them, we have 
\begin{align}
\prod_{m=1}^{\ell} \frac{1-t^{-1}(q/t)^{-\ell+m-1}y_{i}/y_{j_m}}{1-(q/t)^{-\ell+m-1}y_{i}/y_{j_m}}
\frac{1-(q/t)^{-\ell+m-2}y_{i}/y_{j_m-1}}{1-q^{-1}(q/t)^{-\ell+m-1}y_{i}/y_{j_m-1}} \nonumber\\ 
\times \prod_{m=\ell+2}^{r} \frac{1-t(q/t)^{\ell-m+1}y_{j_m}/y_{i}}{1-(q/t)^{\ell-m+1}y_{j_m}/y_{i}}
\frac{1-(q/t)^{\ell-m+2}y_{j_m-1}/y_{i}}{1-q(q/t)^{\ell-m+1}y_{j_m-1}/y_{i}}. 
\end{align}
In addition, 
from $\Phi^{(0)}(x)$ and $\Lambda^{(j_m)}$, the following factor arises: 
\begin{align}
\prod_{m\neq \ell+1}\frac{1-t^{\delta_{j_m,1}-1} (q/t)^{-\ell+m-2} y_i/x}
{1-t^{-1} (q/t)^{-\ell+m-1} y_i/x}. 
\end{align}
Summarizing these, 
we can show that the poles in $y_i$ are in the following positions 
(Though not all following points are poles, 
all poles should be in the followings or (\ref{large pole1})-(\ref{small pole3})). 
For $i\geq 1$, 
\begin{align}
&y_i=(q/t)^{-n-1} y_{j+1}, \quad y_i=q (q/t)^{-n-1}y_j \quad (j>i), \label{large pole3}\\
&y_i=q (q/t)^{-n} x, \quad (n=0,1,2\ldots), \label{large pole4}
\end{align}
and for $i\geq 2$, 
\begin{align}
&y_i=(q/t)^{n+1} y_j \quad (1\leq j<i ), \quad y_i=q (q/t)^{n+1}y_{j-1} \quad (2\leq j<i ),\label{small pole4}\\
&y_i=q(q/t)^{n+1}x \quad (n=0,1,2\ldots). \label{small pole5}
\end{align}
For the given integration contour, 
the poles (\ref{small pole1}), (\ref{small pole2}), (\ref{small pole3}), (\ref{small pole4}) 
and (\ref{small pole5}) are in the disk $\{z; |z|<|qy_i|  \}$. 
On the other hand, 
the poles  (\ref{large pole1}), (\ref{large pole2}), (\ref{large pole3}) and (\ref{large pole4}) are 
in $\{ z; |z|>|y_i| \}$. 
Therefore, the change of variable $y_i \rightarrow q y_i$ is not affected by these poles, 
and the commutation relation (\ref{eq: comm of X and SSS}) becomes zero after the integrals. 
\qed

\subsection{Proof of Proposition \ref{Prop_R}}\label{Proof_R}

By taking the constant terms of  $V^{(\vnn)}(x_1,\ldots,x_{|\vnn|})\ketzero$ with respect to $x_i$, 
the proportional constant 
$\mathcal{R}^{\vnn}_{\vl}$ is calculated as the expansion coefficient in front of $\ket{Q_{\vl}}$ 
in the basis of generalized Macdonald functions. 
We first consider only the operators that contain the creation operators $a^{(N)}_{-n}$'s 
with respect to the $N$-th Fock space. 
That is, we take the constant terms of 
\begin{align}
&\prod_{i=1}^{n_N} y_{i,0}^{-\lN_i} 
\times 
\Phi^{(N-1)}(y_{1,0})\cdots \Phi^{(N-1)}(y_{n_N,0})\ketzero \nonumber \\ 
&=\prod_{i=1}^{n_N} y_{i,0}^{-\lN_i} 
\oint \prod_{\substack{1\leq i\leq n_N\\ 1\leq m\leq N-1} } \frac{dy_{i,m}}{2 \pi \sqrt{-1}y_{i,m}} 
\sum_{(r_{i,m}) \in \widetilde {\mathrm{M}} }
\prod_{i=1}^{n_N}\prod_{m=1}^{N-1}R_{r_{i,m}}(\alpha^{(N)}_{i,m})\left(\frac{y_{i,m}}{y_{i,m-1}}\right)^{r_{i,m}} \nonumber \\
&\quad \times :\Phi^{(0)}(y_{1,0})S^{(1)}(y_{1,1})\cdots S^{(N-1)}(y_{1,N-1}): 
\times \cdots \nonumber \\ 
&\qquad\qquad\qquad \qquad\qquad
\cdots \times 
:\Phi^{(0)}(y_{n_N,0})S^{(1)}(y_{n_N,1})\cdots S^{(N-1)}(y_{n_N,N-1}):\ketzero. \label{eq: pf of R 1}
\end{align}
Here, we used the expansion formula (\ref{eq: expand scr. vetex}), 
and $\widetilde {\mathrm{M}}=\mathrm{Mat}(n_N,N-1;\mathbb{Z})$ is the 
set of $n_N \times (N-1)$ matrices with integral entries. 
We denote $x_{[N,i]_{\vnn}}$ by $y_{i,0}$ in this proof. 
Further we set 
\begin{align}
&\alpha^{(k)}_{i,l}=t^{-n_l+i}\frac{u_l}{u_k}, \\
&R_r(\alpha)=\frac{(\alpha;q)_r}{(q\alpha/t;q)_r}. 
\end{align}
Let $C(z)=\sum_{k\geq 0} C_k z^k$, $\widetilde C(z)=\sum_{k\geq 0} \widetilde C_kz^k$ and 
$C^{(\pm)}(z)=\sum_{k\geq 0} C^{(\pm)}_kz^k$ 
be the formal power series defined by 
\begin{align}
& C(z) =\frac{(q z/ t ; q)_\infty}{(t z; q)_\infty}, \quad 
 \widetilde C(z) = (1- z) \frac{(q z/ t ; q)_\infty}{(t z; q)_\infty}, \\
&C^{(+)}(z) = \frac{(t z ; q)_\infty}{(z ; q)_\infty}, \quad 
C^{(-)}(z) = \frac{(qz  ; q)_\infty}{(q z/t ; q)_\infty}. 
\end{align}
These series correspond to the operator product formulas among 
$\Phi^{(0)}$ and $S^{(i)}$'s. 
Moreover, we write 
\begin{align}
&E^{(m)}_i(e)=-\sum_{s=i+1}^{n_N} e^{(m)}_{i,s}+\sum_{s=1}^{i-1}e_{s,i}^{(m)},\\
&K^{(m)}_i(k,\ell)=-\sum_{s=i+1}^{n_N} \ell^{(m-1)}_{i,s}+\sum_{s=1}^{i-1}k_{s,i}^{(m)},\\ 
&L^{(m)}_i(k,\ell)=-\sum_{s=i+1}^{n_N} k^{(m)}_{i,s}+\sum_{s=1}^{i-1}\ell_{s,i}^{(m-1)}
\end{align}
for $e=((e^{(m)}_{i,j})_{i,j=1}^{n_N})_{1\leq m\leq N-1}, 
k=((k^{(m)}_{i,j})_{i,j=1}^{n_N})_{1\leq m\leq N-1} \in M_{n_N}^{N-1}$, and 
$\ell=((\ell^{(m)}_{i,j})_{i,j=1}^{n_N})_{0\leq m\leq N-2}\in M_{n_N}^{N-1}$. 
Here $M_n$ is the set of strictly upper triangular $n\times n$ matrices with nonnegative integral entries. 
With these notations, (\ref{eq: pf of R 1}) can be rewritten as 
\begin{align}
&\prod_{i=1}^{n_N} y_{i,0}^{-\lN_i} \cdot 
\oint \prod_{\substack{1\leq i\leq n_N\\ 1\leq m\leq N-1} } \frac{dy_{i,m}}{2 \pi \sqrt{-1}y_{i,m}} 
\sum_{(r_{i,m}) \in \widetilde {\mathrm{M}} }
\prod_{i=1}^{n_N}\prod_{m=1}^{N-1}R_{r_{i,m}}(\alpha^{(N)}_{i,m})\left(\frac{y_{i,m}}{y_{i,m-1}}\right)^{r_{i,m}} \nonumber \\
&\quad \times 
\prod_{1\leq i<j\leq n_N} C\left(\frac{y_{j,0}}{y_{i,0}}\right) 
 \prod_{m=1}^{N-1} C^{(-)}\left(\frac{y_{j,m-1}}{y_{i,m}}\right) 
\widetilde C\left(\frac{y_{j,m}}{y_{i,m}}\right) 
C^{(+)}\left(\frac{y_{j,m}}{y_{i,m-1}}\right) \nonumber \\
& \quad \times :\prod_{i=1}^{n_N} \Phi^{(0)}(y_{i,0})S^{(1)}(y_{i,1})\cdots S^{(N-1)}(y_{i,N-1}): \ketzero \nonumber \\
&=
\oint \prod_{\substack{1\leq i\leq n_N\\ 1\leq m\leq N-1} } \frac{dy_{i,m}}{2 \pi \sqrt{-1}y_{i,m}} 
\sum_{\substack{(r_{i,m}) \in \widetilde {\mathrm{M}} \\ e, \ell, k \in M_{n_N}^{N-1}\\ 
(\sfa^{(m)}_i) \in  \widetilde{\mathrm{M}}_{\geq 0} }}
\prod_{i=1}^{n_N} y_{i,0}^{-\lN_i+E^{(0)}_{i}(e)+K_i^{(1)}(k,\ell)-r_{i,1}+\sfa^{(0)}_i}\nonumber\\ 
& \qquad  \qquad  \times 
\prod_{m=1}^{N-2} \prod_{i=1}^{n_N} y_{i,,m}^{E^{(m)}_i(e) +K^{(m+1)}_i(k,\ell) +L^{(m)}_i(k,\ell) -r_{i,m+1}+r_{i,m}+\sfa^{(m)}_i}\nonumber\\
& \qquad  \qquad \times 
\prod_{i=1}^{n_N} y_{i,N-1}^{E^{(N-1)}_{i}(e)+L_i^{(N-1)}(k,\ell)+r_{i,N-1}+\sfa^{(N-1)}_i}\nonumber \\ 
&\qquad  \qquad  \times 
\prod_{i=1}^{n_N}\prod_{m=1}^{N-1}R_{r_{i,m}}(\alpha^{(N)}_{i,m})\cdot 
\prod_{1\leq i<j\leq n_N} C_{e^{(0)}_{i,j}}
 \prod_{m=1}^{N-1}  
C^{(-)}_{k^{(m)}_{i,j}}\widetilde C_{e^{(m)}_{i,j}}  
C^{(+)}_{\ell^{(m-1)}_{i,j}}\nonumber \\ 
& \qquad \qquad \times 
:\prod_{i=1}^{n_N} \Phi^{(0)}_{-\sfa^{(0)}_i} S^{(1)}_{-\sfa^{(1)}_i}\cdots S^{(N-1)}_{-\sfa^{(N-1)}_i}: \ketzero. 
\end{align}
Here, $ \widetilde{\mathrm{M}}_{\geq 0}=\mathrm{Mat}(n_N, N-1;\mathbb{Z}_{\geq 0})$,  
$\Phi^{(0)}(z)=\sum_{n \in \mathbb{Z}} \Phi^{(0)}_{n}z^{-n}$, and 
$S^{(i)}(z)=\sum_{n \in \mathbb{Z}} S^{(i)}_{n}z^{-n}$. 
Since the integral gives us the constant terms in $y_{i,m}$, 
we have 
\begin{align}
r_{i,N-1}&=-E^{(N-1)}_i-L^{(N-1)}_i-\sfa^{(N-1)}_i, \\
r_{i,N-2}&=-E^{(N-1)}_i-L^{(N-2)}_i-K^{(N-1)}_i-\sfa^{(N-2)}_i+r_{i,N-1} \nonumber\\
&=-E^{(N-1)}_i-E^{(N-2)}_i-L^{(N-1)}_i-L^{(N-2)}_i-K^{(N-1)}_i-\sfa^{(N-1)}_i-\sfa^{(N-2)}_i,\\ 
& \cdots\nonumber\\
r_{i,1}&=-\sum_{m=1}^{N-1}\left( E_i^{(m)} +L_i^{(m)} +K_i^{(m)} +\sfa^{(m)}_i \right)+K_i^{(1)},
\end{align}
and 
\begin{align}
\sfa^{(0)}_i+E^{(0)}_i+\sum_{m=1}^{N-1}\left( E_i^{(m)} +L_i^{(m)} +K_i^{(m)} +\sfa^{(m)}_i \right)-\lN_i=0. 
\end{align}
Since $\sum_{i=1}^{n_N} E^{(m)}_i=0$ and $\sum_{i=1}^{n_N} (L_i^{(m)} +K_i^{(m)}) =0$, 
it is shown that 
\begin{align}
|\lN|-\sum_{i=1}^{n_N}\sfa^{(N-1)}_i=\sum_{m=0}^{N-2}\sum_{i=1}^{n_N}\sfa^{(m)}_i\geq 0. 
\end{align}
Therefore,  
$\sum_{i=1}^{n_N}\sfa_i^{(N-1)}$ takes its maximum value $|\lN|$ 
when $\sfa^{(m)}_i=0$ for all $i$ and $m \leq N-2$. 
Since only the operators $:\prod_{i=1}^{n_N}S^{(N-1)}_{-\sfa^{(N-1)}_i}:$
have the creation operators acting on the $N$-th Fock space, 
it is clear that the maximum degree in the $N$-th Fock component is $|\lN|$.

Before taking expansion coefficients in the basis of generalized Macdonald functions, 
we investigate the one in the basis of products of ordinary Macdonald functions 
$\prod_{i=1}^NQ_{\mu^{(i)}}(a^{(i)}_{-n})\ketzero$. 
Consider the terms of level  $|\lN|$ with respect to the $N$-th Fock space. 
Then, $\sfa^{(N-1)}_{i}$ satisfies 
\begin{align}
\sfa^{(N-1)}_i=\lN_i-E^{(0)}_i-\sum_{m=1}^{N-1}\left( E_i^{(m)} +L_i^{(m)} +K_i^{(m)} \right). 
\end{align}
Furthermore, 
by the form of $E^{(m)}_i$, $K^{(m)}_i$ and $L^{(m)}_i$, 
it can be seen that only the following vectors appear: 
\begin{align}\label{eq: pf of R 2}
:\prod_{i=1}^{n_N}S^{(N-1)}_{-\mu_i}:\ketzero,\quad  \mu \geq \lN. 
\end{align}
By  the Pieri formula (Fact \ref{fact: Pieri}), 
we can write the terms of level $|\lambda^{(N)}|$ with respect to the $N$-th Fock space as 
\begin{align}
:\prod_{i=1}^{n_N}S^{(N-1)}_{-\mu_i}:\ketzero \Big/ \left< a^{(N-1)}_n\Big| n \in \mathbb{Z} \right>
&=\gamma^{(N-1)|\mu|} \prod_{i=1}^{n_N}g^{(N)}_{\mu_i} \ketzero, \nonumber\\
&=\gamma^{(N-1)|\mu|} Q_{\mu}(a^{(N)}_{-n})\ketzero+\sum_{\rho>\mu} (\mathrm{const.}) Q_{\rho}(a^{(N)}_{-n})\ketzero. \label{eq: pf of R 3}
\end{align}
Here, $g^{(N)}_n$ is defined by 
\begin{align}
\sum_{n\geq 0} z^n g^{(N)}_{n}= 
:\exp\left( \sum_{n>0} \frac{1-t^n}{1-q^n} a^{(N)}_{-n}z^n\right):. 
\end{align}
Therefore, 
there appears $Q_{\lN}(a^{(N)}_{-n})\ketzero$ only in the case that 
$\mu=\lN$ on (\ref{eq: pf of R 2}), 
i.e., the case that 
\begin{align}
e^{(m)}_{i,j}=\ell^{(m)}_{i,j}=k^{(m)}_{i,j}=0
\end{align}
for all $i, j, m$. 
Then
\begin{align}
r_{i,m}=-\sfa^{(N-1)}_i=-\lN_i 
\end{align}
for all $i, m$.

From the above discussion, 
we have 
\begin{align}
&x^{-\vl}V^{(\vnn)}(x_1,\ldots, x_{|\vnn|})\ketzero\Big| _{\mbox{const. of } x_i}\nonumber \\
&=
\prod_{k=1}^{N-1}\prod_{i=1}^{n_k}x_{[k,i]_{\vnn}}^{-\lambda^{(k)}_i} \,
\Phi^{(0)}(x_1)\cdots  \Phi^{(N-2)}(x_{[N,0]_{\vnn}})
\Bigg\{ \gamma^{(N-1)|\lN|}\prod_{i=1}^{n_N}\prod_{m=1}^{N-1} R_{-\lN_i} (\alpha^{(N-1)}_{i,m} ) 
Q_{\lN}(a^{(N)}_{-n})\ketzero \nonumber \\
&\quad +\sum_{\mN>\lN}(\mathrm{const.}) Q_{\mN}(a^{(N)}_{-n})\ketzero
+\mathcal{O}\left(\substack{|\mN|<|\lN| \\ \mbox{and } |\vm|=|\lN| }\right) \Bigg\}\Bigg|_{\mbox{const. of } x_i}. 
\end{align}
Here $\mathcal{O}\left( P \right)$ expresses the terms 
$\prod_{i=1}^N Q_{\mu^{(i)}}(a^{(i)}_{-n})\ketzero$ with $\vm$ satisfying the proposition $P$. 
By repeating the similar argument $N-1$ times, 
we obtain 
\begin{align}
&x^{-\vl}V^{(\vnn)}(x_1,\ldots, x_{|\vnn|})\ketzero\Big| _{\mbox{const. of } x_i}\nonumber \\
&=
\left( 
\gamma^{\sum_{i=1}^N (i-1)|\lambda^{(i)}|}
\prod_{k=2}^N\prod_{i=1}^{n_k}\prod_{m=1}^{k-1} 
R_{-\lambda^{(k)}_i} (\alpha^{(k-1)}_{i,m} ) \right)
\prod_{i=1}^N Q_{\lambda^{(i)}}(a^{(i)}_{-n})\ketzero \nonumber \\
&\quad +
\sum_{\substack{\mu^{(j)}\geq \lambda^{(j)} (\forall j)\\ \mu^{(k)}\neq \lambda^{(k)} \mbox{ for some $k$}  }}(\mathrm{const.}) \prod_{i=1}^N Q_{\mu^{(i)}}(a^{(i)}_{-n})\ketzero
+\mathcal{O}
\left(
\vm \overstar{<} \vl
\right).\label{eq: pf of R 4}
\end{align}

The existence theorem of generalized Macdonald functions (Fact \ref{fact:existence thm of Gn Mac}) 
shows that the coefficient in front of $\ket{Q_{\vl}}$ in the basis of generalized Macdonald functions 
is the same as the one in front of $\prod_{i=1}^N Q_{\lambda^{(i)}}(a^{(i)}_{-n})\ketzero$ 
in (\ref{eq: pf of R 4}). 
\qed

\subsection{Proof of Lemma \ref{lem: trf formula 1} }
\label{sec: trf formula lem1}

First, it can be shown that
\begin{align}\label{eq: 1}
&\prod_{1\leq i<j\leq n+m}
{(q^{-\theta_{j}}q s_j/t s_i;q)_{\theta_{i}} \over (q^{-\theta_{j}}s_j/s_i;q)_{\theta_{i}} } \nonumber\\
&=\prod_{1\leq i<j\leq n+m} 
\frac{q^{\theta_i}s_i^{-1}-q^{\theta_j}s_j^{-1}}{s_i^{-1}-s_j^{-1}}
\frac{(ts_i/s_j;q)_{\theta_j}}{(qs_i/s_j;q)_{\theta_j}}
t^{-\theta_j} 
\frac{(qs_j/ts_i;q)_{\theta_i-\theta_j}}{(qs_j/s_i;q)_{\theta_i-\theta_j}}
\end{align}
and
\begin{align}\label{eq: 2}
\prod_{1\leq i<j\leq n} 
\frac{(q\ts{j}/t\ts{i};q)_{\infty}}{(q\ts{j}/\ts{i};q)_{\infty}} 
=\prod_{1\leq i<j\leq n} 
\frac{(q s_j/t s_i;q)_{\infty}}{(q s_j/ s_i;q)_{\infty}} 
\frac{(qs_j/s_i;q)_{\theta_i-\theta_j}}{(qs_j/ts_i;q)_{\theta_i-\theta_j}}. 
\end{align}
By (\ref{eq: 1}) and (\ref{eq: 2}), 
we have 
\begin{align}\label{eq: 3}
&\prod_{1\leq i<j\leq n} 
\frac{(q\ts{j}/t\ts{i};q)_{\infty}}{(q\ts{j}/\ts{i};q)_{\infty}} \cdot 
d_{n+m}(\theta;s|q,t) \nonumber\\
&=
\prod_{1\leq i<j\leq n} 
\frac{(q s_j/t s_i;q)_{\infty}}{(q s_j/ s_i;q)_{\infty}} \cdot 
\Mphi{n+m-1, 1}{\theta}
{t&,\ldots,&t \\ s_1^{-1}&,\ldots,& s_{n+m-1}^{-1}}
{ts_{n+m}\\qs_{n+m}}  \nonumber\\
& \quad \times \prod_{i=1}^{n+m-1} q^{\theta_i} t^{-i\theta_i} \cdot 
\prod_{j=n+1}^{n+m-1} \prod_{i=1}^{j-1} \frac{(qs_j/ts_i;q)_{\theta_i-\theta_j}}{(qs_j/s_i;q)_{\theta_i-\theta_j}}. 
\end{align}
Furthermore, 
we can see
\begin{align}\label{eq: 4}
&\prod_{k=1}^{m-1} \prod_{i=1}^{n+k} \frac{(q\ts{n+k}/t\ts{i};q)_{\sigma_k}}{(q\ts{n+k}/\ts{i};q)_{\sigma_k}}  \\
&=\lim_{h\rightarrow 1}
\prod_{k=1}^{m-1} 
 \prod_{i=1}^{n+k}
\frac{( q q^{\rho_k} s_{n+k}/ts_i ;q)_{\theta_i}}{(h q q^{\rho_k} s_{n+k}/s_i ;q)_{\theta_i}}
\frac{( q s_{n+k}/ts_i ;q)_{\rho_k}}{(h q s_{n+k}/s_i ;q)_{\rho_k}}
\frac{(qs_{n+k}/s_i;q)_{\theta_i-\theta_{n+k}}}{(qs_{n+k}/ts_i;q)_{\theta_i-\theta_{n+k}}} \nonumber
\end{align}
and 
\begin{align}\label{eq: 5}
&\prod_{1\leq k <l \leq m-1} 
\frac{(t q^{-\sigma_k}\ts{n+l}/\ts{n+k};q)_{\sigma_l}}{(q^{-\sigma_k}\ts{n+l}/\ts{n+k};q)_{\sigma_l}}\nonumber  \\
&= 
\prod_{1\leq k <l \leq m-1} 
\frac{(t q^{-\rho_k} s_{n+l}/ s_{n+k};q)_{\rho_l}}{(q^{-\rho_k} s_{n+l}/ s_{n+k};q)_{\rho_l}}
\frac{(q q^{\rho_k} s_{n+k}/ ts_{n+l};q)_{\theta_{n+l}}}{(q^{\rho_k} s_{n+k}/ s_{n+l};q)_{\theta_{n+l}}} 
\times t^{\theta_{n+l}}. 
\end{align}
Combining (\ref{eq: 3}), (\ref{eq: 4}) and (\ref{eq: 5}) yields Lemma \ref{lem: trf formula 1}. 
\qed

\subsection{Proof of Lemma \ref{lem: trf formula 2}}\label{sec: trf formula lem2}
Set for short 
\begin{align}
&A=\prod_{k=1}^{m-1}\frac{q^{\nu_k+\rho_k}s_{n+k}-q^{\nu_m} s_{n+m}}{q^{\rho_{k}}s_{n+k}-s_{n+k}} \nonumber \\
&\qquad \times
\prod_{k=1}^{m-1}
\frac{(t;q)_{\nu_{k}}}
{(q;q)_{\nu_{k}}}
\frac{(q q^{\rho_k}s_{n+k}/ts_{n+m};q)_{\nu_{k}}}
{(q q^{\rho_k}s_{n+k}/s_{n+m};q)_{\nu_{k}}}
\frac{(t q^{-\rho_k}s_{n+m}/s_{n+k};q)_{\nu_{m}}}
{(q q^{-\rho_k}s_{n+m}/s_{n+k};q)_{\nu_{m}}}, \\
&B=
\prod_{1\leq k<l \leq m-1}
\frac{q^{\nu_k+\rho_k}s_{n+k}-q^{\nu_l+\rho_l} s_{n+l}}{q^{\rho_{k}}s_{n+k}-q^{\rho_l}s_{n+l}}\cdot 
\prod_{\substack{ k\neq l}}
\frac{(t q^{\rho_k-\rho_l}s_{n+k}/s_{n+l};q)_{\nu_{k}}}
{(q q^{\rho_k-\rho_l}s_{n+k}/s_{n+l};q)_{\nu_{k}}}, \\
&C=\prod_{k=1}^{m-1}\prod_{i=1}^{n+m-1}
\frac{(hq q^{\rho_k}s_{n+k}/ts_{i};q)_{\nu_{k}}}
{(h q q^{\rho_k}s_{n+k}/s_{i};q)_{\nu_{k}}}, \\
&D=\frac{(q/t;q)_{\nu_m}}{(q;q)_{\nu_m}} 
\prod_{i=1}^{n+m-1}
\frac{(h q s_{n+m}/t s_{i};q)_{\nu_{m}}}
{(h q s_{n+m}/s_{i};q)_{\nu_{m}}}. 
\end{align}
Then $\phi^{m,n+m-1}_{\nu}=ABCD$. 
First, we can get 
\begin{align}\label{eq: A}
A= 
&\prod_{k=1}^{m-1}
(q/t)^{\theta_k}
\frac{(t;q)_{\theta_{k}}}{(q;q)_{\theta_{k}}}
\frac{(t q^{-\mu_k+\mu_m}s_{n+m}/s_{n+k};q)_{\theta_{k}}}
{(q q^{-\mu_k+\mu_m}s_{n+m}/s_{n+k};q)_{\theta_{k}}} \nonumber \\ 
&\times 
\prod_{k=1}^{m-1}
\frac{(t q^{-\mu_k}s_{n+m}/s_{n+k};q)_{\mu_{m}}}
{(q^{-\mu_k}s_{n+m}/s_{n+k};q)_{\mu_{m}}}. 
\end{align}
The first product in (\ref{eq: A}) 
reproduces the factors in $d_m((\theta_i); (q^{\mu_i}s_{n+i})|q,t)$, 
i.e., the first product in (\ref{eq: form of d}). 
Next, we have 
\begin{align}
&\lim_{h\rightarrow 1}
\widetilde{\mathsf{N}}^{n,m-1}_{\rho}(h; s_1,\ldots, s_{n+m-1}) C 
= \mathsf{N}^{n,m-1}_{\mu}(s_1,\ldots, s_{n+m-1}) E ,
\end{align}
where
\begin{align}
&E\equiv \prod_{1\leq k<l\leq m-1} 
t^{-\theta_k}
\frac{(t q^{-\mu_k+\mu_l}s_{n+l}/s_{n+k};q)_{\theta_k-\theta_l}}
{( q^{-\mu_k+\mu_l}s_{n+l}/s_{n+k};q)_{\theta_k-\theta_l}}.
\end{align}
$BE$ reproduces the factors in $d_m((\theta_i); (q^{\mu_i}s_{n+i})|q,t)$:
\begin{align}
BE=\prod_{1\leq i <j \leq m} 
\frac{(tq^{-\mu_k+\mu_l}s_{n+l}/s_{n+k};q)_{\theta_k}}
{(qq^{-\mu_k+\mu_l}s_{n+l}/s_{n+k};q)_{\theta_k}}
\frac{(qq^{-\mu_k+\mu_l-\theta_l}s_{n+l}/ts_{n+k};q)_{\theta_k}}
{(q^{-\mu_k+\mu_l-\theta_l}s_{n+l}/s_{n+k};q)_{\theta_k}}.
\end{align}
This corresponds to the second product in (\ref{eq: form of d}).
The product of $\mathsf{N}^{n,m-1}_{\mu}$, $D$ and the remaining factor in (\ref{eq: A}) 
is 
\begin{align}
&\mathsf{N}^{n,m-1}_{\mu}(s_1,\ldots, s_{n+m-1}) \cdot 
\prod_{k=1}^{m-1}\frac{(t q^{-\mu_k}s_{n+m}/s_{n+k};q)_{\mu_{m}}}
{(q^{-\mu_k}s_{n+m}/s_{n+k};q)_{\mu_{m}}}\cdot 
\lim_{h\rightarrow 1}D \nonumber \\
&= \mathsf{N}^{n,m}_{\mu}(s_1,\ldots, s_{n+m}). 
\end{align}
Therefore, Lemma \ref{lem: trf formula 2} follows. 
\qed

\section{Kac determinant revisited}\label{App: reargof SM}

The formula for the Kac determinant with respect to the vectors $\ket{X_{\vl}}$ 
has been discussed in \cite{O}. 
That shows the fact that $\ket{X_{\vl}}$ form a basis on the Fock space (Fact \ref{fact: PBW basis}). 
For the sake of reader's convenience, we revisit the proof, 
clarifying the choice of the integral cycles. 
Here, we construct the $q$-invariance cycles by using the elliptic theta function. 

\begin{definition}
Let $1\leq k\leq N-1$ 
and  $u_k=q^s t^{-r} u_{k+1}$ ($r, s \in \mathbb{Z}_{>0}$ ). 
Define the vector $\ket{\chi^{(k)}_{r,s}} \in \mathcal{F}_{\vu}$ by  
the integral 
\begin{align}
\ket{\chi^{(k)}_{r,s}}:=
\oint \frac{dz}{z} 
S^{(k)}(z_1)\cdots S^{(k)}(z_r) 
\prod_{i=1}^r \frac{\theta_q(t^{2i} u_kz_i/u_{k+1})}{\theta_q(t z_i)}\ketzero. 
\end{align}
Here and hereafter, we use the shorthand notation 
\begin{align}
\oint \frac{dz}{z}:=\oint_{T} \prod_{i=1}^r \frac{dz_i}{2 \pi \sqrt{-1} z_i},
\end{align}
where the cycle is the $r$-dimensional torus $T: |z_1|=\cdots =|z_r|=1$. 
Note that $\vu$ is the spectral parameter of the codomain of $S^{(k)}(z_1)$. 
\end{definition}

\begin{proposition}\label{prop: nonzero}
The vector $\ket{\chi^{(k)}_{r,s}}$ does not vanish. 
In particular,  this is of level $rs$. 
\end{proposition}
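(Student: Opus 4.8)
The plan is to exploit that $S^{(k)}(z)$ acts as the identity on every tensor factor except the $k$-th and $(k+1)$-th, so $\ket{\chi^{(k)}_{r,s}}$ is supported on $\cFhol_{u_k}\otimes\cFhol_{u_{k+1}}$ (the vacuum being untouched elsewhere) and the whole computation reduces to this pair of Fock spaces. First I would normal order the product of screenings with the self--contraction
\[
S^{(k)}(z_1)\cdots S^{(k)}(z_r)=\prod_{1\le i<j\le r}\Big[(1-z_j/z_i)\frac{(qz_j/tz_i;q)_\infty}{(tz_j/z_i;q)_\infty}\Big]\, :\!S^{(k)}(z_1)\cdots S^{(k)}(z_r)\!:\,,
\]
obtained by iterating the screening--screening operator products collected in Appendix \ref{App: Useful Formula}. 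Applying the normal--ordered product to $\ketzero$ yields an explicit exponential in the creation modes $a^{(k)}_{-n},a^{(k+1)}_{-n}$ whose coefficients are power series in the $z_i$, so that $\ket{\chi^{(k)}_{r,s}}$ becomes a genuine $r$--fold torus integral of (OPE prefactor)$\times$(theta kernel)$\times$(creation exponential) applied to $\ketzero$.

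The level is the cleanest part and I would settle it first. Let $d$ be the Heisenberg degree operator on the two relevant factors, normalized so that $[d,a^{(j)}_{-n}]=n\,a^{(j)}_{-n}$; since the mode $S^{(k)}_m$ carries Heisenberg degree $-m$, this gives $[d,S^{(k)}(z)]=z\partial_z S^{(k)}(z)$. Because $d\ketzero=0$, applying $d$ under the integral and integrating by parts in each $z_i$ moves the derivative onto the kernel, giving $d\ket{\chi^{(k)}_{r,s}}=-\oint\frac{dz}{z}\prod_jS^{(k)}(z_j)\big(\sum_i z_i\partial_{z_i}\log K\big)K\ketzero$, where $K$ is the theta kernel. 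Using $u_k/u_{k+1}=q^st^{-r}$ together with the quasi--periodicity $\theta_q(qw)=-w^{-1}\theta_q(w)$, the logarithmic derivative of the $i$-th kernel factor equals $-s$ plus the $q$--periodic remainder $z_i\partial_{z_i}\log\big[\theta_q(t^{2i-r}z_i)/\theta_q(tz_i)\big]$; integrated over the cycle, this remainder is the winding number of the theta ratio. The point of the cycle chosen here (exactly what the appendix aims to clarify) is that the zero and pole of each ratio lie on the same side, so the winding number vanishes. Hence $\sum_i z_i\partial_{z_i}\log K$ acts as $-rs$ under the integral and $d\ket{\chi^{(k)}_{r,s}}=rs\,\ket{\chi^{(k)}_{r,s}}$, i.e. the vector is homogeneous of level $rs$.

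For non--vanishing I would extract a single explicitly computable coefficient. Expanding $:\!S^{(k)}(z_1)\cdots S^{(k)}(z_r)\!:\ketzero$ in the basis $\ket{a_\mu}$ of the degree--$rs$ subspace and isolating a convenient leading monomial (in the dominance order on $\mu$), the remaining integral factorizes into one--dimensional theta/$q$--integrals in each $z_i$. Each such integral is summable in closed form by Ramanujan's ${}_1\psi_1$ formula (Fact \ref{fact: ramanujan}), or equivalently by a residue computation localizing at the poles of $1/\theta_q(tz_i)$, and the outcome is an explicit product of factors of the form $(1-q^{a}t^{b}u_k/u_{k+1})$. Under the genericity hypotheses on $q,t$ and $\vu$ this product is nonzero, so the chosen coefficient survives and $\ket{\chi^{(k)}_{r,s}}\neq0$. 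The compatibility of this residue/Jackson evaluation with the contour used in the level argument is guaranteed by the same $q$--quasi--periodicity that underlies Proposition \ref{prop: commutativity of S}.

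The main obstacle is this last step: controlling the multidimensional theta integral well enough to be certain that the leading coefficient does not suffer an accidental cancellation, and simultaneously fixing the cycle so that the winding--number corrections in the level argument genuinely vanish. Once the cycle is pinned down so that, for every $i$, the relevant zero and pole of the theta ratio are separated consistently, both the level count and the explicit nonzero coefficient flow from the same bookkeeping; organizing that bookkeeping uniformly in $r$ and $s$ is where the real work lies.
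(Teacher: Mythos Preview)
Your proposal has genuine gaps in both halves. For the level, after you split $z_i\partial_{z_i}\log K_i=-s+h_i$ with $h_i=z_i\partial_{z_i}\log\bigl[\theta_q(t^{2i-r}z_i)/\theta_q(tz_i)\bigr]$, what must vanish is
\[
\oint\frac{dz}{z}\,\Bigl(\prod_j S^{(k)}(z_j)\Bigr)\,K(z)\,h_i(z_i)\,\ketzero,
\]
not $\oint d\log g_i$ by itself; the OPE prefactor, the remaining kernel factors and the creation exponentials all depend on $z_i$, so this is not a winding number at all. Even the naive winding number of $g_i=\theta_q(t^{2i-r}z_i)/\theta_q(tz_i)$ around $|z_i|=1$ is $i$--dependent (zeros at $t^{r-2i}q^{\mathbb Z}$, poles at $t^{-1}q^{\mathbb Z}$), so no single torus separates them uniformly as $i$ runs from $1$ to $r$. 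For nonvanishing, the OPE prefactor $\prod_{i<j}(1-z_j/z_i)(qz_j/tz_i;q)_\infty/(tz_j/z_i;q)_\infty$ couples all the variables, so the integral does not factorize into one--dimensional pieces evaluable by ${}_1\psi_1$; you yourself flag this as ``where the real work lies'', and without a decoupling mechanism the argument stops there.

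The paper sidesteps both issues with one device you are missing: it symmetrizes the integrand over $\mathfrak S_r$ and invokes a theta identity (Lemma~\ref{lem: symmetrize formula}) which collapses the entire theta kernel, together with the non--symmetric part of the OPE, to an explicit nonzero constant times $\prod_i z_i^{-s}$. This leaves
\[
\ket{\chi^{(k)}_{r,s}}=\text{(const.)}\oint\frac{dz}{z}\,\Delta(z)\,\prod_{i=1}^r (tz_i)^{-s}\,:\!\prod_i S^{(k)}(z_i)\!:\ketzero,
\]
with $\Delta(z)$ the Macdonald weight. Now $:\!\prod_i S^{(k)}(z_i)\!:\ketzero$ is the Macdonald reproducing kernel in the bosons $\alpha^{(k)}_{-n}=\gamma^{kn}(-\gamma^n a^{(k)}_{-n}+a^{(k+1)}_{-n})$, while $\prod_i z_i^{-s}$ is the rectangular Macdonald polynomial $P^{(r)}_{(s^r)}$ in $r$ variables. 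Orthogonality for $\langle-,-\rangle'_r$ (Fact~\ref{fact: <P,Q>'}) then evaluates the integral as an explicit nonzero scalar times $P_{(s^r)}(\alpha^{(k)}_{-n})\ketzero$, which is visibly nonzero and of level $|(s^r)|=rs$. Both claims thus fall out of a single exact evaluation, with no residue bookkeeping needed.
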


Let us prepare a lemma with respect to the symmetrization of theta functions. 
Set 
\begin{align}
\widehat{F}_{r,s}  (z_1,\ldots , z_r)
&:=\frac{1}{r!}\sum_{\sigma \in \mathfrak{S}_r} \prod_{i=1}^r \theta_q(q^s t^{2i-r} z_{\sigma(i)}) \cdot
\prod_{\substack{i<j \\ \sigma(i) > \sigma(j)}}
t^{-1}\frac{\theta_q(t z_{\sigma(i)}/z_{\sigma(j)})}{ \theta_q(t^{-1}z_{\sigma(i)}/z_{\sigma(j)})}.
\end{align}

\begin{lemma}\label{lem: symmetrize formula}
\begin{align}
\widehat{F}_{r,s} (z_1,\ldots , z_r)= \frac{1}{r !} \prod_{i=2}^{r} \frac{\theta_q (t^i)}{\theta_q(t)} \cdot
\prod_{1\leq i<j \leq r} \frac{\theta_q (z_i/z_j)}{\theta_q (tz_i/z_j)} \cdot
\prod_{i=1}^r \theta_q(q^s t z_i). 
\end{align}
\end{lemma}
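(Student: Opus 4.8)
The plan is to prove Lemma~\ref{lem: symmetrize formula} by a one-variable elliptic function argument, carried out in each variable $z_k$ in turn. Write $\theta:=\theta_q$, and record the two properties of $\theta$ that follow immediately from its definition $\theta(a)=(a;q)_\infty(qa^{-1};q)_\infty$: the quasi-periodicity $\theta(qa)=-a^{-1}\theta(a)$ and the inversion rule $\theta(a^{-1})=-a^{-1}\theta(a)$. Abbreviating the scattering factor by $R(a,b):=t^{-1}\theta(ta/b)/\theta(t^{-1}a/b)$, these two rules give the \emph{unitarity} relation $R(a,b)R(b,a)=1$ and the \emph{coincidence value} $R(a,a)=t^{-1}\theta(t)/\theta(t^{-1})=-1$; these are the only facts about $R$ I will use. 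Set $G_{r,s}:=r!\,\widehat F_{r,s}$ and let $P_{r,s}$ denote the product on the right-hand side (so the claim is $G_{r,s}=P_{r,s}$, the factors $1/r!$ cancelling). I fix all variables but $z_k$ generic and treat both sides as meromorphic functions of $z_k$ on $\mathbb{C}^*$.

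First I would check that $G_{r,s}$ and $P_{r,s}$ transform the same way under $z_k\mapsto qz_k$. For a single summand $W(\sigma)$ the variable $z_k$ occurs once in the ``spectral'' factor $\theta(q^st^{2p-r}z_k)$ (where $p=\sigma^{-1}(k)$) and once in each $R$-factor involving the value $k$; a short computation using the two theta rules shows each such $R$-factor contributes $t^{2}$ or $t^{-2}$ according to whether $z_k$ sits in the ``numerator'' or ``denominator'' slot. Counting these via the elementary identity $\#\{\text{values}>k\text{ before }p\}-\#\{\text{values}<k\text{ after }p\}=p-k$ makes the total factor equal to $-q^{-s}t^{\,r-2k}z_k^{-1}$, \emph{independent of $\sigma$}; the same value comes out of $P_{r,s}$ by a direct check. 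Next I would locate the poles: in $G_{r,s}$ the variable $z_k$ enters a denominator only through factors $\theta(t^{-1}z_m/z_k)$ or $\theta(t^{-1}z_k/z_m)$, giving at most simple poles at $z_k\in t^{-1}z_m q^{\mathbb{Z}}\ (m>k)$ and $z_k\in t z_m q^{\mathbb{Z}}\ (m<k)$, which are exactly the poles of $P_{r,s}$; hence $G_{r,s}/P_{r,s}$ is finite there.

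The crucial step is that $G_{r,s}$ vanishes at the zeros $z_k=z_m$ of $P_{r,s}$ (for $m\ne k$). I would prove this by pairing each $\sigma$ with $(k\,m)\sigma$, the permutation obtained by interchanging the \emph{values} $k$ and $m$. When $z_k=z_m$ the spectral product is unchanged, while the product of $R$-factors is multiplied by exactly $-1$: the factor directly relating the positions of $k$ and $m$ becomes $R(z_k,z_m)=R(z_k,z_k)=-1$, and the factors pairing a third value with $k$ versus with $m$ reorganize into reciprocal pairs $R(a,b)R(b,a)=1$ by unitarity. Thus $W(\sigma)+W((k\,m)\sigma)=0$ at $z_k=z_m$, and summing over the fixed-point-free involution gives $G_{r,s}|_{z_k=z_m}=0$. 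This pairwise (Bethe-ansatz type) cancellation is the main obstacle; I verified the mechanism explicitly in the cases $r=2$ and $r=3$ (including a non-adjacent coincidence $z_1=z_3$), and the general bookkeeping of which $R$-factors appear and cancel is where the real work lies. Granting it, $G_{r,s}/P_{r,s}$ is invariant under $z_k\mapsto qz_k$, has no pole at the shared poles nor at the $r-1$ common zeros $z_k=z_m$, so it is an elliptic function whose only possible singularity is a single simple pole at the remaining zero $z_k=q^{-s}t^{-1}$ of $P_{r,s}$; since no elliptic function has a single simple pole, it is constant in $z_k$.

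Running this for each $k$ shows $G_{r,s}=C_{r,s}\,P_{r,s}$ with $C_{r,s}$ independent of $z_1,\dots,z_r$. Finally I would pin down $C_{r,s}=1$ by induction on $r$: the base case $r=1$ is the tautology $\theta(q^stz_1)=\theta(q^stz_1)$, and taking the residue of both sides at the pole $z_k=t^{-1}z_r$ (equivalently, a suitable specialization) collapses the value $r$ out of the sum and reduces the $r$-variable identity to the $(r-1)$-variable one multiplied by precisely the factor $\theta(t^{r})/\theta(t)$, which matches the way the normalizing product $\prod_{i=2}^r\theta(t^i)/\theta(t)$ grows. I expect the quasi-periodicity and pole bookkeeping to be routine; the delicate points are the summand-independence of the monodromy factor (needing the counting identity above) and, above all, the coincidence cancellation via the value-transposition involution, which is the genuine content of the lemma.
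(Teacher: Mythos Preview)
Your argument is correct and is precisely the elliptic-function method behind Lemma~4 of \cite{JLMP}, to which the paper defers for the proof; the quasi-periodicity count (via the identity $A-D=p-k$), the pole-matching, and the value-transposition cancellation at $z_k=z_m$ are all sound. The only loose end is the normalization step: the residue at $z_k=t^{-1}z_r$ does not collapse the sum in one stroke (half the permutations still carry that pole), but the staircase specialization $z_i=t^{1-i}z$ does---every non-identity $\sigma$ has an inversion with $\sigma(i)=\sigma(j)+1$, so the numerator $\theta(tz_{\sigma(i)}/z_{\sigma(j)})=\theta(1)=0$ kills $W(\sigma)$, and one checks directly that $W(\mathrm{id})=P_{r,s}$ at this point.
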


As for the proof of this lemma, see the proof of Lemma 4 in \cite{JLMP}.

\noindent
\textit{Proof of Proposition \ref{prop: nonzero}.} 
First, by using the operator products of 
screening currents and 
Lemma \ref{lem: symmetrize formula}, 
we have 
\begin{align}
\ket{\chi^{(k)}_{r,s}} 
&= \oint \frac{dz}{z} \Delta(z) 
\prod_{1\leq i<j\leq r} \frac{\theta_q(t z_i/ z_j)}{\theta_q(z_i/z_j)} \cdot
\prod_{i=1}^r \frac{\theta_q(q^s t^{2i-r}z_i)}{\theta_q(tz_i)} 
:S^{(k)}(z_1)\cdots S^{(k)}(z_r) : \ketzero \nonumber \\
&=\oint \frac{dz}{z} \Delta(z) 
\prod_{1\leq i<j\leq r}\frac{\theta_q(t z_i/ z_j)}{\theta_q(z_i/z_j)} \cdot
\prod_{i=1}^r \frac{1}{\theta_q(tz_i)} \widehat F_{r,s} (z_1,\ldots , z_r) 
:S^{(k)}(z_1)\cdots S^{(k)}(z_r) : \ketzero \nonumber\\
&= \frac{1}{r!}\prod_{i=2}^r \frac{\theta_q(t^i)}{\theta_q(t)} \cdot
\oint \frac{dz}{z} \Delta(z) 
q^{\frac{r}{2}s(1-s)} \prod_{i=1}^r(t z_i)^{-s} \cdot
:\prod_{i=1}^rS^{(k)}(z_i): \ketzero. \label{eq: scr act} 
\end{align}
Here, $\Delta(z)$ is defined in (\ref{eq: Delta(z)}). 
Note that $:\prod_{i=1}^rS^{(k)}(z_i): \ketzero$ can be regarded as the kernel function 
for the Macdonald functions. 
Hence, it is expanded in terms of the Macdonald functions (See Fact \ref{fact: kernel}). 
Note that $\prod_{i=1}^rz_i^{-s}$ is the Macdonald polynomial 
with a rectangular Young diagram in $r$ variables. 
Therefore,  (\ref{eq: scr act}) can be written as 
\begin{align}
&\frac{q^{rs(1-s)/2}t^{-rs}}{r!}\prod_{i=2}^r \frac{\theta_q(t^i)}{\theta_q(t)}\cdot 
\sum_{\lambda} P_{\lambda}(\alpha_{-n}^{(k)})\ketzero 
\langle P^{(r)}_{(s^r)}(z),Q^{(r)}_{\lambda}(z) \rangle'_r \nonumber \\
&=\frac{q^{rs(1-s)/2}t^{-rs}}{r!}
\prod_{i=2}^r \frac{\theta_q(t^i)}{\theta_q(t)} \cdot 
P_{(s^r)}(\alpha_{-n}^{(k)}) \ketzero 
\langle P^{(r)}_{(s^r)}(z),Q^{(r)}_{(s^r)}(z) \rangle'_r, 
\end{align}
where $P_{\lambda}^{(r)}(z)$ and $Q^{(r)}_{\lambda}(z)$ denote 
the Macdonald polynomials in $r$ variables,  
\begin{align}
\alpha_{-n}^{(k)}:=\gamma^{kn}(-\gamma^n a^{(k)}_{-n}+a^{(k+1)}_{-n}),  
\end{align}
and $\langle -,- \rangle'_r$ denotes the scalar product defined in Appendix \ref{App: ord. Macdonald}. 
Since the Macdonald polynomials are pairwise orthogonal for $\langle -,- \rangle'_r$ and 
the inner product $\langle P^{(r)}_{(s^r)},Q^{(r)}_{(s^r)} \rangle'_r$ can be evaluated (Fact \ref{fact: <P,Q>'}) to be nonvanishing. 
Thus, $\ket{\chi^{(k)}_{r,s}} \neq 0$ and of level $rs$. 
\qed 

We show the following commutativity with the algebra $\WA$. 
The proof is similar to the case corresponding to the Minimal model, 
given in \cite{JLMP}. 

\begin{proposition}\label{prop: commutativity}
Let $r,s \in \mathbb{Z}_{>0}$ and $k,j \in \{1,\ldots, r\}$. 
Further we assume $|t|<|q|$. 
Then 
\begin{align}
\Big[ X^{(j)}(z), 
\oint \frac{dw}{w} 
S^{(k)}(w_1) \cdots S^{(k)}(w_r) 
\prod_{i=1}^{r} \frac{\theta_q( t^{2i}\frac{u_k}{u_{k+1}}w_i)}{\theta_q(tw_i)} 
\Big]
=0,
\end{align}
where the spectral parameter of the codomain of  $S^{(k)}(w_1)$ is $\vu$ with $u_k=q^st^{-r}u_{k+1}$. 

\begin{proof}
As in the proof of Proposition \ref{prop: commutativity of S}, 
it suffices to consider the relation only with $\Lambda^{(k)}(z)+\Lambda^{(k+1)}(z)$. 
By (\ref{eq: comm rel of lam and S}), we have 
\begin{align}
&\Big[ \Lambda^{(k)}(z)+\Lambda^{(k+1)}(z), 
\oint \frac{dw}{w} 
S^{(k)}(w_1) \cdots S^{(k)}(w_r) \cdot 
\prod_{i=1}^{r} 
\frac{\theta_q( t^{2i}\frac{u_k}{u_{k+1}}w_i)}{\theta_q(tw_i)} 
\Big] \nonumber \\
&
=\sum_{m=1}^r\oint \frac{dw}{w}
(t-1)t^{m-1} u_k 
\left( T_{q,w_m}-1 \right)
\delta\left(\frac{tw_m}{qz} \right)
\Delta(w) 
\prod_{1\leq i <j\leq r}
\frac{\theta_q( tw_i/w_j)}{\theta_q(w_i/w_j)} \cdot
\prod_{i=1}^r\frac{\theta_q( q^st^{2i-r}w_i)}{\theta_q(tw_i)} \nonumber \\
&\qquad \times\prod_{i=1}^{m-1}\frac{1-t^{-1}w_m/w_i}{1-w_m/w_i}\cdot
\prod_{i=m+1}^{r}\frac{1-tw_i/w_m}{1-w_i/w_m}  \cdot
:\Lambda^{(k)}(tw_m/q) \prod_{i=1}^rS^{(k)}(z_i):. \\
\end{align}
By symmetrizing the variables  $w_i$'s, we have
\begin{align}
& 
\frac{1}{r!}\sum_{m=1}^r
\sum_{\sigma \in \mathfrak{S}_r} 
\oint \frac{dw}{w}
(t-1)u_k 
\left( T_{q,w_{\sigma(m)}}-1 \right)
\delta\left(\frac{tw_{\sigma(m)}}{qz} \right)
\Delta(w) \nonumber\\
&\qquad \times \prod_{1\leq i <j\leq r}
\frac{\theta_q( tw_{\sigma(i)}/w_{\sigma(j)})}{\theta_q(w_{\sigma(i)}/w_{\sigma(j)})} \cdot
\prod_{i=1}^r\frac{\theta_q( q^st^{2i-r}w_{\sigma(i)})}{\theta_q(tw_{\sigma(i)})} \nonumber \\
&\qquad \times\prod_{i\neq m}\frac{1-tw_{\sigma(i)}/w_{\sigma(m)}}{1-w_{\sigma(i)}/w_{\sigma(m)}}\cdot
:\Lambda^{(k)}(tw_{\sigma(m)}/q) \prod_{i=1}^rS^{(k)}(z_i): \nonumber\\
&=\frac{1}{r!}\sum_{l=1}^r \sum_{m=1}^r
\sum_{\substack{\sigma \in \mathfrak{S}_r\\ \sigma(m)=l}} 
\oint \frac{dw}{w}
(t-1)u_k 
\left( T_{q,w_l}-1 \right)
\delta\left(\frac{tw_{l}}{qz} \right)
\Delta(w)\nonumber \\
&\qquad \times \prod_{1\leq i <j\leq r}
\frac{\theta_q( tw_{\sigma(i)}/w_{\sigma(j)})}{\theta_q(w_{\sigma(i)}/w_{\sigma(j)})} \cdot
\prod_{i=1}^r\frac{\theta_q( q^st^{2i-r}w_{\sigma(i)})}{\theta_q(tw_{\sigma(i)})} \cdot
\prod_{\sigma(i)\neq l}\frac{1-tw_{\sigma(i)}/w_{l}}{1-w_{\sigma(i)}/w_{l}} \cdot
:\Lambda^{(k)}(tw_{l}/q) \prod_{i=1}^rS^{(k)}(z_i):\nonumber\\
&=\frac{1}{r!}\sum_{l=1}^r
\oint \frac{dw}{w}
(t-1)u_k 
\left( T_{q,w_l}-1 \right)
\delta\left(\frac{tw_{l}}{qz} \right)
\Delta(w) \nonumber\\
&\qquad \times 
\left( \sum_{\sigma \in \mathfrak{S}_r} 
\prod_{1\leq i <j\leq r}
\frac{\theta_q( tw_{\sigma(i)}/w_{\sigma(j)})}{\theta_q(w_{\sigma(i)}/w_{\sigma(j)})} \cdot
\prod_{i=1}^r\frac{\theta_q( q^st^{2i-r}w_{\sigma(i)})}{\theta_q(tw_{\sigma(i)})} 
\right)
\prod_{i\neq l}\frac{1-tw_{i}/w_{l}}{1-w_{i}/w_{l}} \cdot
:\Lambda^{(k)}(tw_{l}/q) \prod_{i=1}^rS^{(k)}(z_i):. 
\end{align}
By Lemma \ref{lem: symmetrize formula}, 
this can be rewritten as  
\begin{align}
&\frac{1}{r!}\sum_{l=1}^r
\oint \frac{dw}{w}
(t-1)u_k 
\left( T_{q,w_l}-1 \right)
\delta\left(\frac{tw_{l}}{qz} \right)
\Delta(w) \nonumber\\
&\qquad \times 
\prod_{i=2}^r\frac{\theta_q(t^i)}{\theta_q(t)} \cdot
\prod_{i=1}^r q^{\frac{s(1-s)}{2}}(tz_i)^{-s} \cdot
\prod_{i\neq l}\frac{1-tw_{i}/w_{l}}{1-w_{i}/w_{l}} \cdot
:\Lambda^{(k)}(tw_{l}/q) \prod_{i=1}^rS^{(k)}(z_i):. 
\label{eq: comm rel of lam and SSS}
\end{align}
In this expression, 
we have poles in $w_l$ 
at $w_l=0$, $w_l=q^nt w_i$ and 
$w_l=q^{-n+1}t^{-1}w_i$ ($i\neq l$, $n=1,2,\ldots$). 
Since $|t|<|q|$, they do not change the integral while we $q$-shift the cycle as 
$w_l\rightarrow q w_l$. 
Therefore, 
the integral (\ref{eq: comm rel of lam and SSS}) is zero. 
\end{proof}
\end{proposition}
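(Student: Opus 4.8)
The plan is to mirror the local computation in the proof of Proposition~\ref{prop: commutativity of S} and then exploit the $q$-invariance of the elliptic theta cycle to kill the resulting total difference. First I would reduce the commutator with the full current $X^{(j)}(z)$ to one with $u_k\Lambda^{(k)}(z)+u_{k+1}\Lambda^{(k+1)}(z)$. Indeed, by the operator products \eqref{eq: Lam S OPE 1}--\eqref{eq: Lam S OPE 2} every $\Lambda^{(\ell)}(z)$ with $\ell\neq k,k+1$ commutes with $S^{(k)}(w)$, and the paired combination $:\Lambda^{(k)}(z)\Lambda^{(k+1)}(\gamma^{-2}z):$ that occurs whenever both indices $k$ and $k+1$ appear in a summand of $X^{(j)}(z)$ also commutes with $S^{(k)}(w)$; hence only the terms containing exactly one of $\Lambda^{(k)},\Lambda^{(k+1)}$ survive, and it suffices to treat $u_k\Lambda^{(k)}(z)+u_{k+1}\Lambda^{(k+1)}(z)$.

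Next I would apply the relation \eqref{eq: comm rel of lam and S}, which expresses this commutator against a single screening current as a delta function times $(T_{q,w}-1)$ applied to a normally ordered product. Summing over the $r$ insertions $S^{(k)}(w_1),\dots,S^{(k)}(w_r)$ and imposing the specialization $u_k=q^st^{-r}u_{k+1}$ turns the theta weight into $\prod_{i}\theta_q(q^st^{2i-r}w_i)/\theta_q(tw_i)$, so that the whole commutator takes the form $\sum_{m}\oint\,(T_{q,w_m}-1)\big(\,\cdots\,\big)\,\tfrac{dw}{w}$, a manifest total $q$-difference under the integral sign.

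The third step is to symmetrize the integrand in $w_1,\dots,w_r$. Using Lemma~\ref{lem: symmetrize formula} exactly as in the proof of Proposition~\ref{prop: nonzero}, the antisymmetric product of theta functions collapses into the rectangular monomial $\prod_i(tw_i)^{-s}$ multiplied by the measure $\Delta(w)$ of \eqref{eq: Delta(z)} and the elementary factor $\prod_{i\neq l}(1-tw_i/w_l)/(1-w_i/w_l)$. After this the integrand is of the shape $(T_{q,w_l}-1)G(w)$ with $G$ a product of simple rational factors, and the problem reduces to showing $\oint(T_{q,w_l}-1)G(w)\,\tfrac{dw}{w}=0$.

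The main obstacle, and the heart of the matter, is the pole bookkeeping for this final vanishing. I would locate the singularities of $G$ in $w_l$: they sit at $w_l=0$, at $w_l=q^n t\,w_i$, and at $w_l=q^{-n+1}t^{-1}w_i$ for $i\neq l$ and $n=1,2,\dots$. An integral of a total $q$-difference vanishes precisely when the cycle is $q$-invariant, i.e.\ when no pole lies in the annulus swept out by the shift $w_l\mapsto qw_l$. This is exactly where the hypothesis $|t|<|q|$ enters: under it the two families of moving poles remain outside that annulus, so the $q$-shifted and unshifted contour integrals coincide and each term $\oint(T_{q,w_l}-1)G(w)\,\tfrac{dw}{w}$ vanishes, giving the commutator zero. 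The remaining work is the purely computational verification of the pole list and of the symmetrization, which runs parallel to the minimal-model computation of \cite{JLMP}.
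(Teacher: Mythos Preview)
Your proposal is correct and follows essentially the same route as the paper: reduce to the combination $u_k\Lambda^{(k)}(z)+u_{k+1}\Lambda^{(k+1)}(z)$ via the OPEs \eqref{eq: Lam S OPE 1}--\eqref{eq: Lam S OPE 2}, apply the local total-difference formula \eqref{eq: comm rel of lam and S}, symmetrize in $w_1,\dots,w_r$ and invoke Lemma~\ref{lem: symmetrize formula} to collapse the theta product to $\prod_i(tw_i)^{-s}\,\Delta(w)$, and finally verify that the pole pattern under $|t|<|q|$ makes the contour $q$-invariant so that $\oint(T_{q,w_l}-1)G\,\tfrac{dw}{w}=0$. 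The only cosmetic difference is that the paper writes the reduction as $\Lambda^{(k)}(z)+\Lambda^{(k+1)}(z)$ without the spectral weights, but your formulation with the explicit $u_k,u_{k+1}$ is in fact closer to how \eqref{eq: comm rel of lam and S} is stated.
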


Proposition \ref{prop: nonzero} and Proposition \ref{prop: commutativity}
show the existence of the singular vectors of the algebra $\WA$. 

\begin{corollary}\label{cor: sing vct}
The vector $\ket{\chi^{(k)}_{r,s}}$ is a singular vector of level $rs$, i.e., 
\begin{align}
X^{(i)}_n\ket{\chi^{(k)}_{r,s}}=0  
\end{align}
for all $n >0$ and $i=1,\ldots, N$. 
\end{corollary}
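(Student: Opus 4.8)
The plan is to deduce the corollary immediately from the two preceding propositions, the only genuine input being the elementary observation that the positive modes of every $X^{(i)}(z)$ annihilate the tensor vacuum. First I would record that, by its very definition, $\ket{\chi^{(k)}_{r,s}}$ is the image $O\ketzero$ of the vacuum under the screening integral operator $O$ appearing in Proposition \ref{prop: commutativity} (with the identification $u_k=q^st^{-r}u_{k+1}$), where $\ketzero$ is the vacuum of the shifted Fock module serving as the domain of $O$. Proposition \ref{prop: commutativity} states that $[X^{(i)}(z),O]=0$ for each generator $X^{(i)}(z)$; since $O$ carries no $z$-dependence, extracting the coefficient of $z^{-n}$ from this identity gives $X^{(i)}_n\,O=O\,X^{(i)}_n$ for all $n$ and all $i=1,\ldots,N$.

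Next I would verify the vacuum annihilation. For each $i$ the current $X^{(i)}(z)$ is a finite sum of normal-ordered products of the currents $\Lambda^{(j)}$, each of which is built from the operators $\eta$ and $\varphi^-$. The creation (negative-mode) parts of $\eta$ and $\varphi^-$ contribute only non-negative integer powers of $z$, while after normal ordering the annihilation (positive-mode) parts stand to the right and act as the identity on $\ket{0}$. Hence $X^{(i)}(z)\ketzero$ is a power series in $z$ with non-negative exponents only, i.e. $X^{(i)}(z)\ketzero\in\cF_{\vu}[[z]]$, so the coefficient of $z^{-n}$ vanishes for every $n>0$. In other words $X^{(i)}_n\ketzero=0$ for all $n>0$ and $i=1,\ldots,N$. (This is consistent with $X^{(1)}_0\ketzero=(u_1+\cdots+u_N)\ketzero$, which reflects that $\ketzero$ is a highest-weight vector rather than one killed by the zero mode.)

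Combining the two steps, for $n>0$ and any $i$ I would write $X^{(i)}_n\ket{\chi^{(k)}_{r,s}}=X^{(i)}_n\,O\ketzero=O\,X^{(i)}_n\ketzero=0$, which is exactly the asserted singular-vector condition; Proposition \ref{prop: nonzero} then supplies both the nonvanishing of $\ket{\chi^{(k)}_{r,s}}$ and the fact that it sits in level $rs$, completing the proof. I expect no serious obstacle here: the substantive analytic work (the $q$-shift of the integration cycle past its poles, which uses the hypothesis $|t|<|q|$) has already been carried out in Proposition \ref{prop: commutativity}, so the main thing to be careful about is the bookkeeping of the spectral-parameter shift so that the vacuum $\ketzero$ indeed lies in the domain of $O$, and the verification that the normal-ordered currents produce no negative powers of $z$ when applied to that vacuum.
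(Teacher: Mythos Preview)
Your proposal is correct and follows exactly the route the paper intends: the paper simply states that the corollary is a consequence of Proposition \ref{prop: nonzero} and Proposition \ref{prop: commutativity}, and your write-up spells out precisely this deduction (commutativity of the screening integral with $X^{(i)}(z)$ plus $X^{(i)}_n\ketzero=0$ for $n>0$, together with the nonvanishing and level from Proposition \ref{prop: nonzero}). There is nothing to add.
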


We revisit the proof of the following formula for 
the Kac determinant 
$\mathrm{det}_n := \det \left( \Braket{X_{\vl}|X_{\vm}} \right)_{\vl, \vm \vdash n}$.

\begin{proposition}\label{prop: KacDet}
We have 
\begin{align}\label{eq:KacDet for DIM}
\mathrm{det}_n&=
\prod_{\vl \vdash n} \prod_{k=1}^N b_{\lambda^{(k)}}(q) b'_{\lambda^{(k)}}(t^{-1})
\nonumber\\
& \times \prod_{\substack{1\leq r,s\\ rs\leq n}}
\left( 
(u_1 u_2 \cdots u_N)^{2}
 \prod_{1\leq i < j \leq N} (u_i-q^st^{-r}u_j)(u_i-q^{-r}t^s u_j) \right)^{P^{(N)}(n-rs)}. 
\end{align}
Here  
$b_{\lambda}(q) := \prod_{i\geq 1} \prod_{k=1}^{m_i} (1-q^k)$, 
$b'_{\lambda}(q) := \prod_{i\geq 1} \prod_{k=1}^{m_i} (-1+q^k)$. 
$P^{(N)}(n)$ is the number of $N$-tuples of Young diagrams of size $n$, i.e.,
$\# \big\{ \vl=(\lo, \ldots , \lN) \big| |\vl | =n\big\}$. 
In particular, if $N=1$, 
\begin{equation}
\mathrm{det}_n=\prod_{\lambda \vdash n} 
b_{\lambda}(q) b'_{\lambda}(t^{-1})
\times u_1^{2\sum_{\lambda \vdash n}\ell(\lambda)}. 
\end{equation}

\proof
The inner product $\braket{X_{\vl}|X_{\vl}}$ can be calculated by 
commutation relations of $X^{(i)}_n$.
The parameters $u_1, \ldots, u_N$ arise 
from the eigenvalues of $X^{(i)}_0$. 
Therefore, it can be seen that $\braket{X_{\vl}|X_{\vl}}$ is a polynomial in 
\begin{align}
m_{(1^i)}(u_1,\ldots,u_N)=\sum_{j_1<\cdots<j_i} u_{j_1}\cdots u_{j_i}
\end{align}
over $\mathbb{Q}(q^{\frac{1}{2}},t^{\frac{1}{2}})$, 
and thus so does the $\mathrm{det}_n$. 
Define the action of the  symmetric group $\mathfrak{S}_N$ (the Weyl group of type $A_{N-1}$) 
on polynomials in $u_j$ in the usual way. 
Since $m_{(1^i)}(u_1,\ldots,u_N)$ is invariant with respect to this action, 
$\mathrm{det}_n$ is also invariant, i.e., 
a symmetric polynomial in $u_j$.

Furthermore, let us introduce the new parameters $u_i'$ and $u''$ by 
\begin{align}
\prod_{i=1}^N u_i'=1, \quad u_i=u_i' u''. 
\end{align}
Then $\braket{X_{\vl}|X_{\vm}}$ can be decomposed as 
\begin{align}
\braket{X_{\vl}|X_{\vm}}= (u'')^{\sum_{k=1}^N k (\ell(\lambda^{(k)})+\ell(\mu^{(k)}))}
\times (\mbox{polynomial in $u'_i$}). 
\end{align}
Therefore,  $\mathrm{det}_n$ can be written as 
\begin{align}
\mathrm{det}_n=(u'')^{2 \sum_{|\vl|=n}\sum_{k=1}^N k \ell(\lambda^{(k)})}
\times F(u'_1,\ldots, u'_N), 
\end{align}
where $F(u'_1,\ldots, u'_N)$ is some polynomial in $u'_i$. 
Note that the maximum degree of $F(u'_1,\ldots, u'_N)$ with respect to 
each $u'_i$ is $2\sum_{|\vl|=n}\sum_{k=1}^N  \ell(\lambda^{(k)})$. 

By Corollary \ref{cor: sing vct}, it can be seen that 
for $r, s \in \mathbb{Z}_{>0}$ with $rs\leq n$, 
the Kac determinant $\mathrm{det}_n$ has the factors 
\begin{align}
(u_k-q^{s}t^{-r}u_{k+1})^{P^{(N)}(n-rs)}=\left( u''(u'_k-q^{s}t^{-r}u'_{k+1}) \right)^{P^{(N)}(n-rs)}
\end{align}
in the usual way. 
By the $\mathfrak{S}_N$ invariance, 
$\mathrm{det}_n$ has also the factor 
\begin{align}
(u_i-q^{\pm s}t^{\mp r}u_{j})^{P^{(N)}(n-rs)}=\left( u''(u'_i-q^{\pm s}t^{\mp r}u'_{j}) \right)^{P^{(N)}(n-rs)}
\end{align}
for $i\neq j$. 
Noticing the degree of $F(u'_1,\ldots, u'_N)$, 
we can see that 
\begin{align}
&\mathrm{det}_n
 = g_{N,n}(q,t) 
    \times (u'')^{2\sum_{\vl \vdash n} \sum_{i=1}^N i\,  \ell(\lambda^{(i)})} \nonumber \\
&\qquad \quad \times \prod_{1 \leq i<j\leq N}
    \prod_{\substack{1\leq r,s \\ rs \leq n}}
     \left( (u'_{i}-q^{s} t^{-r}u'_{j})(u'_{i}-q^{-r} t^{s}u'_{j}) \right)^{P^{(N)}(n-rs)} \nonumber \\ 
&= g_{N,n}(q,t) \nonumber \\
& \quad  \times\prod_{\substack{1\leq r,s \\ rs \leq n}} 
    \left( (u_1u_2\cdots u_N)^2 
      \prod_{1 \leq i<j\leq N}
      (u_{i}-q^{s} t^{-r}u_{j})(u_{i}-q^{-r} t^{s}u_{j}) \right)^{P^{(N)}(n-rs)}.
\end{align}
Here, $g_{N,n}(q,t) \in \mathbb{Q}(q^{\frac{1}{2}},t^{\frac{1}{2}})$. 
Thus, 
we obtained the vanishing loci of the Kac determinant $\mathrm{det}_n$. 
The prefactor $g_{N,n}(q,t)$ has been evaluated in \cite{O}. \qed
\end{proposition}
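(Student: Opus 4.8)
The plan is to determine $\mathrm{det}_n$ by identifying its vanishing loci as a polynomial in the spectral parameters, reading off the multiplicities from the singular vectors supplied by Corollary \ref{cor: sing vct}, and pinning down the remaining $u$-independent prefactor at the very end. First I would establish the algebraic shape of the entries $\langle X_{\vl}|X_{\vm}\rangle$: since the modes $X^{(i)}_n$ with $n\neq 0$ merely shift the level and the only $u$-dependence enters through the zero modes $X^{(i)}_0$, whose eigenvalues on $\ketzero$ are built symmetrically from the elementary symmetric functions $m_{(1^i)}(u)=e_i(u_1,\dots,u_N)$, every inner product $\langle X_{\vl}|X_{\vm}\rangle$ is a polynomial in the $e_i(u)$ over $\mathbb{Q}(q^{1/2},t^{1/2})$. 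Consequently $\mathrm{det}_n$ is invariant under the Weyl group $\mathfrak{S}_N$ permuting $u_1,\dots,u_N$. I would then split off the overall scale by writing $u_i=u_i'u''$ with $\prod_i u_i'=1$ and track homogeneity: the factor $X^{(k)}$ contributes degree $k$ in $u''$, so $\langle X_{\vl}|X_{\vm}\rangle$ is homogeneous of degree $\sum_k k(\ell(\lambda^{(k)})+\ell(\mu^{(k)}))$ in $u''$, whence $\mathrm{det}_n=(u'')^{2\sum_{|\vl|=n}\sum_k k\,\ell(\lambda^{(k)})}\,F(u_1',\dots,u_N')$ for some polynomial $F$, and I would record the resulting upper bound on the degree of $F$ in each $u_i'$.

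Next I would read off the zero locus. By Corollary \ref{cor: sing vct}, when $u_k=q^s t^{-r}u_{k+1}$ the level-$rs$ vector $\ket{\chi^{(k)}_{r,s}}$ is a nonzero singular vector for $\WA$, hence lies in the radical of the contravariant form at that specialization; completing it and its $\WA$-descendants to level $n$ produces a rank drop of multiplicity $P^{(N)}(n-rs)$, forcing $(u_k-q^s t^{-r}u_{k+1})^{P^{(N)}(n-rs)}$ to divide $\mathrm{det}_n$. Using the $\mathfrak{S}_N$-invariance established above, each linear form $(u_i-q^s t^{-r}u_j)$ together with its reflection $(u_i-q^{-r}t^s u_j)$, for $i<j$, must then appear to the same power $P^{(N)}(n-rs)$, for all $r,s\ge 1$ with $rs\le n$. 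This produces exactly the product displayed in \eqref{eq:KacDet for DIM}.

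The main obstacle will be the degree count showing that these factors are exhaustive: I must verify that the total degree contributed by the collected linear factors in each $u_i'$, namely $(N-1)\sum_{rs\le n}2P^{(N)}(n-rs)$, coincides with the bound on $\deg F$ from the homogeneity analysis, i.e.\ with $2\sum_{|\vl|=n}\sum_k\ell(\lambda^{(k)})$. Granting this combinatorial identity, $F$ divided by the product of linear factors is a function of $q,t$ alone, $g_{N,n}(q,t)$. The prefactor is then computed by restriction to the diagonal (equivalently the abelian, or $N=1$ Heisenberg, reduction), which yields $\prod_{\vl\vdash n}\prod_{k=1}^N b_{\lambda^{(k)}}(q)\,b'_{\lambda^{(k)}}(t^{-1})$; this evaluation is exactly the computation carried out in \cite{O}. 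I should stress that the entire scheme rests on the singular vectors $\ket{\chi^{(k)}_{r,s}}$ being genuinely nonzero and genuinely singular, that is, on Propositions \ref{prop: nonzero} and \ref{prop: commutativity}, whose validity in turn hinges on the correct $q$-invariant choice of integration cycle for the theta-weighted screening integrals.
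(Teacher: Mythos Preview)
Your overall strategy coincides with the paper's: symmetry and homogeneity of the Gram entries, the scale split $u_i=u_i'\,u''$, the singular vectors $\ket{\chi^{(k)}_{r,s}}$ from Corollary~\ref{cor: sing vct} giving the linear factors, Weyl invariance to propagate them to all pairs $i\neq j$, a degree count to conclude exhaustion, and the prefactor deferred to \cite{O}.

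The one concrete gap is the combinatorial identity you single out: it is false as stated. The linear factors contribute degree $2(N-1)\sum_{rs\le n}P^{(N)}(n-rs)$ in each $u_i'$, whereas the bound on $\deg_{u_i'}F$ is $2\sum_{|\vl|=n}\sum_k\ell(\lambda^{(k)})=2N\sum_{rs\le n}P^{(N)}(n-rs)$ (this equality follows from $\sum_\lambda \ell(\lambda)x^{|\lambda|}=P(x)\sum_{r,s\ge 1}x^{rs}$, applied componentwise). These differ by $2\sum_{rs\le n}P^{(N)}(n-rs)$, so matching per-variable degrees alone cannot force the quotient to be constant. What actually closes the argument is combining the per-variable bound with the \emph{total} degree: the quotient $G=F/(\text{linear factors})$ is symmetric, homogeneous of total degree $2N\sum_{rs\le n}P^{(N)}(n-rs)$, and of degree at most $2\sum_{rs\le n}P^{(N)}(n-rs)$ in each $u_i'$; the only monomials satisfying both constraints have every exponent exactly equal to $2\sum_{rs\le n}P^{(N)}(n-rs)$, so $G$ is a scalar multiple of $(u_1'\cdots u_N')^{2\sum P^{(N)}(n-rs)}$, i.e.\ a constant once $\prod_i u_i'=1$. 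Equivalently, in the unprimed variables one notes that each $X^{(N)}_{-m}$ carries an overall factor $u_1\cdots u_N$, so $(u_1\cdots u_N)^{2\sum_{\vl}\ell(\lambda^{(N)})}=(u_1\cdots u_N)^{2\sum_{rs\le n}P^{(N)}(n-rs)}$ already divides $\det_n$; together with the linear factors this saturates the per-variable degree bound. The paper glosses this step with ``noticing the degree of $F$,'' but this is the content behind that phrase.
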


As a corollary of Proposition \ref{prop: KacDet}, Fact \ref{fact: PBW basis} follows.

\section{Examples}\label{App: example}

\subsection{Examples of $\ket{K_{\vl}}$}\label{sec: ex of K}
We present examples of the transition matrix  $\alpha^{(\pm)}_{\vl,\vm}$ from $\ket{K_{\vl}}$ to 
the PBW-type basis $\ket{X_{\vl}}$.

Examples of $\alpha^{(+)}_{\vl,\vm}$ in the case $N=1$:

\begin{align}
\begin{array}{c||ccc}
\vl \; \backslash\;  \vm  &(1) & (2) & (1,1)\\ \hline \hline
(1)&1&0&0 \\
(2)&0& \frac{(q-1) u_1}{t} & 1 \\
 (1,1)&0& -\frac{q (t-1) u_1}{t} & 1 \\
\end{array},
\end{align}
\begin{align}
\begin{array}{c||ccc}
\vl \; \backslash\;  \vm  &(3) & (2,1) & (1,1,1)\\ \hline \hline
(3)& \frac{(q-1)^2 (2 t q-q+t) u_1^2}{t^3} & \frac{(q-1) (2 t q-q+t+1) u_1}{t^2} & 1 \\
(2,1)& -\frac{(q-1) q (t-1) (t q-q+1) u_1^2}{t^3} & -\frac{\left(t^2 q^2-2 t q^2+q^2+t q-2 q+1\right) u_1}{t^2} & 1 \\
(1,1,1)& -\frac{q^2 (q-t-2) (t-1)^2 u_1^2}{t^3} & -\frac{q (t-1) (t q-q+t+2) u_1}{t^2} & 1 \\
\end{array}.
\end{align}

Examples of $\alpha^{(+)}_{\vl,\vm}$ in the case $N=2$:

\begin{align}
\begin{array}{c||cc}
\vl \; \backslash\;  \vm & (\emptyset, (1)) & ((1), \emptyset)\\ \hline \hline
 (\emptyset, (1))& -\frac{t}{q u_2} & 1 \\
((1), \emptyset)&  -\frac{t}{q u_1} & 1 \\
\end{array},
\end{align}

\begin{align}
\begin{array}{c||ccccc}
\vl \;\backslash\; \vm & (\emptyset, (2)) & (\emptyset, (1^2)) &((1), (1))&((2), \emptyset) &((1^2), \emptyset)  \\ \hline \hline
(\emptyset, (2)) & -\frac{(q-1) (-q u_1+q t u_1-t u_1+q t u_2)}{q^3 u_2} & \frac{t^2}{q^3 u_2^2} & -\frac{(q+1)
   t}{q^2 u_2} & \frac{(q-1) (q u_2-u_1)}{q t} & 1 \\
(\emptyset, (1,1)) & -\frac{(t-1) t (q u_1+t u_1-u_1-u_2)}{q u_2} & \frac{t^3}{q^2 u_2^2} & -\frac{t (t+1)}{q u_2} &
   \frac{q (t-1) (t u_1-u_2)}{t} & 1 \\
((1), (1)) & -\frac{(q-1) (t-1)}{q} & \frac{t^2}{q^2 u_1 u_2} & -\frac{t (u_1+u_2)}{q u_1 u_2} & 0 & 1 \\
((2),\emptyset) & -\frac{(q-1) (q t u_1-q u_2+q t u_2-t u_2)}{q^3 u_1} & \frac{t^2}{q^3 u_1^2} & -\frac{(q+1)
   t}{q^2 u_1} & \frac{(q-1) (q u_1-u_2)}{q t} & 1 \\
((1,1),\emptyset) & -\frac{(t-1) t (-u_1+q u_2+t u_2-u_2)}{q u_1} & \frac{t^3}{q^2 u_1^2} & -\frac{t (t+1)}{q u_1} &
   \frac{q (t-1) (t u_2-u_1)}{t} & 1 \\
\end{array}.
\end{align}

Examples of $\alpha^{(-)}_{\vl,\vm}$ in the case $N=2$: 

\begin{align}
\begin{array}{c||cc}
\vl \; \backslash\;  \vm & ((1), \emptyset) & (\emptyset, (1))\\ \hline \hline
((1), \emptyset)& 1 & -\frac{1}{u_1} \\
(\emptyset,(1))& 1 & -\frac{1}{u_2} \\
\end{array},
\end{align}

\begin{align}
\begin{array}{c||ccccc}
\vl \;\backslash\; \vm & ((2),\emptyset) & ((1^2),\emptyset) &((1), (1))&(\emptyset,(2)) &(\emptyset,(1^2))  \\ \hline \hline
 ((2),\emptyset)& \frac{(q-1) (q u_1-u_2)}{q t} & 1 & -\frac{q+1}{q u_1} & -\frac{(q-1) (q t u_1-q u_2+q t u_2-t
   u_2)}{q t^2 u_1} & \frac{1}{q u_1^2} \\
((1^2),\emptyset) & \frac{q (t-1) (t u_2-u_1)}{t} & 1 & -\frac{t+1}{u_1} & -\frac{q (t-1) (-u_1+q u_2+t u_2-u_2)}{t
   u_1} & \frac{t}{u_1^2} \\
((1), (1))& 0 & 1 & -\frac{u_1+u_2}{u_1 u_2} & -\frac{(q-1) q (t-1)}{t^2} & \frac{1}{u_1 u_2} \\
(\emptyset,(2))& \frac{(q-1) (q u_2-u_1)}{q t} & 1 & -\frac{q+1}{q u_2} & -\frac{(q-1) (-q u_1+q t u_1-t u_1+q t
   u_2)}{q t^2 u_2} & \frac{1}{q u_2^2} \\
(\emptyset,(1^2))& \frac{q (t-1) (t u_1-u_2)}{t} & 1 & -\frac{t+1}{u_2} & -\frac{q (t-1) (q u_1+t u_1-u_1-u_2)}{t
   u_2} & \frac{t}{u_2^2} \\
\end{array}.
\end{align}

Examples of $\alpha^{(+)}_{\vl,\vm}$ in the case $N=3$:

\begin{align}
\begin{array}{c||ccc}
\vl \; \backslash\;  \vm & (\emptyset,\emptyset, (1)) & (\emptyset,(1), \emptyset)& ((1), \emptyset,\emptyset)\\ \hline \hline
(\emptyset,\emptyset, (1)) & \frac{t^2}{q^2 u_3^2} & -\frac{t}{q u_3} & 1 \\
(\emptyset,(1), \emptyset)& \frac{t^2}{q^2 u_2^2} & -\frac{t}{q u_2} & 1 \\
((1), \emptyset,\emptyset)& \frac{t^2}{q^2 u_1^2} & -\frac{t}{q u_1} & 1 \\
\end{array}.
\end{align}

Examples of $\alpha^{(-)}_{\vl,\vm}$ in the case $N=3$:

\begin{align}
\begin{array}{c||ccc}
\vl \; \backslash\;  \vm & ((1), \emptyset,\emptyset) & (\emptyset,(1), \emptyset)&(\emptyset,\emptyset, (1)) \\ \hline \hline
((1), \emptyset,\emptyset)& 1 & -\frac{1}{u_1} & \frac{1}{u_1^2} \\
(\emptyset,(1), \emptyset)& 1 & -\frac{1}{u_2} & \frac{1}{u_2^2} \\
(\emptyset,\emptyset, (1)) & 1 & -\frac{1}{u_3} & \frac{1}{u_3^2} \\
\end{array}.
\end{align}

\subsection{Examples of matrix elements $\braket{X_{\vl}|\cV(w)|X_{\vm}}$}\label{sec: Ex of Mat el}

In this section, 
we demonstrate how to calculate the matrix elements 
$\braket{X_{\vl}|\cV(w)|X_{\vm}}$ 
by the defining relation of $\cV(w)$. 
Let us first explain it in general case. 
If $\vl\neq (\emptyset,\ldots, \emptyset)$, 
let $j=\min\{i|\lambda^{(i)}\neq\emptyset\}$. 
The defining relation gives
\begin{align}
\braket{X_{\vl}|\cV(w)|X_{\vm}}=&
w\Braket{X_{(\emptyset,\ldots, (\lambda^{(j)}_{2},\lambda^{(j)}_{3},\ldots), 
\lambda^{(j+1)},\ldots, \lambda^{(N)})}| 
X^{(j)}_{\lambda^{(j)}_1-1} \cV(w) | X_{\vm}}\\
&+\Braket{X_{(\emptyset,\ldots, (\lambda^{(j)}_{2},\lambda^{(j)}_{3},\ldots), 
\lambda^{(j+1)},\ldots, \lambda^{(N)})}|
\cV(w) 
\left(X^{(j)}_{\lambda^{(j)}_1} -(q/t)^jwX^{(j)}_{\lambda^{(j)}_1-1}\right)|\Xo_{\vm}}. \nonumber 
\end{align}
The first term can be rewritten by 
matrix elements $\braket{X_{\boldsymbol{\nu}}|\cV(w)|X_{\vm}}$ 
satisfying $|\boldsymbol{\nu}|=|\vl|-1$ 
(particularly, in the case $\lambda^{(j)}_1-1<\lambda^{(j)}_2$), 
and the second term can be expanded by vectors $\ket{X_{\boldsymbol{\rho}}}$ 
of level $|\boldsymbol{\rho}|=|\vm|-\lambda^{(j)}_{1}$ or 
$|\vm|-\lambda^{(j)}_{1}+1$.
(If $|\vm|-\lambda^{(j)}_{1},|\vm|-\lambda^{(j)}_{1}+1<0$, 
it means just $0$ vectors.) 
If $\vl =(\emptyset,\ldots, \emptyset)$, 
moving negative modes $X^{(i)}_{-n}$ to the left side of $\cV(w)$ by the defining relation, 
we have the expression in terms of Young diagrams of smaller size. 
In this way, the matrix elements $\braket{X_{\boldsymbol{\nu}}|\cV(w)|X_{\vm}}$ 
can be inductively and uniquely determined.

The followings are examples.

\subsubsection*{In $N=1$ case}

First, by the defining relation of $\cV(w)$, we have 
\begin{align}
\braket{X_{(1)}|\cV(w)|X_{\emptyset}}=
w \bra{0} \Xo_0 \cV(w)\ket{0}+\bra{0} \cV(w)\left(\Xo_1 -(t/q)w \Xo_{0}\right)\ket{0}. 
\end{align}
By $X^{(1)}_0\ket{0}=u_1\ket{0}$, $\bra{0}X^{(1)}_0=v_1\bra{0}$ and $\Xo_1\ket{0}=0$, 
we get
\begin{align}\label{eq: <(1)|V|(0)>}
\braket{X_{(1)}|\cV(w)|X_{\emptyset}}=
w v_1-(t/q)w u_1. 
\end{align}
Similarly, it is easily seen that 
\begin{align}\label{eq: <(0)|V|(1)>}
\braket{X_{\emptyset}|\cV(w)|X_{(1)}}
&=
(q/t)w^{-1}\bra{0}  \cV(w)\Xo_0\ket{0}
-(q/t)w^{-1} \bra{0} \left(\Xo_0 -(t/q)w \Xo_{-1}\right) \cV(w)\ket{0}\nonumber \\ 
&=(q/t)w^{-1}u_1-(q/t)w^{-1}v_1. 
\end{align}
Moreover, we have 
\begin{align}
\braket{X_{(1)}|\cV(w)|X_{(1)}}=&w\bra{0} \Xo_0 \cV(w)\Xo_{-1}\ket{0} 
+ \bra{0} \cV(w) \left(\Xo_1 -(t/q)w \Xo_0 \right)\Xo_{-1}\ket{0}. 
\end{align}
By the direct calculation of the free field expression, 
we have 
\begin{align}
&X^{(1)}_0\ket{X_{(1)}}=(t^{-1}-qt^{-1}+q)u_1 \ket{X_{(1)}},\label{eq: act Ex 2} \\
&X^{(1)}_1 \ket{X_{(1)}}= -(1-q)(1-t^{-1})u_1^2 \ket{0}. \label{eq: act Ex 3}
\end{align}
By (\ref{eq: act Ex 2}) and (\ref{eq: act Ex 3}), we obtain 
\begin{align}
\braket{X_{(1)}|\cV(w)|X_{(1)}}=& wv_1 \braket{X_{\emptyset}|\cV(w)|X_{(1)}}
-(1-q)(1-t^{-1})u_1^2  \nonumber \\ 
&-(t/q)w(t^{-1}-qt^{-1}+q)u_1 \braket{X_{\emptyset}|\cV(w)|X_{(1)}}.
\end{align}
The matrix element $\braket{X_{\emptyset}|\cV(w)|X_{(1)}}$ is 
already calculated. 
Since $\ket{X_{(1)}}=\ket{K_{(1)}}$ and $\bra{X_{(1)}}=\bra{K_{(1)}}$ 
in this particular case, 
$\braket{X_{(1)}|\cV(w)|X_{(1)}}$ 
is factorized and corresponds to the Nekrasov factor 
(the right hand side of (\ref{eq: matrix element})): 
\begin{align}
\braket{X_{(1)}|\cV(w)|X_{(1)}}=\braket{K_{(1)}|\cV(w)|K_{(1)}}=
\frac{(q v_1-u_1) (t u_1-v_1)}{t}. \label{eq: <(1)|V|(1)>}
\end{align}
(\ref{eq: <(1)|V|(0)>}),(\ref{eq: <(0)|V|(1)>}) and (\ref{eq: <(1)|V|(1)>}) are the simplest examples 
of our main theorem (Theorem \ref{thm: matrix elements of V}).

We list other cases. 
Let us first prepare the formula for the action of the algebra $\WA$ 
on the PBW-type basis $\ket{X_{\vl}}$. 
\begin{align}
&\Xo_0\left(
\begin{array}{c}
\ket{X_{(2)}} \\
\ket{X_{(1,1)}} \\
\end{array}
\right)
=
\left(
\begin{array}{cc}
 \frac{\left(t q^2-q^2+t^2 q-2 t q+q+t\right) u_1}{t^2} & \frac{(q-1) (t-1)}{t} \\
 \frac{(q-1)^2 q (t-1)^2 u_1^2}{t^3} & \frac{(t q-q+1)^2 u_1}{t^2} \\
\end{array}
\right)
\left(
\begin{array}{c}
\ket{X_{(2)}} \\
\ket{X_{(1,1)}} \\
\end{array}
\right), \\
&\Xo_1\left(
\begin{array}{c}
\ket{X_{(2)}} \\
\ket{X_{(1,1)}} \\
\end{array}
\right)
=
\left(
\begin{array}{c}
 \frac{(q-1) (t-1) (q t+t+1) u_1}{t^2} \\
 \frac{(q-1) (t-1) \left(t^2 q^2-t q^2+t^2 q-q+t+1\right) u_1^2}{t^3} \\
\end{array}
\right)\ket{X_{(1)}},
\end{align}
\begin{align}
\bra{X_{(1)}}X^{(1)}_0&=(t^{-1}-qt^{-1}+q)v_1 \bra{X_{(1)}},
\end{align}
\begin{align}
\bra{X_{(1,1)}}X^{(1)}_0&=
\frac{(q-1)^2 q (t-1)^2 v_1^2}{t^3}\bra{X_{(2)}}
+\frac{v_1 (q t-q+1)^2}{t^2}\bra{X_{(1,1)}}.
\end{align}
By using these relation, the matrix elements for larger Young diagrams are inductively determined as follows: 
\begin{align}
\braket{X_{(2)}|\cV(w)|X_{\emptyset}}
&=w \braket{X_{(1)}|\cV(w)|X_{\emptyset}}, \\
\braket{X_{(1,1)}|\cV(w)|X_{\emptyset}}&= 
wv_1(t^{-1}-qt^{-1}+q)\braket{X_{(1)}|\cV(w)|X_{\emptyset}}-(t/q)wu_1\braket{X_{(1)}|\cV(w)|X_{\emptyset}}, \\
\braket{X_{(3)}|\cV(w)|X_{\emptyset}}
&=w \braket{X_{(2)}|\cV(w)|X_{\emptyset}},\\
\braket{X_{(2,1)}|\cV(w)|X_{\emptyset}}&=w
\braket{X_{(1,1)}|\cV(w)|X_{\emptyset}},\\
\braket{X_{(1,1,1)}|\cV(w)|X_{\emptyset}}&=
w\frac{(q-1)^2 q (t-1)^2 v_1^2}{t^3}\braket{X_{(2)}|\cV(w)|X_{\emptyset}}
+w\frac{v_1 (q t-q+1)^2}{t^2}\braket{X_{(1,1)}|\cV(w)|X_{\emptyset}}\nonumber \\
&\quad +(t/q)wu_1\braket{X_{(1,1)}|\cV(w)|X_{\emptyset}},
\end{align}
\begin{align}
\braket{X_{\emptyset}|\cV(w)|X_{(2)}}&= 
(q/t)w^{-1}\braket{X_{\emptyset}|\cV(w)|X_{(1)}}, \\
\braket{X_{\emptyset}|\cV(w)|X_{(1,1)}}&= 
(q/t)w^{-1}(t^{-1}-qt^{-1}+q)u_1 \braket{X_{\emptyset}|\cV(w)|X_{(1)}}-(q/t)w^{-1}v_1\braket{X_{\emptyset}|\cV(w)|X_{(1)}}, \\
\braket{X_{\emptyset}|\cV(w)|X_{(3)}}&= 
(q/t)w^{-1}\braket{X_{\emptyset}|\cV(w)|X_{(2)}},\\
\braket{X_{\emptyset}|\cV(w)|X_{(2,1)}}&= (q/t)w^{-1}\braket{X_{\emptyset}|\cV(w)|X_{(1,1)}},\\
\braket{X_{\emptyset}|\cV(w)|X_{(1,1,1)}}&=
(q/t)w^{-1} \frac{(q-1)^2 q (t-1)^2 u_1^2}{t^3}
\braket{X_{\emptyset}|\cV(w)|X_{(2)}} \nonumber\\
& \quad +(q/t)w^{-1}\frac{(t q-q+1)^2 u_1}{t^2}
\braket{X_{\emptyset}|\cV(w)|X_{(1,1)}} \nonumber\\
&\quad -(q/t)w^{-1}v_1\braket{X_{\emptyset}|\cV(w)|X_{(1,1)}},
\end{align}
\begin{align}
\braket{X_{(2)}|\cV(w)|X_{(1)}}&=
w \braket{X_{(1)}|\cV(w)|X_{(1)}}-(t/q)w \frac{(q-1) (t-1) u_1^2}{t}, \\
\braket{X_{(1,1)}|\cV(w)|X_{(1)}}&= 
w(t^{-1}-qt^{-1}+q)v_1 \braket{X_{(1)}|\cV(w)|X_{(1)}}
+\frac{(q-1) (t-1) u_1^2}{t}\braket{X_{(1)}|\cV(w)|X_{\emptyset}} \nonumber \\
&\quad -(t/q)w(t^{-1}-qt^{-1}+q)u_1 \braket{X_{(1)}|\cV(w)|X_{(1)}}. 
\end{align}
By combining these matrix elements and 
the examples of transition matrices $\alpha^{(\pm)}_{\vl, \vm}$ 
from $\ket{K_{\vl}}$ to $\ket{X_{\vm}}$ in the last subsection, 
we can check Theorem \ref{thm: matrix elements of V}.

\subsubsection*{In $N=2$ case}

If there is only one box in all Young diagrams, 
it is clear that 
\begin{align}
\braket{X_{(1),\emptyset}|\cV(w)|X_{\emptyset,\emptyset}}&=
w(v_1+v_2)-w(t/q)(u_1+u_2),\\ 
\braket{X_{\emptyset,(1)}|\cV(w)|X_{\emptyset,\emptyset}}&=
w(v_1v_2)-w(t/q)^2(u_1u_2),\\
\braket{X_{\emptyset,\emptyset}|\cV(w)|X_{(1),\emptyset}}&=
(q/t)w^{-1}(u_1+u_2)-(q/t)w^{-1}(v_1+v_2),
\\ 
\braket{X_{\emptyset,\emptyset}|\cV(w)|X_{\emptyset,(1)}}&=
(q/t)^2w^{-1}(u_1u_2)-(q/t)^2w^{-1}(v_1v_2).
\end{align}
As in the $N=1$ case, we prepare the formula for the action of $X^{(i)}_n$. 
\begin{align}
\left(
\begin{array}{c}
\bra{X_{(1),\emptyset}} \\
\bra{X_{\emptyset, (1)}}
\end{array}
\right)X^{(i)}_0
=
\left(
\begin{array}{cc}
\chi^{(i)}_{1,1} & \chi^{(i)}_{1,2} \\
\chi^{(i)}_{2,1} & \chi^{(i)}_{2,2} \\
\end{array}
\right)
\left(
\begin{array}{c}
\bra{X_{(1),\emptyset}} \\
\bra{X_{\emptyset, (1)}}
\end{array}
\right),
\end{align}
where
\begin{align}
&\chi^{(1)}_{1,1}= \frac{(t-1) v_1 (-q v_1+q t v_1+v_1+t v_2)}{t},\\
&\chi^{(1)}_{1,2}=\frac{(t-1) \sqrt{\frac{q}{t}} v_2 \left(t v_1 q^2-v_1
   q^2+t v_2 q^2-v_2 q^2-t^2 v_1 q+t v_1 q+v_1 q+v_2 q+t^2 v_1-t v_1\right)}{q t},\\
&\chi^{(1)}_{2,1}= \frac{(t-1) v_1 v_2 (-q v_1+q t v_1+v_1+t v_2)}{t},\\
&\chi^{(1)}_{2,2}=\frac{(t-1) v_1 v_2 \left(t v_2 q^2-v_2 q^2-t v_2
   q+v_2 q+t^2 v_1+t^2 v_2\right)}{\sqrt{\frac{q}{t}} t^2},\\
&\chi^{(2)}_{1,1}=\frac{(t-1) v_1 v_2 \left(q^2 t v_1+q^2 t v_2-q^2 v_1-q^2 v_2-q t v_1-q t v_2+q v_1+q v_2+t^2v_1\right)}{t^2},\\
&\chi^{(2)}_{1,2}=\frac{q (t-1) v_1 v_2 (q t v_1+q t v_2-q v_1-q v_2-t v_1+v_1+v_2)}{t^2 \sqrt{\frac{q}{t}}},\\
&\chi^{(2)}_{2,1}=\frac{(t-1) v_1^2 v_2^2 \left(q^2 t-q^2+q t^2-2 q t+q+t\right)}{t^2},\\
&\chi^{(2)}_{2,2}=\frac{(t-1) v_1^2 v_2^2 \left(q^2 t-q^2+q t^2-2 q t+q+t\right)}{t^2 \sqrt{\frac{q}{t}}}. 
\end{align}
By these, the matrix elements can be written as 
\begin{align}
\braket{X_{(2),\emptyset}|\cV(w)|X_{\emptyset,\emptyset}}&=
w\braket{X_{(1),\emptyset}|\cV(w)|X_{\emptyset,\emptyset}},\\ 
\braket{X_{(1,1),\emptyset}|\cV(w)|X_{\emptyset,\emptyset}}&=
w\chi^{(1)}_{1,1}\braket{X_{(1),\emptyset}|\cV(w)|X_{\emptyset,\emptyset}}+
w\chi^{(1)}_{1,2}\braket{X_{\emptyset,(1)}|\cV(w)|X_{\emptyset,\emptyset}}\nonumber \\
&\quad -w(t/q)(u_1+u_2)\braket{X_{(1),\emptyset}|\cV(w)|X_{\emptyset,\emptyset}},\\
\braket{X_{(1),(1)}|\cV(w)|X_{\emptyset,\emptyset}}&=
w\chi^{(1)}_{2,1}\braket{X_{(1),\emptyset}|\cV(w)|X_{\emptyset,\emptyset}}+
w\chi^{(1)}_{2,2}\braket{X_{\emptyset,(1)}|\cV(w)|X_{\emptyset,\emptyset}}\nonumber \\
&\quad -w(t/q)(u_1+u_2)\braket{X_{\emptyset,(1)}|\cV(w)|X_{\emptyset,\emptyset}}, \\
\braket{X_{\emptyset,(2)}|\cV(w)|X_{\emptyset,\emptyset}}&=
w\braket{X_{\emptyset,(1)}|\cV(w)|X_{\emptyset,\emptyset}},\\
\braket{X_{\emptyset,(1,1)}|\cV(w)|X_{\emptyset,\emptyset}}&=
w\chi^{(2)}_{2,1}\braket{X_{(1),\emptyset}|\cV(w)|X_{\emptyset,\emptyset}}+
w\chi^{(2)}_{2,2}\braket{X_{\emptyset,(1)}|\cV(w)|X_{\emptyset,\emptyset}}\nonumber \\
&\quad -w(t/q)^2(u_1u_2)\braket{X_{\emptyset,(1)}|\cV(w)|X_{\emptyset,\emptyset}}. 
\end{align}
Furthermore, the direct calculation gives
\begin{align}
X^{(i)}_0\left(
\begin{array}{c}
\ket{X_{(1),\emptyset}} \\
\ket{X_{\emptyset, (1)}}
\end{array}
\right)
=
\left(
\begin{array}{cc}
\kappa^{(i)}_{1,1} & \kappa^{(i)}_{1,2} \\
\kappa^{(i)}_{2,1} & \kappa^{(i)}_{2,2} \\
\end{array}
\right)
\left(
\begin{array}{c}
\ket{X_{(1),\emptyset}} \\
\ket{X_{\emptyset, (1)}}
\end{array}
\right), 
\end{align}
\begin{align}
X^{(i)}_1 
\left(
\begin{array}{c}
\ket{X_{(1),\emptyset}} \\
\ket{X_{\emptyset, (1)}}
\end{array}
\right)
=
\left(
\begin{array}{c}
\zeta^{(i)}_{1} \\
\zeta^{(i)}_{2} 
\end{array}
\right)
\ket{X_{\emptyset, \emptyset}}. 
\end{align}
Here, 
\begin{align}
\kappa^{(1)}_{1,1}=&
q^{-1} t^{-2}(t-1) (q^2 t u_1^2+q^2 t u_2^2+q^2 t u_1 u_2-q^2 u_1^2-q^2 u_2^2-q^2 u_1 u_2-q t^2 u_2^2\\
&-q t^2u_1 u_2+q t u_2^2+2 q t u_1 u_2+q u_1^2+q u_2^2+q u_1 u_2-t u_2^2-t u_1 u_2),\\
\kappa^{(1)}_{1,2}=&\frac{(t-1) u_2 (q t u_2-q u_2+t u_1+u_2)}{t^2 \sqrt{\frac{q}{t}}},\\ 
\kappa^{(1)}_{2,1}=&-t^{-3}(t-1) u_1 u_2 (-q^2 t u_1-q^2 t u_2+q^2 u_1+q^2 u_2+q t^2 u_2\\
&-q t u_2-q u_1-q u_2-t^2 u_2+tu_2),\\
\kappa^{(1)}_{2,2}=&\frac{(t-1) u_1 u_2 \sqrt{\frac{q}{t}} (q t u_2-q u_2+t u_1+u_2)}{t^2},\\
\kappa^{(2)}_{1,1}=&
q^{-1} t^{-3}(t-1) u_1 u_2 (q^3 t u_1+q^3 t u_2\nonumber \\
&-q^3 u_1-q^3 u_2-q^2 t u_1-q^2 t u_2+q^2 u_1+q^2 u_2+q t^2u_1+q t^2 u_2-t^3 u_2),\\
\kappa^{(2)}_{1,2}=&\frac{(t-1) u_1 u_2 \left(q^2 t u_1+q^2 t u_2-q^2 u_1-q^2 u_2-q t u_1-q t u_2+q u_1+q u_2+t^2
   u_2\right)}{t^3 \sqrt{\frac{q}{t}}},\\
\kappa^{(2)}_{2,1}=&\frac{q (t-1) u_1^2 u_2^2 \left(q^2 t-q^2+q t^2-2 q t+q+t\right)}{t^4},\\
\kappa^{(2)}_{2,2}=&\frac{(t-1) u_1^2 u_2^2 \sqrt{\frac{q}{t}} \left(q^2 t-q^2+q t^2-2 q t+q+t\right)}{t^3},
\end{align}
\begin{align}
\zeta^{(1)}_{1}=&\frac{(q-1) (t-1) \left(q u_1^2+q u_2 u_1+q u_2^2-t u_2 u_1\right)}{q t},\\
\zeta^{(1)}_{2}=&\frac{(q-1) q (t-1) u_1 u_2 (u_1+u_2)}{t^2},\\
\zeta^{(2)}_{1}=&\frac{(q-1) (t-1) u_1 u_2 (u_1+u_2)}{t},\\
\zeta^{(2)}_{2}=&\frac{(q-1) (t-1) u_1^2 u_2^2 (q+t)}{t^2}.
\end{align}
Thus, we obtain 
\begin{align}
\braket{X_{(1),\emptyset}|\cV(w)|X_{(1),\emptyset}}&=
w(v_1+v_2)
+\kappa^{(1)}_{1,1}\braket{X_{\emptyset,\emptyset}|\cV(w)|X_{(1),\emptyset}}
+\kappa^{(1)}_{1,2}\braket{X_{\emptyset,\emptyset}|\cV(w)|X_{\emptyset,(1)}}
-w(t/q)\zeta^{(1)}_{1},
\\
\braket{X_{(1),\emptyset}|\cV(w)|X_{\emptyset,(1)}}&=
w(v_1+v_2)
+\kappa^{(1)}_{2,1}\braket{X_{\emptyset,\emptyset}|\cV(w)|X_{(1),\emptyset}}
+\kappa^{(1)}_{2,2}\braket{X_{\emptyset,\emptyset}|\cV(w)|X_{\emptyset,(1)}}
-w(t/q)\zeta^{(1)}_{2},
\\
\braket{X_{\emptyset,(1)}|\cV(w)|X_{(1),\emptyset}}&=
wv_1v_2
+\kappa^{(2)}_{1,1}\braket{X_{\emptyset,\emptyset}|\cV(w)|X_{(1),\emptyset}}
+\kappa^{(2)}_{1,2}\braket{X_{\emptyset,\emptyset}|\cV(w)|X_{\emptyset,(1)}}
-w(t/q)^2\zeta^{(2)}_{1},
\\
\braket{X_{\emptyset,(1)}|\cV(w)|X_{\emptyset,(1)}}&=
wv_1v_2
+\kappa^{(2)}_{2,1}\braket{X_{\emptyset,\emptyset}|\cV(w)|X_{(1),\emptyset}}
+\kappa^{(2)}_{2,2}\braket{X_{\emptyset,\emptyset}|\cV(w)|X_{\emptyset,(1)}}
-w(t/q)^2\zeta^{(2)}_{2}.
\end{align}

\subsubsection*{In $N=3$ case}
Also in the representations of higher level, 
we can calculate the matrix elements similarly: 
\begin{align}
\braket{X_{(1),\emptyset,\emptyset}|\cV(w)|X_{\emptyset,\emptyset,\emptyset}}&=
w(v_1+v_2+v_3)-w(t/q)(u_1+u_2+u_3),\\ 
\braket{X_{\emptyset,(1),\emptyset}|\cV(w)|X_{\emptyset,\emptyset,\emptyset}}&=
w(v_1v_2+v_1v_3+v_2v_3)-w(t/q)^2(u_1u_2+u_1u_3+u_2u_3),\\
\braket{X_{\emptyset,\emptyset,(1)}|\cV(w)|X_{\emptyset,\emptyset,\emptyset}}&=
wv_1v_2v_3-w(t/q)^3u_1u_2u_3,\\
\braket{X_{\emptyset,\emptyset,\emptyset}|\cV(w)|X_{(1),\emptyset,\emptyset}}&=
(q/t)w^{-1}(u_1+u_2+u_3)-(q/t)w^{-1}(v_1+v_2+v_3),\\ 
\braket{X_{\emptyset,\emptyset,\emptyset}|\cV(w)|X_{\emptyset,(1),\emptyset}}&=
(q/t)^2w^{-1}(u_1u_2+u_1u_3+u_2u_3)-(q/t)^2w^{-1}(v_1v_2+v_1v_3+v_2v_3),\\
\braket{X_{\emptyset,\emptyset,\emptyset}|\cV(w)|X_{\emptyset,\emptyset,(1)}}&=
(q/t)^3w^{-1}u_1u_2u_3-(q/t)^3w^{-1}v_1v_2v_3.
\end{align}

\section*{List of Notations } \label{App: list of notation}

\subsection*{General Notations}
\begin{align*}
&N, \quad \text{fixed positive number} \\
&\gamma=(t/q)^{1/2},\\
&\vu=(u_1, \ldots, u_N),\quad \text{spectral parameters of the $N$-fold Fock tensor spaces} \\
&\quad t^{\pm \delta_i}\cdot \vu:=(u_1,\ldots,u_{i-1}, t^{\pm 1} u_i, u_{i+1}, \ldots, u_N),\\ 
&\quad t^{\alpha_i}\cdot \vu:=(u_1,\ldots, u_{i-1}, t u_i, t^{-1} u_{i+1}, u_{i+2}\ldots, u_N), \\
&\quad t^{\pm \vnn} \cdot \vu:=(t^{\pm n_1}u_1, \ldots, t^{\pm n_{N}} u_N), \\
&\boldsymbol{n} = (n_1, \dots, n_N), \quad 
\text{$n_i$ stands for the number of the $\Phi^{(i)}$'s in $V^{(\vnn)}(x)$}
\\
&\quad|\vnn|:=\sum_{i=1}^N n_i, \quad \text{the total number of the $\Phi^{(i)}$'s in $V^{(\vnn)}(x)$}
\\
&\quad\ik=\lbl{i}{k}{n}:=\sum_{s=1}^{i-1}n_s+k, 
\\
&\boldsymbol{m} = (m_1, \dots, m_N), \quad \text{ $m_i$ is the number of the $\Phi^{(i)}$'s in  $\ket{Q_{\boldsymbol{\lambda}}}$}
\\
&\quad|\boldsymbol{m}|:=\sum_{i=1}^N m_i, \quad \text{the total number of the $\Phi^{(i)}$'s in   $\ket{Q_{\boldsymbol{\lambda}}}$}
\\
&\quad\lbl{i}{k}{m}:=\sum_{s=1}^{i-1}m_s+k,
\\
&\boldsymbol{s}=(s_i), \quad \text{generic parameter in Macdonald functions }\\
&\quad \text{Especially, in some propositions in Sec. \ref{Sec_GenMac}, and in Sec. \ref{Sec_matrix element}, they are specialized as}\\
&\quad \quad s_{\lbl{i}{k}{n}}=q^{\lambda^{(i)}_k} t^{1-k}v_i \quad (1 \leq k \leq n_i, \, i=1,\ldots, N), 
\\
&\quad \quad s_{|\vnn|+[i,k]_{\vmm}}= t^{1- n_{i} -k}v_i \quad (1 \leq k \leq m_i, \, i=1,\ldots, N),
\\
&[\vl]^{\vnn}=([\vl]^{\vnn}_{i})_{1\leq i\leq |\vnn|}
:=(\lo_1,\ldots,\lo_{n_1},\lt_1,\ldots,\lt_{n_2},\ldots,\lN_1,\ldots,\lN_{n_N}), \\
&\aotimes{i=n}{m}A_i:=A_{n} \otimes \cdots \otimes A_{m},
\\
&n(\lambda)=\sum_{i\geq 1}(i-1)\lambda_i,\\
&g_{\lambda}=q^{n(\lambda')}t^{-n(\lambda)},\\
&\mathcal{G}(z) = \prod_{i,j=0}^\infty (1 - z q^i t^{-j}).
\shortintertext{The Nekrasov factor}
&N_{\lambda,\mu}(u)= \prod_{(i,j)\in \lambda} \left( 1- u q^{a_{\lambda}(i,j)}t^{\ell_{\mu}(i,j)+1} \right)  \prod_{(i,j)\in \mu} \left( 1- u q^{-a_{\mu}(i,j)-1} t^{-\ell_{\lambda}(i,j)} \right).\\
\shortintertext{Taki's Flaming factor}
&f_{\lambda}=(-1)^{|\lambda|}q^{n(\lambda')+|\lambda|/2}t^{-n(\lambda)-|\lambda|/2}.
\end{align*}

\subsection*{Algebras and Representations}

\begin{align*}
&\cU, \quad \text{the Ding-Iohara-Miki algebra (Def. \ref{def: DIM})}\\
&\quad x^{\pm}(z), \psi^{\pm}(z),\quad \text{the Drinfeld currents of the DIM algebra} \\
&\cF^{(1,M)}_u,\quad \text{the level-$(1,M)$ (horizontal) representations (Fact \ref{Fact_Hor_rep})}\\
&\quad \eta(z), \xi(z), \varphi^{\pm}(z),\quad \text{the currents in the horizontal Fock representations}\\ &\quad a_n, \quad \text{generators of the Heisenberg algebra (eq. (\ref{def_boson}))} \\
&\quad a^{(i)}_n=\overbrace{1\otimes \cdots \otimes 1}^{i-1}\otimes
a_n\otimes\overbrace{ 1 \otimes \dots \otimes 1}^{N-i}, \\
&\cF^{(0,1)}_v, \quad \text{the level-$(0,1)$ (vertical) representations (Fact \ref{Fact_vertical_rep})}\\
&\Phi(x), \Phi^*(x), \Phi_{\lambda}(x), \Phi^*_{\lambda}(x), \quad \text{the intertwining operator of the DIM algebra (Fact \ref{Fact_intertwiner})}\\
&\WA, \quad \text{the algebra generated by $X^{(i)}_n$'s (Def. \ref{Def_WA})}\\
&\quad X^{(k)}(z) 
= \sum_{1\leq j_1 <\cdots <j_k \leq N} : \Lambda^{(j_1)}(z) \cdots \Lambda^{(j_k)}((q/t)^{k-1}z): u_{j_1} \cdots u_{j_k},\quad \text{(Def. \ref{Def_X^k})} \\
&\quad \Lambda^{(i)} := \varphi^-(\gamma^{1/2}z)\otimes \cdots \otimes\varphi^-(\gamma^{i-3/2}z)\otimes\overbrace{\eta(\gamma^{i-1} z)}^{i\text{-th Fock space}}\otimes 1\otimes\cdots\otimes1, \\
&\quad Y^{(r)}(x):=\sum_{2\leq i_2<\cdots < i_r\leq N}
:\Lambda^{(i_2)}((q/t)tx) \cdots \Lambda^{(i_r)}((q/t)^{r-1}tx): 
u_{i_2}\cdots u_{i_r}, \\
&\quad \Lambda^{(i_1,\dots,i_k)}(z) : = : \Lambda^{(i_1)}(z) \cdots \Lambda^{(i_k)}(\gamma^{2(1-k)}z):.\\
\shortintertext{PBW-type bases (Def. \ref{def: PBW-type basis})}
 &\Ket{X_{\boldsymbol{\lambda}}} = 
  X^{(1)}_{-\lambda^{(1)}_1} X^{(1)}_{-\lambda^{(1)}_2} \cdots  X^{(2)}_{-\lambda^{(2)}_1} X^{(2)}_{-\lambda^{(2)}_2}\cdots X^{(N)}_{-\lambda^{(N)}_1} X^{(N)}_{-\lambda^{(N)}_2} \cdots \ketzero, \\
 & \Bra{X_{\vl}} =
\brazero \cdots X^{(N)}_{\lambda^{(N)}_2} X^{(N)}_{\lambda^{(N)}_1} \cdots X^{(2)}_{\lambda^{(2)}_2} X^{(2)}_{\lambda^{(2)}_1} \cdots 
X^{(1)}_{\lambda^{(1)}_2} X^{(1)}_{\lambda^{(1)}_1} .
\end{align*}

\subsection*{Vertex Operators associated with Generalized Macdonald functions}

\begin{align*}
\shortintertext{Screening currents (Def. \ref{def: screening})}
\qquad &S^{(i)}(z) := \overbrace{1\otimes\cdots\otimes 1}^{i-1}\otimes \phi^{\mathrm{sc}}(\gamma^{i-1} z)\otimes \overbrace{1\otimes \cdots \otimes 1}^{N-i-1}\,,\qquad i=1,\dots,N-1\,,\\
&\phi^{\mathrm{sc}}(z):= \exp\left(-\sum_{n>0} \frac{1}{n}\frac{1-t^{n}}{1-q^n}\gamma^{2n} a_{-n} z^n\right)\exp\left(\sum_{n>0} \frac{1}{n}\frac{1-t^{-n}}{1-q^{-n}}a_{n} z^{-n}\right) \\
&\qquad \qquad\qquad \otimes\exp\left(\sum_{n>0} \frac{1}{n}\frac{1-t^{n}}{1-q^n}\gamma^n a_{-n} z^n\right)\exp\left(-\sum_{n>0} \frac{1}{n}\frac{1-t^{-n}}{1-q^{-n}}\gamma^{-n} a_{n} z^{-n}\right)\,.
\shortintertext{Shifted screening currents }
&\widetilde{S}^{(k)} (z)=S^{(k)}(\gamma^{-2k}t^{-1} z). 
\shortintertext{Screened vertex operators (Def. \ref{Def_screendV})}
&\Phi^{(0)}(x)=:\exp\left( \sum_{n>0} \frac{1}{n}\frac{1-t^{n}}{1-q^n}a^{(1)}_{-n} u^n\right)\exp\left(\sum_{n>0} \frac{1}{n}\frac{1-\gamma^{2n}t^{n}}{1-q^{-n}}t^{-n}a^{(1)}_{n} x^{-n}\right)\\
&\qquad\qquad \qquad \qquad  \times \exp\left(\sum_{n>0} \frac{1}{n}\frac{1-\gamma^{2n}}{1-q^{-n}}
\sum_{j=2}^N\gamma^{(j-1)n} a^{(j)}_{n} x^{-n}\right):, \\
&\Phi^{(k)}(x):=
\prod_{i=1}^k \frac{(q;q)_{\infty} (q/t;q)_{\infty}}{(\frac{q u_i}{u_{k+1}};q)_{\infty}(\frac{q u_{k+1}}{tu_{i}};q)_{\infty}}
\oint_{C} \prod_{i=1}^k \frac{dy_i}{2 \pi \sqrt{-1}y_i}\Phi^{(0)}(x) S^{(1)}(y_1)\cdots S^{(k)}(y_k) g(x,y_1,\ldots,y_k), \\ 
&g(x,y_1,\ldots,y_k):=\frac{\theta_q(tu_1 y_1/u_{k+1}x)}{\theta_q(t y_1/x)}
\prod_{i=1}^{k-1}\frac{\theta_q(tu_{i+1} y_{i+1}/u_{k+1}y_i)}{\theta_q(t y_{i+1}/y_i)}\,.
\shortintertext{Cartan operator arising from the commutation relation between $\Phi^{(k)}(x)$ and $X^{(i)}(z)$ (Lem. \ref{lem: rel of X and Phi})}
&\Psi^+(z):= \exp\left(\sum \frac{1}{n}(1-\gamma^{2n}) \sum_{j=1}^N\gamma^{(j-1)n} a^{(j)}_n z^{-n}\right).
\shortintertext{Composition of screened vertex operators (Def. \ref{def_Vn})}
&V^{(\vnn)}(x_1,\ldots,x_{|\vnn|})=
\Phi^{(0)}(x_1)\cdots \Phi^{(0)}(x_{n_{1}})
\Phi^{(1)}(x_{n_1+1})\cdots \Phi^{(1)}(x_{n_{1}+n_{2}})\cdots\\ 
&\qquad \qquad \qquad \qquad \qquad \qquad\qquad \qquad \qquad \cdots
\Phi^{(N-1)}(x_{\lbl{N}{1}{n}})\cdots \Phi^{(N-1)}(x_{|\vnn|}).
\end{align*}

\subsection*{Symmetric functions and vectors in the $N$-fold tensor Fock spaces}

\begin{align*}
&\pp_{\lambda}, \quad \text{the power sum symmetric function}\\
&m_{\lambda}, \quad \text{the monomial symmetric functions}\\
&P_{\lambda},Q_{\lambda} \quad \text{the ordinary Macdonald functions} \\
&P_{\lambda}(a_{-n}^{(i)}),Q_{\lambda}(a_{-n}^{(i)}) 
\quad 
\begin{array}{l}
\text{Macdonald functions obtained by replacing }\\
\text{the power sum symmetric function $\pp_n$ by the boson $a^{(i)}_{-n}$}
\end{array}\\
&P_{\lambda}^{(r)}, Q_{\lambda}^{(r)},\quad \text{the ordinary Macdonald polynomials in $r$ variables}\\
\shortintertext{Generalized Macdonald functions (eigenfunctions of $\Xo_0$) (Thm. \ref{thm: GM})}
&\ket{P_{\vl}},\bra{P_{\vl}}, \\ 
&\ket{Q_{\vl}}:=\prod_{i=1}^N\frac{c_{\lambda^{(i)}}}{c'_{\lambda^{(i)}}} \ket{P_{\vl}}, \\
&\quad c_{\lambda}=\prod_{(i,j)\in \lambda}(1-q^{a_{\lambda}(i,j)}t^{\ell_{\lambda}(i,j)+1}), \\
&\quad c'_{\lambda}= \prod_{(i,j)\in \lambda}(1-q^{a_{\lambda}(i,j)+1}t^{\ell_{\lambda}(i,j)}).\\
\shortintertext{Integral form (Def. \ref{def: K})} 
&\ket{K_{\vl}}=\mathcal{C}_{\vl}^{(+)} \ket{P_{\vl}}, \\
&\bra{K_{\vl}}=\mathcal{C}_{\vl} ^{(-)}\bra{P_{\vl}},\\
&\quad\mathcal{C}_{\vl}^{(+)}  := \xi_{\vl}^{(+)} \times 
\prod_{1\leq i<j \leq N} N_{\lambda^{(i)}, \lambda^{(j)}}(qu_i/tu_j) 
\prod_{k=1}^N c_{\lambda^{(k)}}, \\
&\quad\mathcal{C}_{\vl} ^{(-)} :=  \xi_{\vl}^{(-)} \times
\prod_{1\leq i<j \leq N} N_{\lambda^{(j)}, \lambda^{(i)}}(qu_j/tu_i) 
\prod_{k=1}^N c_{\lambda^{(k)}},\\
& \quad\xi_{\vl}^{(+)}:=
\prod_{i=1}^N(-1)^{(N-i+1)|\lambda^{(i)}|}
u_{i}^{(-N+i)|\lambda^{(i)}|+\sum_{k=1}^{i}|\lambda^{(k)}|}\nonumber \\
&\quad\qquad \quad \times \prod_{i=1}^N
(q/t)^{\left(\frac{1-i}{2}\right)|\lambda^{(i)}|} 
q^{\left(i-N\right)\left(n(\lambda^{(i)'})+|\lambda^{(i)}|\right)}
t^{\left(N-i-1\right) \left( n(\lambda^{(i)})+|\lambda^{(i)}| \right)},\\
& \quad\xi_{\vl}^{(-)}:=\prod_{i=1}^N(-1)^{i|\lambda^{(i)}|}
u_{i}^{(-i+1)|\lambda^{(i)}|+\sum_{k=i}^{N}|\lambda^{(k)}|}\nonumber \\
&\quad\qquad \quad \times \prod_{i=1}^N
(q/t)^{\left(\frac{i-1}{2}\right)|\lambda^{(i)}|} t^{|\lambda^{(i)}|}
q^{\left(1-i\right)\left(n(\lambda^{(i)'})+|\lambda^{(i)}|\right)}
t^{\left(i-2\right) \left( n(\lambda^{(i)})+|\lambda^{(i)}| \right)}.
\shortintertext{Factor arising from the application of Rumanujan's ${_1}\psi_1$ summation formula (Prop. \ref{Prop_R})}
&\mathcal{R}^{\vnn}_{\vl}(\vu) =
\gamma^{\sum_{i=1}^N (i-1)|\lambda^{(i)}|}
\prod_{k=2}^N\prod_{i=1}^{n_k}\prod_{l=1}^{k-1} 
\frac{(t^{-n_l+i}u_l/u_k;q )_{-\lambda^{(k)}_i}}{(qt^{-n_l+i-1}u_l/u_k;q )_{-\lambda^{(k)}_i}}\,.
\end{align*}

\subsection*{Factors in the Macdonald  functions and hypergeometric series}
\begin{align*}
&d_n((\theta_i)_{1\leq i \leq n-1};(s_i)_{1\leq i \leq n}|q,t)	\nonumber  \\
& \qquad =\prod_{i=1}^{n-1}
(q/t)^{\theta_{i}} { (t;q)_{\theta_{i}} \over (q;q)_{ \theta_{i}} }
{(t s_n/s_i;q)_{\theta_{i}} \over (q s_n/s_i;q)_{\theta_{i}} }\prod_{1\leq i<j\leq n-1}
{(t s_j/s_i ;q)_{\theta_{i}} \over (q s_j/s_i;q)_{\theta_{i}} }
{(q^{-\theta_{j}}q s_j/t s_i;q)_{\theta_{i}} \over 
	(q^{-\theta_{j}}s_j/s_i;q)_{\theta_{i}} },\\
&c_1(-;s_1,q,t)=1\,,\\
&c_n((\theta_{i,j})_{1\leq i<j\leq n};(s_i)_{1\leq i \leq n}|q,t)\\
&\quad = c_{n-1}((\theta_{i,j})_{1\leq i<j\leq n-1}; (q^{-\theta_{i,n}}s_i)_{1\leq i \leq n}|q,t) \times d_n((\theta_{i,n})_{1\leq i \leq n-1};(s_i)_{1\leq i \leq n}|q,t).
\shortintertext{The Macdonald functions (Def. \ref{def: p_n f_n})}
&p_n(x;s|q,t) =\sum_{\theta \in M_n} c_n(\theta;s|q,t) \prod_{1\leq i<j\leq n}(x_j/x_i)^{\theta_{i,j}}\,,\\ 
&f_n(x;s|q,t) =\prod_{1\leq k<\ell\leq n}(1-x_\ell/x_k)\,\cdot\,p_n(x;s|q^{-1},t^{-1}).
\shortintertext{Kajihara and Noumi's multiple basic hypergeometric series (Def. \ref{def_phi_m_n})}
&\Mphi{m,n}{\mu}
{a_1,\ldots,a_m\\ x_1,\ldots,x_m}
{b_1,\ldots,b_n\\ c_1,\ldots,c_n} =
\prod_{i<j}{}
\frac{q^{\mu_i}x_i-q^{\mu_j}x_j}{x_i-x_j}\ 
\prod_{i,j}{}
\frac{(a_jx_i/x_j;q)_{\mu_i}}
{(qx_i/x_j;q)_{\mu_i}}\ 
\prod_{i,k}{}
\frac{(b_kx_i;q)_{\mu_i}}
{(c_kx_i;q)_{\mu_i}}, \\
&\Mphiu{m,n}
{a_1,\ldots,a_m\\ x_1,\ldots,x_m}
{b_1,\ldots,b_n\\ c_1,\ldots,c_n}{u} =
\sum_{\mu \in \mathbb{Z}_{\geq 0}^m} 
u^{\sum_{i=1}^m \mu_i}
\Mphi{m,n}{\mu}
{a_1,\ldots,a_m\\ x_1,\ldots,x_m}
{b_1y_1,\ldots,b_ny_n\\ c_1y_1,\ldots,c_ny_n}.
\shortintertext{Factors arising in the transformation formula from $p_{n+m}$ to another series containg $p_m$ as inner summation (Def. \ref{Def_N_mu})}
&\mathsf{N}^{n,m}_{\mu}(s_1,\ldots,s_{n+m}) :=
\prod_{k=1}^m\left( \prod_{i=1}^{n+k} \frac{(qs_{n+k}/ts_i;q)_{\mu_k}}{(qs_{n+k}/s_i;q)_{\mu_k}} \right)
\prod_{1\leq i <j \leq m} \frac{(t\, q^{-\mu_i}s_{n+j}/s_{n+i};q)_{\mu_j}}{(q^{-\mu_i}s_{n+j}/s_{n+i};q)_{\mu_j}}, \\
&\widetilde{\mathsf{N}}^{n,m}_{\mu}(h;s_1,\ldots,s_{n+m}) := 
\prod_{k=1}^m\left( \prod_{i=1}^{n+k} \frac{(qs_{n+k}/ts_i;q)_{\mu_k}}{(h qs_{n+k}/s_i;q)_{\mu_k}} \right)
\prod_{1\leq i <j \leq m} \frac{(t\, q^{-\mu_i}s_{n+j}/s_{n+i};q)_{\mu_j}}{(q^{-\mu_i}s_{n+j}/s_{n+i};q)_{\mu_j}},
\end{align*}
where $n$, $m$ are nonnegative integers and $\mu=(\mu_i)_{1\leq i \leq m} \in \mathbb{Z}^m$.

\subsection*{Mukad\'e Operator and its Relatives}
\begin{align*}
\shortintertext{The defining relation of the vertex operator $\cV(x) : \cF_{\boldsymbol{u}} \to\cF_{\boldsymbol{v}}$ (Def. \ref{def_V_gen}) }
&\left(1-\frac{x}{z}\right)X^{(i)}(z) \cV(x) 
=\left(1- (t/q)^i\frac{x}{z}\right)\cV(x) X^{(i)}(z) 
\qquad i \in \{1,2,\dots,N\}.
\shortintertext{Realization of $\cV(x)$ for the special case $v_i=t^{-n_i}u_i$ (Def. \ref{def_Vspecial})}
&\widetilde V^{(\vnn)}(x)=
\lim_{x_i \rightarrow t^{|\vnn|-i} x}
\prod_{1\leq i< j\leq |\vnn|} \frac{(tx_j/x_i;q)_{\infty}}{(qx_j/tx_i;q)_{\infty}} 
V^{(\vnn)} (x_1,\ldots ,x_{|\vnn|})A^{-1}_{(|\vnn|)}(x),\\
&A_{(r)}(x) = \exp\left( \sum_{n>0} \frac{(1-(q/t)^r)(1-t^{(1-r)n})t^{2r}}{n(1-q^n)(1-t^{-n})}
\sum_{i=1}^{N} \gamma^{(i-1)n}a^{(i)}_{n}x^{-n} \right). \\
\shortintertext{Mukad\'{e} operators connected toward vertical and horizontal directions (Def. \ref{def_TV_TH})}
&\mathcal{T}^{V}(\boldsymbol{u}, \boldsymbol{v};w),\quad  
\mathcal{T}^{H}(\boldsymbol{u}, \boldsymbol{v};w) ,\quad
 \cTH_{\boldsymbol{\lambda},\boldsymbol{\mu}}(\boldsymbol{u}, \boldsymbol{v};w),\quad \cTV_{\boldsymbol{\lambda},\boldsymbol{\mu}}(\boldsymbol{u}, \boldsymbol{v};w). 
\shortintertext{Mukad\'{e} operator specialized so that Young diagrams are restricted to only one row (eq. (\ref{eq: Jackson til Ti}))}
& \widetilde{\mathcal{T}}_i(x)=\widetilde{\mathcal{T}}_i(\vu; x).
\end{align*}


\end{document}